\documentclass[letterpaper,11pt,oneside,reqno]{amsart}

\usepackage[sorting=nyt,style=alphabetic,backend=biber,hyperref=true,eprint=true,doi=false,isbn=false,url=false,maxbibnames=9,maxcitenames=4,maxalphanames=9,giveninits=true]{biblatex}
\AtEveryBibitem{
  \clearlist{language}
    \clearfield{eprintclass}
}

\usepackage{amsmath,amssymb,amsthm,amsfonts}
\usepackage{bm}
\usepackage{graphicx,color}
\usepackage[colorlinks=true,linkcolor=blue,citecolor=red]{hyperref}
\usepackage{bbm}
\usepackage{upgreek}
\usepackage[mathscr]{euscript}
\usepackage{textcomp}
\usepackage{hhline}
\usepackage{relsize}

\allowdisplaybreaks
\numberwithin{equation}{section}

\usepackage{tikz}
\usetikzlibrary{shapes,arrows,positioning,decorations.markings}
\usetikzlibrary{graphs,patterns,decorations.markings,arrows.meta,matrix}
\usetikzlibrary{calc,decorations.pathmorphing,decorations.pathreplacing,shapes}

\usepackage{array}
\usepackage{adjustbox}
\usepackage{cleveref}
\usepackage{enumerate}
\usepackage{caption}
\usepackage[DIV=10]{typearea}
\colorlet{lgray}{white!60!black}
\colorlet{dgray}{white!45!black}
\colorlet{lred}{white!85!red}
\colorlet{dred}{white!35!red}
\colorlet{lgreen}{white!60!green}
\colorlet{dgreen}{black!30!green}
\colorlet{lpurple}{white!60!purple}
\colorlet{lblue}{white!60!blue}
\definecolor{green}{rgb}{0.1,0.8,0.1}
\definecolor{yellow}{rgb}{1.0,0.85,0.25}
\definecolor{purple}{rgb}{1.0, 0, 1.0}
\definecolor{blue}{rgb}{0, 0, 1.0}

\usepackage[shadow=true]{todonotes}

\newtheorem{proposition}{Proposition}[section]
\newtheorem{lemma}[proposition]{Lemma}
\newtheorem{corollary}[proposition]{Corollary}
\newtheorem{theorem}[proposition]{Theorem}
\newtheorem{theoremAlph}{Theorem}

\theoremstyle{definition}
\newtheorem{definition}[proposition]{Definition}
\newtheorem{assumption}[proposition]{Assumption}

\newtheorem{remark}[proposition]{Remark}

\renewcommand{\sl}{\mathfrak{sl}}

\DeclareMathOperator{\End}{End}

\newcommand{\tW}{\widetilde{W}}
\newcommand{\tw}{\widetilde{w}}
\newcommand{\tR}{\widetilde{R}}
\newcommand{\tK}{\widetilde{K}}

\newcommand \calA{\mathcal A}
\newcommand \calB{\mathcal B}

\newcommand \calF{\mathcal F}

\newcommand \Z{\mathbb Z}
\newcommand \F{\mathbb F}
\newcommand \T{\mathbb T}
\newcommand \B{\mathbb B}

\newcommand\sfF{\mathsf{F}}

\tikzset{bgplaq/.style={fill=lightgray!30!white}}
\tikzset{bggplaq/.style={fill=lightgray!70!white}}
\tikzset{bline/.style={line width=0.8mm}}
\tikzset{fline/.style={}}
\tikzset{gbline/.style={line width=0.8mm,draw=black!35!green}}
\tikzset{gline/.style={line width=0.3mm,draw=black!35!green}}
\tikzset{bbline/.style={line width=0.8mm,draw=black!35!blue}}
\tikzset{bline/.style={line width=0.3mm,draw=black!35!blue}}
\tikzset{wbline/.style={line width=0.6mm,draw=lightgray!10!white}}
\tikzset{wline/.style={draw=lightgray!10!white}}
\tikzset{part/.style={thick,solid,circle,fill=black,draw=black,inner sep=1.8pt,outer sep=3pt}}
\tikzset{hole/.style={thick,solid,circle,draw=black,fill=white,inner sep=1.8pt,outer sep=3pt}}
\tikzset{arrow/.style={postaction={decorate,thick,decoration={markings,mark = at position #1 with {\arrow{>}}}}},arrow/.default=0.5}

\tikzset{cross/.style={postaction={decorate,thick,decoration={markings,mark = at position #1 with{\draw (-2pt,-2pt) -- (2pt,2pt);\draw (2pt,-2pt) -- (-2pt,2pt);}}}}}
\tikzset{distort/.style={cm={1,0,-\slt,\sltb,(0,0)}}}
\def\slt{0.2}
\pgfmathsetmacro{\sltb}{sqrt(1-\slt*\slt)}

\newcommand\gbull[3]{\filldraw[fill=black!35!green, draw=black] (#1,#2) circle (#3cm);}

\tikzstyle{fused}=[lgray, line width=5pt, arrows={-Stealth[scale=1.1,length=10, width=10,dgray]}]
\tikzstyle{fused*}=[lgray, line width=5pt]
\tikzstyle{unfused}=[lgray, line width=1pt, arrows={-Stealth[scale=1.1,length=5, width=5,dgray]}]
\tikzstyle{unfused*}=[lgray, line width=1pt]
\tikzstyle{fleche}=[>=stealth', postaction={decorate}, thick]

\RequirePackage{tikz}
\RequirePackage{amsmath,amssymb,amsfonts}
\RequirePackage{bm}

\usetikzlibrary{arrows.meta,arrows}
\usetikzlibrary{calc,positioning,fit}
\usetikzlibrary{decorations.markings,decorations.pathmorphing,decorations.pathreplacing}
\usetikzlibrary{patterns,shapes,matrix,graphs}

\colorlet{lgray}{white!85!black}
\colorlet{dgray}{white!45!black}
\colorlet{lred}{white!85!red}
\colorlet{dred}{white!35!red}
\colorlet{lgreen}{white!60!green}
\colorlet{dgreen}{black!30!green}
\colorlet{lpurple}{white!60!purple}
\colorlet{lblue}{white!60!blue}

  \definecolor{green}{rgb}{0.1,0.8,0.1}
  \definecolor{yellow}{rgb}{1.0,0.85,0.25}
  \definecolor{purple}{rgb}{1.0,0,1.0}
  \definecolor{blue}{rgb}{0,0,1.0}

\newcommand{\stikzbase}[3]{%
	\begin{tikzpicture}[scale=#1,baseline={([yshift=#2]current bounding box.center)},>=stealth]#3\end{tikzpicture}%
}

\tikzset{
	fused/.style  = {lgray, line width=3pt,
		arrows={-Stealth[scale=1.1,length=10,width=10,dgray]}},
	fused*/.style = {lgray, line width=3pt}
}

\tikzset{
	vmblue-thick/.style    = {lgray, line width=3pt},
	vmblue-thick*/.style   = {lgray, line width=3pt},
	dred-thick/.style      = {lgray, line width=3pt},
	dred-thick*/.style     = {lgray, line width=3pt},
	vmpurple-thick/.style  = {lgray, line width=3pt},
	vmpurple-thick*/.style = {lgray, line width=3pt},
	black-thick/.style     = {lgray, line width=3pt},
	black-thick*/.style    = {lgray, line width=3pt},
  reflection-arrow/.style = {lgray, line width=3pt,
    line cap=round, line join=round,
    arrows={-Stealth[scale=1.1,length=10,width=10,dgray]}},
  boundary-arrow/.style = {lgray, line width=3pt,
    line cap=round, line join=round,
    arrows={-Stealth[scale=1.1,length=10,width=10,dgray]}},
  cascade-arrow/.style = {lgray, line width=3pt,
    line cap=round, line join=round,
    arrows={-Stealth[scale=1.1,length=10,width=10,dgray]}},
  wb-row-arrow/.style = {lgray, line width=3pt,
    line cap=round, line join=round,
    arrows={-Stealth[scale=1.1,length=10,width=10,dgray]}}
}

\def\part{\begin{tikzpicture}[scale=0.5,baseline=-1.7] \gbull{0}{0}{0.12}; \end{tikzpicture}}

\newmuskip\pPqskip
\mathchardef\pPcomma=\mathcode`, 

\newcommand*\pPq[6]{%
  \begingroup
  \begingroup\lccode`~=`,
    \lowercase{\endgroup\def~}{\pPcomma\mkern\pPqskip}%
  \mathcode`,=\string"8000
  {}_{#1}\phi_{#2}\biggl[\genfrac..{0pt}{}{#3}{#4};#5;#6\biggr]%
  \endgroup
}

\allowdisplaybreaks[1]

\renewcommand{\b}[1]{\bar{#1}}
\renewcommand{\leq}{\leqslant}
\renewcommand{\geq}{\geqslant}

\definecolor{dred}{RGB}{176,0,0}

\title[Reflection equations and Littlewood identities]{Reflection equations and Littlewood identities for spin Hall-Littlewood and $q$-Whittaker functions}
\author{Svetlana Gavrilova, Sergei Korotkikh}

\begin{document}

\begin{abstract}
    We describe two-parametric Littlewood identities for inhomogeneous stable spin Hall-Littlewood and inhomogeneous spin $q$-Whittaker symmetric functions. These identities generalize the Littlewood identity found by Warnaar (2006) for the ordinary Hall-Littlewood polynomials. However, instead of Rogers-Szeg\"o coefficients used by Warnaar, coefficients in our identities are given by products of Al-Salam-Chihara polynomials. 
    
    Our proof relies on integrable lattice models. A key step includes three new solutions to the reflection and boundary equations for the fully fused six-vertex model. To find these solutions we use isomorphisms between tensor products of evaluation $U_q(L\sl_2)$-modules, as well as covectors stable under a  $q$-Onsager coideal subalgebra. 
\end{abstract}

	\maketitle
    
    \tableofcontents

\section{Introduction}

\subsection{Overview} There are several classical identities evaluating a sum of Schur polynomials as a simple product \cite{Lit50}:
\begin{equation}\label{introeq: Schura}
  \sum_{\lambda} s_\lambda(x_1,\ldots,x_n)
  =\prod_{i=1}^n\frac{1}{1-x_i}\prod_{1\leq i<j\leq n}\frac{1}{1-x_i x_j},
\end{equation}
\begin{equation}\label{introeq: Schurb}
  \sum_{\lambda:\,\lambda\text{ even}} s_\lambda(x_1,\ldots,x_n)
  =\prod_{i=1}^n\frac{1}{1-x^2_i}\prod_{1\leq i<j\leq n}\frac{1}{1-x_i x_j},
\end{equation}
\begin{equation}\label{introeq: Schurc}
  \sum_{\lambda:\,\lambda'\text{ even}} s_\lambda(x_1,\ldots,x_n)
  =\prod_{1\leq i<j\leq n}\frac{1}{1-x_i x_j}.
\end{equation}
Here the sums are over partitions $\lambda$, and the conditions ``$\lambda \text{\ even}$" and ``$\lambda' \text{\ even}$" restrict to partitions with rows or columns of only even lengths respectively. These identities are known as the \emph{Littlewood} identities, and they have numerous generalizations and applications. 
For example, \eqref{introeq: Schurb} and \eqref{introeq: Schurc} encode the
multiplicity-free decompositions of $\mathrm{GL}_n(\mathbb{C})$-modules
$\operatorname{Sym}(\operatorname{Sym}^2\mathbb{C}^n)$ and
$\operatorname{Sym}(\bigwedge^2\mathbb{C}^n)$ respectively. These decompositions enter the
classical branching rules for orthogonal and symplectic groups
\cite{HTW05}. Littlewood identities and their bounded versions play an instrumental role in the combinatorics of plane partitions and tableaux \cite{Des1986}, \cite{Sten90}, \cite{Mac95}, \cite{Pan12}, \cite{BeteaW16}, \cite{RW21}; they can be used to express certain characters of Lie algebras \cite{Okada98}, \cite{Krat98}, \cite{BaWa15}; they appear in Schubert calculus \cite{LLT89} and have elliptic generalizations \cite{LSW09}, \cite{Rains12}.


In \cite{War06} Warnaar gave a two-parametric Littlewood identity for \emph{Hall-Littlewood polynomials} $P_\lambda(x_1, \dots, x_n;q)$. Namely, for $m\in\mathbb Z_{\geq0}$ let $h_m(z)$ denote the \emph{Rogers-Szeg\"o polynomial}
$$
h_m(z)=\sum_{i=0}^m z^i\frac{(q;q)_m}{(q;q)_i(q;q)_{m-i}}.
$$
Then
\begin{equation} \label{introeq: War}
  \sum_{\lambda} h_{\lambda}(u, v) P_\lambda(x_1,\ldots,x_n;q)=\prod_{i=1}^n\frac{(1+ux_i)(1+vx_i)}{(1-x_i)(1+x_i)}\prod_{1\leq i<j\leq n}\frac{1-qx_ix_j}{1-x_i x_j},
\end{equation}
where
$$
h_\lambda(u,v)=\prod_{i\ \text{even}} h_{m_i(\lambda)}(uv)\prod_{i\ \text{odd}}v^{m_i(\lambda)}h_{m_i(\lambda)}(u/v).
$$
The Littlewood identity \eqref{introeq: War} unifies the previous ones. Namely, when $q=0$ and $(u,v)=(0,1), (0,0), (-1,1)$ it reduces to \eqref{introeq: Schura}-\eqref{introeq: Schurc}. Moreover, Hall-Littlewood polynomials are dual to \emph{$q$-Whittaker polynomials} $Q_\lambda(x_1, \dots, x_n;q,0)$, and from this duality one gets
\begin{equation} \label{introeq: Wardual}
  \sum_{\lambda} h_{\lambda'}(u, v) Q_\lambda(x_1,\ldots,x_n;q,0)=\prod_{i=1}^n\frac{1}{(ux_i;q)_\infty(vx_i;q)_\infty}\prod_{1\leq i<j\leq n}\frac{1}{(x_i x_j;q)_\infty}.
\end{equation}

The first goal of this work is to generalize the two-parametric Littlewood identities of Warnaar to \emph{inhomogeneous stable spin Hall-Littlewood functions} and \emph{inhomogeneous spin $q$-Whittaker functions}. These functions generalize Hall-Littlewood and $q$-Whittaker functions by introducing two families of additional parameters originating from integrable lattice models. Spin Hall-Littlewood functions were introduced in \cite{Bor14} as partition functions of the \emph{higher-spin six-vertex model} and then extensively studied and extended in \cite{BP18}, \cite{BW22}. Spin $q$-Whittaker functions were constructed in \cite{BW21} using the \emph{$q$-Hahn vertex model}, and later generalized in \cite{MP22}, \cite{BK24}. 
These polynomials also admit an interesting interpolation characterization \cite{Kor24}. Further fundamental results were obtained in \cite{Muc25}: an explicit torus
orthogonality relation for a slightly modified version of the inhomogeneous spin
$q$-Whittaker polynomials and basis theorems for both modified and original family.
Numerous properties of the classical Hall-Littlewood and $q$-Whittaker functions have spin analogues: symmetry,  branching rules, Cauchy and dual-Cauchy summation identities and orthogonality. Many of these properties follow directly from the integrability of lattice models. For example, Cauchy-type identities are a manifestation of the \emph{Yang-Baxter equation}. 

The other goal of this work comes from the results of \cite{BWZJ15}, \cite{WZJ16}, \cite{Gav23}, \cite{CD21}, where Littlewood identities are studied with the use of \emph{reflection} or \emph{boundary Yang-Baxter} equations. This type of equations was introduced by Cherednik \cite{Cher84} in the study of factorized scattering on a half-line, and later it was used by Sklyanin \cite{Skl88} to define and study quantum integrable systems with a boundary. In the context of the six-vertex model and its generalizations the solutions to reflection equations have been studied in \cite{dVGR93}, \cite{ML19}, \cite{KoMa2024}. However, to the best of our knowledge, the relation needed to prove the Littlewood identity for inhomogeneous spin $q$-Whittaker functions has remained unknown. There are two possible reasons for that: (1) we need to work with an arbitrary spin, while the majority of the results we found only address finite spin; (2) the equation we need has a form that is different from the classical reflection equation. In particular, it involves ``$q$-Hahn vertices", which mix spin and spectral parameters of lines in the model in a unique way.


To find the needed reflection equations, we rely on representations of quantum affine algebras. The weights of lattice models are identified with coefficients of an \emph{$R$-matrix} acting on evaluation $U_q(L\sl_2)$-modules. From this point of view, it is known that the reflection equation is connected to coideal subalgebras of quantum affine algebras and quantum symmetric pairs, see \cite{NS95}, \cite{Let02}, \cite{Kol14}, \cite{RVl2016}, \cite{BaKo2019}, \cite{ApVl2022} and references therein. In the case of $U_q(L\sl_2)$, one of the coideal subalgebras of interest is a parameter-dependent embedding of the $q$-Onsager algebra \cite{Bas05}, \cite{IT10}, \cite{Kol14}. Using this as our motivation, we find the needed reflection equations by describing covectors stabilized by the action of the $q$-Onsager subalgebra on evaluation modules. Our approach is somewhat similar to \cite{Kuniba2019MatrixPS}; however, there are crucial differences. Namely, we work with infinite-dimensional evaluation modules instead of finite-dimensional ones. Moreover, it turns out that it is more suitable in our context to treat the reflection matrix as a pairing rather than an operator.





Another motivation for obtaining Littlewood identities lies in their applications to integrable probability, particularly in studying integrable half-space models. Together with Cauchy identities and branching rules, they provide useful algebraic structures underlying certain probability measures on sequences of partitions.
For Schur symmetric functions, these measures are  Pfaffian Schur processes,
which have been used to obtain exact distribution formulas and
asymptotic fluctuation results for last passage percolation in a
half-quadrant, see \cite{BBCS18}.
Macdonald analogues lead to half-space Macdonald processes studied in \cite{BBC20};
their degenerations were applied there to study the half-space log-gamma polymer.
Other applications include half-space stochastic vertex models and half-line ASEP, see
\cite{BBCW18},  
\cite{He24},
\cite{CD21}.
Also, these measures were used for discovering new integrable interacting particle systems with a non-trivial boundary dynamics such as half-space versions of geometric $q$-TASEP
and $q$-PushTASEP in \cite{BBC20}.
We hope to develop the probabilistic applications of our new identities in a
subsequent work.

During preparation of this work, the paper \cite{CD21} has been significantly updated, and it now includes the two-parametric Littlewood identity for stable spin Hall-Littlewood functions. Their approach is also based on reflection equations and it can be readily generalized to Theorem \ref{thmc}. However, the reflection equations used in \cite{CD21} only involve spin $\frac12$ reflection weights, since those are sufficient for the spin Hall-Littlewood functions. Thus, the rest of our results (Theorems \ref{thma}, \ref{thmb} and \ref{thmd}) are still new, since they are based on reflection equations with arbitrary spin. 

Below we detail our main results.

\subsection{Reflection equations} Let $V(s)$ denote the irreducible $U_q(\sl_2)$ module with highest weight $s^{-1}$, and $V(s)_z$ be the evaluation module induced from $V(s)$ along the evaluation map ${\mathrm{ev}_z: U_q(L\sl_2)\to U_q(\sl_2)}$. The modules $V(s)_z$ are irreducible, and for generic $s$ have a weight basis $\{v_n\}_{n\geq 0}$. We set $V^a_b=V(a/b)_{ab}$. 

In \cite{Kor24} we have studied isomorphisms between tensor products of $V^a_b$. Namely, for generic $a_1,a_2, b_1,b_2$ we have the following isomorphisms of irreducible $U_q(L\sl_2)$-modules:
$$
\tW^a: V^{\sqrt{a_1}}_{\sqrt{b_1}}\otimes V^{\sqrt{a_2}}_{\sqrt{b_2}}\xrightarrow{\sim} V^{\sqrt{a_2}}_{\sqrt{b_1}}\otimes V^{\sqrt{a_1}}_{\sqrt{b_2}},\qquad \tW^b: V^{\sqrt{a_1}}_{\sqrt{b_1}}\otimes V^{\sqrt{a_2}}_{\sqrt{b_2}}\xrightarrow{\sim} V^{\sqrt{a_1}}_{\sqrt{b_2}}\otimes V^{\sqrt{a_2}}_{\sqrt{b_1}}.
$$
Moreover, the matrix coefficients of these isomorphisms are given explicitly by
    $$
    \langle v_k\otimes v_l|\ \tW^a\ |v_i\otimes v_j\rangle=\delta_{i+j=k+l}\frac{(b_1/a_2)^{l/2}}{(b_1/a_1)^{j/2}}(a_2/b_1)^{i-k}\frac{(a_2/b_1;q)_k(a_1/a_2;q)_{i-k}(q;q)_i}{(a_1/b_1;q)_i(q;q)_k(q;q)_{i-k}},
    $$
    $$
    \langle v_k\otimes v_l|\ \tW^b\ |v_i\otimes v_j\rangle=\delta_{i+j=k+l}\frac{(b_2/a_1)^{l/2}}{(b_1/a_1)^{j/2}}(b_1/b_2)^{l}\frac{(a_2/b_1;q)_l(b_1/b_2;q)_{j-l}(q;q)_j}{(a_2/b_2;q)_j(q;q)_l(q;q)_{j-l}}.
    $$   
We call these coefficients \emph{$q$-Hahn weights} since they are similar to the orthogonality weights of $q$-Hahn polynomials. The tilde in our notation is used to distinguish our weights from the \emph{stochastic} weights $W^a$ and $W^b$, which are conventionally used in this context. The stochastic weights can be recovered by a certain gauge transformation, it is described in Section \ref{stochW}.  We view $\tW^a$ and $\tW^b$ as vertex weights and use the schematic notation below, see Section \ref{sec: vertex-models} for a detailed explanation of this notation:
\begin{align*}
		\begin{gathered}
			\begin{tikzpicture}[scale=1]
				\draw[fused] (-1,0) -- (1,0);
				\draw[fused] (0,-1) -- (0,1);
				\node[below] at (-1,-0.1) {\tiny \shortstack[c]{$i; a_1,b_1$}};
				\node[below] at (1,-0.1) {\tiny \shortstack[c]{$l; a_1,b_2$}};
				\node[below] at (0,-1) {\tiny $j; a_2,b_2$};
				\node[above] at (0,1) {\tiny $k; a_2,b_1$};
				\node[above right] at (0,0) {\tiny $\tW^a$};
			\end{tikzpicture}
		\end{gathered}
		&=\tW^a_{a_1,b_1,a_2,b_2}(i,j,k,l):=\langle v_k\otimes v_l|\ \tW^a\ |v_i\otimes v_j\rangle.
	\end{align*}
	\begin{align*}
		\begin{gathered}
			\begin{tikzpicture}[scale=1]
				\draw[fused] (-1,0) -- (1,0);
				\draw[fused] (0,-1) -- (0,1);
				\node[below] at (-1,-0.1) {\tiny \shortstack[c]{$i; a_1,b_1$}};
				\node[below] at (1,-0.1) {\tiny \shortstack[c]{$l; a_2,b_1$}};
				\node[below] at (0,-1) {\tiny $j; a_2,b_2$};
				\node[above] at (0,1) {\tiny $k; a_1,b_2$};
				\node[above right] at (0,0) {\tiny $\tW^b$};
			\end{tikzpicture}
		\end{gathered}
		&=\tW^b_{a_1,b_1,a_2,b_2}(i,j,k,l):=\langle v_k\otimes v_l|\ \tW^b\ |v_i\otimes v_j\rangle.
	\end{align*}

Using $\tW^a$ and $\tW^b$ we can describe the action of the $R$-matrix on evaluation modules. Namely, we have
    $$
\tR=\tW^a\circ\tW^b=\tW^b\circ\tW^a:  V^{\sqrt{a_1}}_{\sqrt{b_1}}\otimes V^{\sqrt{a_2}}_{\sqrt{b_2}}\xrightarrow{\sim}V^{\sqrt{a_2}}_{\sqrt{b_2}}\otimes V^{\sqrt{a_1}}_{\sqrt{b_1}}.
$$
Plugging the expressions for $\tW^a$ and $\tW^b$ results in an explicit expression for $\tR$, which is identical to the formula found in \cite{Man14}.

Our first group of results consists of three reflection equations involving $\tW^a$, $\tW^b$, $\tR$. All these equations involve the same reflection matrix $\tK(i,j)$, which depends on parameters $a,b, \xi,\zeta$. An explicit expression for this matrix is complicated and is deferred to Section \ref{sec: reflectioneq}. However, there is an alternative description of $\tK$, which plays a key role in our arguments.

Let
$$
\omega_{s;\xi,\zeta}(n)=\sum_{i+j=n}\zeta^{i-j}\frac{(s\xi/\zeta;q)_i(s\zeta/\xi;q)_j}{(s^2;q)_{n}}\frac{(q;q)_n}{(q;q)_i(q;q)_j}.
$$
The functions above are known as \emph{Al-Salam-Chihara} $q$-hypergeometric functions. They were introduced in \cite{ASC76} as a solution to a certain convolution problem. However, we find another application of these functions: the  covector
$$
\langle \omega_{s;\xi,\zeta}|\ :=\sum_{n\geq 0} \omega_{s;\xi,\zeta}(n)\langle v_n|\ \in \left(V(s)_z\right)^*
$$
spans the unique one-dimensional subspace of $\left(V(s)_z\right)^*$ invariant under the action of a coideal subalgebra $L(\xi,\zeta)_z\subset U_q(L\sl_2)$ generated by
$$
f_1- q^{-\frac12} e_1k_1^{-1}+\frac{\xi+\xi^{-1}}{q^{\frac12}-q^{-\frac12}} k_1^{-1},\qquad f_0-\frac{q^{\frac12}}{z^2} e_0k_0^{-1}+\frac{q^{\frac12}(\zeta+\zeta^{-1})}{z\left(q^{\frac12}-q^{-\frac12}\right)} k_0^{-1}.
$$
This subalgebra is isomorphic to the \emph{$q$-Onsager} algebra \cite{BK05}. Using the action of $L(\xi,\zeta)_z$ to characterize $\omega_{s;\xi,\zeta}$, we prove the following relation, which defines the reflection matrix $\tK$:
\begin{multline*}
\tK_{a,b;\xi,\zeta}(i,j):=\sum_{k,l\geq 0}\omega_{b^{-1};\xi,\zeta}(k)\omega_{a;\zeta,\xi}(l) \tW^a_{a,b,b^{-1},a^{-1}}(i,j,k,l)\\
=\sum_{k,l\geq 0}\omega_{a;\xi,\zeta}(k)\omega_{b^{-1};\zeta,\xi}(l) \tW^b_{a,b,b^{-1},a^{-1}}(i,j,k,l).
\end{multline*}
We represent $\tK_{a,b;\xi,\zeta}$ and $\omega_{a;\xi,\zeta}$ diagrammatically as follows:
\begin{center}
	\begin{adjustbox}{max width=\textwidth}
		\begin{tikzpicture}[x = 7 mm, y = 7 mm]
			\draw[reflection-arrow] (-1,0) -- (0,1);
            \draw[reflection-arrow] (1,0) -- (0,1);
			\fill[black] (0,1) circle[radius=2.5pt];
			\node[font=\scriptsize, above] at (0,1.2) {$\tK_{\xi,\zeta}$};
			\node[font=\scriptsize, below left] at (-1,0) {$i;a,b$};
            \node[font=\scriptsize, below right] at (1,0) {$j;b^{-1},a^{-1}$};
		\end{tikzpicture}
        \hspace{1cm}
    \begin{tikzpicture}[scale=1.2, baseline=-4pt]
		\draw[fused] (0,0) -- (0,1);
        \node at (0,1) {$\text{---}$};
        \node[above] at (0,1) {\tiny $\omega_{a;\xi,\zeta}$};
\end{tikzpicture} 
	\end{adjustbox}
\end{center}
Using these boundary and reflection weights we find the following two-parametric \emph{boundary relations} for $\tW^a, \tW^b$ and $\tR$:
\begin{theoremAlph}[Boundary relations, Theorems \ref{reflection-theo} and \ref{reflection2-theo} in the text]\label{thma} The following equations hold for generic $a,b,c, \xi,\zeta$ and every $i,j,l\in\mathbb Z_{\geq 0}$:
    $$
\begin{tikzpicture}[scale=1.1, baseline=-4pt]
		\draw[fused] (-1,0) -- (1,0) -- (2,1);
		\draw[fused] (0,-1) -- (0,1);
        \draw[fused] (3,0) -- (2,1);
        \node at (2,1) {$\bullet$};
        \node at (0,1) {$\text{---}$};
	   \node[below] at (-1,-0.1) {\tiny \shortstack[c]{$i; a,b$}};
        \node[above] at (2,1) {\tiny \shortstack[c]{$\widetilde{K}_{\zeta,\xi}$}};
        \node[below] at (3,-0.1) {\tiny \shortstack[c]{$l; b^{-1},c^{-1}$}};
		\node[below] at (0,-1) {\tiny $j; c,a^{-1}$};
        \node[above] at (0,1) {\tiny $\omega_{a;\xi,\zeta}$};
		\node[above right] at (0,0) {\tiny $\widetilde{W}^b$};
\end{tikzpicture} 
\quad
=
\quad
\begin{tikzpicture}[scale=1.1,baseline=-4pt]
		\draw[fused] (1,0) -- (-1,0) -- (-2,1);
		\draw[fused] (0,-1) -- (0,1);
        \draw[fused] (-3,0) -- (-2,1);
        \node at (-2,1) {$\bullet$};
        \node at (0,1) {$\text{---}$};
	   \node[below] at (-3,-0.1) {\tiny \shortstack[c]{$i; a,b$}};
        \node[above] at (-2,1) {\tiny \shortstack[c]{$\widetilde{K}_{\xi,\zeta}$}};
        \node[below] at (1,-0.1) {\tiny \shortstack[c]{$l; b^{-1},c^{-1}$}};
		\node[below] at (0,-1) {\tiny $j; c,a^{-1}$};
        \node[above] at (0,1) {\tiny $\omega_{c;\xi,\zeta}$};
		\node[above right] at (0,0) {\tiny $\widetilde{W}^a$};
\end{tikzpicture} 
$$
$$
\begin{tikzpicture}[scale=1.1, baseline=-4pt]
		\draw[fused] (-1,0) -- (1,0) -- (2,1);
		\draw[fused] (0,-1) -- (0,1);
        \draw[fused] (3,0) -- (2,1);
        \node at (2,1) {$\bullet$};
        \node at (0,1) {$\text{---}$};
	   \node[below] at (-1,-0.1) {\tiny \shortstack[c]{$i; a,b$}};
        \node[above] at (2,1) {\tiny \shortstack[c]{$\widetilde{K}_{\zeta,\xi}$}};
        \node[below] at (3,-0.1) {\tiny \shortstack[c]{$l; b^{-1},a^{-1}$}};
		\node[below] at (0,-1) {\tiny $j; c,c^{-1}$};
        \node[above] at (0,1) {\tiny $\omega_{c;\xi,\zeta}$};
		\node[above right] at (0,0) {\tiny $\tR$};
\end{tikzpicture} 
\quad
=
\quad
\begin{tikzpicture}[scale=1.1,baseline=-4pt]
		\draw[fused] (1,0) -- (-1,0) -- (-2,1);
		\draw[fused] (0,-1) -- (0,1);
        \draw[fused] (-3,0) -- (-2,1);
        \node at (-2,1) {$\bullet$};
        \node at (0,1) {$\text{---}$};
	   \node[below] at (-3,-0.1) {\tiny \shortstack[c]{$i; a,b$}};
        \node[above] at (-2,1) {\tiny \shortstack[c]{$\widetilde{K}_{\xi,\zeta}$}};
        \node[below] at (1,-0.1) {\tiny \shortstack[c]{$l; b^{-1},a^{-1}$}};
		\node[below] at (0,-1) {\tiny $j; c,c^{-1}$};
        \node[above] at (0,1) {\tiny $\omega_{c;\xi,\zeta}$};
		\node[above right] at (0,0) {\tiny $\tR$};
\end{tikzpicture} 
$$
\end{theoremAlph}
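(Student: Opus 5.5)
The plan is to prove both identities by a representation-theoretic uniqueness argument rather than by manipulating the $q$-hypergeometric sums directly. I regard both sides as covectors on a triple tensor product of evaluation modules. In the first identity, expanding $\tK$ through its definition shows that each side pairs three boundary covectors $\omega$ against a composition of two $q$-Hahn isomorphisms. On the left this composition is $\tW^b$ followed by a $\tW^a$ coming from $\tK_{\zeta,\xi}$; on the right it is a $\tW^b$ coming from $\tK_{\xi,\zeta}$ followed by $\tW^a$.

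Concretely, the left side of the first identity is a covector on $V^{\sqrt a}_{\sqrt b}\otimes V^{\sqrt c}_{\sqrt{a^{-1}}}\otimes V^{\sqrt{b^{-1}}}_{\sqrt{c^{-1}}}$. It is obtained by applying $\tW^b\otimes \mathrm{id}$ and then $\mathrm{id}\otimes\tW^a$, which permutes the $a$- and $b$-labels to reach a triple product of modules of the form $V(s)_z$ with $z$ equal to $1$. I then pair with $\langle\omega\rangle\otimes\langle\omega\rangle\otimes\langle\omega\rangle$, using the middle-line covector and the two covectors hidden inside $\tK$. The right side is the analogous composition in the other order. I would choose the $\tK$-representation (via $\tW^a$ or via $\tW^b$) on each side so that every module carrying an $\omega$ has spectral parameter $z=ab\cdot$(inverse) $=1$ type. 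This makes $L(\xi,\zeta)_1$ act compatibly.

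The key step is an invariance lemma. Take the coideal subalgebra $L=L(\xi,\zeta)$, with the correct alternation of $\xi,\zeta$ on each factor as dictated by the coproduct. The product of $\omega$-covectors on a tensor product of evaluation modules is then invariant for the action of $L$ through the coproduct, using $\Delta(L)\subset U_q(L\sl_2)\otimes L$. I expect this to force the alternation $\omega_{\cdot;\xi,\zeta},\omega_{\cdot;\zeta,\xi},\omega_{\cdot;\xi,\zeta}$ seen in the diagrams. The swap $\xi\leftrightarrow\zeta$ should come from the two generators exchanging roles under the relevant twist at $z=1$, or from the spectral shift. Since $\tW^a,\tW^b$ are $U_q(L\sl_2)$-module isomorphisms, both sides are then $L$-invariant covectors on the same irreducible triple tensor product $V^{\sqrt a}_{\sqrt b}\otimes V^{\sqrt c}_{\sqrt{a^{-1}}}\otimes V^{\sqrt{b^{-1}}}_{\sqrt{c^{-1}}}$.

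Next I need that the space of $L$-invariant covectors on this triple product is at most one-dimensional for generic parameters. The one-factor statement is given. For the tensor product I would use that $L$-invariant covectors on $U$ correspond to $U_q(L\sl_2)$-homomorphisms $U\to (U_q(L\sl_2)/U_q(L\sl_2)L^{+})^*$, where $L^+$ denotes the augmentation ideal. A cheaper route is to exploit the grading: the weight-lowering part of a generator of $L$ acts triangularly with respect to total weight $n$. Invariance then determines the covector recursively from its value on $v_0\otimes v_0\otimes v_0$, provided a genericity condition makes the recursion solvable. With uniqueness in hand, it remains to compare the two sides at $(i,j,l)=(0,0,0)$. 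By weight conservation the $q$-Hahn matrix elements with input weight zero force all outputs to be zero, so both sides reduce to products of $\omega(0)=1$ times explicit prefactors, and these agree.

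The second identity follows the same way. I write $\tR=\tW^a\tW^b$ and use the first identity twice, or I apply the same invariance and uniqueness argument with $\tR$ in place of $\tW^b$. Here the middle line carries $\omega_{c;\xi,\zeta}$ on both sides, consistent with $\tR$ returning the modules in swapped order. The main obstacle I anticipate is the uniqueness of invariant covectors on the triple tensor product for generic parameters. The triangular recursion must be shown never to degenerate, which requires a careful choice of which generator of $L$ to use and control of the diagonal coefficients as $q$-Pochhammer factors that are nonzero generically. Tracking the correct parameter alternation $\xi\leftrightarrow\zeta$ through the coproduct is a secondary bookkeeping difficulty.
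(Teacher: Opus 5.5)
Your reduction of each side to three $\omega$-covectors paired against a composite of two $q$-Hahn isomorphisms is correct. The gap is the step meant to close the argument: uniqueness of $L(\xi,\zeta)_1$-eigencovectors on a \emph{three-fold} tensor product. You leave it unproved, and the triangular recursion you sketch cannot work. The total-index-one subspace of the triple product is three-dimensional, spanned by $v_1\otimes v_0\otimes v_0$, $v_0\otimes v_1\otimes v_0$ and $v_0\otimes v_0\otimes v_1$. The eigen-conditions for the two generators $B_1,B_0$, tested on $v_0^{\otimes 3}$, give only two linear equations on these coefficients, through the index-raising parts coming from $f_1$ and $e_0$. So the covector is not determined by its value on $v_0^{\otimes 3}$ already at the first step, for any choice of parameters. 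Equations tested on higher-index vectors feed back into lower indices, and other elements of $L$ bring in higher-order raising terms, so no triangular recursion exists. On a two-fold product the same count gives two equations for two unknowns, with determinant proportional to $1-st$; this is why Theorem~\ref{Bdoubleomega} works there.

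You need nothing beyond two factors. For the $\tW$-relation, expand $\tK_{c,b;\zeta,\xi}$ through $\tW^a$ and $\tK_{a,b;\xi,\zeta}$ through $\tW^b$. This is the only place the $q$-Onsager input enters, via Proposition~\ref{dualeigenR}(2). Both sides are then the \emph{same} covector $\langle\omega_{a;\xi,\zeta}|\otimes\langle\omega_{b^{-1};\zeta,\xi}|\otimes\langle\omega_{c;\xi,\zeta}|$, composed with $(1\otimes\tW^a)(\tW^b\otimes 1)$ and $(\tW^b\otimes 1)(1\otimes\tW^a)$ respectively. These are isomorphisms between the same pair of irreducible modules fixing $v_0^{\otimes 3}$, hence equal by Schur's lemma.

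For the $\tR$-relation, the two sides carry the $\omega$'s in the orders $(c,b^{-1},a)$ and $(a,b^{-1},c)$ on different target modules. Schur's lemma alone therefore does not suffice; covectors must be transported. ``Using the first identity twice'' does not evidently do this, and the paper remarks that the relations do not follow directly from each other. The paper instead applies the two-factor $\tR$-covariance of Proposition~\ref{dualeigenR}(1) three times. This rewrites $\langle\omega_{c;\xi,\zeta}|\otimes\langle\omega_{b^{-1};\zeta,\xi}|\otimes\langle\omega_{a;\xi,\zeta}|$ as $\langle\omega_{a;\xi,\zeta}|\otimes\langle\omega_{b^{-1};\zeta,\xi}|\otimes\langle\omega_{c;\xi,\zeta}|\,(1\otimes\tR)(\tR\otimes 1)(1\otimes\tR)$. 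Schur's lemma then gives $(1\otimes\tR)(\tR\otimes 1)(1\otimes\tR)(1\otimes\tW^a)(\tR\otimes 1)=(\tW^b\otimes 1)(1\otimes\tR)$, which finishes the proof.
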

The diagrams above indicate how we multiply matrices $\tR,\tW^{a},\tW^b,\tK,\omega$. 
See Section \ref{sec: reflectioneq} for precise descriptions how such diagrams should be understood.

In addition to the boundary relations above, we are able to prove a reflection equation for the fully fused six-vertex model:
\begin{theoremAlph}[Reflection equation, Theorem \ref{thm: reflection3} in the text] \label{thmb} The following equation holds for generic $a,b,c, d, \xi,\zeta$ and every $i,j,k,l\in\mathbb Z_{\geq 0}$:
$$
\begin{tikzpicture}[scale=1.2, baseline=-4pt]
		\draw[fused] (-3,0) -- (-1,0) -- (1,2);
		\draw[fused] (1,0) -- (-1,2);
		\draw[fused] (-2,-1) -- (-2,1) -- (-1,2);
        \draw[fused] (2,1) -- (1,2);
        \node at (1,2) {$\bullet$};
        \node at (-1,2) {$\bullet$};
	   \node[below] at (-3,-0.1) {\tiny \shortstack[c]{$i; a,b$}};
       	\node[below] at (-2,-1) {\tiny $j; c,d$};
        \node[below] at (1,-0.1) {\tiny \shortstack[c]{$k; d^{-1},c^{-1}$}};
        \node[below] at (2,0.9) {\tiny \shortstack[c]{$l; b^{-1},a^{-1}$}};
        \node[above] at (1,2) {\tiny \shortstack[c]{$\widetilde{K}_{\xi,\zeta}$}};
        \node[above] at (-1,2) {\tiny \shortstack[c]{$\widetilde{K}_{\xi,\zeta}$}};
		\node[above right] at (-2,0) {\tiny $\tR$};
        \node[above] at (0,1.1) {\tiny $\tR$};
\end{tikzpicture} 
=
\begin{tikzpicture}[scale=1.2, baseline=-4pt]
		\draw[fused] (-1,0) -- (1,2);
		\draw[fused] (3,0) -- (1,0) -- (-1,2);
		\draw[fused] (-2,1) -- (-1,2);
        \draw[fused] (2,-1) -- (2,1) -- (1,2);
        \node at (1,2) {$\bullet$};
        \node at (-1,2) {$\bullet$};
	   \node[below] at (-2,1) {\tiny \shortstack[c]{$i; a,b$}};
       	\node[below] at (-1,-0.1) {\tiny $j; c,d$};
        \node[below] at (2,-1) {\tiny \shortstack[c]{$k; d^{-1},c^{-1}$}};
        \node[below] at (3,0) {\tiny \shortstack[c]{$l; b^{-1},a^{-1}$}};
        \node[above] at (1,2) {\tiny \shortstack[c]{$\widetilde{K}_{\xi,\zeta}$}};
        \node[above] at (-1,2) {\tiny \shortstack[c]{$\widetilde{K}_{\xi,\zeta}$}};
		\node[above right] at (2,0) {\tiny $\tR$};
        \node[above] at (0,1.1) {\tiny $\tR$};
\end{tikzpicture} 
$$
\end{theoremAlph}

When $b/a=d/c=q$, the weights $\tR$ in Theorem \ref{thmb} degenerate to the \emph{six-vertex weights}. In this situation, our result degenerates to the reflection equation found in \cite{dVGR93}, see Remark \ref{rem: degen}. More generally, setting $b/a=q^I$, $d/c=q^J$ for $I,J\in\mathbb Z_{> 0}$ and applying a crossing symmetry, we get the reflection equation for the fused six-vertex $R$ matrix with finite spins. Such equations were studied in \cite{ML19}. However, the reflection weights found there look different from our weights $\tK_{z,zq^I;\xi,\zeta}$. In particular, it is not clear if the expressions in \cite{ML19} can be analytically continued to non-integer spins. It would be interesting to compare our results, however, we do not pursue this direction here.

While the three identities above do not follow directly from each other, they are proved using the same method. Namely, both sides of each identity correspond to different constructions of a certain $L(\xi,\zeta)_z$-invariant linear functional on tensor products of evaluation modules $V^{\sqrt{a}}_{\sqrt{b}}$. From this point of view, our reflection weights $\tK_{su,s/u;\xi,\zeta}$ correspond to an $L(\xi,\zeta)_z$-invariant pairing between $V(s)_{zu}$ and $V(s)_{z/u}$. This is different from the usual reflection equations, where $K$ corresponds to an \emph{operator}, rather than a pairing.

\subsection{Littlewood identities} Using the boundary relations, we prove Littlewood identities for inhomogeneous stable spin Hall-Littlewood functions $\sfF_{\lambda}(x_1, \dots, x_n\mid\calA,\calB)$ and inhomogeneous spin $q$-Whittaker functions $\F_{\lambda}(x_1,\dots, x_n\mid \calA,\calB)$. These functions depend on two families of parameters $\calA=(a_0, a_1, \dots)$ and $\calB=(b_0, b_1, \dots)$, and, after a certain normalization,
when $a_i=0, b_i\to\infty$ for all $i\geq0$, these functions reduce to the classical Hall-Littlewood and $q$-Whittaker symmetric functions. Both $\sfF_\lambda$ and $\F_\lambda$ are defined using vertex models, see Sections 4 and 6 respectively.

To describe our Littlewood identities, we use the following reparameterization of Al-Salam-Chihara polynomials:
$$
h_n(u,v\mid a)=\sum_{i+j=n} u^iv^{i}\frac{(a/v;q)_i(av;q)_j}{(a^2;q)_n}\frac{(q;q)_n}{(q;q)_i(q;q)_j} = (uv)^{\frac{n}2}\, \omega_{a;\sqrt{u/v},\sqrt{uv}}(n).
$$
	\begin{theoremAlph}[Theorem \ref{thm:sHLlittlewood} in the text]\label{thmc}
		For a suitable choice of parameters $\calA$ we have
        $$
			\sum_{\lambda} h_{\lambda}(u,v\mid\calA)\overline{\calA}^{\lambda}\,
			\sfF_{\lambda}(x_1, \dots, x_{n}\mid\calA,\overline{\calA})
			\\
            =\prod_{i=1}^n\frac{(1-ux_i)(1-vx_i)}{1-x^2_i}\prod_{1\leq i<j\leq n}\frac{1-qx_ix_j}{1-x_ix_j},
        $$
        where $\overline{\calA}^{\lambda}=\prod_{j\geq1} a_{j-1}^{-\lambda'_{j}}$ and
        $$
        h_{\lambda}(u,v\mid \calA)=\prod_{\substack{t\geq2\\t-even}}h_{m_t(\lambda)}(u,v\mid a_t)\prod_{\substack{t\geq1\\t-odd}}v^{m_t(\lambda)}h_{m_t(\lambda)}(u,v^{-1}\mid a_t).
        $$
        
		\end{theoremAlph}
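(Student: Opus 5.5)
We would prove Theorem \ref{thmc} in the standard lattice-model way for Littlewood identities. We realize the left-hand side as the partition function of a half-space (triangular or ``folded'') vertex model, and then use the boundary relations of Theorem \ref{thma} together with the Yang--Baxter equation to move lines through the boundary.

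\emph{Step 1: the left-hand side as a boundary partition function.} Recall from Section 4 that $\sfF_\lambda(x_1,\dots,x_n\mid\calA,\overline{\calA})$ is the partition function of $n$ horizontal spin-$\frac12$ lines with spectral parameters $x_i$ crossing vertical higher-spin columns indexed by $t\geq 0$. Column $t$ carries the parameters $(a_t,a_t^{-1})$ coming from the specialization $\calB=\overline{\calA}$, and the top boundary records the multiplicities $m_t(\lambda)$. We cap column $t$ at the top with the covector $\omega_{a_t;\xi,\zeta}$, with $(\xi,\zeta)$ chosen so that $h_{m}(u,v\mid a_t)$ (for $t$ even) or $v^{m}h_m(u,v^{-1}\mid a_t)$ (for $t$ odd) appears. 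Up to the normalization $\overline{\calA}^{\lambda}$, which we expect to absorb the gauge factors $(uv)^{n/2}$ and the factors $(b/a)^{\cdot/2}$ in the $q$-Hahn weights, the sum over $\lambda$ becomes a single partition function $Z$ with these caps. The dependence on the parity of $t$ should come from the alternation $\omega_{a;\xi,\zeta}$ versus $\omega_{c;\xi,\zeta}$, that is, from the swap $\xi\leftrightarrow\zeta$ in Theorem \ref{thma}. This swap is what produces $v\mapsto v^{-1}$ and the extra power $v^{m_t}$. The first task is to fix these choices precisely, including which $\tW^a/\tW^b$ specialization at $b/a=q$ (spin $\frac12$) gives the horizontal lines.

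\emph{Step 2: zipper argument.} We then add a reflection vertex $\tK$ that joins the $x_i$ line to a mirror line with spectral parameter $x_i^{-1}$, using the $(a,b)\to(b^{-1},a^{-1})$ structure of $\tK$. Applying the first (or the $\tR$) relation of Theorem \ref{thma} column by column, we move $\tK$ from the far right to the left edge. Each passage swaps $\omega$ at one column and exchanges the two vertex types. Repeating this for all $n$ lines, and using the Yang--Baxter equation to reorder the crossings between the $x_i$ lines and the mirrored $x_j^{-1}$ lines, we obtain a frozen configuration.

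\emph{Step 3: evaluating the frozen configuration.} The frozen configuration produces $\prod_{i<j}\frac{1-qx_ix_j}{1-x_ix_j}$ from the spin-$\frac12$ $R$-weights at the crossings. It also produces $\frac{(1-ux_i)(1-vx_i)}{1-x_i^2}$ from the explicit spin-$\frac12$ entries of $\tK$ evaluated at the leftmost empty column. Stability as the number of columns tends to infinity, under the ``suitable choice'' of $\calA$ (say $|a_t|$ bounded away from the problematic values and $|x_i|<1$), is needed to ensure that the contribution of the far-right column tends to $1$. We would justify this by the standard geometric decay of weights for empty columns.

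\emph{Main obstacle.} We expect the main obstacle to be Step 1 together with the boundary evaluation in Step 3. This means matching the specialization of $\tK$ and $\omega$ at spin $\frac12$ (where $b/a=q$) to the exact product $(1-ux_i)(1-vx_i)/(1-x_i^2)$, and matching the alternating parity structure to the precise normalizations $\overline{\calA}^\lambda$ and $(uv)^{n/2}$. Our first attempt would be to compute $\tK_{x,qx;\xi,\zeta}(i,j)$ for $i,j\in\{0,1\}$ directly from its defining formula via $\omega$ and $\tW^a$, and then solve for $(\xi,\zeta)$ in terms of $(u,v)$ using $\xi=\sqrt{u/v}$ and $\zeta=\sqrt{uv}$.
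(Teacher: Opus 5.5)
Your proposal follows essentially the paper's route. The alternating caps $\omega_{a_t;\zeta,\xi}$/$\omega_{a_t;\xi,\zeta}$ realize $\langle\Omega_{u,v}(\calA)|$, and a one-row zipper with $\tK_{x,qx;\zeta,\xi}$ through the spin-$\frac12$ case of the $\tR$-boundary relation (Proposition \ref{prop: hsref}) gives $\langle\Omega_{u,v}(\calA)|B(x)=\frac{(1-ux)(1-vx)}{1-x^2}\langle\Omega_{u,v}(\calA)|B^*(x^{-1})$. The scalar there comes from contracting $\tK$ with the left boundary weights $(-x/b_0)^i$ of $B$, not from an empty column. Your Yang--Baxter reordering of the mirrored rows is exactly the skew Cauchy identity, which the paper imports from \cite{BP18} inside an induction on $n$ for the skew version.

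When you write it out, two points need care. First, your ``geometric decay'' at the right end is really the vanishing of an extra term in which a path runs through the empty columns with weights $\frac{a_r-x}{1-xa_r}$. Second, the exchanges of infinite sums need uniform bounds, which is why the paper assumes $\bigl|\frac{a_r-x_i}{1-x_ia_r}\bigr|<\varepsilon<1$ (Assumption \ref{asspt: sHL}, Lemma \ref{lemma: bounds}). Your sample condition, $|x_i|<1$ with $|a_t|$ bounded, does not guarantee this.
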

        
\begin{theoremAlph}[Theorem \ref{thm:littlewood} in the text]\label{thmd}
		We have 
		\begin{multline*}
			\sum_{\lambda} h_{\lambda'}(u,v\mid\calA)\overline\calA^{\lambda'}\,
			\F_{\lambda}(x_1, \dots, x_{n}\mid\calA,\overline{\calA})
			\\
            =\prod_{i=1}^n\frac{(a_iu, a_iv;q)_\infty}{(a_i^2, x_iu, x_iv;q)_\infty}\cdot\frac{\prod_{i=1}^n\prod_{j=1}^n(x_ia_j;q)_\infty}{\prod_{1\leq i<j\leq n}(x_ix_j, a_ia_j;q)_\infty},
		\end{multline*}
		where $h_\lambda$ and $\overline\calA^{\lambda}$ are the same as in Theorem \ref{thmc}. The identity holds both formally in the space of formal power series in $x_1, \dots, x_n, q$ and numerically for a suitable choice of parameters. 
\end{theoremAlph}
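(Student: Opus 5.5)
We prove Theorem \ref{thmd} by the standard lattice-model argument for Littlewood identities, now run with the $q$-Hahn vertex model that defines $\F_\lambda$ and with the boundary weights $\omega$ and $\tK$ from Theorem \ref{thma}. The plan has four steps.

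First, we write the left-hand side as a single partition function. By definition (Section 6), $\F_\lambda(x_1,\dots,x_n\mid\calA,\overline\calA)$ is a partition function of $n$ rows of $q$-Hahn vertices $\tW^a$ or $\tW^b$, with spectral data built from $x_i$ and the inhomogeneities $a_t,\overline a_t=a_t^{-1}$. Its top boundary encodes $\lambda$ through the occupation numbers $m_t(\lambda')$ in column $t$. We then cap each column $t$ from above with the covector $\omega_{a_t;\xi,\zeta}$, taking $\xi=\sqrt{u/v}$ and $\zeta=\sqrt{uv}$ (or with $v\to v^{-1}$ on odd columns). The column parameters are $(a_t,a_t^{-1})$, so this is exactly the setting of Theorem \ref{thma}. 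Using $h_n(u,v\mid a)=(uv)^{n/2}\omega_{a;\sqrt{u/v},\sqrt{uv}}(n)$, the factor $\overline\calA^{\lambda'}$ and the parity-dependent powers $v^{m_t}$ should be exactly the gauge factors converting the $\tW$-weights into the stochastic normalization, together with the $(uv)^{n/2}$ factors. The first task is to check this bookkeeping: summing over all top boundary states of the capped lattice reproduces $\sum_\lambda h_{\lambda'}(u,v\mid\calA)\overline\calA^{\lambda'}\F_\lambda$.

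Second, we fold the lattice so the identity becomes a boundary problem. Each row is paired with a ``reflected'' row carrying the dual parameters. This mirrors the picture in Theorem \ref{thma}, where a line $(a,b)$ turns at a $\tK$ vertex into a line $(b^{-1},a^{-1})$. We append a $\tK_{\xi,\zeta}$ vertex at the right end of each row, where the boundary state is trivial ($v_0$), so that $\tK(0,0)$ contributes a scalar. We then use the first relation of Theorem \ref{thma} repeatedly to push $\tK$ leftward through the columns. Each move converts a $\tW^b$ vertex under $\omega_{a_t}$ into a $\tW^a$ vertex under $\omega_{a_{t+1}}$, shifting the column parameter. After $\tK$ has passed all columns, the row sits at the far left boundary. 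There it meets the incoming empty state, and the rows at infinity freeze to the vacuum.

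Third, we handle the interaction between different rows. When two rows must be exchanged to bring their $\tK$ vertices into position, we use the Yang--Baxter equation for $\tR$, together with Theorem \ref{thmb}, or the second relation of Theorem \ref{thma} for $\tR$ against $\omega$. This step produces the factors $(x_ix_j;q)_\infty^{-1}$ and the $(x_ia_j;q)_\infty$ terms, which are $\tR$ or $\tW$ weights on empty configurations. The remaining single-row frozen configurations give
\[
\frac{(a_iu,a_iv;q)_\infty}{(a_i^2,x_iu,x_iv;q)_\infty}.
\]
These come from evaluating $\omega$ and $\tK$ on the vacuum together with an infinite product of column weights; this is where a $q$-binomial sum converges to a ratio of $q$-Pochhammers. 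I expect the factor $(a_ia_j;q)_\infty^{-1}$ to come from the normalization of the $\tK$ vertices pairing rows $i$ and $j$.

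Fourth, we deal with convergence. The argument first gives the identity for finitely many columns, or with truncated inhomogeneities. Passing to infinitely many columns is justified in formal power series in $x_i$ and $q$, since each $\lambda$ contributes a term of degree $|\lambda|$. The numerical version then follows by absolute convergence for small $|x_i|$ and $|a_i|$.

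The main obstacle is the first step combined with the evaluation of the frozen boundary. The parity dependence ($v$ versus $v^{-1}$, even versus odd $t$) must match exactly the alternation between $\tK_{\xi,\zeta}$ and $\tK_{\zeta,\xi}$ in Theorem \ref{thma}. Otherwise the telescoping push-through fails. To settle this, I would first verify the case $n=1$ directly, where the left-hand side is a single-row sum. That case should pin down both the gauge and the single-variable product, and it serves as a test before the induction on $n$.
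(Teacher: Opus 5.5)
Your single-row zipper in Step 2 is the right mechanism and is essentially Lemma \ref{lem: row flip}. The proposal breaks where the reflected row reaches the left boundary, because that boundary is not empty. We have $\B(x\mid\calA,\overline\calA)=\sum_{r\geq0}(xa_0)^r\frac{(a_1/x;q)_r}{(q;q)_r}\T_r(x\mid\calA,\overline\calA)$. So after pushing $\tK$ across all columns you are left with $\tK_{a_1,x;\zeta,\xi}(i,l)$ for an arbitrary incoming label $i$, and this must be summed against those weights for each outgoing label $l$. The single-variable factor depends on a summation identity, Proposition \ref{prop: Ksum}, applied with $c=a_1$. So does the fact that the outgoing $\T^*_l$ recombine into $\B^*=\sum_l\T^*_l$. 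The identity reads
\[
\sum_{p\geq 0}x^{p}\zeta^{p+l}\frac{(c/x;q)_p}{(q;q)_p}(x/c)^{l/2}(c/x;q)_l\,\tK_{c,x;\zeta,\xi}(p,l)=x^l(c/x;q)_l\,\frac{(cx,c\zeta/\xi,c\xi\zeta;q)_\infty}{(c^2,x\zeta/\xi,x\xi\zeta;q)_\infty},
\]
so the weighted sum is $x^l(a_1/x;q)_l$ times an $l$-independent constant. This is not an evaluation of $\omega$ and $\tK$ on the vacuum, nor a product of column weights. It is also the source of $(a_ix_i;q)_\infty$, which is missing from your list.

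Proving this identity is a separate nontrivial step. The paper applies Theorem \ref{reflection-theo} with $i=0$ under the degenerate specialization $a=b=x$. There $\tK_{a,b;\xi,\zeta}(0,p)=\omega_{a;\zeta,\xi}(p)$, and the right-hand side collapses to $(xc)^{l/2}\omega_{c;\xi,\zeta}(j+l)$. It then lets $j\to\infty$ in $\mathbb Q(c,\xi)[[x,q,\zeta]]$, using Lemma \ref{Krat} to control degrees. Without this identity your $n=1$ check cannot produce the product.

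The multi-row step also fails as described. Each $\tK$ reflects a single row, so no $\tK$ ``pairs rows $i$ and $j$''. The cross factors $(x_ix_j;q)_\infty^{-1}$, $(x_ia_j;q)_\infty$ and $(a_ia_j;q)_\infty^{-1}$ are not weights of frozen vertices, since every weight on an empty configuration is $1$. Moving the reflected row $\B^*(x_1^{-1})$ past $\B(x_2),\dots$ needs an exchange relation whose scalar comes from summing over a boundary label. That is exactly what a Cauchy identity encodes. Theorem \ref{thm: reflection3} and the $\tR$-boundary relation play no role here.

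The paper instead proves the skew identity \eqref{eq: littlewoodsqW}. Its right side carries $h_{\lambda'}(u,v\mid\tau^n\calA)$ and $\F^*_{\mu/\lambda}(\cdots\mid\calA,\tau^n\overline\calA)$, and its $n=1$ case is Lemma \ref{lem: Brow flip}. It then inducts on $n$ in this order:
\begin{enumerate}
\item the branching rule and the inductive hypothesis;
\item the skew Cauchy identity of Theorem \ref{thm:skew-cauhy}, whose prefactor in this application, $\prod_{i\leq n}\frac{(x_ia_{n+1};q)_\infty(a_ix_{n+1};q)_\infty}{(x_ix_{n+1};q)_\infty(a_ia_{n+1};q)_\infty}$, supplies every cross term;
\item the one-variable skew case, followed by branching again.
\end{enumerate}
The induction you mention at the end cannot close with the non-skew statement, because the branching rule immediately produces skew functions. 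A minor point: the numerical version needs no smallness of $|a_i|$. It follows by analytic continuation in $(x_1,\dots,x_n,q)$ from the formal identity, using Lemmas \ref{lem: bound1} and \ref{lem: bound2}.
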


We also provide skew versions of these identities in the main text, as well as degenerations of the form similar to \eqref{introeq: Schura}-\eqref{introeq: Schurc}.

Following the duality between Hall-Littlewood and $q$-Whittaker functions, we interpret the Littlewood identities in Theorms \ref{thmc} and \ref{thmd} as dual versions of each other. However, we do not yet have a rigorous framework for the ``spin" version of this duality, so we have to provide two independent proofs for our identities. It is worth noting that, in the homogeneous case when the parameters $a_i$ in $\calA$ are equal to each other, the spin $q$-Whittaker functions $\F_\lambda$ can be constructed from the stable spin Hall-Littlewood functions $\sfF_\lambda$ using \emph{fusion} \cite{BW21}. In this case Theorem \ref{thmd} follows from Theorem \ref{thmc}. However, we do not expect the fusion argument to work in the fully inhomogeneous case due to differences in how parameters $a_i$ propagate through the vertex models: in the higher spin model the parameters $a_i$ are attached to columns of the lattice, while for the $q$-Hahn model the parameters are attached both to columns and diagonals of the lattice.

\subsection{Layout of the text} The text is organized as follows. In Section 2 we describe the quantum loop algebra $U_q(L\sl_2)$ and its modules $V^a_b$. Using isomorphisms between tensor products $V^{a_1}_{b_1}\otimes V^{a_2}_{b_2}$, in the same section we define the fully fused vertex models used throughout the paper. In Section 3 we use the dual vectors invariant under the action of $q$-Onsager algebra to prove our first main result: the reflection and boundary equations. The remaining sections are dedicated to the Littlewood identities. First, in Section 4 (Section 6) we remind the definition and properties of inhomogeneous stable spin Hall-Littlewood functions $\sfF_\lambda$ (inhomogeneous spin $q$-Whittaker functions $\F_\lambda$ respectively). Then, in Section 5 (Section 7) we use the reflection equation to prove the Littlewood identity for $\sfF_\lambda$ (for $\F_\lambda$ respectively).

	\subsection{Notation}

	Throughout the paper we treat $q^{\frac12}$ either as a formal parameter or as a transcendental complex number satisfying $|q^{\frac12}|<1$. In either case we set $\calF:=\mathbb C(q^{\frac12})$ and $\calF^{\times}=\calF\backslash\{0\}$. We use the standard $q$-Pochhammer symbol
	\[
	(a;q)_n =\begin{cases} \prod_{k=0}^{n-1}(1-aq^k)\qquad &\text {if\ } n\in\Z_{\geq0}, \\
    \prod_{k=1}^{-n}(1-aq^{-k})^{-1}\qquad &\text {if\ } n\in\Z_{\leq0}
    \end{cases}
	\]
	with the convention $(a;q)_0 = 1$. We also set $(a;q)_\infty = \prod_{k=0}^\infty (1-aq^k)$. We also use the $q$-binomial coefficients, which are defined as follows:
    $$
    [n]_q=\frac{q^{\frac n2}-q^{-\frac n2}}{q^{\frac 12} - q^{-\frac 12}},\qquad [n]_q!=[1]_q[2]_q\dots [n]_q,
    $$
    $$
    {\, n\,\brack m}_q=\frac{[n]_q!}{[m]_q![n-m]_q!}=q^{-\frac{m(n-m)}{2}}\frac{(q;q)_n}{(q;q)_m(q;q)_{n-m}}.
    $$

    A \emph{partition} is an infinite integer sequence $\lambda=(\lambda_1\geq\lambda_2\geq\dots\geq 0)$ with all but finitely many entries equal to zero. The \emph{length} of a partition $\lambda$ is denoted by $l(\lambda)$ and is equal to the number of nonzero entries $\lambda_i$. For $k\geq1$ we use $m_k(\lambda)$ to denote the number of indices $i$ such that $\lambda_i=k$. We set $|\lambda|=\lambda_1+\lambda_2+\dots$. We say that $\mu$ \emph{interlaces} $\lambda$ if
	$
	\lambda_1\geq\mu_1\geq\lambda_2\geq\mu_2\geq\lambda_3\geq\cdots.
	$
    In this case we write $\mu\prec\lambda$.

For a sequence $\mathcal X=(\chi_0,\chi_1,\ldots)$ we write
	$$
	\tau^m\mathcal X=(\chi_{m},\chi_{m+1},\ldots),
	\qquad
	\overline{\mathcal X}=(\chi_0^{-1},\chi_1^{-1},\chi_2^{-1},\ldots).
	$$

\subsection{Acknowledgments} The authors are deeply grateful to Alexei Borodin and Jimmy He for their constant support and productive discussions.

The first author was partially supported by the grant SFI-MPS-SIM-00017348 of the Simons Foundation and by the NSF grant DMS-2450323.

\section{Quantum loop algebra $U_q(L\mathfrak{sl}_2)$ and fully fused vertex models} \label{quantum-sect}

In this section we describe the necessary background on the quantum loop algebra $U_q(L\sl_{2})$ and define the representations $V^a_b$. We also describe the isomorphisms $\tW^a$, $\tW^b$, $\tR$ and use them to define vertex models, following \cite{Kor24}. 

\subsection{Quantum loop algebra $U_q(L\sl_{2} )$}

The \emph{quantum loop algebra} $U_q(L\sl_{2})$ is the unital associative algebra over $\calF$ with the generators $k_1^{\pm 1}$, $k_0^{\pm 1}$, $e_1$, $e_0$, $f_1$, $f_0$ with the following relations:
$$
k_ik_i^{-1}=k_i^{-1}k_i=1, \qquad k_0k_1=1,\qquad [e_i,f_j]=\delta_{i,j}\frac{k_i-k_i^{-1}}{q^{\frac12}-q^{-\frac12}},
$$ 
$$
k_ie_i=qe_ik_i,\qquad k_ie_{1-i}=q^{-1}e_{1-i}k_i,\qquad k_if_i=q^{-1} f_ik_i,\qquad k_if_{1-i}=qf_{1-i}k_i,
$$
$$
\sum_{r=0}^{3}(-1)^r{\, 3\,\brack r}_qe_i^re_{1-i}e_{i}^{3-r}=0,\qquad \sum_{r=0}^{3}(-1)^r{\, 3\,\brack r}_qf_i^rf_{1-i}f_{i}^{3-r}=0,
$$
where in all relations $i,j\in\{0,1\}$. The algebra $U_q(L\sl_{2})$ has a Hopf algebra structure, which is given by
$$
\Delta k_i=k_i\otimes k_i, \qquad \Delta e_i=e_i\otimes k_i+1\otimes e_i,\qquad \Delta f_i=f_i\otimes 1 + k_i^{-1}\otimes f_i,
$$
$$
S(e_i)=-e_ik_i^{-1}, \qquad S(f_i)=-k_if_i,\qquad S(k_i^{\pm 1})=k_i^{\mp 1},
$$
$$
\epsilon(k_i)=1,\qquad \epsilon(e_i)=\epsilon(f_i)=0.
$$

There is another presentation of $U_q(L\sl_2)$, introduced by Drinfeld in \cite{Dri87}. In this presentation, the algebra is generated by $k^{\pm1}$, $\{h_{m}\}_{m\in \mathbb Z\backslash\{0\}}$, $\{x^{\pm}_{m}\}_{m\in\mathbb Z}$, with the relations
$$
kk^{-1}=k^{-1}k=1, \qquad kh_{m}=h_{m}k, \qquad kx^{\pm}_{m}=q^{\pm 1}x^{\pm}_{m}k, 
$$ 
$$
 [h_{m}, h_{l}]=0, \qquad [h_{m},x^{\pm}_{l}]=\pm\frac{1}{m}[2m]_q\,x^{\pm}_{m+l}, 
$$
\begin{equation}\label{newrealisation}
x^{\pm}_{m+1}x^{\pm}_{l}-q^{\pm1}x^{\pm}_{l}x^{\pm}_{m+1}=q^{\pm1}x^{\pm}_{m}x^{\pm}_{l+1}-x^{\pm}_{l+1}x^{\pm}_{m},
\end{equation}
$$
\qquad [x_{m}^+,x_{l}^-]=\frac{\phi^+_{m+l}-\phi^-_{m+l}}{q^{\frac12}-q^{-\frac12}}.
$$
Here $\phi^{\pm}_{m}$ are defined as the coefficients in the formal power series
$$
\phi^{\pm}(u)=\sum_{m\geq 0}\phi^{\pm}_{\pm m}u^{\pm m}=k^{\pm 1}\exp\left(\pm(q^{\frac12}-q^{-\frac12})\sum_{m>0} h_{\pm m} u^{\pm m}\right).
$$
We set $\phi^{\pm}_{\pm m}=0$ for $m<0$. The two presentations of $U_q(L\sl_{2})$ are related as follows: 
$$
k_1=k,\quad k_0=k^{-1},\quad e_{1}=x^{+}_0,\quad f_1=x_0^-,\quad e_0=x_{1}^- k^{-1},\quad f_0= k x_{-1}^+.
$$

The quantum algebra $U_q(\sl_2)$ is isomorphic to the Hopf subalgebra of $U_q(L\sl_2)$ generated by $e_1$, $k_1$, $f_1$. Let $\hat U^\pm$ be the subalgebras generated by $\{x^\pm_{m}\}_{m\in\mathbb Z}$, and $\hat U^0$ be the subalgebra generated by $k^{\pm1}$ and $\{h_m\}_{m\in\mathbb Z\backslash\{0\}}$. Then we have the following triangular decomposition \cite{CP94}:
$$
U_q(L\sl_2)=\hat U^-.\hat U^0.\hat U^+.
$$

\subsection{Representations of $U_q(L\sl_2)$} The representations of $U_q(\sl_{2})$ behave in a way similar to the classical $\sl_2$. For a $U_q(\sl_{2})$-module $V$ we say that a nonzero vector $v\in V$ is a weight vector with weight $\rho\in\calF^{\times}$ if $k_1.v=\rho v$. We call a $U_q(\sl_{2})$-module $V$ a \emph{weight module} if 
$$
V=\bigoplus_{\rho\in\mathbb \calF^\times}V_{\rho},\qquad V_\rho:=\{v\in V \mid k_1.v=\rho v\}.
$$
The spaces $V_\rho$ are called the \emph{weight spaces} of $V$, and $\rho$ is called a weight of $V$ if $V_\rho\neq 0$. We say that a $U_q(\sl_{2})$-module $V$ is in the category $\mathcal O$ if 
\begin{itemize}
\item $V$ is a weight module and all its weight spaces are finite-dimensional;
\item All weights of $V$ are contained in $\bigcup_{i=1}^{m}\{\rho_i q^{-\frac{l}{2}}\mid l\in \Z^+\}$ for some collection of weights $\rho_1, \dots, \rho_m\in\calF^{\times}$.
\end{itemize}
For each weight $\rho$ there exists a unique irreducible $U_q(\sl_{2})$-module $V(\rho)\in \mathcal O$ with the highest weight $\rho$ \cite[Chapter 10]{CP94}.

Following \cite{MY14}, we can repeat the formalism above for the loop algebra $U_q(L\sl_{2})$. Let $V$ be a $U_q(L\sl_{2})$-module. We call a nonzero vector $v\in V$ an \emph{$l$-weight vector} if 
$$
\phi^{\pm}_{m}.v=\gamma^{\pm}_{m}v\qquad \text{for\ all\ }m\in\Z,$$
where $\gamma=(\gamma_{m}^\pm)_{m\in \pm\mathbb{Z}_{\geq 0}}$ is a collection of elements in $\calF$ satisfying $\gamma_{0}^+\gamma_{0}^-=1$. Such collections of numbers $\gamma$ are called \emph{$l$-weights}. We say that an $l$-weight $\gamma$ is \emph{rational} if for some rational function $f\in\calF(u)$ the expansions of $f(u)$ around $0$ and $\infty$ are
$$
f(u)=\sum_{m\geq 0} \gamma_{m}^+ u^m, \qquad f(u)=\sum_{m\geq 0} \gamma_{-m}^- u^{-m}. 
$$
In this situation we denote this $l$-weight by $f$. Note that a rational function $f(u)$ defines a rational $l$-weight as long as  $f(u)$ is regular at $0,\infty$ and $f(0)f(\infty)=1$. 

We call an $l$-weight vector $v$ \emph{singular} if $x^+_{m}.v=0$ for all $m$.  A $U_q(L\sl_{2})$-module $V$ is called \emph{a highest $l$-weight module} if $V=U_q(L\sl_{2}).v=\widehat{U}^-.v$ for a singular $l$-weight vector $v$. In this case, the $l$-weight of $v$ is called the \emph{highest $l$-weight} of $V$.

\begin{proposition}[\cite{MY14}]\label{Lmod} Let $f$, $g$ be rational $l$-weights.
\begin{enumerate}
\item There exists a unique up to isomorphism irreducible representation of $U_q(L\sl_{2})$ with the highest $l$-weight $f$, which we denote by $L(f)$.
\item $L(f)\in\mathcal O$ as a $U_q(\sl_{2})$-module.
\item If $L(f)\otimes L(g)$ is an irreducible $U_q(L\sl_{2})$-module, then $L(f)\otimes L(g)\cong L({fg})\cong L(g)\otimes L(f)$.
\end{enumerate}
\end{proposition}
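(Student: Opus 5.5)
The plan follows the standard highest-weight argument of \cite{MY14}, using the triangular decomposition $U_q(L\sl_2)=\hat U^-.\hat U^0.\hat U^+$ throughout.

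\textbf{Part (1): existence.} Given a rational $l$-weight $f$, I will form the Verma-type module $M(f)=U_q(L\sl_2)\otimes_{\hat U^0\hat U^+}\calF_f$. Here $\hat U^+$ acts by zero on the one-dimensional module $\calF_f$, and the $\phi^\pm_m$ act by the coefficients $\gamma^\pm_m$ of $f$. By the triangular decomposition, $M(f)\cong \hat U^-\otimes\calF_f$ as a vector space. It is graded by $k$-weight, and its top weight space is one-dimensional. Hence the sum of all proper submodules avoids the top weight space, and the quotient $L(f)$ is irreducible.

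\textbf{Part (1): uniqueness.} Any irreducible module $V$ with highest $l$-weight $f$ is a quotient of $M(f)$, via the highest vector. Its kernel is a maximal submodule, and the maximal submodule is unique by the argument above. So $V\cong L(f)$.

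\textbf{Part (2).} The weights of $M(f)$ lie in $\gamma_0^+q^{-l/2}$ with $l\geq0$, because $x^-_m$ lowers the weight by the factor $q^{-1}$ (up to the normalization of $k$). The difficulty is that the weight spaces of $M(f)$ are infinite-dimensional, since the $x^-_m$ range over all $m$. So I expect the main obstacle to be proving that the weight spaces of $L(f)$ are finite-dimensional, and this is where rationality enters.

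My approach is to use rationality through the relation \eqref{newrealisation} together with $[x^+_m,x^-_l]$. Write $f=P(u)/Q(u)$ with $\deg P,\deg Q\leq N$. Consider the generating function $x^-(u)v$, where $v$ is the highest vector. Applying $x^+_m$ to $\sum_l c_l x^-_l v$ gives $\sum_l c_l(\gamma^+_{m+l}-\gamma^-_{m+l})v$. The linear recurrence satisfied by the coefficients of a rational function therefore produces finitely supported combinations $\sum_l c_lx^-_lv$ that are annihilated by every $x^+_m$. Such vectors are singular and lie strictly below the top, so they vanish in $L(f)$. Consequently the span of $\{x^-_lv\}$ in $L(f)$ has dimension at most $2N$.

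Next, I will argue inductively on $l$ that each weight space at depth $l$ is spanned by finitely many ordered monomials $x^-_{m_1}\cdots x^-_{m_l}v$. For the induction step I will use \eqref{newrealisation} to shift indices, so that all $m_i$ lie in a bounded window. Alternatively, I could argue via $L(f)$ being a subquotient of a tensor product of evaluation-type modules (prefundamental/$q$-characters as in \cite{MY14}) and inherit finite-dimensionality from there. This second route is the one I would actually cite if the direct estimate becomes messy.

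\textbf{Part (3).} Let $v_f\otimes v_g$ be the tensor of the highest vectors. Using $\Delta$ on Drinfeld generators modulo terms that kill highest vectors, $v_f\otimes v_g$ is an $l$-weight vector with $l$-weight $fg$. It is also annihilated by $x^+_m$ up to triangularity arguments: the standard estimate $\Delta(x^+_m)\in x^+_m\otimes 1+ U\otimes U\hat U^+$-type terms. Since the tensor product is irreducible and generated by this singular vector, uniqueness in (1) gives $L(f)\otimes L(g)\cong L(fg)$. Applying the same argument with $f$ and $g$ exchanged, after noting that $L(g)\otimes L(f)$ is irreducible whenever $L(f)\otimes L(g)$ is (by comparing $q$-characters or dual modules), yields $L(fg)\cong L(g)\otimes L(f)$.
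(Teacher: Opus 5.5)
The paper gives no proof of this proposition; it quotes it from \cite{MY14}. So there is no internal argument to compare with. Your outline is the standard one and is correct in its main lines.

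In particular, the direct route in (2) does close up. Write $E_d$ for the weight space of $L(f)$ of weight $\gamma^+_0q^{-d}$. Your recurrence shows that $E_1$ is spanned by the $x^-_bv$ with $b$ in a finite window. Now suppose $E_{d-1}=\sum_{b\in W}x^-_bE_{d-2}$ with $W$ finite. Relation \eqref{newrealisation}, in the form
\[
x^-_{m+1}x^-_b=q^{-1}x^-_bx^-_{m+1}+q^{-1}x^-_mx^-_{b+1}-x^-_{b+1}x^-_m ,
\]
gives
\[
x^-_{m+1}E_{d-1}\subseteq x^-_mE_{d-1}+\sum_{a\in W\cup(W+1)}x^-_aE_{d-1}.
\]
The symmetric downward inclusion holds as well, so $\dim E_d<\infty$. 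Rationality is used only at depth one.

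A small point: the combinations $\sum_j q_jx^-_{n-j}v$ need not be $l$-weight vectors, so they are not ``singular'' in the paper's sense. They still vanish in $L(f)$, because $U_q(L\sl_2).w=\hat U^-\hat U^0w$ lies in strictly lower weights.

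The one step that is actually wrong as written is the triangularity you quote in (3). For the coproduct used in the paper, the correction terms sit in the \emph{left} tensor factor, not in $U\otimes U\hat U^+$. For instance,
\[
\Delta(x^+_0)=x^+_0\otimes k+1\otimes x^+_0,\qquad \Delta(x^+_{-1})=x^+_{-1}\otimes k^{-1}+1\otimes x^+_{-1}.
\]
From $\phi^+_1=(q^{\frac12}-q^{-\frac12})[e_1,e_0k]$ one finds
\[
\Delta(\phi^+_1)=\phi^+_1\otimes k+k\otimes\phi^+_1+(q^{\frac12}-q^{-\frac12})(q^{-1}-q)\,x^+_0k\otimes x^-_1k .
\]
Consequently $\Delta(x^+_1)-x^+_1\otimes k-1\otimes x^+_1$ is a combination of $(x^+_0)^2\otimes x^-_1k$ and $x^+_0\otimes kh_1$. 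These terms are not in $x^+_1\otimes U+U\otimes U\hat U^+_{>0}$. They annihilate $v_f\otimes v_g$ only because their left factor kills $v_f$.

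So the conclusion you need is true: $v_f\otimes v_g$ is killed by every $x^+_m$ and has $l$-weight $fg$. But it rests on the approximate coproduct formulas for all Drinfeld modes (Damiani, Beck). You should cite these as the key input rather than deduce the conclusion from the displayed inclusion.

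Finally, the symmetry step is cleanest via ordinary characters rather than $q$-characters or duals. By (2), all weight spaces are finite-dimensional. Moreover
\[
\mathrm{ch}\bigl(L(g)\otimes L(f)\bigr)=\mathrm{ch}\bigl(L(f)\otimes L(g)\bigr)=\mathrm{ch}\,L(fg).
\]
The submodule $U_q(L\sl_2).(v_g\otimes v_f)$ is a highest $l$-weight module surjecting onto $L(fg)$. Comparing dimensions weight by weight forces it to equal $L(g)\otimes L(f)$ and to be isomorphic to $L(fg)$.
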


An important example of the irreducible $U_q(L\sl_2)$-modules is given by the family of evaluation modules $V(s)_z$. They depend on a pair of parameters $s,z$ and can be defined by
$$
V(s)_z=L\left(s^{-1}\frac{1-szu}{1-s^{-1}zu}\right).
$$
It is also convenient to use an alternative notation for these modules, where we set 
$$
V^a_b=V(a/b)_{ab}=L\left(\frac{a^{-1}-au}{b^{-1}-bu}\right).
$$
Below we give an explicit description of these modules:

\begin{proposition}[{\cite[Proposition 2.3]{Kor24}}]\label{explicitAction} The representation $V(s)_z$ is infinite-dimensional and has a basis $\{v_{i}\}_{i\in\Z_{\geq 0}}$ if $s^2\notin q^{-\mathbb{Z}_{\geq 0}}$, and is finite-dimensional with a basis $\{v_i\}_{i=0,\dots, m}$ if $s^2=q^{-m}$ for $m\in\Z_{\geq0}$. The action of $U_q(L\sl_2)$ is given explicitly by
\begin{align}
k_1. v_{i}&=s^{-1}q^{-i}\, v_{i},\quad& e_1.v_{i}&=s^{-1}q^{\frac12(-i+1)}[i]_q\, v_{i-1},\quad& f_1.v_{i}&=\frac{1-s^2q^{i}}{q^{\frac12}-q^{-\frac12}}\, v_{i+1},  \label{slaction} \\
k_0. v_{i}&= k_1^{-1}. v_{i}, \quad& e_0.v_i&=z\, f_1.v_i, \quad &f_0. v_{i}&=z^{-1}\, e_1.v_i. \nonumber
\end{align}
Moreover, viewed as a $U_q(\sl_{2})$-module, $V(s)_z$ is irreducible with the highest weight $s^{-1}$.
\end{proposition}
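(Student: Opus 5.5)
We need to prove Proposition (explicit action on $V(s)_z$), cited from Kor24. The plan is to verify the stated formulas directly and then match highest $l$-weights.

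\textbf{Step 1: the formulas define a $U_q(\sl_2)$-module.} On the span of $\{v_i\}$ we define operators by \eqref{slaction}. The relations $k_1e_1=qe_1k_1$ and $k_1f_1=q^{-1}f_1k_1$ are immediate from the weights $s^{-1}q^{-i}$. For $[e_1,f_1]$ we compute
$$
\bigl(e_1f_1-f_1e_1\bigr).v_i=\frac{s^{-1}}{q^{\frac12}-q^{-\frac12}}\Bigl(q^{-\frac i2}[i+1]_q(1-s^2q^{i})-q^{-\frac{i-1}2}[i]_q(1-s^2q^{i-1})\Bigr)v_i .
$$
We then check that this equals $\frac{s^{-1}q^{-i}-sq^{i}}{q^{\frac12}-q^{-\frac12}}v_i$, which is a short telescoping identity in $q^{\pm i/2}$. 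This makes the span a Verma-type module with highest weight $s^{-1}$. Since $f_1.v_i=0$ exactly when $s^2q^i=1$, the module is spanned by $v_0,\dots,v_m$ when $s^2=q^{-m}$ (reading the convention as highest weight $s^{-1}=q^{m/2}$). Otherwise all $v_i$ are nonzero and $e_1$ never kills $v_i$ for $i>0$. Irreducibility then follows from the standard argument: any submodule contains a weight vector, $e_1$ lowers the index down to $v_0$, and $f_1$ raises it back.

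\textbf{Step 2: extend to $U_q(L\sl_2)$ via evaluation.} We use the evaluation homomorphism $\mathrm{ev}_z:U_q(L\sl_2)\to U_q(\sl_2)$, given by $e_0\mapsto zf_1$, $f_0\mapsto z^{-1}e_1$, $k_0\mapsto k_1^{-1}$. We check it respects the defining relations:
\begin{itemize}
\item $[e_0,f_0]\mapsto[f_1,e_1]$, which matches $\frac{k_0-k_0^{-1}}{q^{\frac12}-q^{-\frac12}}$.
\item $[e_1,f_0]=0$ and $[e_0,f_1]=0$ hold because they map to $z^{-1}[e_1,e_1]=0$ and $z[f_1,f_1]=0$.
\item The $q$-Serre relations become the statement that $e_1^re_1... $ is replaced by powers of $e_1$ and $f_1$. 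These reduce to the identity $\sum_r(-1)^r{3\brack r}_q e^rfe^{3-r}=0$ in $U_q(\sl_2)$, which is the known evaluation identity. Alternatively, we verify it directly on $v_i$: both sides shift the index by $-1$ and involve a finite $q$-binomial sum.
\end{itemize}
This gives the stated action of $e_0,f_0,k_0$.

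\textbf{Step 3: identify the highest $l$-weight (main obstacle).} This requires passing to Drinfeld generators. We have $x_0^+=e_1$, $x_1^-=e_0k$, and $x^+_{-1}=k^{-1}f_0$, and the remaining generators are produced through the $h_m$ brackets. We compute $x^-_1.v_0$ and $x_{-1}^+$ on $v_1$, then use
$$
\phi^+_1=(q^{\frac12}-q^{-\frac12})[x^+_0,x^-_1]
$$
to obtain $\gamma_1^+$ on $v_0$. To determine all of $\phi^\pm(u)$, we first use that $v_0$ is singular: $x^+_m.v_0=0$ by weight considerations, since the weight space above $v_0$ is zero. We then use that $\phi^+(u).v_0$ is a rational function determined by the evaluation structure. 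A cleaner route is the known formula $\mathrm{ev}_z(\phi^\pm(u))$ on a highest weight vector of weight $s^{-1}$, which gives $s^{-1}\frac{1-szu}{1-s^{-1}zu}$ (up to the convention $u\mapsto$ scaled).

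The first-order term supplies the check: $\gamma_1^+=(s-s^{-1})\,s^{-1}z$, which matches the expansion of $s^{-1}\frac{1-szu}{1-s^{-1}zu}$. Since an evaluation module has a one-parameter $l$-weight, first order plus $k$ determine it. Given the highest $l$-weight vector $v_0$ and irreducibility, Proposition~\ref{Lmod}(1) identifies the module with $L(f)$, i.e., with $V(s)_z$.
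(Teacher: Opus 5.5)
The paper does not reprove this proposition (it cites Kor24), so I am judging your argument on its own. Steps 1 and 2 are fine. Your commutator identity is correct, and the Serre words do vanish in $U_q(\sl_2)$; this follows from $e^nf=fe^n+[n]_q\,e^{n-1}\frac{q^{(n-1)/2}k-q^{-(n-1)/2}k^{-1}}{q^{1/2}-q^{-1/2}}$. One small correction: $e^rfe^{3-r}$ shifts the index by $-2$, not $-1$.

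\textbf{The gap is in Step 3, where the actual content of the proposition lies.} $V(s)_z$ is \emph{defined} as $L\bigl(s^{-1}\frac{1-szu}{1-s^{-1}zu}\bigr)$. So you must show that $\phi^+_m.v_0$ and $\phi^-_{-m}.v_0$ are the coefficients of the two expansions of this function, for \emph{all} $m$.
\begin{itemize}
\item You only compute $\gamma^+_1$. The sentence ``an evaluation module has a one-parameter $l$-weight, so first order plus $k$ determine it'' is exactly the statement that needs proof.
\item Appealing to ``the known formula \dots\ up to the convention $u\mapsto$ scaled'' is circular. It also leaves open precisely the normalization at stake, since rescaling $u$ turns $V(s)_z$ into $V(s)_{z'}$.
\item Your first-order value has the wrong sign. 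We have $x^+_0x^-_1.v_0=e_1e_0k_1.v_0=s^{-2}z\frac{1-s^2}{q^{1/2}-q^{-1/2}}v_0$, so $\gamma^+_1=(s^{-2}-1)z$. With your value $(1-s^{-2})z$, the one-pole ansatz would produce $V(s)_{-z}$, not $V(s)_z$.
\item $\phi^-(u)$ is never treated, although $L(f)$ requires both expansions to match.
\end{itemize}

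\textbf{The missing idea is a short argument showing that the $l$-weight is geometric in $m$.}
\begin{itemize}
\item Weight spaces are one-dimensional and $h_{\pm1}$ commute with $k$, so $h_{\pm1}.v_i=\eta^\pm_iv_i$.
\item The relation $[h_{\pm1},x^-_l]=-[2]_q\,x^-_{l\pm1}$ then gives, for $m\geq1$, $x^-_m.v_0=\rho_+^{m-1}x^-_1.v_0$ and $x^-_{-m}.v_0=\rho_-^{m}x^-_0.v_0$, where $\rho_\pm=-(\eta^\pm_1-\eta^\pm_0)/[2]_q$.
\item From $h_1=k^{-1}[x^+_0,x^-_1]$ and $h_{-1}=k[x^+_{-1},x^-_0]$, the evaluation structure gives $h_1=z(e_1f_1-qf_1e_1)$ and $h_{-1}=z^{-1}(e_1f_1-q^{-1}f_1e_1)$ on $V(s)_z$. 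A direct check on $v_0,v_1$ yields $\rho_+=s^{-1}z$ and $\rho_-=sz^{-1}$.
\item Since $x^+_0.v_0=0$, we have $\phi^+_m.v_0=(q^{1/2}-q^{-1/2})x^+_0x^-_m.v_0$ and $\phi^-_{-m}.v_0=-(q^{1/2}-q^{-1/2})x^+_0x^-_{-m}.v_0$. Summing the geometric series gives
\[
\phi^+(u).v_0=\Bigl(s^{-1}+\frac{(s^{-2}-1)zu}{1-s^{-1}zu}\Bigr)v_0,\qquad \phi^-(u).v_0=\frac{s-(zu)^{-1}}{1-s(zu)^{-1}}\,v_0 .
\]
These are exactly the expansions of $s^{-1}\frac{1-szu}{1-s^{-1}zu}$ at $0$ and at $\infty$.
\end{itemize}
Combine this with $x^+_m.v_0=0$ and with irreducibility, which gives $U_q(L\sl_2).v_0=\hat U^-.v_0$. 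Then Proposition~\ref{Lmod}(1) identifies the module with $L(f)$, completing the proof.
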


It turns out that a large class of irreducible representations of $U_q(L\sl_2)$ can be obtained by taking tensor products of $V^a_b$. In particular, we have the following statement:

\begin{proposition}
\label{irred}
Let $a_1, \dots, a_m, b_1, \dots, b_m\in\calF^{\times}$ be a collection of parameters satisfying $\frac{a^2_i}{b^2_j}\notin q^{-\mathbb Z_{\geq0}}$ for all $i\neq j$. Then 
$
V^{a_1}_{b_1}\otimes V^{a_2}_{b_2}\otimes \dots \otimes V^{a_m}_{b_m}
$
is irreducible and isomorphic to $L\left(\prod_{i=1}^m\frac{(a_i^{-1}-a_iu)}{(b_i^{-1}-b_iu)}\right)$.
\end{proposition}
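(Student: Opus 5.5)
The plan is to reduce the statement to Proposition \ref{Lmod}(3), applied inductively, together with an irreducibility criterion for tensor products of evaluation modules. First I would identify each factor: by definition $V^{a_i}_{b_i}=V(a_i/b_i)_{a_ib_i}=L(f_i)$ with $f_i(u)=\frac{a_i^{-1}-a_iu}{b_i^{-1}-b_iu}$. This is a rational $l$-weight: it is regular at $0$ and $\infty$, and $f_i(0)f_i(\infty)=\frac{b_i}{a_i}\cdot\frac{a_i}{b_i}=1$. Products of such functions are again rational $l$-weights. So once irreducibility of the tensor product is known, repeated use of Proposition \ref{Lmod}(3) gives $V^{a_1}_{b_1}\otimes\cdots\otimes V^{a_m}_{b_m}\cong L\bigl(\prod_i f_i\bigr)$. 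Concretely, assume by induction that $V^{a_1}_{b_1}\otimes\cdots\otimes V^{a_{m-1}}_{b_{m-1}}\cong L(f_1\cdots f_{m-1})$, and tensor this with $L(f_m)$.

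The real content is therefore irreducibility. I would first treat $m=2$, using the $q$-character / $l$-weight description of $V(s)_z$. Using the explicit action in Proposition \ref{explicitAction}, the vector $v_i$ is an $l$-weight vector with $l$-weight $f\cdot A^{-1}_{c}A^{-1}_{cq}\cdots A^{-1}_{cq^{i-1}}$, where $A$ is the simple-root $l$-weight and $c$ is determined by the pole $b$. The standard criterion (Chari's braid-group / cyclicity argument, or the Mukhin--Young theory \cite{MY14} of prefundamental and evaluation modules in category $\mathcal O$) says the following: $L(f)\otimes L(g)$ is cyclic on the tensor product of highest $l$-weight vectors, and cocyclic, exactly when no zero of one factor sits in a ``$q$-string'' position relative to a pole of the other. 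For $V^{a_1}_{b_1}\otimes V^{a_2}_{b_2}$ the relevant resonances are $a_1^2/b_2^2\in q^{-\mathbb Z_{\geq0}}$ and $a_2^2/b_1^2\in q^{-\mathbb Z_{\geq0}}$, which are excluded by hypothesis. Cyclicity of $V\otimes W$ and of $W\otimes V$ then gives irreducibility, since the dual of a cyclic module on a highest vector is cocyclic. Alternatively, one can argue directly: take any nonzero submodule, use $e_1$ and the diagonal action of $k_1$ to reach a nonzero vector in the top weight space $\mathbb C v_0\otimes v_0$, and then show $\widehat U^-.(v_0\otimes v_0)$ is everything by acting with $f_1$ and $f_0$. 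The two operators $\Delta f_1$ and $\Delta f_0$ act on the weight-$n$ subspace (spanned by $v_i\otimes v_{n-i}$) with coefficients whose nonvanishing is governed by factors $1-a_j^2b_k^{-2}q^{\ell}$.

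For general $m$, I would reduce to the pairwise case. A tensor product of highest $l$-weight modules in category $\mathcal O$ is irreducible if all pairwise tensor products in both orders are irreducible and cyclic; this is the usual Hernandez-type statement, and the hypothesis is symmetric in $i\neq j$. Alternatively, I would run the direct argument above on the full product, checking cyclicity on $v_0^{\otimes m}$ by induction on $m$, grouping the first $m-1$ factors.

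The main obstacle is the $m=2$ cyclicity computation. One must show that, on each weight space, the operators coming from $\Delta(f_1)$ and $\Delta(f_0)$ (equivalently, the loop generators $x^-_r$) generate the full span of $\{v_i\otimes v_{n-i}\}$ from $v_0\otimes v_0$. This amounts to nondegeneracy of a Vandermonde-like determinant. I would try computing the action of $x^-_r$ on $v_i\otimes v_j$ in generating-function form; the determinant should factor into terms $(1-q^{\ell}a_1^2/b_2^2)$ and $(1-q^{\ell}a_2^2/b_1^2)$, whose nonvanishing is exactly the hypothesis.
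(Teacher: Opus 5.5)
Your overall architecture matches the paper's, but the parts you propose to prove yourself contain genuine gaps. The identification with $L\bigl(\prod_i f_i\bigr)$ is exactly Proposition~\ref{Lmod}(3), as in the paper. Irreducibility, however, the paper gets for the whole $m$-fold product in one step from \cite[Theorem 4.6]{MY14}, via the dictionary $q_{\mathrm{MY}}=q^{1/2}$, $b/a=q^{\mu}$, $qab=z$, and from \cite{CP91} in the finite-dimensional case. It never reduces to $m=2$. Your pairwise-to-$m$-fold step via a ``Hernandez-type statement'' is not available off the shelf: that theorem concerns finite-dimensional modules, whereas here the factors are typically infinite-dimensional objects of $\hat{\mathcal O}$. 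Your fallback, cyclicity on $v_0^{\otimes m}$, would give cyclicity only, not irreducibility.

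The self-contained $m=2$ argument, which you single out as the real content, fails as written.
\begin{itemize}
\item $f_0$ acts on $V(s)_z$ by $z^{-1}e_1$, so it \emph{raises} the weight and kills $v_0\otimes v_0$. The lowering operators are $f_1$ and $e_0$ (acting by $zf_1$), i.e.\ $x^-_0$ and $x^-_1$.
\item $e_1$ and $k_1$ do not bring a nonzero submodule down to $\mathbb C\,v_0\otimes v_0$. For infinite-dimensional factors, $\Delta e_1$ maps the $(n+1)$-dimensional weight space onto an $n$-dimensional one, so it has kernel at every level. A maximal-weight vector of a submodule is only known to be killed by all raising operators, in particular by both $e_1$ and $f_0$. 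You must separately prove that this space of singular vectors is $\mathbb C\,v_0\otimes v_0$.
\item The two non-resonance conditions split between these two steps rather than appearing together in one determinant. Already at weight level one, for infinite-dimensional factors, $\Delta f_1(v_0\otimes v_0)$ and $\Delta e_0(v_0\otimes v_0)$ have determinant a nonzero multiple of $b_2^2-a_1^2$. By contrast, some nonzero $\alpha\, v_1\otimes v_0+\beta\, v_0\otimes v_1$ is killed by $\Delta e_1$ and $\Delta f_0$ if and only if $a_2^2=b_1^2$.
\end{itemize}
So cyclicity only detects $a_1^2/b_2^2$, and the hypothesis on $a_2^2/b_1^2$ enters solely through the cocyclicity step your sketch omits. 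That split is what the ``cyclic in both orders'' criterion encodes, but its duality step also needs justification for infinite-dimensional highest $l$-weight modules. Citing \cite[Theorem 4.6]{MY14} for the full product, as the paper does, avoids all of this.
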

\begin{proof}
For the finite-dimensional representations this was proved in \cite{CP91}. The irreducibility in the general claim follows from \cite[Theorem 4.6]{MY14}. Here is the correspondence with our notation:
\begin{itemize}
    \item $q$ in \cite{MY14} corresponds to $q^{\frac12}$ here;
    \item The module $V(\mu)_z$ from \cite{MY14} is the module $V^a_b$ in our notation with $b/a=q^\mu$ and $qab=z$.
\end{itemize}
The second claim follows from the last part of Proposition \ref{Lmod}.
\end{proof}

For a $U_q(L\sl_2)$-module $V$, let $V^*$ denote the \emph{full} dual of $V$. Throughout the text, we treat $V^*$ as a \emph{right} $U_q(L\sl_2)$-module, and we use the notation $\langle v|$ for elements of $V^*$. We also occasionally use $|v\rangle$ to denote vectors $v \in V$.
If $V$ has a countable or finite basis $\{v_i\}_{i \in I}$, then every element of $V^*$ can be written as 
$$
\langle v|=\sum_{i\in I} c_i \langle v_i|,
$$
where $\{\langle v_i|\}_{i\in I}$ are the elements dual to $\{v_i\}_{i\in I}$. Note that the expansion above gives a well-defined element of $V^*$ even when infinitely many $c_i$ are non-zero. 

\subsection{Isomorphisms of tensor products} Proposition \ref{irred} implies that for generic $a_1, a_2, b_1, b_2$ the tensor products $\{V^{a_i}_{b_j}\otimes V^{a_{3-i}}_{b_{3-j}}\}_{i,j=1,2}$ are all isomorphic to each other. In \cite{Kor24} we have found an explicit description for these isomorphisms, which is provided below.

For a pair of parameters $a,b$ and integers $i,j\in\mathbb{Z}$ set
$$
\Phi(i,j,a,b)=b^i\frac{(a;q)_{i}(b;q)_j}{(ab;q)_{i+j}}\frac{(q;q)_{i+j}}{(q;q)_i(q;q)_j}.
$$
In particular, $\Phi(i,j,a,b)=0$ unless $i,j\geq 0$.

\begin{theorem}[{\cite[Theorem 3.1]{Kor24}}] \label{WaWb-theo}
    Let $\sqrt{a_1}, \sqrt{a_2}, \sqrt{b_1}, \sqrt{b_2}\in\calF^{\times}$ be parameters\footnote{Here and throughout the text we often use square roots of certain parameters. To avoid working with the algebraic closure of $\calF$, in such situations we fix a choice of the roots $\sqrt{a},\sqrt{b},\dots\in\calF$ and set $a=(\sqrt{a})^2, b=(\sqrt{b})^2,\sqrt{ab}=\sqrt{a}\sqrt{b}$, etc.   } satisfying $\frac{a_i}{b_j}\notin q^{-\mathbb{Z}_{\geq 0}}$ for all $i,j\in\{1,2\}$.

    (1) The $U_q(L\sl_2)$-modules $V^{\sqrt{a_1}}_{\sqrt{b_1}}\otimes V^{\sqrt{a_2}}_{\sqrt{b_2}}$ and $V^{\sqrt{a_2}}_{\sqrt{b_1}}\otimes V^{\sqrt{a_1}}_{\sqrt{b_2}}$  are irreducible and isomorphic, with an isomorphism given by
    $$
    \tW^a:V^{\sqrt{a_1}}_{\sqrt{b_1}}\otimes V^{\sqrt{a_2}}_{\sqrt{b_2}}\to V^{\sqrt{a_2}}_{\sqrt{b_1}}\otimes V^{\sqrt{a_1}}_{\sqrt{b_2}}
    $$
    $$
    \langle v_k\otimes v_l|\ \tW^a\ |v_i\otimes v_j\rangle=\tW^a_{a_1,b_1,a_2,b_2}(i,j,k,l):=\delta_{i+j=k+l}\frac{(b_1/a_2)^{l/2}}{(b_1/a_1)^{j/2}}\Phi(i-k, k, a_1/a_2, a_2/b_1).
    $$

    (2) The $U_q(L\sl_2)$-modules $V^{\sqrt{a_1}}_{\sqrt{b_1}}\otimes V^{\sqrt{a_2}}_{\sqrt{b_2}}$ and $V^{\sqrt{a_1}}_{\sqrt{b_2}}\otimes V^{\sqrt{a_2}}_{\sqrt{b_1}}$  are irreducible and isomorphic, with an isomorphism given by
    $$
    \tW^b:V^{\sqrt{a_1}}_{\sqrt{b_1}}\otimes V^{\sqrt{a_2}}_{\sqrt{b_2}}\to V^{\sqrt{a_1}}_{\sqrt{b_2}}\otimes V^{\sqrt{a_2}}_{\sqrt{b_1}}
    $$
    $$
    \langle v_k\otimes v_l|\ \tW^b\ |v_i\otimes v_j\rangle=\tW^b_{a_1,b_1,a_2,b_2}(i,j,k,l):=\delta_{i+j=k+l}\frac{(b_2/a_1)^{l/2}}{(b_1/a_1)^{j/2}}\Phi(l, j-l, a_2/b_1, b_1/b_2).
    $$    
\end{theorem}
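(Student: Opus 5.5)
We prove both parts the same way, so we describe the plan for $\tW^a$ and only indicate the changes for $\tW^b$.

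\textbf{Irreducibility and existence.} The hypothesis $a_i/b_j\notin q^{-\Z_{\geq0}}$ is exactly the condition of Proposition \ref{irred} in the variables $\sqrt{a_i},\sqrt{b_j}$, for both tensor orderings. So all four modules $V^{\sqrt{a_i}}_{\sqrt{b_j}}\otimes V^{\sqrt{a_{3-i}}}_{\sqrt{b_{3-j}}}$ are irreducible. Each is isomorphic to $L(f)$ with the same rational $l$-weight
$$
f(u)=\prod_{i=1}^2\frac{a_i^{-1/2}-a_i^{1/2}u}{b_i^{-1/2}-b_i^{1/2}u}.
$$
Hence an isomorphism exists and, by Schur's lemma, is unique up to a scalar. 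It therefore suffices to show that the operator $T$ with the stated matrix coefficients is a nonzero intertwiner. It is nonzero because $\langle v_0\otimes v_0|T|v_0\otimes v_0\rangle=1$, since $\Phi(0,0,\cdot,\cdot)=1$. It is well defined on the algebraic tensor product because $\delta_{i+j=k+l}$ makes each weight space finite-dimensional.

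\textbf{Reducing the intertwining check.} We check $T\circ\Delta(x)=\Delta(x)\circ T$ for the generators $x=k_1,e_1,f_1,e_0,f_0$, using the explicit actions from Proposition \ref{explicitAction}; the generator $k_0=k_1^{-1}$ needs no separate check. Commutation with $k_1$ is exactly weight conservation, which is the factor $\delta_{i+j=k+l}$.

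For the remaining generators, we use that $V$ is irreducible and generated by $v_0\otimes v_0$ under $f_1$ and $f_0$. A cheaper route is therefore available. Define $T$ by $T(v_0\otimes v_0)=v_0\otimes v_0$ together with commutation with $f_1$ and $f_0$. This determines $T$ uniquely, because $T$ is forced to be the unique isomorphism. So we only need to verify that the explicit coefficients satisfy the two recursions coming from $f_1$ and $f_0$, with $\Delta f_i=f_i\otimes1+k_i^{-1}\otimes f_i$, and that the coefficients are correctly normalized at $v_0\otimes v_0$.

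Since $f_0$ acts as $z^{-1}e_1$ on each factor, with $z=\sqrt{a\,b}$ varying between factors, the $f_0$-recursion relates entries with total weight shifted down. The $f_1$-recursion shifts it up. Both recursions are relations among at most four neighbouring entries $\Phi(i-k,k,\cdot,\cdot)$, with the indices $(i,j,k,l)$ shifted by $\pm1$.

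\textbf{Main obstacle.} The main obstacle is verifying these two three-term contiguous relations for $\Phi$. After dividing out the common factors, each should reduce to an identity of the form
$$
(1-\alpha q^{m})\,\Phi(m,n)-\ldots=0,
$$
that is, a rational identity in $q^{i},q^{k}$, $a_1/a_2$ and $a_2/b_1$. We would first factor out the ratio $\Phi(m\pm1,n)/\Phi(m,n)$, which is an explicit rational function. The relation then becomes a polynomial identity in $q^m$ and $q^n$, linear in each, which we check by comparing coefficients. The half-integer powers $(b_1/a_2)^{l/2}/(b_1/a_1)^{j/2}$ are exactly a gauge that absorbs the $z$-factors $\sqrt{a_ib_j}$ and the $s^{-1}$-factors coming from $k_1^{-1}$. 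We would track them first, so that the remaining identity is free of square roots.

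\textbf{The case of $\tW^b$.} Part (2) is obtained the same way with $\Phi(l,j-l,a_2/b_1,b_1/b_2)$. Alternatively, it can be derived from part (1) via the symmetry exchanging $a\leftrightarrow b^{-1}$ combined with reversing tensor factors. That symmetry maps $V^a_b$ to a twisted dual module and interchanges the roles of $\tW^a$ and $\tW^b$, but we expect that direct verification is simpler.
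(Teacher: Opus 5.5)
Your overall architecture matches the paper's. You get irreducibility of all four tensor products from Proposition~\ref{irred}, apply Schur's lemma, and then check directly that the explicit matrices intertwine the $U_q(L\sl_2)$-action. The paper defers that last check to \cite{Kor24}.

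\textbf{The gap is the ``cheaper route'' you substitute for the full check.} Suppose the explicit operator commutes with $k_1,f_1,f_0$ and fixes $v_0\otimes v_0$. Then it agrees with the true normalized isomorphism only on the subspace $\langle f_0,f_1\rangle\cdot(v_0\otimes v_0)$. Irreducibility of $V^{\sqrt{a_1}}_{\sqrt{b_1}}\otimes V^{\sqrt{a_2}}_{\sqrt{b_2}}$ under the whole algebra says nothing about this subspace, because it is not a $U_q(L\sl_2)$-submodule. The sentence ``$T$ is forced to be the unique isomorphism'' is circular: commuting with two of the generators does not make $T$ a module map.

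The choice of $f_0$ is also unnatural. Both $e_1=x_0^+$ and $f_0=kx_{-1}^+$ lie in the Drinfeld raising part, and both annihilate $v_0\otimes v_0$. The cyclicity that comes for free from the highest $l$-weight structure is $V=\hat U^-.(v_0\otimes v_0)$, with $\hat U^-$ generated by the $x^-_m$. These include $f_1=x_0^-$ and $e_0=x_1^-k^{-1}$, but not $f_0$.

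The cyclicity you need may well hold generically, but it is a genuine claim requiring proof, not a consequence of irreducibility. In the weight space spanned by $v_1\otimes v_0$ and $v_0\otimes v_1$, the natural second vector is $f_0f_1^2(v_0\otimes v_0)$. A direct computation with Proposition~\ref{explicitAction} shows it is independent of $f_1(v_0\otimes v_0)$ exactly when $a_1\neq b_2$. In weight $n$ you would need $n+1$ independent words in $f_0,f_1$ applied to $v_0\otimes v_0$, which means controlling a family of determinants.

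The straightforward repair is to do what you first announced and what \cite{Kor24} does: verify commutation with all of $e_1,f_1,e_0,f_0$. That gives four contiguous relations for $\Phi$ rather than two.

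A minor point: well-definedness of the entries also needs the denominators $(a_1/b_1;q)_i$ (for $\tW^a$) and $(a_2/b_2;q)_j$ (for $\tW^b$) to be nonzero. This is where those hypotheses enter, besides irreducibility; finiteness of weight spaces alone does not cover it.
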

\begin{proof}
For generic parameters $a_1$, $b_1$, $a_2$, $b_2$ the claim is proved in \cite[Theorem 3.1]{Kor24}. Here we outline that argument to explain why it is still valid in the current setting.

Due to our assumptions on $a_i, b_i$ all the modules $V^{\sqrt{a_i}}_{\sqrt{b_j}}$ are infinite dimensional. Moreover, $\tW^a_{a_1,b_1,a_2,b_2}(i,j,k,l)$ and $\tW^b_{a_1,b_1,a_2,b_2}(i,j,k,l)$ are both well-defined since the symbols $(a_i/b_j;q)_n$ appearing in the denominators are nonzero. By Proposition \ref{irred}, all the tensor products are irreducible and isomorphic. So, by Schur's lemma, it is enough to show that $\tW^a$ and $\tW^b$ commute with the action of $U_q(L\sl_2)$. This is verified directly in \cite[Theorem 3.1]{Kor24}.
\end{proof} 

Using Proposition \ref{irred} and Theorem \ref{WaWb-theo} we can construct isomorphisms for larger tensor products as well. Namely, let $\pi, \rho\in S_m$ be a pair of permutations and $\sqrt{a_1}, \dots, \sqrt{a_m}$, $\sqrt{b_1}, \dots, \sqrt{b_m}$ be parameters such that $\frac{a_i}{b_j}\notin q^{-\mathbb Z_{\geq 0}}$.  By Proposition \ref{irred} the tensor products
$$
V^{\sqrt{a_1}}_{\sqrt{b_1}}\otimes V^{\sqrt{a_2}}_{\sqrt{b_2}}\otimes \dots \otimes V^{\sqrt{a_m}}_{\sqrt{b_m}},\qquad V^{\sqrt{a_{\pi(1)}}}_{\sqrt{b_{\rho(1)}}}\otimes V^{\sqrt{a_{\pi(2)}}}_{\sqrt{b_{\rho(2)}}}\otimes \dots \otimes V^{\sqrt{a_{\pi(m)}}}_{\sqrt{b_{\rho(m)}}}
$$
are both irreducible and isomorphic. So, there is a unique up to a scalar isomorphism between these products, and it can be constructed by decomposing $\pi$ and $\rho$ into simple transpositions and taking an appropriate product of $\tW^a$ and $\tW^b$ isomorphisms. Moreover, different combinations of $\tW^a$ and $\tW^b$ corresponding to $\pi$ and $\rho$ result in the same isomorphism due to Schur's lemma. The last observation is the main reason for developing this formalism, and various identities can be derived using this argument. Below we provide several notable examples:

\begin{theorem}\label{tRdef}
Let $\sqrt{a_1}, \sqrt{a_2}, \sqrt{b_1}, \sqrt{b_2}\in\mathbb \calF^{\times}$ satisfy $\frac{a_i}{b_j}\notin q^{-\mathbb{Z}_{\geq 0}}$ for all $i,j\in\{1,2\}$. Then the $U_q(L\sl_2)$-modules $V^{\sqrt{a_1}}_{\sqrt{b_1}}\otimes V^{\sqrt{a_2}}_{\sqrt{b_2}}$ and $V^{\sqrt{a_2}}_{\sqrt{b_2}}\otimes V^{\sqrt{a_1}}_{\sqrt{b_1}}$  are irreducible and isomorphic. The isomorphism 
$$
\tR: V^{\sqrt{a_1}}_{\sqrt{b_1}}\otimes V^{\sqrt{a_2}}_{\sqrt{b_2}}\to V^{\sqrt{a_2}}_{\sqrt{b_2}}\otimes V^{\sqrt{a_1}}_{\sqrt{b_1}}
$$ 
is explicitly described by 
\begin{multline*}
\tR_{a_1,b_1,a_2,b_2}(i,j,k,l):=\langle v_k\otimes v_l|\ \tR\ |v_i\otimes v_j\rangle\\
=\delta_{i+j=k+l}\frac{(b_2/a_2)^{l/2}}{(b_1/a_1)^{j/2}}\sum_{p=0}^{\min(i,k)} \Phi(i-p, p, a_1/a_2, a_2/b_1)\Phi(l, k-p, a_1/b_1, b_1/b_2)\\
=\delta_{i+j=k+l}\frac{(b_2/a_2)^{l/2}}{(b_1/a_1)^{j/2}}\sum_{p=0}^{\min(j,l)} \Phi(l-p, k, a_1/a_2, a_2/b_2)\Phi(p, j-p, a_2/b_1, b_1/b_2).
\end{multline*}
\end{theorem}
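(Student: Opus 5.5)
The plan is to build $\tR$ as a composition of the two elementary isomorphisms from Theorem \ref{WaWb-theo} and then read off its matrix coefficients. Irreducibility and the existence of an isomorphism come first. Since $a_i/b_j\notin q^{-\mathbb Z_{\geq 0}}$ for all $i,j$, Proposition \ref{irred} shows that both $V^{\sqrt{a_1}}_{\sqrt{b_1}}\otimes V^{\sqrt{a_2}}_{\sqrt{b_2}}$ and $V^{\sqrt{a_2}}_{\sqrt{b_2}}\otimes V^{\sqrt{a_1}}_{\sqrt{b_1}}$ are irreducible and isomorphic to $L\big(\prod_{i}\frac{a_i^{-1}-a_iu}{b_i^{-1}-b_iu}\big)$. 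By Schur's lemma the isomorphism between them is unique up to a scalar, so it suffices to exhibit one and compute its coefficients.

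For the first formula I would take $\tR=\tW^b\circ\tW^a$. First apply $\tW^a_{a_1,b_1,a_2,b_2}$, which maps $V^{\sqrt{a_1}}_{\sqrt{b_1}}\otimes V^{\sqrt{a_2}}_{\sqrt{b_2}}$ to $V^{\sqrt{a_2}}_{\sqrt{b_1}}\otimes V^{\sqrt{a_1}}_{\sqrt{b_2}}$. Then apply $\tW^b$ with parameters $(a_2,b_1,a_1,b_2)$, which maps this to $V^{\sqrt{a_2}}_{\sqrt{b_2}}\otimes V^{\sqrt{a_1}}_{\sqrt{b_1}}$. The hypotheses of Theorem \ref{WaWb-theo} hold at both steps because the same four ratios $a_i/b_j$ appear. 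The matrix coefficient is a sum over an intermediate basis vector $v_p\otimes v_{i+j-p}$:
$$\tW^a_{a_1,b_1,a_2,b_2}(i,j,p,i+j-p)\,\tW^b_{a_2,b_1,a_1,b_2}(p,i+j-p,k,l).$$
The factor $\Phi(i-p,p,\dots)$ forces $0\le p\le i$, and the second $\Phi(l,(i+j-p)-l,\dots)=\Phi(l,k-p,\dots)$ forces $p\le k$. So the sum is finite and runs over $p\le\min(i,k)$. Substituting the formulas of Theorem \ref{WaWb-theo} gives the $\Phi$-arguments $\Phi(i-p,p,a_1/a_2,a_2/b_1)$ and $\Phi(l,k-p,a_1/b_1,b_1/b_2)$ directly.

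The remaining work is the prefactors. From the first step we get $(b_1/a_2)^{(i+j-p)/2}(b_1/a_1)^{-j/2}$, and from the second $(b_2/a_2)^{l/2}(b_1/a_2)^{-(i+j-p)/2}$. The $p$-dependent powers cancel, leaving $(b_2/a_2)^{l/2}(b_1/a_1)^{-j/2}$ as claimed. I expect this bookkeeping of square-root prefactors to be the only delicate point: one has to check that the two half-integer powers telescope exactly.

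The second formula comes from the other composition $\tR'=\tW^a\circ\tW^b$, passing through $V^{\sqrt{a_1}}_{\sqrt{b_2}}\otimes V^{\sqrt{a_2}}_{\sqrt{b_1}}$. Here we first apply $\tW^b_{a_1,b_1,a_2,b_2}$ and then $\tW^a_{a_1,b_2,a_2,b_1}$, with the intermediate state $v_{i+j-p}\otimes v_p$ where $p\le\min(j,l)$. The same computation yields $\Phi(p,j-p,a_2/b_1,b_1/b_2)\,\Phi(l-p,k,a_1/a_2,a_2/b_2)$, and the prefactors telescope to the same expression. By Schur's lemma $\tR'=c\,\tR$ for some scalar $c$. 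Comparing the coefficient at $i=j=k=l=0$, where both sums reduce to $\Phi(0,0,\cdot,\cdot)=1$, gives $c=1$. Hence the two expressions agree.
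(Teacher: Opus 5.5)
Your proposal is correct and matches the paper's proof. The paper likewise sets $\tR=\tW^b\circ\tW^a=\tW^a\circ\tW^b$ and identifies the two compositions by Proposition~\ref{irred}, Schur's lemma and the normalization at $v_0\otimes v_0$; your explicit tracking of the intermediate index and of the telescoping square-root prefactors fills in the computation the paper leaves implicit.
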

\begin{proof}
    Both $\tW^a\circ \tW^b$ and $\tW^b\circ\tW^a$ define an isomorphism 
    $$
    V^{\sqrt{a_1}}_{\sqrt{b_1}}\otimes V^{\sqrt{a_2}}_{\sqrt{b_2}}\to V^{\sqrt{a_2}}_{\sqrt{b_2}}\otimes V^{\sqrt{a_1}}_{\sqrt{b_1}}.
    $$
    Since the tensor products are irreducible, the isomorphism between them is unique up to a scalar. Comparing the action on $v_0\otimes v_0$, we get $\tW^a\circ \tW^b=\tW^b\circ \tW^a$. The claim follows by setting $\tR=\tW^a\circ \tW^b=\tW^b\circ \tW^a$ and using the explicit expressions from Theorem \ref{WaWb-theo}.
\end{proof}

\begin{theorem}\label{YB} Assume that $\sqrt{a_1}, \sqrt{a_2}, \sqrt{a_3}, \sqrt{b_1}, \sqrt{b_2}, \sqrt{b_3}\in\calF^{\times}$ satisfy $\frac{a_i}{b_j}\notin q^{-\mathbb{Z}_{\geq 0}}$ for all $i,j\in\{1,2,3\}$. The following identity holds for every $i_1, i_2,i_3, j_1,j_2,j_3\in\mathbb Z_{\geq0}$:
\begin{multline*}
\sum_{p_1,p_2,p_3\geq 0}\tR_{a_1,b_1,a_2,b_2}(i_1,i_2,p_2,p_1)\tR_{a_1,b_1,a_3,b_3}(p_1,i_3,p_3,j_1)\tR_{a_2,b_2,a_3,b_3}(p_2,p_3,j_3,j_2)\\
=\sum_{p_1,p_2,p_3\geq 0}\tR_{a_2,b_2,a_3,b_3}(i_2,i_3,p_3,p_2)\tR_{a_1,b_1,a_3,b_3}(i_1,p_3,j_3,p_1)\tR_{a_1,b_1,a_2,b_2}(p_1,p_2,j_2,j_1).  
\end{multline*}
Equivalently, 
$$
(\tR \otimes 1)(1\otimes\tR)(\tR\otimes 1)=(1\otimes\tR)(\tR \otimes 1)(1\otimes\tR),
$$
where both sides are maps
$$
    V^{\sqrt{a_1}}_{\sqrt{b_1}}\otimes V^{\sqrt{a_2}}_{\sqrt{b_2}}\otimes V^{\sqrt{a_3}}_{\sqrt{b_3}}\to V^{\sqrt{a_3}}_{\sqrt{b_3}}\otimes V^{\sqrt{a_2}}_{\sqrt{b_2}}\otimes V^{\sqrt{a_1}}_{\sqrt{b_1}}.
$$
\end{theorem}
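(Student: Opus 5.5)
Both sides of the identity are $U_q(L\sl_2)$-module maps
$$
V^{\sqrt{a_1}}_{\sqrt{b_1}}\otimes V^{\sqrt{a_2}}_{\sqrt{b_2}}\otimes V^{\sqrt{a_3}}_{\sqrt{b_3}}\to V^{\sqrt{a_3}}_{\sqrt{b_3}}\otimes V^{\sqrt{a_2}}_{\sqrt{b_2}}\otimes V^{\sqrt{a_1}}_{\sqrt{b_1}},
$$
and the plan is to use the Schur-lemma argument, exactly as in the proof of Theorem \ref{tRdef}. By Theorem \ref{tRdef}, each factor $\tR$ acting on two adjacent tensor factors is an isomorphism of $U_q(L\sl_2)$-modules. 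The hypothesis $\frac{a_i}{b_j}\notin q^{-\mathbb Z_{\geq0}}$ for all $i,j\in\{1,2,3\}$ is exactly what is needed to apply Theorem \ref{tRdef} to every pair, including the intermediate orderings such as $V^{\sqrt{a_2}}_{\sqrt{b_2}}\otimes V^{\sqrt{a_1}}_{\sqrt{b_1}}$. Tensoring with the identity on the third factor preserves the intertwining property, because the coproduct is an algebra map. Hence both compositions $(\tR\otimes1)(1\otimes\tR)(\tR\otimes1)$ and $(1\otimes\tR)(\tR\otimes1)(1\otimes\tR)$ are module homomorphisms between the same two triple tensor products.

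Next, Proposition \ref{irred} shows that both triple products are irreducible, since the condition on $a_i/b_j$ there is implied by the (squared-root) hypothesis. They are isomorphic to the same $L(f)$ with $f=\prod_i\frac{a_i^{-1/2}-a_i^{1/2}u}{b_i^{-1/2}-b_i^{1/2}u}$. By Schur's lemma for irreducible modules in category $\mathcal O$ (weight spaces are finite-dimensional, so the usual argument on a weight space applies), the two maps differ by a scalar. To fix the scalar, evaluate both on $v_0\otimes v_0\otimes v_0$. Weight conservation ($\delta_{i+j=k+l}$) forces $\tR(v_0\otimes v_0)=\tR_{\dots}(0,0,0,0)\,v_0\otimes v_0$. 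From the formula in Theorem \ref{tRdef}, this coefficient is $\Phi(0,0,\cdot,\cdot)^2=1$. So both sides send $v_0^{\otimes3}$ to $v_0^{\otimes3}$, the scalar is $1$, and the operators agree.

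Finally, the matrix-coefficient form of the statement comes from expanding the operator identity in the basis $\{v_{i_1}\otimes v_{i_2}\otimes v_{i_3}\}$ and matching the labels $p_1,p_2,p_3$ with the intermediate indices. The sums are finite because weight conservation restricts each internal index to the range $\leq i_1+i_2+i_3$. This shows the compositions are well defined even though the modules are infinite-dimensional.

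The only delicate point is the Schur-lemma step for infinite-dimensional modules. I would handle it by noting that an endomorphism of an irreducible module in $\mathcal O$ preserves the one-dimensional top weight space spanned by $v_0^{\otimes 3}$. Composing one side with the inverse of the other gives an endomorphism that acts on $v_0^{\otimes3}$ by a scalar, and generation by that highest weight vector then forces it to be that scalar everywhere.
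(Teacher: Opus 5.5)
Your proposal is correct and follows the paper's proof: both triple products are irreducible by Proposition~\ref{irred}, both compositions fix $v_0\otimes v_0\otimes v_0$, and Schur's lemma gives equality; your justification of Schur's lemma via the one-dimensional top weight space and cyclicity of $v_0^{\otimes 3}$ is fine. One small correction: the fact that $\tR\otimes 1$ and $1\otimes\tR$ intertwine the same action on the triple product rests on coassociativity of $\Delta$ (which identifies $(\Delta\otimes 1)\Delta$ with $(1\otimes\Delta)\Delta$), not on $\Delta$ being an algebra map.
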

\begin{proof}
Both tensor products  in the claim are irreducible by Proposition \ref{irred}. Since $(\tR \otimes 1)(1\otimes\tR)(\tR\otimes 1)$ and $(1\otimes\tR)(\tR \otimes 1)(1\otimes\tR)$ have the same action on $v_0\otimes v_0\otimes v_0$, the claim follows from Schur's Lemma.
\end{proof}

\begin{theorem}\label{WRW-YB} Assume that $\sqrt{a_1}, \sqrt{a_2}, \sqrt{a_3}, \sqrt{b_1}, \sqrt{b_2}, \sqrt{b_3}\in\calF^{\times}$ satisfy $\frac{a_i}{b_j}\notin q^{-\mathbb{Z}_{\geq 0}}$ for all $i,j\in\{1,2,3\}$. The following identity holds for every $i_1, i_2,i_3, j_1,j_2,j_3\in\mathbb Z_{\geq0}$:
\begin{multline*}
\sum_{p_1,p_2,p_3\geq 0}\tW^b_{a_1,b_1,a_2,b_2}(i_1,i_2,p_2,p_1)\tR_{a_2,b_1,a_3,b_3}(p_1,i_3,p_3,j_1)\tW^a_{a_1,b_2,a_3,b_3}(p_2,p_3,j_3,j_2)\\
=\sum_{p_1,p_2,p_3\geq 0}\tW^a_{a_2,b_2,a_3,b_3}(i_2,i_3,p_3,p_2)\tR_{a_1,b_1,a_3,b_2}(i_1,p_3,j_3,p_1)\tW^b_{a_1,b_1,a_2,b_3}(p_1,p_2,j_2,j_1).  
\end{multline*}
Equivalently, 
$$
(\tW^a \otimes 1)(1\otimes\tR)(\tW^b\otimes 1)=(1\otimes\tW^b)(\tR \otimes 1)(1\otimes\tW^a),
$$
where both sides are maps
$$
    V^{\sqrt{a_1}}_{\sqrt{b_1}}\otimes V^{\sqrt{a_2}}_{\sqrt{b_2}}\otimes V^{\sqrt{a_3}}_{\sqrt{b_3}}\to V^{\sqrt{a_3}}_{\sqrt{b_2}}\otimes V^{\sqrt{a_1}}_{\sqrt{b_3}}\otimes V^{\sqrt{a_2}}_{\sqrt{b_1}}.
$$
\end{theorem}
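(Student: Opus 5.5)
The plan is to follow the proofs of Theorems \ref{tRdef} and \ref{YB}: show that both sides are $U_q(L\sl_2)$-intertwiners between the same two irreducible modules, and then fix the scalar by evaluating on the highest weight vector.

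First, track the modules through each composition. On the left-hand side, $\tW^b\otimes 1$ acts on $V^{\sqrt{a_1}}_{\sqrt{b_1}}\otimes V^{\sqrt{a_2}}_{\sqrt{b_2}}\otimes V^{\sqrt{a_3}}_{\sqrt{b_3}}$. It swaps the $b$-labels of the first two factors and produces $V^{\sqrt{a_1}}_{\sqrt{b_2}}\otimes V^{\sqrt{a_2}}_{\sqrt{b_1}}\otimes V^{\sqrt{a_3}}_{\sqrt{b_3}}$. Next, $1\otimes\tR$ swaps the last two factors entirely, giving $V^{\sqrt{a_1}}_{\sqrt{b_2}}\otimes V^{\sqrt{a_3}}_{\sqrt{b_3}}\otimes V^{\sqrt{a_2}}_{\sqrt{b_1}}$. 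Finally, $\tW^a\otimes 1$ swaps the $a$-labels of the first two factors, giving $V^{\sqrt{a_3}}_{\sqrt{b_2}}\otimes V^{\sqrt{a_1}}_{\sqrt{b_3}}\otimes V^{\sqrt{a_2}}_{\sqrt{b_1}}$.

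On the right-hand side, $1\otimes\tW^a$ gives $V^{\sqrt{a_1}}_{\sqrt{b_1}}\otimes V^{\sqrt{a_3}}_{\sqrt{b_2}}\otimes V^{\sqrt{a_2}}_{\sqrt{b_3}}$. Then $\tR\otimes1$ gives $V^{\sqrt{a_3}}_{\sqrt{b_2}}\otimes V^{\sqrt{a_1}}_{\sqrt{b_1}}\otimes V^{\sqrt{a_2}}_{\sqrt{b_3}}$. Then $1\otimes\tW^b$ gives $V^{\sqrt{a_3}}_{\sqrt{b_2}}\otimes V^{\sqrt{a_1}}_{\sqrt{b_3}}\otimes V^{\sqrt{a_2}}_{\sqrt{b_1}}$. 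So both sides have the same source and target. Reading off the matrix indices in the stated sums confirms that the parameter subscripts of each factor match these intermediate modules.

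Second, check that every intermediate map is a genuine isomorphism of $U_q(L\sl_2)$-modules. Each factor is $V^{\sqrt{a_i}}_{\sqrt{b_j}}$ with $a_i/b_j\notin q^{-\mathbb Z_{\geq0}}$, so Theorem \ref{WaWb-theo} and Theorem \ref{tRdef} apply to every two-fold swap. Tensoring with an identity map preserves the intertwining property, because the coproduct makes $\phi\otimes 1$ commute with the action. By Proposition \ref{irred}, the source and target triple tensor products are irreducible, since the hypothesis on $a_i/b_j$ implies the needed condition on $a_i^2/b_j^2$ for the square roots. Schur's lemma then gives LHS $=c\cdot$RHS for some scalar $c$. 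One point to handle: Schur's lemma for infinite-dimensional modules over $\calF$. I would argue as the paper implicitly does, using category $\mathcal O$: weight spaces are finite-dimensional, and the highest weight space is one-dimensional, spanned by $v_0\otimes v_0\otimes v_0$. An intertwiner must map this vector to a multiple of the corresponding highest weight vector, and it is then determined by cyclicity.

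Third, fix $c=1$ by comparing the action on $v_0\otimes v_0\otimes v_0$. Weight conservation $\delta_{i+j=k+l}$ in every weight forces all intermediate indices to be $0$. For zero indices, $\Phi(0,0,\cdot,\cdot)=1$ and the prefactors $(\cdot)^{l/2}$, $(\cdot)^{j/2}$ equal $1$. So $\tW^a$, $\tW^b$ and $\tR$ all send $v_0\otimes v_0$ to $v_0\otimes v_0$, and both sides send $v_0\otimes v_0\otimes v_0$ to itself.

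The only real obstacle is the bookkeeping in the first step: matching the subscripts $\tR_{a_2,b_1,a_3,b_3}$, $\tW^a_{a_1,b_2,a_3,b_3}$ and the others to the intermediate modules and index orderings. This is routine but must be done carefully. Finally, the coordinate form of the identity involves only finite sums, since weight conservation bounds all $p_i$, so it follows from the operator identity.
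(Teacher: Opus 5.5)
Your proposal is correct and matches the paper's argument: the paper proves this by citing Proposition \ref{irred} and Schur's lemma, with the scalar fixed as in Theorems \ref{tRdef} and \ref{YB} by the fact that $\tW^a$, $\tW^b$ and $\tR$ each send $v_0\otimes v_0$ to $v_0\otimes v_0$. Your weight-space and cyclicity justification of the uniqueness-up-to-scalar step is a useful detail that the paper leaves implicit.
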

\begin{proof}
Follows from Proposition \ref{irred} and Schur's Lemma.
\end{proof}

\subsection{Vertex models}\label{sec: vertex-models} It is convenient to present identities like Theorem \ref{YB} and \ref{WRW-YB} using certain diagrams, called \emph{vertex models}. We give a description of these diagrams below.

A vertex model consists of a collection of oriented lines with simple intersections. Each intersection is called a \emph{vertex}, and the line segments between these intersections are called \emph{edges}. An edge is \emph{internal} if both of its endpoints are vertices, and \emph{boundary} otherwise. To each edge we attach a pair of parameters, often denoted by letters $a$ or $b$. 

A \emph{configuration} of a vertex model is an assignment of a nonnegative integer label to each edge of the model. It is convenient to interpret vertex model configurations as ensembles of lattice paths oriented along the edges. In our setting multiple paths may share an edge, and each integer label assigned by a configuration tracks the number of paths going through the corresponding edge, thus we also call this integer a \emph{(path) occupation number}. See Figure \ref{fig: path-ens} for a depiction of this correspondence. A \emph{boundary condition}
specifies these occupation numbers on the boundary edges. We usually use letters $i,j,k,l$ to denote the path occupation numbers.\footnote{In different parts of the text we use $k$ as either a generator of $U_q(L\sl_2)$ or as an edge label. In each instance the meaning should be clear from the context.} As a result, in our diagrams the edges are often labeled by triples like $(i;a,b)$, where the first number reflects the configuration of the edge and the latter two are the edge parameters. 

\begin{figure}
\centering
\begin{tikzpicture}[scale=1.4]
				\draw[fused] (-1,0) -- (1,0);
				\draw[fused] (0,-1) -- (0,1);
				\node[below] at (-1,-0.1) {\small \shortstack[c]{$3$}};
				\node[below] at (1,-0.1) {\small \shortstack[c]{$5$}};
				\node[below] at (0,-1) {\small $3$};
				\node[above] at (0,1) {\small $1$};
			\end{tikzpicture}
\hspace{8em}
            \begin{tikzpicture}[scale=1.4]
                \draw[black, line width=0.8pt] (-1,0.04) -- (-0.04, 0.04) -- (0, 0.08) -- (0,1);
                \draw[black, line width=0.8pt] (-1,0) -- (-0.04, 0) -- (0.04, 0.08) -- (1,0.08);
                \draw[black, line width=0.8pt] (-1,-0.04) -- (-0.04, -0.04) -- (0.04, 0.04) -- (1,0.04);
                \draw[black, line width=0.8pt] (-0.04,-1) -- (-0.04, -0.08) -- (0.04, 0) -- (1,0);
                \draw[black, line width=0.8pt] (-0,-1) -- (-0, -0.08) -- (0.04, -0.04) -- (1,-0.04);
                \draw[black, line width=0.8pt] (0.04,-1) -- (0.04, -0.08) -- (0.04, -0.08) -- (1,-0.08);
				\node[below] at (-1,-0.1) {\phantom{\small \shortstack[c]{$3$}}};
				\node[below] at (1,-0.1) {\phantom{\small \shortstack[c]{$5$}}};
				\node[below] at (0,-1) {\phantom{\small $3$}};
				\node[above] at (0,1) {\phantom{\small $1$}};
			\end{tikzpicture}
    \caption{Correspondence between integer labels and path ensembles.}
    \label{fig: path-ens}
\end{figure}

To each vertex we assign a family of \emph{vertex weights} $W(i,j,k,l)$, which depend on the configurations of $4$ edges surrounding the vertex. The weight of the configuration is obtained by multiplying all corresponding vertex weights. Given a vertex model with a boundary condition, a \emph{partition function} is the sum of weights of all configurations which satisfy the boundary condition. 

We mainly use three types of vertex weights in this work, which are inspired by the isomorphisms from the previous section. Namely, we set
\begin{align*}
		\begin{gathered}
			\begin{tikzpicture}[scale=1.3]
				\draw[fused] (-1,0) -- (1,0);
				\draw[fused] (0,-1) -- (0,1);
				\node[below] at (-1,-0.1) {\tiny \shortstack[c]{$i; a_1,b_1$}};
				\node[below] at (1,-0.1) {\tiny \shortstack[c]{$l; a_1,b_2$}};
				\node[below] at (0,-1) {\tiny $j; a_2,b_2$};
				\node[above] at (0,1) {\tiny $k; a_2,b_1$};
				\node[above right] at (0,0) {\tiny $\tW^a$};
			\end{tikzpicture}
		\end{gathered}
		&=\tW^a_{a_1,b_1,a_2,b_2}(i,j,k,l).
	\end{align*}
	\begin{align*}
		\begin{gathered}
			\begin{tikzpicture}[scale=1.3]
				\draw[fused] (-1,0) -- (1,0);
				\draw[fused] (0,-1) -- (0,1);
				\node[below] at (-1,-0.1) {\tiny \shortstack[c]{$i; a_1,b_1$}};
				\node[below] at (1,-0.1) {\tiny \shortstack[c]{$l; a_2,b_1$}};
				\node[below] at (0,-1) {\tiny $j; a_2,b_2$};
				\node[above] at (0,1) {\tiny $k; a_1,b_2$};
				\node[above right] at (0,0) {\tiny $\tW^b$};
			\end{tikzpicture}
		\end{gathered}
		&=\tW^b_{a_1,b_1,a_2,b_2}(i,j,k,l).
	\end{align*}
    \begin{align*}
		\begin{gathered}
			\begin{tikzpicture}[scale=1.3]
				\draw[fused] (-1,0) -- (1,0);
				\draw[fused] (0,-1) -- (0,1);
				\node[below] at (-1,-0.1) {\tiny \shortstack[c]{$i; a_1,b_1$}};
				\node[below] at (1,-0.1) {\tiny \shortstack[c]{$l; a_1,b_1$}};
				\node[below] at (0,-1) {\tiny $j; a_2,b_2$};
				\node[above] at (0,1) {\tiny $k; a_2,b_2$};
				\node[above right] at (0,0) {\tiny $\tR$};
			\end{tikzpicture}
		\end{gathered}
		&=\tR_{a_1,b_1,a_2,b_2}(i,j,k,l).
	\end{align*}
    All these weights vanish unless $i+j=k+l$, so all configurations with non-zero weight satisfy \emph{the conservation law}: the number of paths entering a vertex is equal to the number of paths leaving it. Note that each weight also enforces a certain relation between the edge parameters $a_1$, $a_2$, $b_1$, $b_2$ around the vertex. Since the weights $\tW^a$, $\tW^b$ are identical to the orthogonality weights of $q$-Hahn polynomials, we sometimes refer to these weights as \emph{$q$-Hahn vertex weights}.

    Using the notation above we can give a visual presentation of the relations from the previous subsection. For example, the equality from Theorem \ref{tRdef} is equivalent to
    $$
    \begin{gathered}
    \begin{tikzpicture}[scale=1.3]
				\draw[fused] (-1,0) -- (1,0);
				\draw[fused] (0,-1) -- (0,1);
				\node[below] at (-1,-0.1) {\tiny \shortstack[c]{$i; a_1,b_1$}};
				\node[below] at (1,-0.1) {\tiny \shortstack[c]{$l; a_1,b_1$}};
				\node[below] at (0,-1) {\tiny $j; a_2,b_2$};
				\node[above] at (0,1) {\tiny $k; a_2,b_2$};
				\node[above right] at (0,0) {\tiny $\tR$};
    \end{tikzpicture}
    \end{gathered}
            =
    \begin{gathered}
    \begin{tikzpicture}[scale=1]
				\draw[fused] (-1,0) -- (0.8,0) arc (-90:0:0.2) -- (1, 2);
				\draw[fused] (0,-1) -- (0,0.8) arc (180:90:0.2) -- (2,1);
				\node[below] at (-1,-0.1) {\tiny \shortstack[c]{$i; a_1,b_1$}};
				\node[below] at (2,0.9) {\tiny \shortstack[c]{$l; a_1,b_1$}};
				\node[below] at (0,-1) {\tiny $j; a_2,b_2$};
				\node[above] at (1,2) {\tiny $k; a_2,b_2$};
				\node[above right] at (0,0) {\tiny $\tW^a$};
                \node[above right] at (1,1) {\tiny $\tW^b$};
    \end{tikzpicture}
    \end{gathered}
    =
                \begin{gathered}
    \begin{tikzpicture}[scale=1]
				\draw[fused] (-1,0) -- (0.8,0) arc (-90:0:0.2) -- (1, 2);
				\draw[fused] (0,-1) -- (0,0.8) arc (180:90:0.2) -- (2,1);
				\node[below] at (-1,-0.1) {\tiny \shortstack[c]{$i; a_1,b_1$}};
				\node[below] at (2,0.9) {\tiny \shortstack[c]{$l; a_1,b_1$}};
				\node[below] at (0,-1) {\tiny $j; a_2,b_2$};
				\node[above] at (1,2) {\tiny $k; a_2,b_2$};
				\node[above right] at (0,0) {\tiny $\tW^b$};
                \node[above right] at (1,1) {\tiny $\tW^a$};
    \end{tikzpicture}
    \end{gathered}
    $$
    Here each diagram denotes the corresponding partition function, where the sum is taken over all possible configurations of the internal edges. Note that we do not label the internal edges in such diagrams, their parameters can be recovered from the boundary conditions and the choice of the vertex weights. Theorem \ref{YB} is equivalent to
    \[
	\stikzbase{1.3}{0}{
		\tikzset{every node/.append style={text=black}}
		\draw[fused]
		(-2,0.5) node[above,scale=0.62] {$i_1; a_1,b_1$} -- (-1,-0.5) -- (1,-0.5) node[right,scale=0.62] {$j_1; a_1,b_1$};
		\draw[fused]
		(-2,-0.5) node[below,scale=0.62] {$i_2; a_2,b_2$}
		-- (-1,0.5) -- (1,0.5) node[right,scale=0.62] {$j_2; a_2,b_2$};
		\draw[fused]
		(0,-1.5) node[below,scale=0.62] {$i_3; a_3,b_3$}
		-- (0,0) -- (0,1.5) node[above,scale=0.62] {$j_3; a_3,b_3$};
		\node[above right] at (0,-0.5) {\tiny $\tR$};
		\node[above right] at (0,0.5) {\tiny $\tR$};
		\node[right] at (-1.4,0) {\tiny $\tR$};
	}
	\qquad=\qquad
    \stikzbase{1.3}{0}{
		\tikzset{every node/.append style={text=black}}
		\draw[fused]
        (-1,1) node[left,scale=0.62] {$i_1; a_1,b_1$}-- (1,1) -- (2,0) node[below,scale=0.62] {$j_1; a_1,b_1$};
		\draw[fused]
		(-1,0) node[left,scale=0.62] {$i_2; a_2,b_2$}
		-- (1,0) -- (2,1) node[above,scale=0.62] {$j_2; a_2,b_2$};
		\draw[fused]
		(0,-1) node[below,scale=0.62] {$i_3; a_3,b_3$}
		-- (0,0.5) -- (0,2) node[above,scale=0.62] {$j_3; a_3,b_3$};
		\node[above right] at (0,0) {\tiny $\tR$};
		\node[above right] at (0,1) {\tiny $\tR$};
		\node[right] at (1.6,0.5) {\tiny $\tR$};
	}
	\]
    Here the two tilted vertices are obtained by rotating the standard vertex while preserving the orientation. Theorem \ref{WRW-YB} can be presented as
\[
	\stikzbase{1.3}{0}{
		\tikzset{every node/.append style={text=black}}
		\draw[fused]
		(2,0) node[below,scale=0.62] {$i_3; a_3,b_3$}
		-- (1,1) -- (-1,1) node[left,scale=0.62] {$j_3; a_3,b_2$};
		\draw[fused]
		(-1,0) node[left,scale=0.62] {$i_1; a_1,b_1$}
		-- (1,0) -- (2,1) node[above,scale=0.62] {$j_1; a_2,b_1$};
		\draw[fused]
		(0,-1) node[below,scale=0.62] {$i_2; a_2,b_2$}
		-- (0,0.5) -- (0,2) node[above,scale=0.62] {$j_2; a_1,b_3$};
		\node[above right] at (0,0) {\tiny $\tW^b$};
		\node[above left] at (0,1) {\tiny $\tW^a$};
		\node[above] at (1.5,0.6) {\tiny $\tR$};
	}
	\qquad=\qquad
	\stikzbase{1.3}{0}{
		\tikzset{every node/.append style={text=black}}
		\draw[fused]
		(1,-0.5) node[right,scale=0.62] {$i_3; a_3,b_3$}
		-- (-1,-0.5) -- (-2,0.5) node[above,scale=0.62] {$j_3; a_3,b_2$};
		\draw[fused]
		(-2,-0.5) node[below,scale=0.62] {$i_1; a_1,b_1$}
		-- (-1,0.5) -- (1,0.5) node[right,scale=0.62] {$j_1; a_2,b_1$};
		\draw[fused]
		(0,-1.5) node[below,scale=0.62] {$i_2; a_2,b_2$}
		-- (0,0) -- (0,1.5) node[above,scale=0.62] {$j_2; a_1,b_3$};
		\node[above left] at (0,-0.5) {\tiny $\tW^a$};
		\node[above right] at (0,0.5) {\tiny $\tW^b$};
		\node[above] at (-1.5,0.1) {\tiny $\tR$};
	}
	\]

    \begin{remark} One can think of vertex models as a way to visualize products of $\tW^a$, $\tW^b$, $\tR$ acting on tensor products of the evaluation modules $V^{\sqrt{a}}_{\sqrt{b}}$. Namely, an edge with parameters $(a,b)$ corresponds to the module $V^{\sqrt{a}}_{\sqrt{b}}$, while a vertex corresponds to an application of $\tW^a$, $\tW^b$ or $\tR$ to a tensor product of modules corresponding to the incoming edges. Then the partition function computes a matrix coefficient of the appropriate matrix product.
    \end{remark}
    
\subsection{Stochastic weights}\label{stochW} It is convenient to introduce another version of the weights $\tW^a$, $\tW^b$, $\tR$, which we denote by $W^a, W^b, R$ and define by
\begin{multline*}
    W^a_{a_1,b_1,a_2,b_2}(i,j,k,l):=\frac{(b_1/a_1)^{j/2}}{(b_1/a_2)^{l/2}}\tW^a_{a_1,b_1,a_2,b_2}(i,j,k,l)
    =\delta_{i+j=k+l}\Phi(i-k, k, a_1/a_2, a_2/b_1),
\end{multline*}
\begin{multline*}
    W^b_{a_1,b_1,a_2,b_2}(i,j,k,l):=\frac{(b_1/a_1)^{j/2}}{(b_2/a_1)^{l/2}}\tW^b_{a_1,b_1,a_2,b_2}(i,j,k,l)
    =\delta_{i+j=k+l}\Phi(l, j-l, a_2/b_1, b_1/b_2).
\end{multline*}
\begin{multline}\label{eq: stochR}
    R_{a_1,b_1,a_2,b_2}(i,j,k,l):=\frac{(b_1/a_1)^{j/2}}{(b_2/a_2)^{l/2}}\tR_{a_1,b_1,a_2,b_2}(i,j,k,l)\\
=\delta_{i+j=k+l}\sum_{p} \Phi(i-p, p, a_1/a_2, a_2/b_1)\Phi(l, k-p, a_1/b_1, b_1/b_2)\\
=\delta_{i+j=k+l}\sum_{p} \Phi(l-p, k, a_1/a_2, a_2/b_2)\Phi(p, j-p, a_2/b_1, b_1/b_2).
\end{multline}
Note that these weights do not involve the roots $\sqrt{a_i}, \sqrt{b_i}$.

The main reason to introduce these weights is the following stochasticity relation: one can prove that for every $i,j\in\mathbb Z_{\geq0}$ we have
$$
\sum_{k,l\geq0}W^a_{a_1,b_1,a_2,b_2}(i,j,k,l)=\sum_{k,l\geq0}W^b_{a_1,b_1,a_2,b_2}(i,j,k,l)=\sum_{k,l\geq0}R_{a_1,b_1,a_2,b_2}(i,j,k,l)=1.
$$
Because of this property, this normalization is standard in the context of integrable probability, and many objects of our interest are defined in terms of these weights.

To work with the two normalizations of our weights, we define the following diagonal operator $\Theta$ on $V^{\sqrt{a_1}}_{\sqrt{b_1}}\otimes V^{\sqrt{a_2}}_{\sqrt{b_2}}$:
$$
\Theta.(v_i\otimes v_j) =(b_1/a_1)^{j/2}\ v_i\otimes v_j.
$$
One can immediately verify that 
$$
\tW^a=\Theta W^a\Theta^{-1},\quad \tW^b=\Theta W^b\Theta^{-1},\quad \tR=\Theta R \Theta^{-1}.
$$
More generally, consider $V^{\sqrt{a_1}}_{\sqrt{b_1}}\otimes \dots\otimes V^{\sqrt{a_n}}_{\sqrt{b_n}}$ with the parameters $a_i,b_j$ satisfying the condition of Proposition \ref{irred}. Let $\Theta_{i,j}$ denote the operator $\Theta$ acting on $i$th and $j$th factors of this tensor product, and set 
$$
\Theta_{[1,n]}=\prod_{1\leq i<j\leq n}\Theta_{i,j}.
$$
For $X$ being one of the isomorphisms $W^a$, $\tW^a$, $W^b$, $\tW^b$, $R$, $\tR$, let $X_p$ denote the isomorphism $\dots\otimes 1\otimes X\otimes 1\otimes \dots$ acting on the $p$th and $p+1$st factors. 

\begin{proposition}\label{rescale} Let $X$ be either an isomorphism $W^a$, $W^b$ or $R$ and consider a tensor product $V^{\sqrt{a_1}}_{\sqrt{b_1}}\otimes \dots\otimes V^{\sqrt{a_n}}_{\sqrt{b_n}}$ satisfying the conditions of Proposition \ref{irred}. Then for every $p=1,\dots, n-1$ we have
$$
\widetilde{X}_p=\Theta_{[1,n]} X_p \Theta_{[1,n]}^{-1}.
$$
\end{proposition}
\begin{proof} We have $\widetilde{X}_p=\Theta_{p,p+1}\ X_p\ \Theta_{p,p+1}^{-1}$, so we only need to show that the contributions of the other $\Theta_{s,t}^{\pm1}$ cancel out. We have three cases:

{\bfseries Case 1: $s,t\notin\{p,p+1\}$} We have $\Theta_{s,t}\ X_p\ \Theta^{-1}_{s,t}=X_p$ since $\Theta_{s,t}$ and $X_p$ act on different tensor factors. 

{\bfseries Case 2: $s<p$, $t\in\{p,p+1\}$} Note that $\Theta_{1,2}\Theta_{1,3}$ acts on $v_i\otimes v_j\otimes v_k$ by $(b_1/a_1)^{\frac{j+k}2}$. Hence, the conservation law implies 
$$
\Theta_{1,2}\Theta_{1,3} (1\otimes X)=(1\otimes X) \Theta_{1,2}\Theta_{1,3}.
$$
Replacing $1,2,3$ by $s, p, p+1$, we get the cancellation of $\Theta_{s,p}\Theta_{s,p+1}$.

{\bfseries Case 3: $p+1<t$, $s\in\{p,p+1\}$} Note that $\Theta_{2,3}\Theta_{1,3}$ acts on $v_i\otimes v_j\otimes v_k$ by $(\frac{b_1b_2}{a_1a_2})^{\frac{k}2}$. Since the application of $X$ only permutes $a_1,a_2$ and $b_1,b_2$, we get
$$
\Theta_{2,3}\Theta_{1,3} (X\otimes 1)=(X\otimes 1) \Theta_{2,3}\Theta_{1,3}.
$$
Replacing $1,2,3$ by  $p, p+1, t$, we get the cancellation of $\Theta_{p,t}\Theta_{p+1,t}$.
\end{proof}

Here is the practical application of the computation above. Assume we are given a partition function of a vertex model with vertex weights $\tW^a$, $\tW^b$, $\tR$. Then, in order to change all the weights to the stochastic weights $W^a$, $W^b$, $R$, we need to apply $\Theta_{[1,n]}$ to the vector corresponding to the incoming boundary condition and $\Theta_{[1,n]}^{-1}$ to the outgoing boundary condition.

\section{Reflection equation} \label{sec: reflectioneq}

Here we prove the reflection equations involving the vertex weights $\tW^a$, $\tW^b$ and $\tR$. To do so, we first describe one-dimensional subspaces of the dual modules $(V^{\sqrt{a}}_{\sqrt{b}})^*$, which are invariant under a $q$-Onsager coideal subalgebra of $U_q(L\sl_2)$.

\subsection{$q$-Onsager invariant subspaces} Let $\gamma,\sigma,\tau,z$ be parameters such that $\gamma,z\neq 0$. Set
$$
B_i(\gamma,\sigma)=\left(q^{\frac12}-q^{-\frac12}\right)f_i-\gamma (1-q^{-1}) e_ik_i^{-1}+\sigma k_i^{-1}\in U_q(L\sl_2).
$$
Let $L(\xi,\zeta)_z$ be the subalgebra of $U_q(L\sl_2)$ generated by 
$$
B_1(1, \xi+\xi^{-1}),\qquad B_0\left(\frac{q}{z^2},\frac{q^{\frac12}(\zeta+\zeta^{-1})}{z}\right).
$$
This subalgebra is isomorphic to the \emph{$q$-Onsager algebra}, see \cite[Example 7.6]{Kol14}.
The goal of this subsection is to describe one-dimensional subspaces of $\left(V(s)_z\otimes V(t)_z\right)^*$ which are preserved by the action of $L(\xi,\zeta)_z$. We start from describing the eigenvectors of $B_1$ and $B_0$ in $V(s)_z^*$.

\begin{lemma}\label{B1omega} Assume that $\gamma,\sigma, \tau, s, z\in\calF$ satisfy $s, z,\gamma \neq 0$, $s^2\notin q^{-\mathbb{Z}_{\geq 0}}$. Let $\langle\omega|\in V(s)_z^*$ be a nonzero dual vector satisfying  
$$
\langle\omega|\ B_1(\gamma, \sigma)=\tau \langle\omega|.
$$
Then the generating function
$$
F(T)=\sum_{n\geq 0}\langle\omega\mid v_n\rangle \frac{(s^2;q)_n}{(q;q)_n} T^n
$$ 
satisfies
$$
\frac{F(T)}{F(qT)}=\frac{1-\sigma sT+\gamma s^2T^2}{1-\tau T+\gamma T^2}.
$$
\end{lemma}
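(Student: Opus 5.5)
The plan is to turn the eigenvalue equation $\langle\omega|\,B_1(\gamma,\sigma)=\tau\langle\omega|$ into a three-term recurrence for the coefficients $c_n:=\langle\omega\mid v_n\rangle$. After renormalizing the $c_n$, that recurrence becomes exactly a $q$-difference equation for $F(T)$. Since $V(s)_z^*$ is the full dual with its right action, pairing with $v_n$ gives $\langle\omega|B_1|v_n\rangle=\tau c_n$ for all $n\geq0$. So the first step is to compute $B_1(\gamma,\sigma).v_n$ from Proposition \ref{explicitAction}; only the $U_q(\sl_2)$ part of the action, \eqref{slaction}, is needed.

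Concretely, $(q^{\frac12}-q^{-\frac12})f_1.v_n=(1-s^2q^n)v_{n+1}$ and $k_1^{-1}.v_n=sq^nv_n$. For the middle term, $e_1k_1^{-1}.v_n=q^{n}q^{\frac12(1-n)}[n]_q\,v_{n-1}$. Multiplying by $(1-q^{-1})=q^{-\frac12}(q^{\frac12}-q^{-\frac12})$ and simplifying $[n]_q$ gives $\gamma(1-q^{-1})e_1k_1^{-1}.v_n=\gamma(q^n-1)v_{n-1}$. Hence for all $n\geq0$, with $c_{-1}:=0$, we get
$$
(1-s^2q^n)c_{n+1}+\gamma(1-q^n)c_{n-1}+\sigma sq^n c_n=\tau c_n .
$$

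Next I substitute $c_n=\frac{(q;q)_n}{(s^2;q)_n}d_n$, so that $F(T)=\sum_n d_nT^n$; this is allowed since $s^2\notin q^{-\mathbb Z_{\geq0}}$. Dividing by $\frac{(q;q)_n}{(s^2;q)_n}$ gives
$$
(1-q^{n+1})d_{n+1}+\gamma(1-s^2q^{n-1})d_{n-1}+\sigma sq^nd_n=\tau d_n .
$$
I then multiply by $T^{n+1}$ and sum over $n\geq0$, using:
\begin{itemize}
\item $\sum(1-q^{n+1})d_{n+1}T^{n+1}=F(T)-F(qT)$;
\item the $d_{n-1}$ terms sum to $\gamma T^2F(T)-\gamma s^2T^2F(qT)$;
\item the $\sigma$ term sums to $\sigma sTF(qT)$, and the right side to $\tau TF(T)$.
\end{itemize}
Rearranging yields $F(T)(1-\tau T+\gamma T^2)=F(qT)(1-\sigma sT+\gamma s^2T^2)$, which is the claimed ratio.

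The only delicate point is making sense of the quotient $F(T)/F(qT)$ as a formal power series. The recurrence shows that $d_0$ determines every $d_n$, because the coefficient $(1-q^{n+1})$ is nonzero. So $\omega\neq0$ forces $d_0\neq0$, and then $F(qT)$ is invertible in $\calF[[T]]$. Beyond that, the main thing to get right is the bookkeeping of the $q$-powers in the $e_1k_1^{-1}$ term.
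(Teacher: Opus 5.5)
Your proposal is correct and follows the paper's proof essentially line by line. It uses the same three-term recurrence for $c_n$, the same renormalization $d_n=c_n\frac{(s^2;q)_n}{(q;q)_n}$, and the same summation into a $q$-difference equation for $F$; the paper multiplies by $T^n$ and sums over $n\geq -1$, which is your computation divided by $T$. Your observation that the recurrence forces $d_0\neq 0$, so that $F(qT)$ is invertible in $\calF[[T]]$, is a slightly more explicit version of the paper's remark that the division is well-defined because $\langle\omega|\neq 0$.
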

\begin{proof}
Since $s^2\notin q^{-\mathbb{Z}_{\geq 0}}$, the module $V(s)_z$ is infinite-dimensional. Using Proposition \ref{explicitAction}, for every $n\geq0$ we have
$$
\langle\omega|\left(B_1(\gamma,\sigma)-\tau\right)|v_n\rangle=(1-s^2q^n)c_{n+1}-\gamma(q^n-1)c_{n-1}+\sigma sq^nc_n-\tau c_n=0,
$$
where $c_n:=\langle\omega\mid v_n\rangle$ and we set $c_{-n}=0$ for $n\in\mathbb Z_{> 0}$. Setting $d_n=c_n \frac{(s^2;q)_n}{(q;q)_n}$, the recurrence relation above can be rewritten as 
\begin{equation}
\label{omegarec}
(1-q^{n+1})d_{n+1}+\gamma(1-s^2q^{n-1})d_{n-1}+\sigma sq^nd_n-\tau d_n=0.
\end{equation}
Let $T$ be a formal variable and set $F(T)=\sum_{n\geq 0}d_n T^n$. Note that \eqref{omegarec} holds for all $n\geq -1$, where $n=-1$ case follows from $(1-q^{n+1})=0$. So, multiplying \eqref{omegarec} by $T^n$ and taking the sum over $n\geq -1$, we get
$$
T^{-1}F(T)-T^{-1}F(qT)+\gamma T F(T)-\gamma T s^2 F(qT)+\sigma s F(qT)-\tau F(T)=0.
$$
Rearranging the terms yields the claim. Note that the division by $F(qT)$ is well-defined since $\langle\omega|\neq 0$.
\end{proof}

\begin{lemma}\label{B0flip} For $s,z\in\calF^\times$ let $\pi_{V(s)_z}:U_q(L\sl_2)\to \End(V(s)_z)$ denote the action of $U_q(L\sl_2)$ on $V(s)_z$. Assume that $\gamma,\sigma,\tau\in\calF$ and $\gamma\neq 0$. Then
$$
\pi_{V(s)_z}\left(B_0\left(\frac{q}{\gamma z^2},\frac{q^{\frac12}\tau}{\gamma z}\right)-\frac{q^{\frac12}\sigma}{\gamma z}\right)=-\frac{q^{\frac12}}{\gamma z} \pi_{V(s)_z}\left(B_1(\gamma, \sigma)k_1-\tau k_1\right).
$$
\end{lemma}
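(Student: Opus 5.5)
The identity is a statement about operators on the single module $V(s)_z$, so I will prove it by direct comparison, using only the explicit form of the evaluation action. The key input is Proposition \ref{explicitAction}: on $V(s)_z$ the loop generators act through the $U_q(\sl_2)$ ones via
$$
\pi(k_0)=\pi(k_1)^{-1},\qquad \pi(e_0)=z\,\pi(f_1),\qquad \pi(f_0)=z^{-1}\pi(e_1).
$$
In particular $\pi(k_0^{-1})=\pi(k_1)$.

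First, I will expand the left-hand side. Put $\gamma'=\frac{q}{\gamma z^2}$ and $\sigma'=\frac{q^{1/2}\tau}{\gamma z}$ in the definition of $B_0(\gamma',\sigma')$ and substitute the relations above. This gives
$$
(q^{\frac12}-q^{-\frac12})z^{-1}e_1-\frac{q}{\gamma z^2}(1-q^{-1})\,z\, f_1k_1+\frac{q^{\frac12}\tau}{\gamma z}k_1-\frac{q^{\frac12}\sigma}{\gamma z},
$$
where every generator is understood through $\pi_{V(s)_z}$.

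Second, I will expand the right-hand side. Using $k_1^{-1}k_1=1$, it equals
$$
-\frac{q^{\frac12}}{\gamma z}\Big((q^{\frac12}-q^{-\frac12})f_1k_1-\gamma(1-q^{-1})e_1+\sigma-\tau k_1\Big).
$$

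Third, I will match the two expansions term by term. Both are linear combinations of $e_1$, $f_1k_1$, $k_1$ and $1$, so it suffices to compare these four coefficients.

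For $e_1$, the identity $q^{\frac12}(1-q^{-1})=q^{\frac12}-q^{-\frac12}$ shows that the right-hand coefficient $\frac{q^{1/2}(1-q^{-1})}{z}$ equals the left-hand one.

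For $f_1k_1$, the left-hand coefficient is $-\frac{q-1}{\gamma z}$. The right-hand coefficient is $-\frac{q^{1/2}(q^{1/2}-q^{-1/2})}{\gamma z}$, and $q^{1/2}(q^{1/2}-q^{-1/2})=q-1$, so they agree.

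The coefficients of $k_1$ and of $1$ match on sight.

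There is no real obstacle here. The only points needing care are keeping track of the convention $k_0^{-1}\mapsto k_1$ and the ordering $f_1k_1$ versus $e_1$, since $e_0k_0^{-1}$ becomes $z f_1k_1$ on this module. The hypothesis $\gamma\neq0$ is only needed for the coefficients to be defined.
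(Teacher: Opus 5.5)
Your proposal is correct and is the same argument as the paper, which also just substitutes $\pi(k_0)=\pi(k_1)^{-1}$, $\pi(e_0)=z\,\pi(f_1)$, $\pi(f_0)=z^{-1}\pi(e_1)$ into $B_0$ and compares with $B_1(\gamma,\sigma)k_1-\tau k_1$. You spelled out that computation in full, and each of your four coefficient checks is right.
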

\begin{proof} Follows directly from 
$$
k_0=k_1^{-1},\qquad\pi_{V(s)_z}(f_0)=z^{-1} \pi_{V(s)_z}(e_1),\qquad \pi_{V(s)_z}(e_0)=z \pi_{V(s)_z}(f_1).
$$
\end{proof}

\begin{proposition}\label{Bomega1} Let $s,\xi,\zeta,z\in\calF^\times$ and assume $s^2\notin q^{-\mathbb{Z}_{\geq 0}}$. There exists a unique up to a scalar nonzero covector $\langle\omega_{s;\xi,\zeta}|\in V(s)_z^*$ such that 
$$
\langle\omega_{s;\xi,\zeta}|\ L(\xi,\zeta)_z\subset\calF\langle\omega_{s;\xi,\zeta}|.
$$
This covector satisfies
$$
\langle\omega_{s;\xi,\zeta}|\ B_1(1,\xi+\xi^{-1})=(\zeta+\zeta^{-1})\langle\omega_{s;\xi,\zeta}|,
$$
$$
\langle\omega_{s;\xi,\zeta}|\ B_0\left(\frac{q}{z^2},\frac{q^{\frac12}(\zeta+\zeta^{-1})}{z}\right)=\frac{q^{\frac12}(\xi+\xi^{-1})}{z}\langle\omega_{s;\xi,\zeta}|,
$$
and it is determined by
$$
\langle\omega_{s;\xi,\zeta}|=\sum_{n\geq 0} \omega_{s;\xi,\zeta}(n) \langle v_n |,
$$ 
$$
\sum_{n\geq 0}\omega_{s;\xi,\zeta}(n)\frac{(s^2;q)_n}{(q;q)_n}T^n=\frac{(s\xi T;q)_\infty(s\xi^{-1} T;q)_\infty}{(\zeta T;q)_\infty(\zeta^{-1} T;q)_\infty},
$$
where $T$ is a formal variable. Equivalently,
\begin{equation}\label{omegaex}
\omega_{s;\xi,\zeta}(n)=\zeta^{-n}\sum_{i+j=n}(\xi\zeta/s)^i\Phi(i,j,s\xi/\zeta, s\zeta/\xi).
\end{equation}
\end{proposition}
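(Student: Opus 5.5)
We reduce everything to Lemma \ref{B1omega} and Lemma \ref{B0flip}. First, suppose $\langle\omega|\neq 0$ spans an $L(\xi,\zeta)_z$-stable line. Then $\langle\omega|$ is a common eigencovector of $B_1:=B_1(1,\xi+\xi^{-1})$ and $B_0:=B_0(q/z^2, q^{\frac12}(\zeta+\zeta^{-1})/z)$, with eigenvalues $\tau$ and $\tau_0$. By Lemma \ref{B1omega} with $\gamma=1$, $\sigma=\xi+\xi^{-1}$, the generating function $F(T)=\sum_n \langle\omega|v_n\rangle\frac{(s^2;q)_n}{(q;q)_n}T^n$ satisfies
$$
\frac{F(T)}{F(qT)}=\frac{(1-s\xi T)(1-s\xi^{-1}T)}{1-\tau T+T^2}.
$$
Writing $\tau=\eta+\eta^{-1}$ and iterating, we get (normalizing $F(0)=1$, which is forced since $F\neq0$ and the recursion determines all $d_n$ from $d_0$)
$$
F(T)=\frac{(s\xi T;q)_\infty(s\xi^{-1}T;q)_\infty}{(\eta T;q)_\infty(\eta^{-1}T;q)_\infty}.
$$
So the $B_1$-eigenline with eigenvalue $\tau$ is one-dimensional. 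This gives uniqueness once $\tau$ is pinned down.

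To pin down $\tau$, we use $B_0$. Lemma \ref{B0flip} with $\gamma=1$, $\tau\mapsto \zeta+\zeta^{-1}$, and $\sigma$ chosen as $\xi+\xi^{-1}$ converts the $B_0$-eigen equation $\langle\omega|B_0=\tau_0\langle\omega|$ into a statement about $\langle\omega|(B_1-(\zeta+\zeta^{-1}))k_1$. Since $k_1$ is invertible, this becomes a statement about $\langle\omega|(B_1-(\zeta+\zeta^{-1}))$. Concretely, Lemma \ref{B0flip} gives
$$
\langle\omega|\Bigl(B_0-\tfrac{q^{\frac12}(\xi+\xi^{-1})}{z}\Bigr)=-\tfrac{q^{\frac12}}{z}\,\langle\omega|\bigl(B_1-(\zeta+\zeta^{-1})\bigr)k_1 .
$$
If $\tau=\zeta+\zeta^{-1}$, the right side vanishes, so $\langle\omega|$ is a $B_0$-eigencovector with eigenvalue $q^{\frac12}(\xi+\xi^{-1})/z$. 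This proves existence, and taking $\eta=\zeta$ gives the stated generating function.

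For uniqueness we must rule out $\tau\neq\zeta+\zeta^{-1}$. In that case the right side equals $-\frac{q^{\frac12}}{z}(\tau-\zeta-\zeta^{-1})\langle\omega|k_1$, and this must be proportional to $\langle\omega|$. But $\langle\omega|k_1$ has coefficients $s^{-1}q^{-n}c_n$, so it is proportional to $\langle\omega|$ only if $\langle\omega|$ is supported on a single weight $n$. We will show this is impossible: a covector supported at one $n$ cannot be a $B_1$-eigencovector, because the $f_1$ term gives $(1-s^2q^{n-1})c_n$ on $v_{n-1}$, which is nonzero since $s^2\notin q^{-\mathbb Z_{\geq0}}$ (or, for $n=0$, the $e_1$ term contributes on $v_1$). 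This step, checking that no degenerate single-weight eigencovector exists, is the one delicate point. The rest is bookkeeping.

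Finally, to get \eqref{omegaex}, we expand the generating function with the $q$-binomial theorem. Write it as a product of $\frac{(s\xi T;q)_\infty}{(\zeta T;q)_\infty}$ and $\frac{(s\xi^{-1}T;q)_\infty}{(\zeta^{-1}T;q)_\infty}$, whose coefficients are $\frac{(s\xi/\zeta;q)_i}{(q;q)_i}\zeta^i$ and $\frac{(s\zeta/\xi;q)_j}{(q;q)_j}\zeta^{-j}$. Taking the Cauchy product and dividing by $\frac{(s^2;q)_n}{(q;q)_n}$ gives $\sum_{i+j=n}\zeta^{i-j}\frac{(s\xi/\zeta;q)_i(s\zeta/\xi;q)_j}{(s^2;q)_n}\frac{(q;q)_n}{(q;q)_i(q;q)_j}$. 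This matches \eqref{omegaex} after unpacking $\Phi$, since $\zeta^{-n}(\xi\zeta/s)^i(s\zeta/\xi)^i=\zeta^{i-j}$.
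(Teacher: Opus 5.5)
Your proof is correct, but you pin down the eigenvalues by a different argument from the paper's.

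\textbf{The paper's route.} It writes the $B_0$-eigenvalue as $q^{\frac12}\mu/z$ and applies Lemma~\ref{B0flip} with $\sigma$ replaced by the unknown $\mu$. This turns the $B_0$-equation into a second $B_1$-type equation, $\langle\omega|B_1(1,\mu)=(\zeta+\zeta^{-1})\langle\omega|$. Applying Lemma~\ref{B1omega} to both equations gives two rational expressions for $F(T)/F(qT)$. Comparing the coefficients of $T,T^2,T^3$ then forces $\lambda=\zeta+\zeta^{-1}$ and $\mu=\xi+\xi^{-1}$, using only $s\neq0$ and $s^2\neq1$.

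\textbf{Your route.} You apply Lemma~\ref{B0flip} with the fixed value $\sigma=\xi+\xi^{-1}$. A wrong $B_1$-eigenvalue then makes $\langle\omega|$ a $k_1$-eigencovector. Since $q$ is not a root of unity, it would be supported on a single weight, and the $B_1$-recurrence rules this out via $1-s^2q^{n-1}\neq0$ (or $1-q\neq0$ when $n=0$).

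\textbf{Comparison.} The paper's argument treats the two generators symmetrically. It also exhibits the ``swap $\sigma\leftrightarrow\tau$'' mechanism that is reused in Theorem~\ref{Bdoubleomega}. Yours needs only one application of Lemma~\ref{B1omega} and gets the $B_0$-eigenvalue as a byproduct. It also makes the existence direction explicit: the paper leaves implicit that the constructed covector satisfies the $B_0$-equation.

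\textbf{Two harmless remarks.} First, an $\eta$ with $\eta+\eta^{-1}=\tau$ may exist only in a quadratic extension of $\calF$. You do not need it, because one-dimensionality of the eigenline follows directly from the recursion. Second, existence uses the converse of Lemma~\ref{B1omega}. This holds because the recurrence and the functional equation are equivalent, and any $d_0$ extends to a solution since $1-q^{n+1}\neq0$.
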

\begin{proof}
For now let $\sigma,\tau$ be arbitrary parameters. Assume that for $\langle \omega|\in V(s)_z^*$ and $\lambda,\mu\in\calF$ we have 
$$
\langle\omega|\ B_1(1,\sigma)=\lambda\langle\omega|,
$$
$$
\langle\omega|\ B_0\left(\frac{q}{z^2},\frac{q^{\frac12}\tau}{z}\right)=\frac{q^{\frac12}\mu}{z}\langle\omega|.
$$
Using Lemma \ref{B0flip}, we can rewrite the last equation as
$$
\langle\omega|\ B_1(1,\mu)=\tau\langle\omega|.
$$
Let $T$ be a formal variable and set $F(T)=\sum_{n\geq 0}\langle \omega\mid v_n\rangle \frac{(s^2;q)_n}{(q;q)_n}T^n$. Lemma \ref{B1omega} implies
$$
\frac{F(T)}{F(qT)}=\frac{1-\sigma sT+ s^2T^2}{1-\lambda T+T^2}=\frac{1-\mu sT+ s^2T^2}{1-\tau T+ T^2}.
$$
Taking the common denominator and comparing the coefficients of $T$, $T^2$ and $T^3$, we get
$$
(\sigma-\mu)s=\lambda-\tau,\qquad \sigma\tau s=\mu\lambda s,\qquad (\sigma-\mu)s=(\lambda-\tau)s^2.
$$
Since $s^2\neq1$ and $s\neq 0$, this implies $\lambda=\tau$, $\mu=\sigma$. 

Now set $\sigma=\xi+\xi^{-1}$ and $\tau=\zeta+\zeta^{-1}$. The elements $B_1(1,\sigma)$ and $B_0\left(\frac{q}{ z^2},\frac{q^{\frac12}\tau}{z}\right)$ become the generators of $L(\xi,\zeta)_z$, and the generating function $F(T)$ becomes
$$
\frac{F(T)}{F(qT)}=\frac{1-\sigma sT+ s^2T^2}{1-\tau T+ T^2}=\frac{(1-s\xi T)(1-s\xi^{-1} T)}{(1-\zeta T)(1-\zeta^{-1} T)}.
$$
Let 
$$
G(T)=\frac{(\zeta T;q)_\infty(\zeta^{-1} T;q)_\infty}{(s\xi T;q)_\infty(s\xi^{-1} T;q)_\infty}F(T).
$$
Then $G(T)=G(qT)$, which forces $G(T)=G(0)=F(0)=\langle\omega\mid v_0\rangle$. Hence
$$
\sum_{n\geq 0}\langle \omega\mid v_n\rangle \frac{(s^2;q)_n}{(q;q)_n}T^n=F(T)=\langle\omega\mid v_0\rangle\frac{(s\xi T;q)_\infty(s\xi^{-1} T;q)_\infty}{(\zeta T;q)_\infty(\zeta^{-1} T;q)_\infty}.
$$
This determines $\langle \omega|$ uniquely up to a scalar, and by rescaling $\omega$ we can set $\langle\omega\mid v_0\rangle=1$. 

To get the explicit expression \eqref{omegaex} we use the $q$-binomial theorem:
\begin{equation}\label{eq: qbinom}
\sum_{n\geq 0} \frac{(a;q)_n}{(q;q)_n}T^n=\frac{(aT;q)_\infty}{(T;q)_\infty}.
\end{equation}
It implies that
$$
\frac{(s\xi T;q)_\infty(s T/\xi;q)_\infty}{(\zeta T;q)_\infty( T/\zeta;q)_\infty}=\left(\sum_{i\geq 0}\frac{(s\xi/\zeta;q)_i}{(q;q)_i}(\zeta T)^i\right)\left(\sum_{j\geq 0}\frac{(s\zeta/\xi;q)_j}{(q;q)_j}(T/\zeta)^j\right).
$$
Hence
\begin{multline*}
\langle\omega\mid v_n\rangle=\sum_{i+j=n}\frac{(s\xi/\zeta;q)_i(s\zeta/\xi;q)_j}{(s^2;q)_n}\frac{(q;q)_n}{(q;q)_i(q;q)_j}\zeta^{i-j}\\
=\zeta^{-n}\sum_{i+j=n}(\xi\zeta/s)^i\Phi(i,j,s\xi/\zeta, s\zeta/\xi). \qedhere
\end{multline*}
\end{proof}

Now we are ready for our key statement.

\begin{theorem}\label{Bdoubleomega} Let $\xi, \zeta, s, t, z\in \calF^\times$ be parameters satisfying 
\begin{equation*}
s^2,t^2, st\notin q^{-\mathbb{Z}_{\geq 0}}. 
\end{equation*}
Then there is a unique up to a scalar element $\langle\omega|\in\left(V(s)_z\otimes V(t)_z\right)^*$ satisfying
$$
\langle \omega |\ B_1(1, \xi+\xi^{-1})=(\xi+\xi^{-1})\langle \omega |,\quad \langle \omega |\ B_0\left(\frac{q}{z^2}, \frac{q^{\frac12}(\zeta+\zeta^{-1})}{z}\right)=\frac{q^{\frac12}(\zeta+\zeta^{-1})}{z}\langle \omega |.
$$
This element $\langle \omega |$ is proportional to $\langle\omega_{s;\zeta,\xi}|\otimes \langle \omega_{t;\xi,\zeta}|$.
\end{theorem}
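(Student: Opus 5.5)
The plan is to prove existence by a coproduct computation showing that the tensor product of two one-factor covectors from Proposition \ref{Bomega1} is a joint eigenvector. Uniqueness will follow from a triangularity argument in the total degree $i+j$ of the basis $v_i\otimes v_j$.

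\emph{Existence.} Using $\Delta f_i=f_i\otimes 1+k_i^{-1}\otimes f_i$, $\Delta(e_ik_i^{-1})=e_ik_i^{-1}\otimes 1+k_i^{-1}\otimes e_ik_i^{-1}$ and $\Delta k_i^{-1}=k_i^{-1}\otimes k_i^{-1}$, we get the coideal-type formula
$$
\Delta B_i(\gamma,\sigma)=\bigl(B_i(\gamma,\sigma)-\sigma k_i^{-1}\bigr)\otimes 1+k_i^{-1}\otimes B_i(\gamma,\sigma).
$$
Hence if $\langle\omega_2|B_i(\gamma,\sigma)=\lambda\langle\omega_2|$, then
$$
(\langle\omega_1|\otimes\langle\omega_2|)\,\Delta B_i(\gamma,\sigma)=\langle\omega_1|B_i(\gamma,\lambda)\otimes\langle\omega_2|.
$$
So the tensor product is an eigencovector with eigenvalue $\mu$ as soon as $\langle\omega_1|B_i(\gamma,\lambda)=\mu\langle\omega_1|$.

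Take $\langle\omega_2|=\langle\omega_{t;\xi,\zeta}|$. By Proposition \ref{Bomega1}, $B_1(1,\xi+\xi^{-1})$ acts on it by $\lambda=\zeta+\zeta^{-1}$. The $B_0$-generator acts on it by $q^{\frac12}(\xi+\xi^{-1})/z$. We then need $\langle\omega_1|$ with
$$
\langle\omega_1|B_1(1,\zeta+\zeta^{-1})=(\xi+\xi^{-1})\langle\omega_1|,
\qquad
\langle\omega_1|B_0\bigl(\tfrac{q}{z^2},\tfrac{q^{\frac12}(\xi+\xi^{-1})}{z}\bigr)=\tfrac{q^{\frac12}(\zeta+\zeta^{-1})}{z}\langle\omega_1|.
$$
These are exactly the defining relations of $\langle\omega_{s;\zeta,\xi}|$ in Proposition \ref{Bomega1}, with $\xi$ and $\zeta$ swapped. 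This gives existence, and it uses only $s^2,t^2\notin q^{-\mathbb Z_{\geq0}}$.

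\emph{Uniqueness.} Write $\langle\omega|=\sum c_{i,j}\langle v_i\otimes v_j|$. Pairing the two eigen-equations with $v_i\otimes v_j$, where $i+j=N$, gives linear relations among the coefficients in degrees $N+1$, $N$ and $N-1$. Only the $f_1$-part of $B_1$ and the $e_0=z f_1$-part of $B_0$ raise the degree. So the degree-$(N+1)$ coefficients enter only through the covectors
$$
\langle\omega|\,(f_1\otimes 1+k_1^{-1}\otimes f_1)|v_i\otimes v_j\rangle,\qquad
\langle\omega|\,(f_1\otimes k_1^{-1}+1\otimes f_1)|v_i\otimes v_j\rangle,
$$
up to nonzero scalars. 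I will show that these relations, over all $i+j=N$, determine every $c_{i',j'}$ with $i'+j'=N+1$ in terms of lower-degree coefficients. By induction, all coefficients are then determined by $c_{0,0}$, which gives uniqueness up to a scalar.

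Write $\alpha_i=(1-s^2q^i)/(q^{\frac12}-q^{-\frac12})$ and $\beta_j=(1-t^2q^j)/(q^{\frac12}-q^{-\frac12})$. By Proposition \ref{explicitAction}, the two operators send $v_i\otimes v_j$ to
$$
\alpha_i\,v_{i+1}\otimes v_j+s^{-1}q^{-i}\beta_j\,v_i\otimes v_{j+1}
\quad\text{and}\quad
t^{-1}q^{-j}\alpha_i\,v_{i+1}\otimes v_j+\beta_j\,v_i\otimes v_{j+1}.
$$
The $2\times2$ determinant is $\alpha_i\beta_j\bigl(1-(st)^{-1}q^{-N}\bigr)$. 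The factors $\alpha_i,\beta_j$ are nonzero because $s^2,t^2\notin q^{-\mathbb Z_{\geq0}}$. The last factor is nonzero because $st\neq q^{-N}$, which is exactly where the hypothesis $st\notin q^{-\mathbb Z_{\geq0}}$ is needed. Hence both $v_{i+1}\otimes v_j$ and $v_i\otimes v_{j+1}$ lie in the span of the images, and letting $(i,j)$ range over degree $N$ covers all basis vectors of degree $N+1$. Therefore each degree-$(N+1)$ coefficient is a linear combination of equations involving only already-known lower-degree coefficients.

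The step I expect to need the most care is bookkeeping the exact form of $\Delta B_0$ on $V(s)_z\otimes V(t)_z$, in particular the $k_0$-factors and the $z$-scalings. The goal there is to confirm that its degree-raising part really is a nonzero multiple of $f_1\otimes k_1^{-1}+1\otimes f_1$, so that the determinant computation above applies.
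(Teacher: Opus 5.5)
Your proof is correct, but your uniqueness argument takes a different route from the paper. The paper never separates existence from uniqueness. Starting from an arbitrary solution and writing $\sigma=\xi+\xi^{-1}$, $\tau=\zeta+\zeta^{-1}$, it does the following:
\begin{itemize}
\item It uses the coproduct formula and Lemma~\ref{B0flip} to turn the $B_0$-equation into a $B_1$-type equation.
\item It subtracts the two resulting relations to get $\langle\omega|\bigl([B_1(1,\tau)-\sigma]\otimes 1\bigr)\bigl(k_1\otimes 1-1\otimes k_1^{-1}\bigr)=0$.
\item It inverts the diagonal operator $k_1\otimes 1-1\otimes k_1^{-1}$, whose entries $s^{-1}q^{-n}(1-stq^{n+m})$ are nonzero by the hypothesis on $st$.
\end{itemize}
This splits the joint equations into the one-factor equations $\langle\omega|\,[B_1(1,\tau)-\sigma]\otimes 1=\langle\omega|\,1\otimes[B_1(1,\sigma)-\tau]=0$. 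The rank-one statement of Lemma~\ref{B1omega} and Proposition~\ref{Bomega1}, applied factor by factor, then identifies $\langle\omega|$ as a multiple of $\langle\omega_{s;\zeta,\xi}|\otimes\langle\omega_{t;\xi,\zeta}|$. Existence is only implicit there, since the steps are reversible.

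Your direct check with $\Delta B_i(\gamma,\sigma)=B_i(\gamma,0)\otimes 1+k_i^{-1}\otimes B_i(\gamma,\sigma)$ makes existence explicit. It also shows that existence needs only $s^2,t^2\notin q^{-\mathbb Z_{\geq0}}$. Your recursion in total degree replaces the splitting trick by a plain dimension count. Since the raising parts do not see the eigenvalues, it even shows the joint eigenspace is at most one-dimensional for arbitrary eigenvalues.

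What you give up is that uniqueness alone no longer tells you what the solution is, so you genuinely need both halves. The paper's route produces the factorization directly. The hypothesis on $st$ enters both proofs through the same factor $1-stq^{N}$.

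There is a harmless bookkeeping slip. Since $k_1^{-1}v_i=sq^iv_i$, your two raising operators send $v_i\otimes v_j$ to $\alpha_iv_{i+1}\otimes v_j+sq^i\beta_jv_i\otimes v_{j+1}$ and to $tq^j\alpha_iv_{i+1}\otimes v_j+\beta_jv_i\otimes v_{j+1}$. The determinant is therefore $\alpha_i\beta_j(1-stq^N)$, which vanishes exactly when $st=q^{-N}$, as in your version.

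The point you flagged about $\Delta B_0$ also checks out. On $V(s)_z\otimes V(t)_z$ its raising part is $-\tfrac{q-1}{z}\bigl(f_1k_1\otimes 1+k_1\otimes f_1k_1\bigr)=-\tfrac{q(q-1)}{z}(k_1\otimes k_1)\bigl(f_1\otimes k_1^{-1}+1\otimes f_1\bigr)$, using $f_1k_1=qk_1f_1$. Here $k_1\otimes k_1$ acts by the nonzero scalar $(st)^{-1}q^{-N-1}$ in degree $N+1$.
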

Note that $\xi$ and $\zeta$ alternate in the expression for $\langle\omega|$.
\begin{proof} Assume that $\langle\omega|\in \left(V(s)_z\otimes V(t)_z\right)^*$ satisfies the conditions from the claim. Note that for every $i\in\{0,1\}$ we have
$$
\Delta B_i(\gamma,\sigma)= k_i^{-1}\otimes (B_i(\gamma,\sigma)-\tau)+B_i(\gamma,\tau)\otimes 1.
$$
Setting $\gamma=1$, $\sigma=\xi+\xi^{-1}$, $\tau=\zeta+\zeta^{-1}$ we get
\begin{equation}\label{B1relation}
\langle\omega|\ \left(k_1^{-1}\otimes (B_1(1,\sigma)-\tau)+B_1(1,\tau)\otimes 1\right) =\langle\omega|\ B_1(1,\sigma)=\sigma \langle\omega|. 
\end{equation}
A similar computation for $B_0$ yields
\begin{multline*}
\langle\omega|\left(k_0^{-1}\otimes \left[B_0\left(\frac{q}{z^2},\frac{q^{\frac12}\tau}{z}\right)-\frac{q^{\frac12}\sigma}{ z}\right]+\left[B_0\left(\frac{q}{z^2},\frac{q^{\frac12}\sigma}{z}\right)-\frac{q^{\frac12}\tau}{z}\right]\otimes 1\right)\\
=\langle\omega|\left(B_0\left(\frac{q}{z^2},\frac{q^{\frac12}\tau}{z}\right)-\frac{q^{\frac12}\tau}{z}\right)=0.
\end{multline*}
Applying Lemma \ref{B0flip}, we get
\begin{equation}\label{B0relation}
\langle\omega|\ \left(k_1\otimes \left(B_1\left(1,\sigma\right)k_1-\tau k_1\right)+(B_1\left(1,\tau\right)k_1-\sigma k_1)\otimes 1\right)=0.
\end{equation}
Multiplying \eqref{B1relation} by $k_1\otimes 1$ and \eqref{B0relation} by $k_1^{-1}\otimes k_1^{-1}$ results in the following system of relations:
\begin{equation}\label{Bsystemrelation}
\begin{split}
\langle\omega|\ \left(1\otimes [B_1(1,\sigma)-\tau]+[B_1(1,\tau)k_1-\sigma k_1]\otimes 1\right) =0,\\
\langle\omega|\ \left(1\otimes \left[B_1\left(1,\sigma\right)-\tau\right]+[B_1\left(1,\tau\right)-\sigma]\otimes k_1^{-1}\right)=0.
\end{split}
\end{equation}
Taking the difference of the two expressions above, we arrive at
$$
\langle\omega|\left(\left[B_1(1,\tau)-\sigma\right]\otimes 1\right)\left(k_1\otimes 1-1\otimes k_1^{-1}\right)=0.
$$
The element $k_1\otimes 1-1\otimes k_1^{-1}$ acts on $V(s)_z\otimes V(t)_z$ as a diagonal matrix with entries $\{s^{-1}q^{-n}-tq^{m}\}_{n,m\in\mathbb Z_{\geq 0}}$. Since $st\notin q^{-\mathbb{Z}_{\geq0}}$, all these entries are nonzero, hence the action of $k_1\otimes 1-1\otimes k_1^{-1}$ is invertible and we get
$$
\langle\omega|\left[B_1(1,\tau)-\sigma\right]\otimes 1=0.
$$
Together with \eqref{Bsystemrelation}, this implies
\begin{equation}
\label{splitB}
\langle\omega|\left[B_1(1,\tau)-\sigma\right]\otimes 1=\langle\omega|\, 1\otimes\left[B_1(1,\sigma)-\tau\right]=0.
\end{equation}

Consider the expansion $\langle\omega|=\sum_{n\geq 0} \langle v_n|\otimes \langle w_n|$, where $\langle v_n|\in V(t)_z^*$ are dual to the standard basis $\{v_n\}_{n\geq 0}$ of $V(s)_z$ and $\langle w_n|\in V(t)_z^*$. Then, for all $n\geq0$ we have 
$$
\langle w_n| B_1(1,\sigma)=\tau \langle w_n|.
$$ 
Remembering the values of $\sigma,\tau$ and applying Lemma \ref{B1omega} together with Proposition \ref{Bomega1} leads to $\langle w_n|=c_n\langle \omega_{t;\xi,\zeta}|$ for some $c_n\in\calF$. Hence, $\langle\omega|=\langle w| \otimes \langle \omega_{t;\xi,\zeta}|$ for $\langle w|=\sum_{n\geq 0} c_n\langle v_n|\in V(s)_z^*$. From \eqref{splitB} we have 
$$
\langle w|\, B_1(1,\tau)=\sigma\langle w|,
$$
which implies $\langle w|=c\langle \omega_{s;\zeta,\xi}|$ for $c\in\calF$. 
\end{proof}

Theorem \ref{Bdoubleomega} allows us to characterize $\langle\omega_{s;\zeta,\xi}|\otimes \langle \omega_{t;\xi,\zeta}|$ using the action of $L(\xi,\zeta)_z$. Since this action commutes with the isomorphisms $\tR$, $\tW^a$, $\tW^b$, we get a natural collection of covectors, consistent with these isomorphisms. We summarize this observation below.    

\begin{proposition}\label{dualeigenR}
    (1) Let $s,t,z,\xi,\zeta\in\calF$ be parameters satisfying the conditions of Theorem \ref{Bdoubleomega}. We have
    $$
    \langle\omega_{t;\xi,\zeta}|\,\otimes \langle \omega_{s;\zeta,\xi}|\, \tR=\langle\omega_{s;\xi,\zeta}|\,\otimes \langle \omega_{t;\zeta,\xi}|,
    $$
    where $\widetilde{R}:V(s)_z\otimes V(t)_z\to V(t)_z\otimes V(s)_z$ is defined in Theorem \ref{tRdef}. In other words, for every $i,j\in\mathbb Z_{\geq 0}$ we have
    $$
    \sum_{k,l\geq 0}\omega_{t;\xi,\zeta}(k)\omega_{s;\zeta,\xi}(l) \tR_{sz,s^{-1}z, tz, t^{-1}z}(i,j,k,l)=\omega_{s;\xi,\zeta}(i)\omega_{t;\zeta,\xi}(j).
    $$

    (2) Let $\sqrt{a}, \sqrt{b}, \xi, \zeta\in\calF^\times$ be parameters satisfying $a^2, b^{-2}, a/b\notin q^{-\mathbb Z_{\geq0}}$ . Then
    $$
        \langle\omega_{b^{-1};\xi,\zeta}|\,\otimes \langle \omega_{a;\zeta,\xi}|\, \tW^a=\langle\omega_{a;\xi,\zeta}|\,\otimes \langle \omega_{b^{-1};\zeta,\xi}|\, \tW^b,
    $$
    where 
    $$
    \tW^a:V^{\sqrt{a}}_{\sqrt{b}}\otimes V^{\sqrt{b}^{-1}}_{\sqrt{a}^{-1}}\to V^{\sqrt{b}^{-1}}_{\sqrt{b}}\otimes V^{\sqrt{a}}_{\sqrt{a}^{-1}}, \quad  \tW^b:V^{\sqrt{a}}_{\sqrt{b}}\otimes V^{\sqrt{b}^{-1}}_{\sqrt{a}^{-1}}\to V^{\sqrt{a}}_{\sqrt{a}^{-1}}\otimes V^{\sqrt{b}^{-1}}_{\sqrt{b}}
    $$ 
    are defined in Theorem \ref{WaWb-theo}. Equivalently, for every $i,j\in\mathbb Z_{\geq 0}$ we have
    \begin{multline}\label{omegaW}
    \sum_{k,l\geq 0}\omega_{b^{-1};\xi,\zeta}(k)\omega_{a;\zeta,\xi}(l) \tW^a_{a,b,b^{-1},a^{-1}}(i,j,k,l)\\
    =\sum_{k,l\geq 0}\omega_{a;\xi,\zeta}(k)\omega_{b^{-1};\zeta,\xi}(l) \tW^b_{a,b,b^{-1},a^{-1}}(i,j,k,l).
    \end{multline}
\end{proposition}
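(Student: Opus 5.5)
The plan is to use Theorem \ref{Bdoubleomega} as a uniqueness statement and transport $L(\xi,\zeta)_z$-invariance through the isomorphisms. The key observation is that $L(\xi,\zeta)_z$ is a left coideal subalgebra, and the relations characterizing $\langle\omega|$ in Theorem \ref{Bdoubleomega} are eigen-equations for the generators. Right multiplication by a $U_q(L\sl_2)$-intertwiner preserves such eigen-equations. So if $X:V\to V'$ is an isomorphism of $U_q(L\sl_2)$-modules and $\langle\eta|\in V'^*$ satisfies $\langle\eta|B=\beta\langle\eta|$, then $\langle\eta|X\in V^*$ satisfies $\langle\eta|X\,B=\langle\eta|B\,X=\beta\langle\eta|X$. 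We apply this to $B=B_1(1,\xi+\xi^{-1})$ and to $B=B_0(q/z^2,q^{1/2}(\zeta+\zeta^{-1})/z)$.

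For part (1), write $V(s)_z=V^{\sqrt{sz}}_{\sqrt{z/s}}$, so $a_1=sz$, $b_1=z/s$, $a_2=tz$, $b_2=z/t$. The hypotheses of Theorem \ref{tRdef} reduce to $a_i/b_j\in\{s^2,t^2,st,s/t,t/s\}\notin q^{-\mathbb Z_{\geq0}}$. The first three are assumed. I expect the ratios $s/t$ and $t/s$ to also need genericity, or the irreducibility may be bypassed, since only the intertwining property of $\tR$ is needed here. By Theorem \ref{Bdoubleomega} applied to $V(t)_z\otimes V(s)_z$, with $t$ in the first slot, the covector $\langle\omega_{t;\zeta,\xi}|\otimes\langle\omega_{s;\xi,\zeta}|$ is the unique joint eigencovector with eigenvalues $\xi+\xi^{-1}$ and $q^{1/2}(\zeta+\zeta^{-1})/z$. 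The indices need care here: the theorem puts $\zeta,\xi$ on the first factor, and I would match them to the statement, swapping the roles of $\xi,\zeta$ by symmetry if necessary. Pulling back by $\tR$ gives a joint eigencovector on $V(s)_z\otimes V(t)_z$ with the same eigenvalues. By uniqueness it equals $c\,\langle\omega_{s;\cdot}|\otimes\langle\omega_{t;\cdot}|$. One subtlety is that the two sides of the claimed identity carry the parameter pair of the theorem with $\xi\leftrightarrow\zeta$. Theorem \ref{Bdoubleomega} is not symmetric in $\xi,\zeta$, but applying it with $(\xi,\zeta)$ replaced by $(\zeta,\xi)$ yields the analogous statement for the subalgebra $L(\zeta,\xi)_z$. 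Since the subalgebra is fixed on both sides, the convention must simply be read consistently.

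The constant $c$ is fixed by evaluating on $v_0\otimes v_0$. On the right this gives $\omega(0)\omega(0)=1$. On the left it gives $\sum_{k+l=0}\omega(k)\omega(l)\tR(0,0,k,l)=\tR(0,0,0,0)=1$, using the conservation law and $\Phi(0,0,\cdot,\cdot)=1$. One point needs checking: the sum over $k,l$ must be finite for pairing the full-dual covector with $\tR$ to make sense. It is, because $\tR$ conserves $i+j=k+l$, so each weight space is finite-dimensional.

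Part (2) follows the same plan, using the weights $a_1=a$, $b_1=b$, $a_2=b^{-1}$, $b_2=a^{-1}$. Both $V^{\sqrt a}_{\sqrt b}$ and $V^{\sqrt b^{-1}}_{\sqrt a^{-1}}$ have evaluation point $z=\sqrt{ab}\cdot\sqrt{ab}^{-1}=1$. The targets $V^{\sqrt b^{-1}}_{\sqrt b}\otimes V^{\sqrt a}_{\sqrt a^{-1}}=V(b^{-1})_1\otimes V(a)_1$ and $V(a)_1\otimes V(b^{-1})_1$ therefore also share $z=1$. Theorem \ref{Bdoubleomega} then applies to each target with $(s,t)=(b^{-1},a)$ and $(a,b^{-1})$, and the hypotheses $a^2,b^{-2},a/b\notin q^{-\mathbb Z_{\geq 0}}$ are exactly what it needs. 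Pulling back both covectors gives two joint eigencovectors on the source $V^{\sqrt a}_{\sqrt b}\otimes V^{\sqrt b^{-1}}_{\sqrt a^{-1}}$ for the same subalgebra and eigenvalues. The source is $V(\sqrt{a/b})_{\sqrt{ab}}\otimes V(\sqrt{ab})_{1/\sqrt{ab}}$, which has different evaluation points, so Theorem \ref{Bdoubleomega} does not apply to it directly.

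This is the main obstacle: uniqueness on the source. I would get it by transport instead. Since $\tW^a$ is a bijective intertwiner, the map $\langle\eta|\mapsto\langle\eta|\tW^a$ is a bijection on joint eigencovectors, so the eigenspace on the source is one-dimensional. Hence the two pullbacks are proportional. Comparing the coefficient at $v_0\otimes v_0$, both sides equal $1$ by the same conservation argument, which gives \eqref{omegaW}. Finally, I would double-check that $\tW^a$ and $\tW^b$ are genuinely invertible on full duals, not just on the modules. This holds because they are block-diagonal with finite blocks indexed by $i+j$.
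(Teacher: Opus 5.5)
Your proposal is correct. Part (1) is exactly the paper's argument. You pull back the target covector by the intertwiner $\tR$, invoke the uniqueness in Theorem \ref{Bdoubleomega} on $V(s)_z\otimes V(t)_z$ (after relabelling $\xi\leftrightarrow\zeta$), and normalize at $v_0\otimes v_0$.

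For part (2) you take a different route.
\begin{itemize}
\item The paper first proves $\tR\circ\tW^b=\tW^a$ by Schur's lemma. It uses irreducibility of $V^{\sqrt a}_{\sqrt b}\otimes V^{\sqrt b^{-1}}_{\sqrt a^{-1}}$ from Proposition \ref{irred} and agreement on $v_0\otimes v_0$. Then (2) follows in one line from (1) with $s=a$, $t=b^{-1}$, $z=1$.
\item You instead apply Theorem \ref{Bdoubleomega} to both targets $V(b^{-1})_1\otimes V(a)_1$ and $V(a)_1\otimes V(b^{-1})_1$. You get one-dimensionality of the joint eigenspace on the source by transporting it through the bijective intertwiner $\tW^a$.
\end{itemize}
This is valid. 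Any linear bijection induces a bijection of full duals by composition. Intertwining with $\tW^a$ and its inverse preserves the eigen-equations for the fixed elements $B_1(1,\zeta+\zeta^{-1})$ and $B_0(q,q^{1/2}(\xi+\xi^{-1}))$. For $\tW^b$ you only need the intertwining property.

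Your version does not depend on part (1) or on the factorization $\tW^a=\tR\tW^b$. In effect you use the intertwiner $\tW^b(\tW^a)^{-1}$ without ever identifying it with $\tR^{-1}$. The paper's route keeps (2) a corollary of (1), at the cost of one extra Schur's-lemma comparison.

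Two small slips, neither affecting the argument:
\begin{itemize}
\item In (1), with $a_1=sz$, $b_1=z/s$, $a_2=tz$, $b_2=z/t$, the ratios $a_i/b_j$ are $s^2,t^2,st,st$. So the hypotheses of Theorem \ref{Bdoubleomega} are exactly those of Theorem \ref{tRdef}, and no condition on $s/t$ is needed.
\item In (2), it is the target factors $V^{\sqrt a}_{\sqrt a^{-1}}=V(a)_1$ and $V^{\sqrt b^{-1}}_{\sqrt b}=V(b^{-1})_1$ that have evaluation point $1$, not the source factors. The second source factor is $V(\sqrt{a/b})_{1/\sqrt{ab}}$, not $V(\sqrt{ab})_{1/\sqrt{ab}}$.
\end{itemize}
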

\begin{proof}
(1) Recall that $\tR$ commutes with the action of $U_q(L\sl_2)$. Theorem \ref{Bdoubleomega} yields
$$
\langle\omega_{t;\xi,\zeta}|\,\otimes \langle \omega_{s;\zeta,\xi}|\, \tR B_1(1, \zeta+\zeta^{-1})=(\zeta+\zeta^{-1})\langle\omega_{t;\xi,\zeta}|\,\otimes \langle \omega_{s;\zeta,\xi}|\,\tR,
$$
$$
\langle\omega_{t;\xi,\zeta}|\,\otimes \langle \omega_{s;\zeta,\xi}|\, \tR B_0\left(\frac{q}{z^2}, \frac{q^{\frac12}(\xi+\xi^{-1})}{z}\right)= \frac{q^{\frac12}(\xi+\xi^{-1})}{z} \langle\omega_{t;\xi,\zeta}|\,\otimes \langle \omega_{s;\zeta,\xi}|\,\tR.
$$
So, $\langle\omega_{t;\xi,\zeta}|\,\otimes \langle \omega_{s;\zeta,\xi}|\, \tR$ is a vector in $(V(s)_z\otimes V(t)_z)^*$ satisfying the conditions of Theorem \ref{Bdoubleomega} with $\xi$ and $\zeta$ swapped. Hence
$$
\langle\omega_{t;\xi,\zeta}|\,\otimes \langle \omega_{s;\zeta,\xi}|\, \tR= c\langle\omega_{s;\xi,\zeta}|\,\otimes \langle \omega_{t;\zeta,\xi}|,
$$
where $c\in\calF$. Since $\langle v_0|\otimes\langle v_0|\, \tR=\langle v_0|\otimes\langle v_0|$, we get $c=1$. 

(2) Applying Schur's Lemma and Proposition \ref{irred}, we see that $\tR\circ \tW^b=\tW^a$: both are isomorphisms of irreducible modules $V^{\sqrt{a}}_{\sqrt{b}}\otimes V^{\sqrt{b}^{-1}}_{\sqrt{a}^{-1}}\to V^{\sqrt{b}^{-1}}_{\sqrt{b}}\otimes V^{\sqrt{a}}_{\sqrt{a}^{-1}}$ sending $v_0\otimes v_0$ to $v_0\otimes v_0$. Setting $s=a$, $t=b^{-1}$ and $z=1$, part (1) implies
$$
\langle\omega_{b^{-1};\xi,\zeta}|\,\otimes \langle \omega_{a;\zeta,\xi}|\, \tW^a=\langle\omega_{b^{-1};\xi,\zeta}|\,\otimes \langle \omega_{a;\zeta,\xi}|\, \tR\tW^b=\langle\omega_{a;\xi,\zeta}|\,\otimes \langle \omega_{b^{-1};\zeta,\xi}|\,\tW^b.
$$
\end{proof}
\subsection{Reflection weights $K_{\xi,\zeta}$} 
The covector 
$$
\langle\omega_{b^{-1};\xi,\zeta}|\,\otimes \langle \omega_{a;\zeta,\xi}|\, \tW^a=\langle\omega_{a;\xi,\zeta}|\,\otimes \langle \omega_{b^{-1};\zeta,\xi}|\,\tW^b
$$
from the second part of Proposition \ref{dualeigenR} is the bilinear pairing defining our reflection matrix $\tK$. Namely, we set
\begin{multline}\label{tKdef}
\tK_{a,b;\xi,\zeta}(i,j):=\sum_{k,l\geq 0}\omega_{b^{-1};\xi,\zeta}(k)\omega_{a;\zeta,\xi}(l) \tW^a_{a,b,b^{-1},a^{-1}}(i,j,k,l)\\
=\sum_{k,l\geq 0}\omega_{a;\xi,\zeta}(k)\omega_{b^{-1};\zeta,\xi}(l) \tW^b_{a,b,b^{-1},a^{-1}}(i,j,k,l).
\end{multline}
Recalling the expressions for $\tW^a,\tW^b$ and $\omega$ from Theorem \ref{WaWb-theo} and Proposition \ref{Bomega1}, this definition is equivalent to
\begin{align*}
\tK_{a,b;\xi,\zeta}(i,j)&=\begin{aligned}[t]
(b/a)^{-\frac j2}\sum_{\substack{k,l,p,r\geq 0\\k+l+p+r=i+j}}&\xi^{k-r}\zeta^{p-l}b^{k+p+r}a^{-p}\ \Phi\left(k,l,\frac{\xi}{\zeta b}, \frac{\zeta}{\xi b}\right)\\
&\times\Phi\left(p,r,\frac{a\zeta}{\xi}, \frac{a\xi}{\zeta}\right)\Phi\left(i-k-l, k+l, ab, \frac1{b^2}\right)
\end{aligned}\\
&=\begin{aligned}[t]
(b/a)^{-\frac j2}\sum_{\substack{k,l,p,r\geq 0\\k+l+p+r=i+j}}&\xi^{k-r}\zeta^{p-l}a^{-k-p-r}b^p\ \Phi\left(k,l,\frac{a\xi}{\zeta}, \frac{a\zeta}{\xi}\right)\\
&\times \Phi\left(p,r,\frac{\zeta}{\xi b}, \frac{\xi}{\zeta b}\right)\Phi\left(p+r, j-p-r, \frac1{b^2}, ab\right).
\end{aligned}
\end{align*}

In terms of vertex models, we use the following notation for the reflection weights:
\begin{center}
	\begin{adjustbox}{max width=\textwidth}
		\begin{tikzpicture}[x = 7 mm, y = 7 mm]
			\draw[reflection-arrow] (-1,0) -- (0,1);
            \draw[reflection-arrow] (1,0) -- (0,1);
			\fill[black] (0,1) circle[radius=2.5pt];
			\node[font=\scriptsize, above] at (0,1.2) {$\tK_{\xi,\zeta}$};
			\node[font=\scriptsize, below left] at (-1,0) {$i;a,b$};
            \node[font=\scriptsize, below right] at (1,0) {$j;b^{-1},a^{-1}$};
		\end{tikzpicture}
	\end{adjustbox}
\end{center}
It is also convenient to introduce a boundary condition defined by the weights $\omega_{a;\xi,\zeta}$. It sets the parameters of the corresponding boundary edge to $(a,a^{-1})$, and takes the sum over all non-negative path occupation numbers $i\in\mathbb Z_{\geq 0}$ with weights $\omega_{a;\xi,\zeta}(i)$. We denote this boundary condition by 
$$
\begin{tikzpicture}[scale=1.2, baseline=-4pt]
		\draw[fused] (0,0) -- (0,1);
        \node at (0,1) {$\text{---}$};
        \node[above] at (0,1) {\tiny $\omega_{a;\xi,\zeta}$};
\end{tikzpicture} 
$$
With this notation, \eqref{tKdef} can be depicted by
$$
\begin{gathered}
		\begin{tikzpicture}[scale= 1.2, x = 7 mm, y = 7 mm]
			\draw[reflection-arrow] (-1,0) -- (0,1);
            \draw[reflection-arrow] (1,0) -- (0,1);
			\fill[black] (0,1) circle[radius=2.5pt];
			\node[font=\scriptsize, above] at (0,1.2) {$\tK_{\xi,\zeta}$};
			\node[font=\scriptsize, below left] at (-1,0) {$i;a,b$};
            \node[font=\scriptsize, below right] at (1,0) {$j;b^{-1},a^{-1}$};
		\end{tikzpicture}
\end{gathered}
=
\begin{gathered}
		\begin{tikzpicture}[scale = 1.2, x = 7 mm, y = 7 mm]
			\draw[reflection-arrow] (-1,0) -- (1,2);
            \draw[reflection-arrow] (1,0) -- (-1,2);
			\node[font=\scriptsize, above] at (0,1.2) {$\tW^a$};
			\node[font=\scriptsize, below left] at (-1,0) {$i;a,b$};
            \node[font=\scriptsize, below right] at (1,0) {$j;b^{-1},a^{-1}$};
            \node at (1,2) {$\text{---}$};
            \node[above] at (-1,2) {\tiny $\omega_{b^{-1};\xi,\zeta}$};
            \node at (-1,2) {$\text{---}$};
            \node[above] at (1,2) {\tiny $\omega_{a;\zeta,\xi}$};
		\end{tikzpicture}
\end{gathered}
=
\begin{gathered}
		\begin{tikzpicture}[scale = 1.2, x = 7 mm, y = 7 mm]
			\draw[reflection-arrow] (-1,0) -- (1,2);
            \draw[reflection-arrow] (1,0) -- (-1,2);
			\node[font=\scriptsize, above] at (0,1.2) {$\tW^b$};
			\node[font=\scriptsize, below left] at (-1,0) {$i;a,b$};
            \node[font=\scriptsize, below right] at (1,0) {$j;b^{-1},a^{-1}$};
            \node at (1,2) {$\text{---}$};
            \node[above] at (-1,2) {\tiny $\omega_{a;\xi,\zeta}$};
            \node at (-1,2) {$\text{---}$};
            \node[above] at (1,2) {\tiny $\omega_{b^{-1};\zeta,\xi}$};
		\end{tikzpicture}
\end{gathered}
$$

Our main interest in the reflection weights $\tK_{\xi,\zeta}$ comes from the following set of reflection and boundary equations.

\begin{theorem}\label{reflection-theo} (W-boundary relation) Let $\sqrt{a},\sqrt{b},\sqrt{c},\xi,\zeta\in\calF^\times$ be parameters satisfying
$$
\frac ab, \frac cb, ac, a^2, \frac 1{b^{2}}, c^2\notin q^{-\mathbb Z_{\geq0}}.
$$ 
Then the following equation holds for every $i,j,l\in\mathbb Z_{\geq 0}$:
    \begin{multline*}
    \sum_{k,p\geq 0}\widetilde{W}^b_{a,b,c,a^{-1}}(i,j,k,p)\omega_{a;\xi,\zeta}(k)\widetilde{K}_{c,b;\zeta,\xi}(p,l)\\
    =\sum_{k,p\geq 0}\widetilde{K}_{a,b;\xi,\zeta}(i,p)\widetilde{W}^a_{c,a^{-1},b^{-1},c^{-1}}(j,l,p,k)\omega_{c;\xi,\zeta}(k).
    \end{multline*}
Graphically, we write this equation as
$$
\begin{tikzpicture}[scale=1.2, baseline=-4pt]
		\draw[fused] (-1,0) -- (1,0) -- (2,1);
		\draw[fused] (0,-1) -- (0,1);
        \draw[fused] (3,0) -- (2,1);
        \node at (2,1) {$\bullet$};
        \node at (0,1) {$\text{---}$};
	   \node[below] at (-1,-0.1) {\tiny \shortstack[c]{$i; a,b$}};
        \node[above] at (2,1) {\tiny \shortstack[c]{$\widetilde{K}_{\zeta,\xi}$}};
        \node[below] at (3,-0.1) {\tiny \shortstack[c]{$l; b^{-1},c^{-1}$}};
		\node[below] at (0,-1) {\tiny $j; c,a^{-1}$};
        \node[above] at (0,1) {\tiny $\omega_{a;\xi,\zeta}$};
		\node[above right] at (0,0) {\tiny $\widetilde{W}^b$};
\end{tikzpicture} 
\quad
=
\quad
\begin{tikzpicture}[scale=1.2,baseline=-4pt]
		\draw[fused] (1,0) -- (-1,0) -- (-2,1);
		\draw[fused] (0,-1) -- (0,1);
        \draw[fused] (-3,0) -- (-2,1);
        \node at (-2,1) {$\bullet$};
        \node at (0,1) {$\text{---}$};
	   \node[below] at (-3,-0.1) {\tiny \shortstack[c]{$i; a,b$}};
        \node[above] at (-2,1) {\tiny \shortstack[c]{$\widetilde{K}_{\xi,\zeta}$}};
        \node[below] at (1,-0.1) {\tiny \shortstack[c]{$l; b^{-1},c^{-1}$}};
		\node[below] at (0,-1) {\tiny $j; c,a^{-1}$};
        \node[above] at (0,1) {\tiny $\omega_{c;\xi,\zeta}$};
		\node[above right] at (0,0) {\tiny $\widetilde{W}^a$};
\end{tikzpicture} 
$$
\end{theorem}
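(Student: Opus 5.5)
Both sides are covectors on the triple tensor product $V^{\sqrt a}_{\sqrt b}\otimes V^{\sqrt c}_{\sqrt{a}^{-1}}\otimes V^{\sqrt b^{-1}}_{\sqrt c^{-1}}$, paired with $v_i\otimes v_j\otimes v_l$. We plan to show that each is $L(\xi,\zeta)_1$-invariant (with the eigenvalues of Theorem \ref{Bdoubleomega}), that this invariance pins the covector down up to a scalar, and then to fix the scalar on $v_0\otimes v_0\otimes v_0$.

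\textbf{Step 1: unfold $\tK$.} Using \eqref{tKdef}, we replace each $\tK$ by a $\tW^a$ or $\tW^b$ vertex capped by two $\omega$-boundaries, choosing whichever form is convenient. On the left we expand $\tK_{c,b;\zeta,\xi}$ via its $\tW^b$-form, which gives covector weights $\omega_{c;\zeta,\xi}\otimes\omega_{b^{-1};\xi,\zeta}$. The left side then becomes
$$
\bigl(\langle\omega_{a;\xi,\zeta}|\otimes\langle\omega_{c;\zeta,\xi}|\otimes\langle\omega_{b^{-1};\xi,\zeta}|\bigr)\,(1\otimes\tW^b)(\tW^b\otimes 1).
$$
On the right we expand $\tK_{a,b;\xi,\zeta}$ via its $\tW^b$-form as well, giving
$$
\bigl(\langle\omega_{a;\xi,\zeta}|\otimes\langle\omega_{b^{-1};\zeta,\xi}|\otimes\langle\omega_{c;\xi,\zeta}|\bigr)\,(\tW^b\otimes1)(1\otimes\tW^a).
$$
Both composite maps are $U_q(L\sl_2)$-isomorphisms from the source triple to modules of the form $V(s_1)_1\otimes V(s_2)_1\otimes V(s_3)_1$, i.e.\ modules $V^{\sqrt x}_{\sqrt x^{-1}}$ with $z=1$. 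The target triples are $(a,c,b^{-1})$ on the left and $(a,b^{-1},c)$ on the right.

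\textbf{Step 2: invariance.} We extend Theorem \ref{Bdoubleomega} to three factors by iterating the coproduct formula $\Delta B_i(\gamma,\sigma)=k_i^{-1}\otimes(B_i(\gamma,\sigma)-\tau)+B_i(\gamma,\tau)\otimes1$. The expected result is that a tensor product $\omega_{s_1;\xi,\zeta}\otimes\omega_{s_2;\zeta,\xi}\otimes\omega_{s_3;\xi,\zeta}$ with alternating parameters is an eigencovector of $B_1(1,\zeta+\zeta^{-1})$ and $B_0$, with the same eigenvalues as a single $\omega_{\cdot;\xi,\zeta}$.

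Only the easy direction, namely that such an alternating tensor is an eigencovector, needs this computation. It follows from the coproduct identity together with Proposition \ref{Bomega1}, using $\langle\omega_{s;\xi,\zeta}|\,B_1(1,\xi+\xi^{-1})=(\zeta+\zeta^{-1})\langle\omega_{s;\xi,\zeta}|$ and the analogous relations with $\xi$ and $\zeta$ swapped.

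Both sides of Step 1 have exactly this alternating form $\xi\zeta,\ \zeta\xi,\ \xi\zeta$. Since the maps are intertwiners, both sides are eigencovectors of $L(\xi,\zeta)_1$ on the source triple with equal eigenvalues.

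\textbf{Step 3: uniqueness, the main obstacle.} We need the eigenspace on the source triple to be one-dimensional. By intertwining, it suffices to prove this on one product of evaluation modules at a common $z$, say $V(a)_1\otimes V(c)_1\otimes V(b^{-1})_1$. We plan to mimic the proof of Theorem \ref{Bdoubleomega}:
\begin{enumerate}
\item Group the last two factors.
\item Derive the pair of relations \eqref{Bsystemrelation} for the splitting (first factor)$\otimes$(rest).
\item Subtract them and invert $k_1\otimes1-1\otimes k_1^{-1}\otimes k_1^{-1}$. Its eigenvalues are $a^{-1}q^{-n}-cb^{-1}q^{m}$, which are nonzero because $a c/b\notin q^{-\mathbb Z_{\geq0}}$ is generic.
\item Conclude that the first factor carries $\omega_{a;\xi,\zeta}$ up to a scalar, and that the remaining two-factor covector satisfies the hypotheses of Theorem \ref{Bdoubleomega}.
\end{enumerate}
The hypotheses of Theorem \ref{Bdoubleomega} are $c^2,b^{-2},c/b\notin q^{-\mathbb Z_{\geq0}}$, all of which are assumed. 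The delicate part is bookkeeping the $k_1$ factors across three tensor legs, and the genericity of $ac/b$, which we expect to handle by analytic continuation in the parameters.

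\textbf{Step 4: normalization.} Finally we evaluate both sides at $i=j=l=0$. By conservation only the zero configuration contributes, all $\tW$ weights there equal $1$, and $\omega(0)=1$, so both sides equal $1$ and the scalar is $1$.
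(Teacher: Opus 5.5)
\textbf{Gap in Step 3.} Steps 1, 2 and 4 are sound. Step 3 is not, and it is exactly where you need a result the paper does not contain: that the joint eigenspace of $L(\xi,\zeta)_1$ on a \emph{triple} product is one-dimensional. Your plan to obtain this by mimicking Theorem \ref{Bdoubleomega} breaks at item (2). The second line of \eqref{Bsystemrelation} comes from the $B_0$-equation after applying Lemma \ref{B0flip} to \emph{both} tensor legs. That lemma rests on $\pi(e_0)=z\,\pi(f_1)$ and $\pi(f_0)=z^{-1}\pi(e_1)$, which hold only on a single evaluation module. On your grouped leg $M=V(c)_1\otimes V(b^{-1})_1$ they fail: $\Delta(e_0)$ acts as $f_1\otimes k_1^{-1}+1\otimes f_1$, whereas $\Delta(f_1)=f_1\otimes 1+k_1^{-1}\otimes f_1$. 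So the $B_0$-relation keeps a $B_0$ on the $M$-leg, while the $B_1$-relation has $B_1$ there. The two relations no longer share the same operator on that leg, and subtracting them does not produce a factorized expression to which you could apply the inverse of $k_1\otimes 1\otimes 1-1\otimes k_1^{-1}\otimes k_1^{-1}$. Grouping the first two legs instead runs into the same problem on the other leg, since \eqref{Bsystemrelation} also flips the first factor. As written, the three-factor uniqueness is unproven and would need a new argument. A small slip as well: the generator of $L(\xi,\zeta)_1$ is $B_1(1,\xi+\xi^{-1})$, and $\langle\omega_{s;\xi,\zeta}|$ is in general not an eigencovector of $B_1(1,\zeta+\zeta^{-1})$.

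\textbf{How to avoid it.} The obstacle comes from your choice in Step 1 and disappears with the other choice. Expand $\tK_{c,b;\zeta,\xi}$ on the left using the $\tW^a$-form of \eqref{tKdef}, i.e.\ with covector $\langle\omega_{b^{-1};\zeta,\xi}|\otimes\langle\omega_{c;\xi,\zeta}|$. Then the left side becomes $\langle\omega_{a;\xi,\zeta}|\otimes\langle\omega_{b^{-1};\zeta,\xi}|\otimes\langle\omega_{c;\xi,\zeta}|\,(1\otimes\tW^a)(\tW^b\otimes 1)$. This is the same covector, on the same target $V^{\sqrt a}_{\sqrt a^{-1}}\otimes V^{\sqrt b^{-1}}_{\sqrt b}\otimes V^{\sqrt c}_{\sqrt c^{-1}}$, as your right side. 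No invariance or uniqueness statement for covectors on a triple product is then needed. The maps $(1\otimes\tW^a)(\tW^b\otimes1)$ and $(\tW^b\otimes1)(1\otimes\tW^a)$ are isomorphisms between the same two modules, which are irreducible by Proposition \ref{irred} under exactly the stated hypotheses. Both send $v_0^{\otimes 3}$ to itself, so Schur's lemma makes them equal. This is the paper's proof. It also removes your extra genericity condition on $ac/b$ and the continuation step. Put differently, your left-hand covector already equals this one, because the two expressions in \eqref{tKdef} coincide by Proposition \ref{dualeigenR}(2). The only covector-level input needed is the two-factor Theorem \ref{Bdoubleomega}.
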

\begin{proof} Note that both $(1\otimes \tW^a)(\tW^b\otimes 1)$ and $(\tW^b\otimes 1)(1\otimes \tW^a)$ define an isomorphism
$$
V_{\sqrt{b}}^{\sqrt{a}}\otimes V_{\sqrt{a}^{-1}}^{\sqrt{c}}\otimes V_{\sqrt{c}^{-1}}^{\sqrt{b}^{-1}} \to V_{\sqrt{a}^{-1}}^{\sqrt{a}}\otimes V_{\sqrt{b}}^{\sqrt{b}^{-1}}\otimes V_{\sqrt{c}^{-1}}^{\sqrt{c}}.
$$
By Proposition \ref{irred} both tensor products are irreducible. Applying Schur's Lemma and comparing the action on $v_0\otimes v_0\otimes v_0$, we get
$$
(1\otimes \tW^a)(\tW^b\otimes 1)=(\tW^b\otimes 1)(1\otimes \tW^a).
$$
Graphically this identity has the form
\begin{align*}
	\begin{gathered}
	\begin{tikzpicture}[scale=1.5]
		\draw[fused] (-1,-1) -- (0,0) -- (1,1);
		\draw[fused] (0,-1) -- (-1,0) -- (-1,1);
        \draw[fused] (1,-1) -- (1,0) -- (0,1);
	\node[below] at (-1,-1) {\tiny \shortstack[c]{$i_1; a,b^{\phantom{-1}}$}};
	\node[below] at (0,-1) {\tiny \shortstack[c]{$i_2; c,a^{-1}$}};
	\node[below] at (1,-1) {\tiny \shortstack[c]{$i_3; b^{-1},c^{-1}$}};
	\node[above] at (-1,1) {\tiny $j_1; a,a^{-1}$};
	\node[above] at (0,1) {\tiny $j_2; b^{-1},b$};
	\node[above] at (1,1) {\tiny $j_3; c,c^{-1}$};    
	\node[right] at (-0.4,-0.5) {\tiny $\widetilde{W}^b$};
	\node[right] at (0.6,0.5) {\tiny $\widetilde{W}^a$};
	\end{tikzpicture} 
	\end{gathered}
	=
	\begin{gathered}
	\begin{tikzpicture}[scale=1.5]
		\draw[fused] (-1,-1) -- (-1,0) -- (0,1);
		\draw[fused] (0,-1) -- (1,0) -- (1,1);
        \draw[fused] (1,-1) -- (0,0) -- (-1,1);
	\node[below] at (-1,-1) {\tiny \shortstack[c]{$i_1; a,b^{\phantom{-1}}$}};
	\node[below] at (0,-1) {\tiny \shortstack[c]{$i_2; c,a^{-1}$}};
	\node[below] at (1,-1) {\tiny \shortstack[c]{$i_3; b^{-1},c^{-1}$}};
	\node[above] at (-1,1) {\tiny $j_1; a,a^{-1}$};
	\node[above] at (0,1) {\tiny $j_2; b^{-1},b$};
	\node[above] at (1,1) {\tiny $j_3; c,c^{-1}$};    
	\node[right] at (-0.4,0.5) {\tiny $\widetilde{W}^b$};
	\node[right] at (0.6,-0.5) {\tiny $\widetilde{W}^a$};
	\end{tikzpicture} 
	\end{gathered}
\end{align*}
for every $i_1, i_2, i_3, j_1, j_2,j_3\in\mathbb Z_{\geq 0}$. In particular, we get
\begin{multline*}
\langle\omega_{a;\xi,\zeta}|\otimes \langle\omega_{b^{-1};\zeta,\xi}|\otimes \langle\omega_{c;\xi,\zeta}|\ (1\otimes \tW^a)(\tW^b\otimes 1)\\
=\langle\omega_{a;\xi,\zeta}|\otimes \langle\omega_{b^{-1};\zeta,\xi}|\otimes \langle\omega_{c;\xi,\zeta}|\ (\tW^b\otimes 1)(1\otimes \tW^a).
\end{multline*}
Equivalently, for every $i,j,l\in\mathbb Z_{\geq 0}$, we have
\begin{align*}
	\begin{gathered}
	\begin{tikzpicture}[scale=1.5]
		\draw[fused] (-1,-1) -- (0,0) -- (1,1);
		\draw[fused] (0,-1) -- (-1,0) -- (-1,1);
        \draw[fused] (1,-1) -- (1,0) -- (0,1);
	\node[below] at (-1,-1) {\tiny \shortstack[c]{$i; a,b$}};
	\node[below] at (0,-1) {\tiny \shortstack[c]{$j; c,a^{-1}$}};
	\node[below] at (1,-1) {\tiny \shortstack[c]{$l; b^{-1},c^{-1}$}};
	\node[above] at (-1,1) {\tiny $\omega_{a;\xi,\zeta}$};
    \node[] at (-1,1) {\text{---}};
	\node[above] at (0,1) {\tiny $\omega_{b^{-1};\zeta,\xi}$};
    \node[] at (0,1) {\text{---}};
	\node[above] at (1,1) {\tiny $\omega_{c;\xi,\zeta}$};    
    \node[] at (1,1) {\text{---}};
	\node[right] at (-0.4,-0.5) {\tiny $\widetilde{W}^b$};
	\node[right] at (0.6,0.5) {\tiny $\widetilde{W}^a$};
	\end{tikzpicture} 
	\end{gathered}
	=
	\begin{gathered}
	\begin{tikzpicture}[scale=1.5]
		\draw[fused] (-1,-1) -- (-1,0) -- (0,1);
		\draw[fused] (0,-1) -- (1,0) -- (1,1);
        \draw[fused] (1,-1) -- (0,0) -- (-1,1);
	\node[below] at (-1,-1) {\tiny \shortstack[c]{$i; a,b$}};
	\node[below] at (0,-1) {\tiny \shortstack[c]{$j; c,a^{-1}$}};
	\node[below] at (1,-1) {\tiny \shortstack[c]{$l; b^{-1},c^{-1}$}};
	\node[above] at (-1,1) {\tiny $\omega_{a;\xi,\zeta}$};
    \node[] at (-1,1) {\text{---}};
	\node[above] at (0,1) {\tiny $\omega_{b^{-1};\zeta,\xi}$};
    \node[] at (0,1) {\text{---}};
	\node[above] at (1,1) {\tiny $\omega_{c;\xi,\zeta}$};    
    \node[] at (1,1) {\text{---}};    
	\node[right] at (-0.4,0.5) {\tiny $\widetilde{W}^b$};
	\node[right] at (0.6,-0.5) {\tiny $\widetilde{W}^a$};
	\end{tikzpicture} 
	\end{gathered}
\end{align*}
The reflection equation follows by using \eqref{tKdef} to replace the upper vertices with $\widetilde{K}$.
\end{proof}

\begin{theorem}\label{reflection2-theo} (R-boundary relation) Let $\sqrt{a},\sqrt{b},\sqrt{c},\xi,\zeta\in\calF^\times$ be parameters satisfying
$$
\frac ab, \frac cb, ac, a^2, \frac 1{b^{2}}, c^2\notin q^{-\mathbb Z_{\geq0}}.
$$ 
The following equation holds for every $i,j,l\in\mathbb Z_{\geq 0}$:
    \begin{multline*}
    \sum_{k,p\geq 0}\tR_{a,b,c,c^{-1}}(i,j,k,p)\omega_{c;\xi,\zeta}(k)\widetilde{K}_{a,b;\zeta,\xi}(p,l)\\
    =\sum_{k,p\geq 0}\widetilde{K}_{a,b;\xi,\zeta}(i,p)\tR_{c,c^{-1},b^{-1},a^{-1}}(j,l,p,k)\omega_{c;\xi,\zeta}(k).
    \end{multline*}
Graphically, we write this equation as
$$
\begin{tikzpicture}[scale=1.2, baseline=-4pt]
		\draw[fused] (-1,0) -- (1,0) -- (2,1);
		\draw[fused] (0,-1) -- (0,1);
        \draw[fused] (3,0) -- (2,1);
        \node at (2,1) {$\bullet$};
        \node at (0,1) {$\text{---}$};
	   \node[below] at (-1,-0.1) {\tiny \shortstack[c]{$i; a,b$}};
        \node[above] at (2,1) {\tiny \shortstack[c]{$\widetilde{K}_{\zeta,\xi}$}};
        \node[below] at (3,-0.1) {\tiny \shortstack[c]{$l; b^{-1},a^{-1}$}};
		\node[below] at (0,-1) {\tiny $j; c,c^{-1}$};
        \node[above] at (0,1) {\tiny $\omega_{c,\xi,\zeta}$};
		\node[above right] at (0,0) {\tiny $\tR$};
\end{tikzpicture} 
\quad
=
\quad
\begin{tikzpicture}[scale=1.2,baseline=-4pt]
		\draw[fused] (1,0) -- (-1,0) -- (-2,1);
		\draw[fused] (0,-1) -- (0,1);
        \draw[fused] (-3,0) -- (-2,1);
        \node at (-2,1) {$\bullet$};
        \node at (0,1) {$\text{---}$};
	   \node[below] at (-3,-0.1) {\tiny \shortstack[c]{$i; a,b$}};
        \node[above] at (-2,1) {\tiny \shortstack[c]{$\widetilde{K}_{\xi,\zeta}$}};
        \node[below] at (1,-0.1) {\tiny \shortstack[c]{$l; b^{-1},a^{-1}$}};
		\node[below] at (0,-1) {\tiny $j; c,c^{-1}$};
        \node[above] at (0,1) {\tiny $\omega_{c,\xi,\zeta}$};
		\node[above right] at (0,0) {\tiny $\tR$};
\end{tikzpicture} 
$$
\end{theorem}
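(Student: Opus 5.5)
Mirroring the proof of Theorem \ref{reflection-theo}, we look for a three-fold tensor product on which two different compositions of the basic isomorphisms agree by Schur's lemma. We then pair both sides with a product of three boundary covectors and replace the top pair of vertices using \eqref{tKdef}. The relevant product is
$$
V^{\sqrt a}_{\sqrt b}\otimes V^{\sqrt c}_{\sqrt{c}^{-1}}\otimes V^{\sqrt b^{-1}}_{\sqrt a^{-1}},
$$
which is irreducible under the stated genericity conditions by Proposition \ref{irred}.

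On the left-hand side, $\tR$ is applied first to factors $1,2$, giving $V^{\sqrt c}_{\sqrt c^{-1}}\otimes V^{\sqrt a}_{\sqrt b}\otimes V^{\sqrt b^{-1}}_{\sqrt a^{-1}}$. We then resolve the last two factors by $\tW^a$ (or $\tW^b$) into $V^{\sqrt b^{-1}}_{\sqrt b}\otimes V^{\sqrt a}_{\sqrt a^{-1}}$, as in the definition of $\tK$.

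On the right-hand side, $\tW^a$ is applied first to factors $1,3$. Since they are not adjacent, we realize this instead as a composition: $\tR$ on factors $2,3$, then $\tW^a$ on factors $1,2$, then $\tR$ on factors $2,3$. Concretely:
\begin{itemize}
\item $\tR$ on factors $2,3$ gives $V^{\sqrt a}_{\sqrt b}\otimes V^{\sqrt b^{-1}}_{\sqrt a^{-1}}\otimes V^{\sqrt c}_{\sqrt c^{-1}}$;
\item $\tW^a$ on factors $1,2$ gives $V^{\sqrt b^{-1}}_{\sqrt b}\otimes V^{\sqrt a}_{\sqrt a^{-1}}\otimes V^{\sqrt c}_{\sqrt c^{-1}}$;
\item $\tR$ moves $V^{\sqrt c}_{\sqrt c^{-1}}$ back across if needed.
\end{itemize}
Both composites are isomorphisms between the same irreducible modules and fix $v_0\otimes v_0\otimes v_0$. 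By Schur's lemma they are equal, exactly as in Theorems \ref{YB} and \ref{WRW-YB}.

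Next we pair both sides on the left with a product of three covectors, choosing the order so that the final factors carry the $\omega$'s appearing in \eqref{tKdef}: $\langle\omega_{b^{-1};\xi,\zeta}|\otimes\langle\omega_{a;\zeta,\xi}|\otimes\langle\omega_{c;\xi,\zeta}|$, or the variant with $\langle\omega_{c;\zeta,\xi}|$. The crucial extra ingredient compared to Theorem \ref{reflection-theo} is Proposition \ref{dualeigenR}(1). Whenever an $\tR$ sits directly below a pair of $\omega$-covectors with alternating $\xi,\zeta$, it can be removed, at the cost of swapping $\xi\leftrightarrow\zeta$ in both covectors. This is how the spectator line with parameters $(c,c^{-1})$ gets absorbed into the boundary covector $\omega_{c;\xi,\zeta}$ on one side, and how the subscript of $\tK$ changes from $\xi,\zeta$ to $\zeta,\xi$. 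After these removals, each side consists of one $\tR$ vertex, one $\omega_c$ boundary and one pair of top vertices matching \eqref{tKdef}. Replacing that pair by $\tK_{a,b;\zeta,\xi}$ on the left and by $\tK_{a,b;\xi,\zeta}$ on the right gives the claimed identity.

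The main obstacle is the bookkeeping of the $\xi/\zeta$ alternation. Proposition \ref{dualeigenR}(1) only applies when the two adjacent covectors have opposite orderings $(\xi,\zeta)$, $(\zeta,\xi)$, and the outputs flip both. So I would first fix the top covector triple for the right-hand side, where $\tK_{\xi,\zeta}$ requires $\omega_{b^{-1};\xi,\zeta}\otimes\omega_{a;\zeta,\xi}$ next to $\omega_{c;\xi,\zeta}$. I would then track this triple through the chain of $\tR$'s on the left-hand side and check that it lands on $\omega_{c;\xi,\zeta}$ together with a $\tK_{\zeta,\xi}$ pair. If the alternation fails to match, I would instead use the second expression in \eqref{tKdef} (via $\tW^b$) for one side, or insert $\tR\circ\tW^b=\tW^a$ as in the proof of Proposition \ref{dualeigenR}(2), to restore alternation. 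Convergence of the infinite sums is harmless, since all weights enforce conservation and the sums are finite for fixed $i,j,l$.
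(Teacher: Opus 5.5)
Your strategy is the paper's: Schur's lemma on $V^{\sqrt a}_{\sqrt b}\otimes V^{\sqrt c}_{\sqrt{c}^{-1}}\otimes V^{\sqrt{b}^{-1}}_{\sqrt{a}^{-1}}$, plus Proposition \ref{dualeigenR}(1) to absorb extra $\tR$ crossings into the $\omega$-covectors. However, the Schur step as you wrote it is false.

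The right-hand side of the theorem contains no ``$\tW^a$ on factors $1,3$''. By the first line of \eqref{tKdef}, it is just $\langle\omega_{b^{-1};\xi,\zeta}|\otimes\langle\omega_{a;\zeta,\xi}|\otimes\langle\omega_{c;\xi,\zeta}|$ paired with $(\tW^a\otimes 1)(1\otimes\tR)$. The targets are:
\begin{itemize}
\item $(\tW^a\otimes 1)(1\otimes\tR)$ lands in $V^{\sqrt{b}^{-1}}_{\sqrt b}\otimes V^{\sqrt a}_{\sqrt{a}^{-1}}\otimes V^{\sqrt c}_{\sqrt{c}^{-1}}$;
\item $(1\otimes\tW^a)(\tR\otimes 1)$ lands in $V^{\sqrt c}_{\sqrt{c}^{-1}}\otimes V^{\sqrt{b}^{-1}}_{\sqrt b}\otimes V^{\sqrt a}_{\sqrt{a}^{-1}}$;
\item appending your single $\tR$ on factors $2,3$ to the first gives $V^{\sqrt{b}^{-1}}_{\sqrt b}\otimes V^{\sqrt c}_{\sqrt{c}^{-1}}\otimes V^{\sqrt a}_{\sqrt{a}^{-1}}$.
\end{itemize}
The last module matches neither of the others, and it is not where your chosen covector triple lives. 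So your two composites are not maps between the same modules, and Schur's lemma cannot be invoked. The $c$-factor has to be carried across both other factors:
\[
(1\otimes\tW^a)(\tR\otimes 1)=(\tR\otimes 1)(1\otimes\tR)(\tW^a\otimes 1)(1\otimes\tR).
\]
Both sides map into $V^{\sqrt c}_{\sqrt{c}^{-1}}\otimes V^{\sqrt{b}^{-1}}_{\sqrt b}\otimes V^{\sqrt a}_{\sqrt{a}^{-1}}$ and fix $v_0^{\otimes 3}$.

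With this corrected identity, the alternation you left open works out, so none of your fallbacks are needed. Pair both sides with $\langle\omega_{c;\xi,\zeta}|\otimes\langle\omega_{b^{-1};\zeta,\xi}|\otimes\langle\omega_{a;\xi,\zeta}|$. The left side becomes the theorem's left-hand side, via \eqref{tKdef} for $\tK_{a,b;\zeta,\xi}$. On the right, apply Proposition \ref{dualeigenR}(1) at $z=1$, first to the pair $(b^{-1},c)$ and then to $(a,c)$; the needed conditions $b^{-2},c^2,c/b,a^2,ac\notin q^{-\mathbb Z_{\geq0}}$ are among the hypotheses. Passing through $\langle\omega_{b^{-1};\xi,\zeta}|\otimes\langle\omega_{c;\zeta,\xi}|\otimes\langle\omega_{a;\xi,\zeta}|$, this gives
\[
\langle\omega_{c;\xi,\zeta}|\otimes\langle\omega_{b^{-1};\zeta,\xi}|\otimes\langle\omega_{a;\xi,\zeta}|\,(\tR\otimes 1)(1\otimes\tR)=\langle\omega_{b^{-1};\xi,\zeta}|\otimes\langle\omega_{a;\zeta,\xi}|\otimes\langle\omega_{c;\xi,\zeta}|.
\]
This is exactly your triple, and pairing it with $(\tW^a\otimes 1)(1\otimes\tR)$ gives the theorem's right-hand side.

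The paper argues the same way but uses the $\tW^b$ form of $\tK_{a,b;\xi,\zeta}$ on the right. Its target is then the full reversal $V^{\sqrt a}_{\sqrt{a}^{-1}}\otimes V^{\sqrt{b}^{-1}}_{\sqrt b}\otimes V^{\sqrt c}_{\sqrt{c}^{-1}}$, so it inserts $(1\otimes\tR)(\tR\otimes 1)(1\otimes\tR)$ and applies Proposition \ref{dualeigenR}(1) three times. Your $\tW^a$ choice saves one crossing, once the module bookkeeping is fixed.
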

\begin{proof} The claim is equivalent to
\begin{multline*}
\langle \omega_{c;\xi,\zeta}|\otimes\langle \omega_{b^{-1};\zeta,\xi}|\otimes\langle \omega_{a;\xi,\zeta}|\ (1\otimes \tW^a)(\tR\otimes 1)\\
=\langle \omega_{a;\xi,\zeta}|\otimes\langle \omega_{b^{-1};\zeta,\xi}|\otimes\langle \omega_{c;\xi,\zeta}|\ (\tW^b\otimes 1)(1\otimes\tR),
\end{multline*}
where both sides are elements of
$
\left(V^{\sqrt{a}}_{\sqrt{b}}\otimes V^{\sqrt{c}}_{\sqrt{c}^{-1}}\otimes V^{\sqrt{b}^{-1}}_{\sqrt{a}^{-1}}\right)^*.
$
Proposition \ref{dualeigenR} implies
\begin{multline*}
\langle \omega_{c;\xi,\zeta}|\otimes\langle \omega_{b^{-1};\zeta,\xi}|\otimes\langle \omega_{a;\xi,\zeta}|\  (1\otimes \tW^a)(\tR\otimes 1)\\
=\langle \omega_{a;\xi,\zeta}|\otimes\langle \omega_{b^{-1};\zeta,\xi}|\otimes\langle \omega_{c;\xi,\zeta}|\ (1\otimes\tR)(\tR\otimes 1)(1\otimes\tR)  (1\otimes \tW^a)(\tR\otimes 1).
\end{multline*}
Both $(1\otimes\tR)(\tR\otimes 1)(1\otimes\tR)  (1\otimes \tW^a)(\tR\otimes 1)$ and $(\tW^b\otimes 1)(1\otimes\tR)$ are isomorphisms of irreducible modules
$$
V^{\sqrt{a}}_{\sqrt{b}}\otimes V^{\sqrt{c}}_{\sqrt{c}^{-1}}\otimes V^{\sqrt{b}^{-1}}_{\sqrt{a}^{-1}}\to V^{\sqrt{a}}_{\sqrt{a}^{-1}}\otimes V^{\sqrt{b}^{-1}}_{\sqrt{b}}\otimes V^{\sqrt{c}}_{\sqrt{c}^{-1}}
$$
with the same action on $v_0^{\otimes3}$. Hence
$$
(1\otimes\tR)(\tR\otimes 1)(1\otimes\tR)  (1\otimes \tW^a)(\tR\otimes 1)=(\tW^b\otimes 1)(1\otimes\tR).
$$
\end{proof}

\begin{theorem} \label{thm: reflection3}(Reflection equation)
Let $\sqrt{a},\sqrt{b},\sqrt{c},\sqrt{d},\xi,\zeta\in\calF^\times$ be parameters satisfying
$$
\frac ab, \frac ad, \frac cb, \frac cd, ac, \frac 1{bd}, a^2, \frac 1{b^{2}}, c^2, \frac1{d^{2}}\notin q^{-\mathbb Z_{\geq0}}.
$$
The following equation holds for every $i,j,k,l \in\mathbb Z_{\geq 0}$:
    \begin{multline*}
    \sum_{p,r,s,t\geq 0}\tR_{a,b,c,d}(i,j,p,r)\widetilde{K}_{c,d;\xi,\zeta}(p,s)\tR_{a,b,d^{-1},c^{-1}}(r,k,s,t)\tK_{a,b;\xi,\zeta}(t,l)\\
    =\sum_{p,r,s,t\geq 0}\widetilde{K}_{a,b;\xi,\zeta}(i,s)\tR_{c,d,b^{-1},a^{-1}}(j,p,s,t)\tK_{c,d;\xi,\zeta}(t,r)\tR_{d^{-1},c^{-1},b^{-1},a^{-1}}(k,l,p,r).
    \end{multline*}
Graphically, we write this equation as
$$
\begin{tikzpicture}[scale=1.2, baseline=-4pt]
		\draw[fused] (-3,0) -- (-1,0) -- (1,2);
		\draw[fused] (1,0) -- (-1,2);
		\draw[fused] (-2,-1) -- (-2,1) -- (-1,2);
        \draw[fused] (2,1) -- (1,2);
        \node at (1,2) {$\bullet$};
        \node at (-1,2) {$\bullet$};
	   \node[below] at (-3,-0.1) {\tiny \shortstack[c]{$i; a,b$}};
       	\node[below] at (-2,-1) {\tiny $j; c,d$};
        \node[below] at (1,-0.1) {\tiny \shortstack[c]{$k; d^{-1},c^{-1}$}};
        \node[below] at (2,0.9) {\tiny \shortstack[c]{$l; b^{-1},a^{-1}$}};
        \node[above] at (1,2) {\tiny \shortstack[c]{$\widetilde{K}_{\xi,\zeta}$}};
        \node[above] at (-1,2) {\tiny \shortstack[c]{$\widetilde{K}_{\xi,\zeta}$}};
		\node[above right] at (-2,0) {\tiny $\tR$};
        \node[above] at (0,1.1) {\tiny $\tR$};
\end{tikzpicture} 
=
\begin{tikzpicture}[scale=1.2, baseline=-4pt]
		\draw[fused] (-1,0) -- (1,2);
		\draw[fused] (3,0) -- (1,0) -- (-1,2);
		\draw[fused] (-2,1) -- (-1,2);
        \draw[fused] (2,-1) -- (2,1) -- (1,2);
        \node at (1,2) {$\bullet$};
        \node at (-1,2) {$\bullet$};
	   \node[below] at (-2,1) {\tiny \shortstack[c]{$i; a,b$}};
       	\node[below] at (-1,-0.1) {\tiny $j; c,d$};
        \node[below] at (2,-1) {\tiny \shortstack[c]{$k; d^{-1},c^{-1}$}};
        \node[below] at (3,0) {\tiny \shortstack[c]{$l; b^{-1},a^{-1}$}};
        \node[above] at (1,2) {\tiny \shortstack[c]{$\widetilde{K}_{\xi,\zeta}$}};
        \node[above] at (-1,2) {\tiny \shortstack[c]{$\widetilde{K}_{\xi,\zeta}$}};
		\node[above right] at (2,0) {\tiny $\tR$};
        \node[above] at (0,1.1) {\tiny $\tR$};
\end{tikzpicture} 
$$
\end{theorem}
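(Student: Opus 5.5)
We follow the same strategy as in Theorems \ref{reflection-theo} and \ref{reflection2-theo}: rewrite both sides as a single covector on a four-fold tensor product, then reduce the equality to an identity between isomorphisms of irreducible modules, which is settled by Schur's Lemma.

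First, replace each $\tK$ by its definition \eqref{tKdef}. For $\tK_{a,b;\xi,\zeta}$ we use the $\tW^b$-form with boundary covectors $\langle\omega_{a;\xi,\zeta}|\otimes\langle\omega_{b^{-1};\zeta,\xi}|$. For $\tK_{c,d;\xi,\zeta}$ we use either the $\tW^a$- or the $\tW^b$-form, whichever makes the outgoing covectors alternate correctly. Each side then becomes the covector
$$
\langle\omega_{x_1;\cdot,\cdot}|\otimes\langle\omega_{x_2;\cdot,\cdot}|\otimes\langle\omega_{x_3;\cdot,\cdot}|\otimes\langle\omega_{x_4;\cdot,\cdot}|
$$
composed with a product of $\tR$'s and $\tW$'s, viewed as a map out of $V^{\sqrt a}_{\sqrt b}\otimes V^{\sqrt c}_{\sqrt d}\otimes V^{\sqrt d^{-1}}_{\sqrt c^{-1}}\otimes V^{\sqrt b^{-1}}_{\sqrt a^{-1}}$. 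The outgoing modules are of the form $V^{\sqrt x}_{\sqrt x^{-1}}$ with $x\in\{a,b^{-1},c,d^{-1}\}$. On the LHS the order and the $\xi\leftrightarrow\zeta$ pattern of the covectors are fixed by the chosen forms. On the RHS they may come out in a different order.

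Second, use Proposition \ref{dualeigenR}(1) to reorder the covectors. Each application of $\tR$ to two adjacent factors $\langle\omega_{t;\xi,\zeta}|\otimes\langle\omega_{s;\zeta,\xi}|$ swaps them and flips their $\xi,\zeta$ labels. Applying a suitable word in $\tR$'s, we bring the RHS covector to exactly the same product $\langle\omega|$ as on the LHS, at the cost of composing with extra $\tR$'s. This is the same manoeuvre as in the proof of Theorem \ref{reflection2-theo}. The identity then reduces to $\langle\omega|\,X=\langle\omega|\,Y$, where $X$ and $Y$ are products of $\tR,\tW^a,\tW^b$ mapping between the same pair of four-fold tensor products.

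Third, conclude with Schur's Lemma. By Proposition \ref{irred} and the genericity assumptions (the listed nonresonance conditions, which guarantee $a_i/b_j\notin q^{-\mathbb Z_{\geq0}}$ for all the relevant pairs among $a,c,b^{-1},d^{-1}$ and $b,d,a^{-1},c^{-1}$), both the source and the target are irreducible. Both $X$ and $Y$ send $v_0^{\otimes4}$ to $v_0^{\otimes4}$, so $X=Y$ and the theorem follows.

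The main obstacle is the bookkeeping in the first two steps. For each $\tK$ we must choose the $\tW^a$- or $\tW^b$-form so that the resulting covectors can be matched using only admissible applications of Proposition \ref{dualeigenR}(1). Each such application requires adjacent factors with opposite $\xi/\zeta$ order and parameters $(s,s^{-1})$ and $(t,t^{-1})$ in the same spectral slot $z=1$, together with the conditions $s^2,t^2,st\notin q^{-\mathbb Z_{\geq0}}$. We must also track the parameters on the internal edges so that every intermediate composition is a legitimate isomorphism between modules satisfying the hypotheses of Proposition \ref{irred}. Should matching the covectors directly prove awkward, the fallback is to apply Theorem \ref{Bdoubleomega}-type uniqueness on the four-fold product. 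We would characterize both sides as covectors satisfying the same eigen-relations for $L(\xi,\zeta)_1$, since $\tR$, $\tW^a$ and $\tW^b$ commute with the $U_q(L\sl_2)$-action. Matching them would then need only a check of normalization at $v_0^{\otimes4}$.
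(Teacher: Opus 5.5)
Your proposal is correct and is essentially the paper's proof. The paper uses the $\tW^b$-form of \eqref{tKdef} for both reflection weights, which gives the alternating $\xi,\zeta$ pattern on both sides. The reordering word is $G=(1\otimes\tR\otimes1)(\tR\otimes\tR)(1\otimes\tR\otimes1)$: Schur's lemma gives $G\circ F_1=F_2$, where $F_1,F_2$ are the two composites $(\tW^b\otimes\tW^b)(1\otimes\tR\otimes1)(\tR\otimes1\otimes1)$ and $(\tW^b\otimes\tW^b)(1\otimes\tR\otimes1)(1\otimes1\otimes\tR)$. Proposition \ref{dualeigenR}(1) then carries $\langle\omega_{a;\xi,\zeta}|\otimes\langle\omega_{b^{-1};\zeta,\xi}|\otimes\langle\omega_{c;\xi,\zeta}|\otimes\langle\omega_{d^{-1};\zeta,\xi}|$ through $G$ to $\langle\omega_{c;\xi,\zeta}|\otimes\langle\omega_{d^{-1};\zeta,\xi}|\otimes\langle\omega_{a;\xi,\zeta}|\otimes\langle\omega_{b^{-1};\zeta,\xi}|$, so your fallback is not needed.
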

\begin{proof} Applying \eqref{tKdef}, the claim is equivalent to
\begin{multline}\label{reflectiongoal}
\langle \omega_{c;\xi,\zeta}|\otimes\langle \omega_{d^{-1};\zeta,\xi}|\otimes\langle \omega_{a;\xi,\zeta}|\otimes\langle \omega_{b^{-1};\zeta,\xi}|\ (\tW^b\otimes \tW^b)(1\otimes\tR\otimes 1)(\tR\otimes 1\otimes 1)\\
=\langle \omega_{a;\xi,\zeta}|\otimes\langle \omega_{b^{-1};\zeta,\xi}|\otimes\langle \omega_{c;\xi,\zeta}|\otimes\langle \omega_{d^{-1};\zeta,\xi}|\ (\tW^b\otimes \tW^b)(1\otimes\tR\otimes 1)(1\otimes 1\otimes \tR),
\end{multline}
where both sides are elements of
$$
\left(V^{\sqrt{a}}_{\sqrt{b}}\otimes V^{\sqrt{c}}_{\sqrt{d}}\otimes V^{\sqrt{d}^{-1}}_{\sqrt{c}^{-1}}\otimes V^{\sqrt{b}^{-1}}_{\sqrt{a}^{-1}}\right)^*.
$$
To simplify the expressions, let
$$
F_1=(\tW^b\otimes \tW^b)(1\otimes\tR\otimes 1)(\tR\otimes 1\otimes 1),\qquad F_2=(\tW^b\otimes \tW^b)(1\otimes\tR\otimes 1)(1\otimes 1\otimes\tR),
$$
$$
G=(1\otimes \tR\otimes 1)(\tR\otimes\tR)(1\otimes\tR\otimes1).
$$
Note that both $G\circ F_1$ and $F_2$ are isomorphisms 
$$
V^{\sqrt{a}}_{\sqrt{b}}\otimes V^{\sqrt{c}}_{\sqrt{d}}\otimes V^{\sqrt{d}^{-1}}_{\sqrt{c}^{-1}}\otimes V^{\sqrt{b}^{-1}}_{\sqrt{a}^{-1}}\to V^{\sqrt{a}}_{\sqrt{a}^{-1}}\otimes V^{\sqrt{b}^{-1}}_{\sqrt{b}}\otimes V^{\sqrt{c}}_{\sqrt{c}^{-1}}\otimes V^{\sqrt{d}^{-1}}_{\sqrt{d}},
$$
where both sides are irreducible by Proposition \ref{irred}. Comparing the action on $v_0^{\otimes 4}$ and using the Schur's lemma, we get that  $G\circ F_1=F_2$. So
\begin{multline*}
\langle \omega_{a;\xi,\zeta}|\otimes\langle \omega_{b^{-1};\zeta,\xi}|\otimes\langle \omega_{c;\xi,\zeta}|\otimes\langle \omega_{d^{-1};\zeta,\xi}|\ F_2\\
=\langle \omega_{a;\xi,\zeta}|\otimes\langle \omega_{b^{-1};\zeta,\xi}|\otimes\langle \omega_{c;\xi,\zeta}|\otimes\langle \omega_{d^{-1};\zeta,\xi}|\ G\circ F_1.
\end{multline*}
From Proposition \ref{dualeigenR}, we have
\begin{multline*}
\langle \omega_{a;\xi,\zeta}|\otimes\langle \omega_{b^{-1};\zeta,\xi}|\otimes\langle \omega_{c;\xi,\zeta}|\otimes\langle \omega_{d^{-1};\zeta,\xi}|\ G\circ F_1\\
=\langle \omega_{a;\xi,\zeta}|\otimes\langle \omega_{b^{-1};\zeta,\xi}|\otimes\langle \omega_{c;\xi,\zeta}|\otimes\langle \omega_{d^{-1};\zeta,\xi}|\ (1\otimes \tR\otimes 1)(\tR\otimes\tR)(1\otimes\tR\otimes1)F_1\\
=\langle \omega_{c;\xi,\zeta}|\otimes\langle \omega_{d^{-1};\zeta,\xi}|\otimes\langle \omega_{a;\xi,\zeta}|\otimes\langle \omega_{b^{-1};\zeta,\xi}|\, F_1,
\end{multline*}
which implies \eqref{reflectiongoal}.
\end{proof}

\section{Higher-spin six vertex model and spin Hall-Littlewood functions}

In this section, we provide the notation and vertex models related to inhomogeneous stable spin Hall-Littlewood functions.

\subsection{Higher-spin six vertex model} Here we consider vertex models where some edges have parameters $(z, zq)$. We call these edges \emph{unfused}, we parametrize them using a single parameter $z$ and they are visualized by thin lines in our diagrams. These edges correspond to the two-dimensional modules $V(q^{-\frac12})_{zq^{1/2}}$, so such edges are only allowed to have values $0$ or $1$. Below we describe the corresponding degeneration of the vertex weights.
\begin{proposition}\label{prop: hsw} Let $a,b,z$ be generic parameters. 

(1) The vertex weights 
$$
\begin{tikzpicture}[scale=1.2, baseline=-4pt]
	\draw[unfused] (-1,0) -- (1,0);
	\draw[fused] (0,-1) -- (0,1);
	\node[left] at (-1,0) {\tiny $i;z$};\node[right] at (1,0) {\tiny $l;z$};
	\node[below] at (0,-1) {\tiny $j;a,b$};\node[above] at (0,1) {\tiny $k;a,b$};
	\node[above right] at (0,0) {\tiny $w$};
\end{tikzpicture}=w_{z;a,b}(i,j,k,l):=R_{z, zq,a,b}(i,j,k,l)
$$
are well-defined for all $i,j,k,l\in\mathbb Z_{\geq0}$. When $i\in\{0,1\}$ they are described by the following table:
$$
w_{z,a,b}(i,j,k,l)=0\qquad \text{unless}\ i+j=k+l\ \text{and}\ l\in\{0,1\},
$$

\begin{align*}
\begin{array}{cccc}
\begin{tikzpicture}[scale=0.9, baseline=-4pt]
	\draw[unfused] (-1,0) -- (1,0);
	\draw[fused] (0,-1) -- (0,1);
	\node[left] at (-1,0) {\tiny $0;z$};\node[right] at (1,0) {\tiny $0;z$};
	\node[below] at (0,-1) {\tiny $g;a,b$};\node[above] at (0,1) {\tiny $g;a,b$};
	\node[above right] at (0,0) {\tiny $w$};
\end{tikzpicture}
&
\begin{tikzpicture}[scale=0.9, baseline=-4pt]
	\draw[unfused] (-1,0) -- (1,0);
	\draw[fused] (0,-1) -- (0,1);
	\node[left] at (-1,0) {\tiny $0;z$};\node[right] at (1,0) {\tiny $1;z$};
	\node[below] at (0,-1) {\tiny $g;a,b$};\node[above] at (0,1) {\tiny $g-1;a,b$};
	\node[above right] at (0,0) {\tiny $w$};
\end{tikzpicture}
&
\begin{tikzpicture}[scale=0.9, baseline=-4pt]
	\draw[unfused] (-1,0) -- (1,0);
	\draw[fused] (0,-1) -- (0,1);
	\node[left] at (-1,0) {\tiny $1;z$};\node[right] at (1,0) {\tiny $0;z$};
	\node[below] at (0,-1) {\tiny $g;a,b$};\node[above] at (0,1) {\tiny $g+1;a,b$};
	\node[above right] at (0,0) {\tiny $w$};
\end{tikzpicture}
&
\begin{tikzpicture}[scale=0.9, baseline=-4pt]
	\draw[unfused] (-1,0) -- (1,0);
	\draw[fused] (0,-1) -- (0,1);
	\node[left] at (-1,0) {\tiny $1;z$};\node[right] at (1,0) {\tiny $1;z$};
	\node[below] at (0,-1) {\tiny $g;a,b$};\node[above] at (0,1) {\tiny $g;a,b$};
	\node[above right] at (0,0) {\tiny $w$};
\end{tikzpicture}
\\
\\
\dfrac{1-q^gz/b}{1-z/b}
\qquad
&
\dfrac{(q^g-1)z/b}{1-z/b}
\qquad
&
\dfrac{1-q^ga/b}{1-z/b}
\qquad
&
\dfrac{q^ga/b-z/b}{1-z/b}
\vspace{1ex}
\end{array}
\end{align*}

(2) The vertex weights 
$$
\begin{tikzpicture}[scale=1.2, baseline=-4pt]
	\draw[fused] (-1,0) -- (1,0);
	\draw[unfused] (0,-1) -- (0,1);
	\node[left] at (-1,0) {\tiny $i;a,b$};\node[right] at (1,0) {\tiny $l;a,b$};
	\node[below] at (0,-1) {\tiny $j;z$};\node[above] at (0,1) {\tiny $k;z$};
	\node[above right] at (0,0) {\tiny $w^*$};
\end{tikzpicture}=w^*_{a,b;z}(i,j,k,l):=R_{a,b, z, zq}(i,j,k,l)
$$
are well-defined for all $i,j,k,l\in\mathbb Z_{\geq0}$. When $j\in\{0,1\}$ they are described by the following table:
$$
w^*_{a,b;z}(i,j,k,l)=0\qquad \text{unless}\ i+j=k+l\ \text{and}\ k\in\{0,1\},
$$

\begin{align*}
\begin{array}{cccc}
\begin{tikzpicture}[scale=0.9, baseline=-4pt]
	\draw[fused] (-1,0) -- (1,0);
	\draw[unfused] (0,-1) -- (0,1);
	\node[below] at (-1,-0.1) {\tiny $g;a,b$};\node[below] at (1,-0.1) {\tiny $g;a,b$};
	\node[below] at (0,-1) {\tiny $0;z$};\node[above] at (0,1) {\tiny $0;z$};
	\node[above right] at (0,0) {\tiny $w^*$};
\end{tikzpicture}
\ \
&
\begin{tikzpicture}[scale=0.9, baseline=-4pt]
	\draw[fused] (-1,0) -- (1,0);
	\draw[unfused] (0,-1) -- (0,1);
	\node[below] at (-1,-0.1) {\tiny $g;a,b$};\node[below] at (1,-0.1) {\tiny $g-1;a,b$};
	\node[below] at (0,-1) {\tiny $0;z$};\node[above] at (0,1) {\tiny $1;z$};
	\node[above right] at (0,0) {\tiny $w^*$};
\end{tikzpicture}
\ \
&
\begin{tikzpicture}[scale=0.9, baseline=-4pt]
	\draw[fused] (-1,0) -- (1,0);
	\draw[unfused] (0,-1) -- (0,1);
	\node[below] at (-1,-0.1) {\tiny $g;a,b$};\node[below] at (1,-0.1) {\tiny $g+1;a,b$};
	\node[below] at (0,-1) {\tiny $1;z$};\node[above] at (0,1) {\tiny $0;z$};
	\node[above right] at (0,0) {\tiny $w^*$};
\end{tikzpicture}
\ \
&
\begin{tikzpicture}[scale=0.9, baseline=-4pt]
	\draw[fused] (-1,0) -- (1,0);
	\draw[unfused] (0,-1) -- (0,1);
	\node[below] at (-1,-0.1) {\tiny $g;a,b$};\node[below] at (1,-0.1) {\tiny $g;a,b$};
	\node[below] at (0,-1) {\tiny $1;z$};\node[above] at (0,1) {\tiny $1;z$};
	\node[above right] at (0,0) {\tiny $w^*$};
\end{tikzpicture}
\\
\\
\dfrac{q^{-g}-\frac{a}{zq}}{1-\frac{a}{zq}}
\qquad
&
\dfrac{1-q^{-g}}{1-\frac{a}{zq}}
\qquad
&
\dfrac{q^{-g}\frac{b}{qz}-\frac{a}{zq}}{1-\frac{a}{zq}}
\qquad
&
\dfrac{1-q^{-g}\frac{b}{zq}}{1-\frac{a}{zq}}
\vspace{1ex}
\end{array}
\end{align*}
\end{proposition}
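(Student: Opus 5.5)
Since both families of weights are defined as specializations of the stochastic weight $R$ from \eqref{eq: stochR}, I will specialize the explicit sum formula for $R$ and simplify. The factors $\Phi$ in that formula are rational in their parameters, and the only possible singularities are the denominators $(ab;q)_{i+j}$ inside $\Phi$. So well-definedness reduces to checking that, after substituting $(a_1,b_1)=(z,zq)$ (respectively $(a_2,b_2)=(z,zq)$), no denominator vanishes identically for generic $a,b,z$. I also need to check that every term involving the unfused label outside $\{0,1\}$ is absorbed into factors that are already controlled. One point needs care: $(a_1/b_1;q)_i=(q^{-1};q)_i$ vanishes for $i\ge 2$. I therefore choose whichever of the two expressions in \eqref{eq: stochR} keeps such Pochhammers in the numerator. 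For part (1) I use the second expression, with $\Phi(l-p,k,z/a,a/b)\Phi(p,j-p,a/(zq),q^{-1}zq/b)$, whose denominators are $(z/b;q)$ and $(a/(zq)\cdot z/b;q)=(a/(qb);q)$. Both are nonzero for generic parameters. For part (2) I use the first expression, which contains $\Phi(l,k-p,1/q,zq/b)$ with denominator $(z/b;q)$. Thus the weights are well-defined for all $i,j,k,l$.

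Next I prove the vanishing statement. In part (1), the factor $\Phi(p,j-p,\cdot,\cdot)$ equals $\Phi(l-p,k,z/a,\dots)$, and the relevant Pochhammer $(q^{-1};q)_{\cdot}$ comes from the argument $a_1/b_1=q^{-1}$. Rather than locate it directly, I argue representation-theoretically: $R$ is an intertwiner $V(q^{-1/2})_{zq^{1/2}}\otimes V^{\sqrt a}_{\sqrt b}\to V^{\sqrt a}_{\sqrt b}\otimes V(q^{-1/2})_{zq^{1/2}}$, and the second module is $2$-dimensional by Proposition \ref{explicitAction}. Hence the outgoing unfused label $l$ lies in $\{0,1\}$. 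The intertwining argument needs the normalization on the generic locus, but the conclusion then extends by rationality. As a sanity check, I also verify directly that for $i\in\{0,1\}$ and $l\ge2$ every term carries a factor $(q^{-1};q)_{m}$ with $m\ge2$, or has $p$ out of range. Part (2) is symmetric, with $k\in\{0,1\}$.

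Finally I compute the four table entries. Each has $i,l\in\{0,1\}$ (respectively $j,k\in\{0,1\}$), so the sum over $p$ has at most two terms. For example, in (1) with $i=0$ the constraint $p\le \min(j,l)$ forces very few terms. I substitute the explicit $\Phi$, simplify products such as $(q^{-1};q)_1=1-q^{-1}$ and $(a/b;q)_g/(a/(qb);q)_g$-type ratios, and telescope to the listed rational functions. As a consistency check, stochasticity requires each column of the table to sum to $1$ over $(k,l)$ for fixed $(i,j)$. For instance, $\frac{1-q^gz/b}{1-z/b}+\frac{(q^g-1)z/b}{1-z/b}=1$ confirms the $i=0$ row.

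The main obstacle is the bookkeeping in this last step: choosing, for each entry, the expression in \eqref{eq: stochR} that avoids indeterminate $0/0$ forms at $a_1/b_1=q^{-1}$. Where both expressions degenerate, I would first take a generic spin $b_1=za_1/s$, compute, and then let $s\to q^{-1}$, using continuity of the rational functions.
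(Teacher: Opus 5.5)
Your overall route is the paper's: specialize the two expressions in \eqref{eq: stochR}, use the one without $q^{-1}$-arguments for well-definedness, and read off the table from a sum with at most two terms. The step that does not hold up is your representation-theoretic argument for $l\in\{0,1\}$.

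Theorem~\ref{tRdef} is only available when $a_1/b_1\notin q^{-\mathbb Z_{\geq0}}$. So nothing tells you that the specialized matrix $R_{z,zq,a,b}$, which is defined for all labels, restricts to an intertwiner out of the $2$-dimensional module. Saying ``the target is $2$-dimensional, hence $l\in\{0,1\}$'' assumes what has to be shown. Nor can the vanishing be extended by rationality from the generic locus. For generic $a_1,b_1$ the entry $R_{a_1,b_1,a,b}(0,k+l,k,l)=\Phi(l,k,a_1/b_1,b_1/b)$ is nonzero for $l\ge2$; it vanishes only because of the factor $(a_1/b_1;q)_l$ at $a_1/b_1=q^{-1}$. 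Making the representation theory work would need extra input, e.g.\ that $\mathrm{span}(v_0,v_1)$ is a submodule of the reducible module at $s^2=q^{-1}$, plus a cyclicity argument.

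What you call a sanity check is the actual proof, and it is what the paper does. In the first expression, $\Phi(l,k-p,q^{-1},zq/b)$ carries $(q^{-1};q)_l$ in its numerator. The only dangerous denominator is $(q^{-1};q)_i$ from $\Phi(i-p,p,z/a,a/(zq))$, which is nonzero for $i\in\{0,1\}$. So for $i\le1$ this expression is regular and agrees with the everywhere-regular second one. It gives the vanishing for $l\geq 2$, and it also yields the table with $p\le\min(i,k)$. This is more economical than your choice: the entry $(i,j,k,l)=(0,g,g-1,1)$ is a single term there, but two terms that must be combined in the second expression.

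Your bookkeeping also needs fixing, though it does not change the conclusions.
In part (1), the second expression is $\Phi(l-p,k,z/a,a/b)\,\Phi(p,j-p,a/(zq),zq/b)$. Its denominators are $(z/b;q)$ and $(a/b;q)$, not $(a/(qb);q)$.
In part (2), the first expression is $\Phi(i-p,p,a/z,z/b)\,\Phi(l,k-p,a/b,b/(zq))$, with denominators $(a/b;q)$ and $(a/(zq);q)$. The factor $\Phi(l,k-p,q^{-1},zq/b)$ you quote belongs to part (1).
In part (2), the $q^{-1}$-arguments sit in the second expression $\Phi(l-p,k,a/z,q^{-1})\,\Phi(p,j-p,z/b,b/(zq))$. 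There $(q^{-1};q)_k$ is in a numerator and $(q^{-1};q)_j$ in a denominator. This mirrors part (1) and gives $k\in\{0,1\}$ for $j\in\{0,1\}$.
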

\begin{proof} Follows directly from \eqref{eq: stochR}. More precisely,
$$
R_{z,zq,a,b}(i,j,k,l)=\delta_{i+j=k+l}\sum_{p\geq 0} \Phi(l-p, k, z/a, a/b)\Phi(p, j-p, a/(zq), zq/b)
$$
implies that $w_{z;a,b}$ is always well-defined, while 
$$
R_{z,zq,a,b}(i,j,k,l)=\delta_{i+j=k+l}\sum_{p=0}^{\max(i,k)} \Phi(i-p, p, z/a, a/(zq))\Phi(l, k-p, q^{-1}, zq/b)
$$
can be easily computed when $i\in\{0,1\}$ since there are at most two terms. The argument for $w^*$ is identical.
\end{proof}

The weights $w$ and $w^*$ are called \emph{stochastic higher spin six-vertex weights} \cite{Bor14}, \cite{BP18}. We set 
$$
\tw_{z;a,b}(i,j,k,l):=\tR_{z, zq,a,b}(i,j,k,l), \qquad \tw^*_{a,b;z}(i,j,k,l):=\tR_{a,b, z, zq}(i,j,k,l).
$$
Note that, as long as the unfused edges are adjacent to $w$ or $w^*$ vertices, restricting the values of unfused boundary edges to $\{0,1\}$ forces all internal unfused edges to have values $0$ or $1$ as well. This ensures that unfused edges are consistent with our notation for partition functions, and we can get identities for the weights $w$, $w^*$ by setting some of the edge parameters $(a,b)$ to $(z,zq)$. For instance, we have the following reflection equation for the higher spin six-vertex weights.

\begin{proposition}\label{prop: hsref} For generic parameters $\sqrt{z},\sqrt{c},\zeta,\xi$ and $i,l\in\{0,1\}$, $j\in\mathbb Z_{\geq0}$ we have
    $$
\begin{tikzpicture}[scale=1.2, baseline=-4pt]
		\draw[unfused] (-1,0) -- (1,0) -- (2,1);
		\draw[fused] (0,-1) -- (0,1);
        \draw[unfused] (3,0) -- (2,1);
        \node at (2,1) {$\bullet$};
        \node at (0,1) {$\text{---}$};
	   \node[below] at (-1,-0.1) {\tiny \shortstack[c]{$i; z$}};
        \node[above] at (2,1) {\tiny \shortstack[c]{$\widetilde{K}_{\zeta,\xi}$}};
        \node[below] at (3,-0.1) {\tiny \shortstack[c]{$l; (qz)^{-1}$}};
		\node[below] at (0,-1) {\tiny $j; c,c^{-1}$};
        \node[above] at (0,1) {\tiny $\omega_{c;\xi,\zeta}$};
		\node[above right] at (0,0) {\tiny $\tw$};
\end{tikzpicture} 
\quad
=
\quad
\begin{tikzpicture}[scale=1.2,baseline=-4pt]
		\draw[unfused] (1,0) -- (-1,0) -- (-2,1);
		\draw[fused] (0,-1) -- (0,1);
        \draw[unfused] (-3,0) -- (-2,1);
        \node at (-2,1) {$\bullet$};
        \node at (0,1) {$\text{---}$};
	   \node[below] at (-3,-0.1) {\tiny \shortstack[c]{$i; z$}};
        \node[above] at (-2,1) {\tiny \shortstack[c]{$\widetilde{K}_{\xi,\zeta}$}};
        \node[below] at (1,-0.1) {\tiny \shortstack[c]{$l; (qz)^{-1}$}};
		\node[below] at (0,-1) {\tiny $j; c,c^{-1}$};
        \node[above] at (0,1) {\tiny $\omega_{c;\xi,\zeta}$};
		\node[above right] at (0,0) {\tiny $\tw^*$};
\end{tikzpicture} 
$$
The reflection weights $\tK_{z,qz;\xi,\zeta}(i,j)$ are given by

\begin{align*}
\begin{array}{cccc}
\begin{tikzpicture}[scale=0.9, baseline=-4pt]
	\draw[unfused] (-1,0) -- (0,1);
	\draw[unfused] (1,0) -- (0,1);
	\node[below] at (-1,0) {\tiny $0;z$};\node[below] at (1,0) {\tiny $0;z^{-1}q^{-1}$};
	\node[above] at (0,1) {\tiny $\tK_{\xi,\zeta}$};
    \node at (0,1) {$\bullet$};
\end{tikzpicture}
\ \
&
\begin{tikzpicture}[scale=0.9, baseline=-4pt]
	\draw[unfused] (-1,0) -- (0,1);
	\draw[unfused] (1,0) -- (0,1);
	\node[below] at (-1,0) {\tiny $0;z$};\node[below] at (1,0) {\tiny $1;z^{-1}q^{-1}$};
	\node[above] at (0,1) {\tiny $\tK_{\xi,\zeta}$};
    \node at (0,1) {$\bullet$};
\end{tikzpicture}
\ \
&
\begin{tikzpicture}[scale=0.9, baseline=-4pt]
	\draw[unfused] (-1,0) -- (0,1);
	\draw[unfused] (1,0) -- (0,1);
	\node[below] at (-1,0) {\tiny $1;z$};\node[below] at (1,0) {\tiny $0;z^{-1}q^{-1}$};
	\node[above] at (0,1) {\tiny $\tK_{\xi,\zeta}$};
    \node at (0,1) {$\bullet$};
\end{tikzpicture}
\ \
&
\begin{tikzpicture}[scale=0.9, baseline=-4pt]
	\draw[unfused] (-1,0) -- (0,1);
	\draw[unfused] (1,0) -- (0,1);
	\node[below] at (-1,0) {\tiny $1;z$};\node[below] at (1,0) {\tiny $1;z^{-1}q^{-1}$};
	\node[above] at (0,1) {\tiny $\tK_{\xi,\zeta}$};
    \node at (0,1) {$\bullet$};
\end{tikzpicture}
\\
\\
1
\qquad
&
\frac{zq^{\frac12}(\xi+\xi^{-1}-z(\zeta+\zeta^{-1}))}{1-z^2}
\qquad
&
\frac{\zeta+\zeta^{-1}-z(\xi+\xi^{-1})}{1-z^2}
\qquad
&
q^{\frac12}
\vspace{1ex}
\end{array}
\end{align*}
\end{proposition}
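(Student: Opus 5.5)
The plan is to specialize Theorem \ref{reflection2-theo} (the R-boundary relation) to unfused parameters, and then compute the four reflection weights $\tK_{z,qz;\xi,\zeta}(i,l)$, $i,l\in\{0,1\}$, from the definition \eqref{tKdef}.

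\emph{Reflection equation.} Apply Theorem \ref{reflection2-theo} with $(a,b)=(z,zq)$. The right boundary edge then carries parameters $(b^{-1},a^{-1})=((zq)^{-1},z^{-1})=(z',z'q)$ with $z'=(qz)^{-1}$, so it is an unfused edge with spectral parameter $(qz)^{-1}$, as in the statement. On the left-hand side the vertex $\tR_{z,zq,c,c^{-1}}$ is exactly $\tw_{z;c,c^{-1}}$. On the right-hand side the vertex $\tR_{c,c^{-1},(qz)^{-1},z^{-1}}$ is $\tw^*_{c,c^{-1};(qz)^{-1}}$. For generic $z,c$ the nondegeneracy conditions of Theorem \ref{reflection2-theo} hold, since $a/b=q^{-1}$ and $1/b^2=(zq)^{-2}$ are not in $q^{-\mathbb Z_{\geq0}}$.

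The one point to check is that restricting the outer boundary labels $i,l$ to $\{0,1\}$ forces the internal unfused edge labels into $\{0,1\}$. This follows from the conservation remarks after Proposition \ref{prop: hsw}, because $w$ and $w^*$ vanish unless the outgoing unfused label lies in $\{0,1\}$. It also relies on the gauge factors $\Theta$ only rescaling weights, so they preserve the vanishing pattern. As a result, the $\tK$ entries that appear are exactly $\tK_{z,zq}(p,l)$ with $p,l\in\{0,1\}$. The conjugation of $\tw$ versus $w$ is a diagonal rescaling that does not affect the support, so the statement follows directly.

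\emph{Explicit weights.} Use the first expression in \eqref{tKdef}, which has prefactor $(b/a)^{-j/2}=q^{-j/2}$. The plan is to compute
\[
\tK_{z,zq}(i,j)=\sum_{k+l'=i+j}\omega_{(zq)^{-1};\xi,\zeta}(k)\,\omega_{z;\zeta,\xi}(l')\,\tW^a_{z,zq,(zq)^{-1},z^{-1}}(i,j,k,l').
\]
The $\tW^a$ factor is $\Phi(i-k,k,q^{-1},\cdot)$, and $(q^{-1};q)_{m}=0$ for $m\ge2$, so $i-k\in\{0,1\}$. Hence for $i+j\le 2$ only a handful of terms survive. The needed values of $\omega$ come from \eqref{omegaex}:
\begin{align*}
\omega_s(0)&=1,\\
\omega_{s;\xi,\zeta}(1)&=\frac{\zeta(1-s\xi/\zeta)+\zeta^{-1}(1-s\zeta/\xi)}{1-s^2}=\frac{\zeta+\zeta^{-1}-s(\xi+\xi^{-1})}{1-s^2},\\
\omega_{s;\xi,\zeta}(2)&=\text{a similar short expression}.
\end{align*}

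The four cases work out as follows.
\begin{itemize}
\item $(i,j)=(0,0)$ gives $1$.
\item $(1,0)$ gives either $\omega_{(zq)^{-1};\xi,\zeta}(1)$ or $\omega_{z;\zeta,\xi}(1)$ times the appropriate $\Phi$ values. Simplifying should yield $\frac{\zeta+\zeta^{-1}-z(\xi+\xi^{-1})}{1-z^2}$.
\item $(0,1)$ is handled in the same way with the $q^{-1/2}$ prefactor, which should yield $zq^{1/2}(\xi+\xi^{-1}-z(\zeta+\zeta^{-1}))/(1-z^2)$.
\item $(1,1)$ involves a degree-two combination of $\omega(2)$, $\omega(1)\omega(1)$ and $\omega(2)$ terms. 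The claim is that it collapses to $q^{1/2}$.
\end{itemize}

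The main obstacle I expect is the $(1,1)$ entry. Here the cancellation is nontrivial, and it is easy to mix up the roles of $\xi$ and $\zeta$ and the square-root normalizations. To keep this under control, I would cross-check using the second expression in \eqref{tKdef} (the $\tW^b$ form), since the two must agree. If the direct simplification becomes messy, an alternative is to derive $(1,1)$ from the reflection equation itself with $j=0$ and $i=l=1$: both sides involve only $\omega_c(0),\omega_c(1)$ and the known six-vertex weights, which determines the remaining entry from the other three.
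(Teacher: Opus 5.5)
Your computation of the four entries is fine, and it is the ``direct computation'' the paper intends. The $(1,1)$ entry does collapse: write $X=\xi+\xi^{-1}$, $Z=\zeta+\zeta^{-1}$. Only $k\in\{0,1\}$ survive in the first line of \eqref{tKdef}, and the two terms combine to $q-1$, so $q^{-1/2}(q-1)/(1-q^{-1})=q^{1/2}$.

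\textbf{The gap is in the first part.} You claim the hypotheses of Theorem \ref{reflection2-theo} hold at $(a,b)=(z,zq)$ because $a/b=q^{-1}\notin q^{-\mathbb Z_{\geq0}}$. In fact $q^{-1}=q^{-m}$ with $m=1$, so this hypothesis \emph{fails}. This is not a technicality:
\begin{itemize}
\item At $a/b=q^{-1}$ the module $V^{\sqrt a}_{\sqrt b}$ becomes two-dimensional, so the irreducibility and Schur-lemma arguments behind Theorem \ref{reflection2-theo} no longer apply.
\item The individual weights genuinely acquire poles. For example, $\tW^a_{a,b,b^{-1},a^{-1}}(i,j,k,l)$ contains $\Phi(i-k,k,ab,b^{-2})$, not $\Phi(i-k,k,q^{-1},\cdot)$. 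Its denominator is $(a/b;q)_i=(q^{-1};q)_i$, so your remark that $(q^{-1};q)_m=0$ truncates the sum has it backwards. That factor is a pole, which makes $\tK_{a,b}(i,j)$ singular at $b=qa$ for $i\geq2$ in the first line of \eqref{tKdef} and $j\geq 2$ in the second.
\item The first expression for $\tR_{a,b,\cdot,\cdot}$ likewise has $(a/b;q)_i$ in the denominator.
\item Your truncation of internal unfused labels to $\{0,1\}$ is only available \emph{after} specialization. For generic $(a,b)$ the internal sums in Theorem \ref{reflection2-theo} run over all $p\geq 0$.
\end{itemize}

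\textbf{What is missing is the specialization argument.} By conservation, each side of Theorem \ref{reflection2-theo} is a finite sum, hence the theorem is an identity of rational functions in $\sqrt a,\sqrt b,\sqrt c,\xi,\zeta$. You may set $b=qa$ once you show every term is regular there for $i,l\in\{0,1\}$ and arbitrary internal $p$:
\begin{itemize}
\item For $\tR$, use the second expression in \eqref{eq: stochR}, as in Proposition \ref{prop: hsw}; it has no $(a/b;q)$ denominators.
\item For $\tK_{a,b;\zeta,\xi}(p,l)$ with $l\in\{0,1\}$, use the second line of \eqref{tKdef}, whose only dangerous denominator is $(a/b;q)_l$.
\item For $\tK_{a,b;\xi,\zeta}(i,p)$ with $i\in\{0,1\}$, use the first line, with denominator $(a/b;q)_i$.
\end{itemize}
Only then does the vanishing of $w,w^*$ for outgoing unfused labels $\geq 2$ cut the internal sums down to $p\in\{0,1\}$. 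This regularity check is the real content of the paper's proof, and it is why the statement is restricted to $i,l\in\{0,1\}$.

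Your fallback for the $(1,1)$ entry is also weaker than stated. With $i=l=1$, $j=0$ the equation involves both $\tK_{\zeta,\xi}(1,1)$ and $\tK_{\xi,\zeta}(1,1)$, so a single instance does not determine it. You would need a second value of $j$, but the direct computation makes this unnecessary.
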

\begin{proof} Note that Theorem \ref{reflection2-theo} can be viewed as an identity of rational functions in $\sqrt{a}$, $\sqrt{b}$, $\sqrt{c}$, $\zeta$, $\xi$, so we can set $a=z$ and $b=qz$ as long as all the weights are well-defined. Proposition \ref{prop: hsw} ensures that the weights $\tR$ on both sides have no poles at $a/b=q^{-1}$. For the reflection weights we use \eqref{tKdef} to get
\begin{multline*}
\tK_{a,b;\xi,\zeta}(i,j)=\sum_{k= 0}^{i}\omega_{b^{-1};\xi,\zeta}(k)\omega_{a;\zeta,\xi}(i+j-k)\frac{b^{j+k-i}}{(b/a)^{j/2}}\frac{(ab;q)_{i-k}(b^{-2};q)_k(q;q)_i}{(a/b;q)_{i}(q;q)_{i-k}(q;q)_k}
\\=\sum_{l= 0}^j\omega_{a;\xi,\zeta}(i+j-l)\omega_{b^{-1};\zeta,\xi}(l)\frac{b^l}{(b/a)^{j/2}}\frac{(b^{-2};q)_l(ab;q)_{j-l}(q;q)_j}{(a/b;q)_j(q;q)_l(q;q)_{j-l}}.
\end{multline*}
The first line has no pole at $b/a=q$ if $i\in\{0,1\}$, ensuring that the right-hand side of Theorem \ref{reflection2-theo} is well-defined. The second line has no pole if $j\in\{0,1\}$, which handles the other side. The values for the reflection weights follow by a direct computation. 
\end{proof}

\begin{remark}\label{rem: degen} Setting $(a,b)=(z,qz)$ and $(c,d)=(u,qu)$ in Theorem \ref{thm: reflection3} turns all $\tR$ weights into \emph{six-vertex} weights. By swapping zeroes and ones on one side of the reflection weights $\tK_{z,qz;\xi,\zeta}(i,j)$, using cross symmetry of six-vertex weights and applying a gauge transformation, Theorem \ref{thm: reflection3} reduces to the solution of the reflection equation found in \cite{dVGR93}. In particular, 
    $$
    \left(\frac{z^{\frac{i+j-1}{2}}q^{\frac{j-1}2}(-1)^{j}(z-z^{-1})\tK_{z,qz;\xi,\zeta}(i,1-j)}{2c}\right)_{i,j\in\{0,1\}}=\begin{pmatrix}
    \sinh(\varepsilon+\theta)&-c^{-1}\sinh(\theta)\\
    c^{-1}\sinh(\theta)&\sinh(\varepsilon-\theta)\\
    \end{pmatrix}
    $$
    where
    $$
    c=(\xi+\xi^{-1})^{\frac12}(\zeta+\zeta^{-1})^{\frac12},\qquad e^{\theta}=\sqrt{z}, \qquad e^{\varepsilon}=\left(\frac{\zeta+\zeta^{-1}}{\xi+\xi^{-1}}\right)^{\frac12}.
    $$
    These reflection weights have also been used in the half-space six-vertex model from \cite{He24}, \cite{GdGMW}.
\end{remark}

\subsection{Inhomogeneous stable spin Hall-Littlewood functions}\label{sec: sHL-def}  Let $\calA=(a_0,a_1,a_2, \dots )$, $\calB=(b_0,b_1,b_2,\dots)$ be two sequences of generic complex parameters, and $\lambda=(\lambda_1\geq\lambda_2\geq\dots\geq 0)$, $\mu=(\mu_1\geq\mu_2\geq\dots\geq 0)$ be a pair of partitions. For each $N>\lambda_1,\mu_1$ and $(r_1,\dots,r_n)\in\{0,1\}^n$ we consider the partition function $Zw^{r_1,\dots,r_n}_{\lambda/\mu,N}(x_1,\dots, x_n\mid \calA,\calB)$ consisting of the higher spin six-vertex weights $w_{z;a,b}$:

	\begin{center}
		\begin{tikzpicture}
			\foreach\y in {4,...,5}{
				\draw[unfused*] (1,\y) -- (6.5,\y);
				\node[right] at (6.5,\y) {$\dots$};
				\draw[unfused] (7.3,\y) -- (10,\y);
			}
			\draw[unfused*] (1,2) -- (6.5,2);
			\node[right] at (6.5,2) {$\dots$};
			\draw[unfused] (7.3,2) -- (10,2);
			\foreach\x in {1, 2}{
				\draw[fused] (2.5*\x-0.5,3.3) -- (2.5*\x-0.5,6);
				\draw[fused*] (2.5*\x-0.5,1) -- (2.5*\x-0.5,2.5);
				\node at (2.5*\x-0.5, 3) {$\vdots$};
			}
            \draw[fused] (8.5,3.3) -- (8.5,6);
			\draw[fused*] (8.5,1) -- (8.5,2.5);
			\node at (8.5, 3) {$\vdots$};
			\node[above right] at (2,5) {\tiny $w$};
			\node[above right] at (4.5,5) {\tiny $w$};
			\node[above right] at (2,4) {\tiny $w$};
			\node[above right] at (4.5,4) {\tiny $w$};
			\node[above right] at (2,2) {\tiny $w$};
			\node[above right] at (4.5,2) {\tiny $w$};
			\node[above] at (8.5,6) {\tiny $m_N(\lambda); a_N, b_N$};
			\node[above] at (4.5,6) {\tiny $m_2(\lambda); a_2, b_2$};
			\node[above] at (2,6) {\tiny $m_1(\lambda); a_1, b_1$};
			\node[left] at (1,2) {\tiny $r_n; x_n$};
			\node[left] at (1,4) {\tiny $r_2; x_2$};
			\node[left] at (1,5) {\tiny $r_1; x_1$};
			\node[below] at (8.5,1) {\tiny $m_N(\mu); a_{N}, b_N$};
			\node[below] at (4.5,1) {\tiny $m_2(\mu); a_{2}, b_2$};
			\node[below] at (2,1) {\tiny $m_1(\mu); a_{1}, b_1$};
			\node[right] at (10,2) {\tiny $0;x_n$};
			\node[right] at (10,3) {$\vdots$};
			\node[right] at (10,4) {\tiny $0; x_2$};
			\node[right] at (10,5) {\tiny $0; x_1$};
		\end{tikzpicture}
	\end{center}
	The configuration occupies a strip with $n$ horizontal rows and $N$ columns. Every internal vertex carries the stochastic $w$ weight and the boundary conditions are
	\begin{itemize}
		\item the top edge of the $j$-th column carries the integer $m_j(\lambda)$ and the parameter pair $(a_j,b_j)$;
		\item the bottom edge of the $j$-th column carries the integer $m_j(\mu)$ and the parameter pair $(a_{j},b_j)$;
		\item the left unfused edge of the $i$-th row carries the integer $r_i$ and the parameter $x_i$;
		\item the right unfused edge of the $i$-th row carries the value $0$ and the parameter $x_i$.
	\end{itemize}
	The partition function $Zw^{r_1,\dots,r_n}_{\lambda/\mu,N}$ is the sum over all admissible configurations of the products of vertex weights. 
    
    Since $N>\lambda_1,\mu_1$, the conservation law forces all vertices in the last column to be surrounded by edges with $0$ labels. Hence the corresponding vertex weights are  all equal to $1$ and these vertices can be removed without changing the partition function. In other words, $Zw^{r_1,\dots,r_n}_{\lambda/\mu,N}$ does not depend on the number of columns $N$, as long as $\lambda,\mu$ are fixed and $N$ is sufficiently large. For this reason we omit $N$ from our notation, writing $Zw^{r_1,\dots,r_n}_{\lambda/\mu}$. Equivalently, one can think of $Zw^{r_1,\dots,r_n}_{\lambda/\mu}$ as a partition function of a semi-infinite strip with  finitely many non-trivial columns.

	\begin{definition}
		The \emph{inhomogeneous stable spin Hall-Littlewood function} $\sfF_{\lambda/\mu}(x_1,\dots,x_n\mid\calA,\calB)$ is defined by
		\begin{equation}\label{eq: def-sHL}
			\sfF_{\lambda/\mu}(x_1, \dots, x_n\mid\calA,\calB)
			=\sum_{r_1, \dots, r_n\in\{0,1\}}
			\prod_{i=1}^n(-x_i/b_0)^{r_i}\,
			Zw^{r_1, \dots, r_n}_{\lambda/\mu}(x_1, \dots, x_n\mid \calA,\calB).
		\end{equation}
	\end{definition}

    It is convenient to construct these functions row-by-row, using row operators. Let $V$ denote the vector space over $\calF$ with the basis vectors $\{|\lambda\rangle\}_{\lambda}$ enumerated by partitions. We use $\langle\lambda|\in V^*$ to denote the vectors dual to $\{|\lambda\rangle\}_{\lambda}$. For $i\in\{0,1\}$ define the row operator $T_i(x\mid\calA,\calB)$ as the matrix
    $$
    \langle\lambda|\ T_i(x\mid\calA,\calB)\ |\mu\rangle=		
    \begin{gathered}
    \begin{tikzpicture}
			\draw[unfused*] (1,2) -- (8.1,2);
			\node[right] at (8.1,2) {$\dots$};
			\draw[unfused] (8.8,2) -- (10,2);
			\foreach\x in {1,...,3}{
				\draw[fused] (2.5*\x-0.5,1) -- (2.5*\x-0.5,3);
			}
			\node[above right] at (2,2) {\tiny $w$};
			\node[above right] at (4.5,2) {\tiny $w$};
            \node[above right] at (7,2) {\tiny $w$};
			\node[above] at (7,3) {\tiny $m_3(\lambda); a_3, b_3$};
			\node[above] at (4.5,3) {\tiny $m_2(\lambda); a_2, b_2$};
			\node[above] at (2,3) {\tiny $m_1(\lambda); a_1, b_1$};
			\node[left] at (1,2) {\tiny $i; x$};
			\node[below] at (7,1) {\tiny $m_3(\mu); a_{3}, b_3$};
			\node[below] at (4.5,1) {\tiny $m_2(\mu); a_{2}, b_2$};
			\node[below] at (2,1) {\tiny $m_1(\mu); a_{1}, b_1$};
			\node[right] at (10,2) {\tiny $0;x$};
		\end{tikzpicture}
        \end{gathered}
    $$
    Note that the partition function on the right-hand side has at most one configuration with non-zero weight, due to the conservation law. In this configuration the horizontal edge entering column $j$ is forced to carry the label $\lambda'_{j}-\mu'_{j}$, which implies $\lambda'_j-\mu'_j\in\{0,1\}$. In other words, $\langle\lambda|\ T_i(x\mid\calA,\calB)\ |\mu\rangle=0$ unless $\lambda/\mu$ is a horizontal strip or, equivalently, unless $\mu\prec\lambda$. To get $\sfF_{\lambda/\mu}$, define
    $$
    B(x\mid\calA,\calB)= T_0(x\mid\calA,\calB)-xb_0^{-1} T_1(x\mid\calA,\calB).
    $$
    Then the vertex model definition of $\sfF_{\lambda/\mu}$ is equivalent to
    $$
    \sfF_{\lambda/\mu}(x_1,\dots,x_n\mid\calA,\calB)=\langle\lambda|\ B(x_1\mid\calA,\calB)B(x_2\mid\calA,\calB)\dots B(x_n\mid\calA,\calB)\ |\mu\rangle.
    $$
    From the interlacing, we have $\sfF_{\lambda/\mu}(x_1,\dots,x_n\mid\calA,\calB)=0$ unless $l(\lambda)\leq l(\mu)+n$.

    There is also an alternative description, in terms of the weights $w^*$. Define
    $T^*_i(y\mid\calA,\calB)$ as the matrix
    $$
    \langle\lambda|\ T_i^*(y\mid\calA,\calB)\ |\mu\rangle=q^{l(\mu)}\left(		
    \begin{gathered}
    \begin{tikzpicture}
			\draw[unfused] (7.9,2) -- (1,2);
			\node[right] at (7.9,2) {$\dots$};
			\draw[unfused*] (8.6,2) -- (9.3,2);
			\foreach\x in {1,...,3}{
				\draw[fused] (2.5*\x-0.5,1) -- (2.5*\x-0.5,3);
			}
			\node[above right] at (2,2) {\tiny $w^*$};
			\node[above right] at (4.5,2) {\tiny $w^*$};
            \node[above right] at (7,2) {\tiny $w^*$};
			\node[above] at (7,3) {\tiny $m_3(\lambda); a_3, b_3$};
			\node[above] at (4.5,3) {\tiny $m_2(\lambda); a_2, b_2$};
			\node[above] at (2,3) {\tiny $m_1(\lambda); a_1, b_1$};
			\node[below] at (1,1.9) {\tiny $i; q^{-1}y$};
			\node[below] at (7,1) {\tiny $m_3(\mu); a_{3}, b_3$};
			\node[below] at (4.5,1) {\tiny $m_2(\mu); a_{2}, b_2$};
			\node[below] at (2,1) {\tiny $m_1(\mu); a_{1}, b_1$};
			\node[below] at (9.3,2) {\tiny $0$};
		\end{tikzpicture}
        \end{gathered}\right)
    $$
    and set
    $$
    B^*(y\mid\calA,\calB)=T^*_0(y\mid\calA,\calB)+b_0y^{-1}T^*_1(y\mid\calA,\calB).
    $$
    \begin{proposition}\label{SHLrenorm}
        For every pair of partitions $\lambda,\mu$ we have
        $$
        \langle\mu| B^*(y\mid\calA,\calB)|\lambda\rangle=\frac{\psi_{\lambda}(\calA,\calB)}{\psi_\mu(\calA,\calB)}\langle\lambda| B(y^{-1}\mid\overline{\calB},\overline{\calA})|\mu\rangle,
        $$
        where 
    	\begin{equation}\label{eq:def-psi}
	       \psi_\lambda(\calA,\calB)=\psi_\lambda(\overline{\calB},\overline{\calA})
	        =
	       \prod_{r\geq 1}\left(\frac{b_{r-1}}{a_{r-1}}\right)^{\lambda'_{r}}
	       \frac{(q;q)_{m_r(\lambda)}}
	       {(a_r/b_r;q)_{m_r(\lambda)}}.
	\end{equation}
    \end{proposition}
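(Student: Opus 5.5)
Both sides are single-row partition functions, and by the conservation law each has at most one configuration with nonzero weight. The plan is therefore to reduce the statement to a vertex-by-vertex comparison of the weights $w^*$ and $w$, and then check that the mismatch factors telescope into $\psi_\lambda/\psi_\mu$.

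First, identify the unique configurations. For $\langle\mu|B^*(y\mid\calA,\calB)|\lambda\rangle$, the row runs right to left. The column $j$ carries $m_j(\lambda)$ at the bottom and $m_j(\mu)$ at the top. The horizontal edge to the left of column $j$ is then forced to carry $\lambda'_j-\mu'_j\in\{0,1\}$, which is nonzero only if $\mu\prec\lambda$; the same is true on the right-hand side. The leftmost unfused edge carries $i=\lambda'_1-\mu'_1=l(\lambda)-l(\mu)$. This matches the factor $(-x/b_0)^{r}$ with $x=y^{-1}$, $b_0\mapsto a_0^{-1}$ on the right, against $b_0y^{-1}$ on the left; the sign discrepancy will be absorbed at the column-$1$ stage. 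So both sides reduce to products over columns $j\ge1$ of single-vertex weights, together with the prefactor $q^{l(\lambda)}$ and the boundary factors.

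Second, compare the weights column by column, using the tables in Proposition \ref{prop: hsw}. Fix column $j$ with parameters $(a,b)=(a_j,b_j)$. Let $h$ be the horizontal label to its left and $h'$ the one to its right (the vertical direction is reversed on the $B^*$ side). On the left-hand side the weight is $w^*_{a,b;q^{-1}y}(h',m_j(\lambda),m_j(\mu),h)$. On the right-hand side it is $w_{y^{-1};b^{-1},a^{-1}}(h,m_j(\mu),m_j(\lambda),h')$. I will check the four cases $(h,h')\in\{0,1\}^2$ directly. For instance, the case with no paths and $g$ vertical paths gives $\frac{q^{-g}-a/y}{1-a/y}$ against $\frac{1-q^g a/y}{1-a/y}$, whose ratio is $q^{-g}$. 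The other cases should similarly produce ratios of the form $q^{\pm g}$ times $\frac{(q;q)}{(a/b;q)}$-type ratios. When $m_j$ changes by one, these combine into $\frac{1-q^{g+1}}{1-q^{g}a/b}$, which is exactly the ratio of $\frac{(q;q)_{m}}{(a_j/b_j;q)_m}$ in $\psi$ between $m_j(\lambda)$ and $m_j(\mu)$. Cases where a path enters at column $j$ change $\lambda'_j$ relative to $\mu'_j$, and contribute $(b_{j-1}/a_{j-1})$ factors. These come from the horizontal parameter difference between $w^*$ (spectral $q^{-1}y$) and $w$. I expect the convention of attaching $(b_{r-1}/a_{r-1})^{\lambda'_r}$ to column $r$ to match the entry at the left of column $r$, including $r=1$, where $b_0/a_0$ meets the $b_0y^{-1}$ and $-x b_0^{-1}$ boundary factors.

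Third, multiply everything. The product of the $\psi$ ratios telescopes to $\psi_\lambda/\psi_\mu$. The powers $q^{\pm g}$ should sum to $q^{-l(\lambda)}$, or something cancelling the prefactor $q^{l(\mu)}$ after accounting for $i$; this needs to be checked. The symmetry $\psi_\lambda(\calA,\calB)=\psi_\lambda(\overline\calB,\overline\calA)$ is immediate from the definition, since $(a/b;q)$ and $b/a$ are invariant under $a\mapsto b^{-1}$, $b\mapsto a^{-1}$.

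The main obstacle is bookkeeping: getting the exact powers of $q$, the signs, and the column-$1$ boundary factor all consistent. I would first verify the identity for one-column examples, such as $\lambda=(1)$, $\mu=\varnothing$ and $\lambda=(1,1)$, $\mu=(1)$, to fix the conventions, and then run the general telescoping argument.
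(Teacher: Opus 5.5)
Your plan is the paper's proof. The paper packages your four-case comparison into one local identity, $q^{i}\,w^*_{a,b;y/q}(i,j,k,l)=(-1)^{k-j}(b/a)^{j}\frac{(q;q)_i(a/b;q)_l}{(a/b;q)_i(q;q)_l}\,w_{y^{-1};b^{-1},a^{-1}}(k,l,i,j)$, where $i$, $j$, $k$ are the bottom, right and left edges on the $w^*$ side (so your argument order for $w^*$ should be $(m_j(\lambda),h',h,m_j(\mu))$), and then multiplies it over the row. To settle your hedges: the prefactor in $\langle\mu|T^*_k|\lambda\rangle$ is $q^{l(\lambda)}$, not $q^{l(\mu)}$, and the per-column factors $q^{-m_r(\lambda)}$ cancel it exactly; the signs $(-1)^{k-j}$ telescope along the row to $(-1)^{\lambda'_1-\mu'_1}$, which combines with $(b_0y^{-1})^{k}$ to give $(b_0/a_0)^{k}(-a_0y^{-1})^{k}$.
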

    \begin{proof} Note that
     $$
     q^i\
    \begin{gathered}
            \begin{tikzpicture}[scale=1.3]
				\draw[unfused] (1,0) -- (-1,0);
				\draw[fused] (0,-1) -- (0,1);
				\node[below] at (-0.9,-0.1) {\tiny \shortstack[c]{$k;y/q$}};
				\node[below] at (0.9,-0.1) {\tiny \shortstack[c]{$j; y/q$}};
				\node[below] at (0,-1) {\tiny $i; a,b$};
				\node[above] at (0,1) {\tiny $l; a,b$};
				\node[above right] at (0,0) {\tiny $w^*$};
			\end{tikzpicture}
		\end{gathered}
        = (-1)^{k-j}(b/a)^{j}\frac{(q;q)_i(a/b;q)_l}{(a/b;q)_i(q;q)_l}\ 
        \begin{gathered}
			\begin{tikzpicture}[scale=1.3]
				\draw[unfused] (-1,0) -- (1,0);
				\draw[fused] (0,-1) -- (0,1);
				\node[below] at (-0.9,-0.1) {\tiny \shortstack[c]{$k;y^{-1}$}};
				\node[below] at (0.9,-0.1) {\tiny \shortstack[c]{$j; y^{-1}$}};
				\node[below] at (0,-1) {\tiny $l; b^{-1},a^{-1}$};
				\node[above] at (0,1) {\tiny $i; b^{-1},a^{-1}$};
				\node[above right] at (0,0) {\tiny $w$};
			\end{tikzpicture}
		\end{gathered}
    $$
    Setting $(a,b)=(a_r, b_r)$ and taking the product over all vertices, we get
    \begin{multline*}
    \langle\mu|\ T_k^*(y\mid\calA,\calB)\ |\lambda\rangle\\
    =(-1)^k\prod_{r\geq 1}\left(\frac{b_r}{a_r}\right)^{\lambda'_{r+1}-\mu'_{r+1}}\frac{(q;q)_{m_r(\lambda)}}
	       {(a_r/b_r;q)_{m_r(\lambda)}}\frac{(a_r/b_r;q)_{m_r(\mu)}}{(q;q)_{m_r(\mu)}}\langle\lambda|\ T_k(y^{-1}\mid\overline{\calB},\overline{\calA})\ |\mu\rangle.
    \end{multline*}
    To finish the proof we multiply both sides by $(b_0/y)^{k}$ and take the sum over $k\in\{0,1\}$. Note that for fixed $\lambda,\mu$ the sum has at most one nonzero term, when $k=\lambda'_1-\mu'_1$.
    \end{proof}

    We define the dual stable spin Hall-Littlewood functions by
    $$
    \sfF^*_{\lambda/\mu}(y_1,\dots, y_m\mid\calA, \calB)=\langle\mu| B^*(y^{-1}_m\mid\overline{\calB},\overline{\calA})\dots B^*(y^{-1}_2\mid\overline{\calB},\overline{\calA}) B^*(y^{-1}_1\mid\overline{\calB},\overline{\calA})|\lambda\rangle.
    $$
    Then
	\begin{equation}\label{eq:def-dual-f}
	\sfF^*_{\lambda/\mu}(y_1,\ldots,y_m\mid\calA,\calB)
	=
	\frac{\psi_\lambda(\calA,\calB)}
	{\psi_\mu(\calA,\calB)}
	\sfF_{\lambda/\mu}(y_1,\ldots,y_m\mid\calA,\calB).
	\end{equation}

	\subsection{Properties of $\sfF_{\lambda/\mu}$}\label{sec: sHLbasic-properties} Here we list some of the known properties of the stable spin Hall-Littlewood functions. These properties follow from \cite{BP18}, since our functions $\sfF_{\lambda/\mu}$ are a particular case of the functions studied there. See Appendix \ref{sHLmatch} for the exact match with the results of \cite{BP18}.

    \begin{proposition}[{\cite[Proposition 4.5]{BP18}}] The function $\sfF_{\lambda/\mu}(x_1,\dots,x_n\mid\calA,\calB)$ is a symmetric rational function in $x_1,\dots, x_n$.
    \end{proposition}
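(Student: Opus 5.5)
The plan is to derive symmetry from the Yang--Baxter equation for two adjacent rows, via the standard train argument. It suffices to treat a transposition of neighbours $x_i\leftrightarrow x_{i+1}$. Since $\sfF_{\lambda/\mu}$ is a matrix coefficient of the product $B(x_1\mid\calA,\calB)\cdots B(x_n\mid\calA,\calB)$, the goal is to show that the row operators commute,
$$
B(x\mid\calA,\calB)B(y\mid\calA,\calB)=B(y\mid\calA,\calB)B(x\mid\calA,\calB).
$$
Rationality is then immediate. Each matrix coefficient is a finite sum, because only finitely many columns are nontrivial, and each summand is a product of the rational weights listed in Proposition \ref{prop: hsw}.

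First, I would encode the row operators as a monodromy. Consider the two-row partition function with left unfused edges $(r_1;x)$ and $(r_2;y)$ and right edges both equal to $0$. I would attach an $R$-vertex $R_{x,xq,y,yq}$ crossing the two unfused lines on the left. This vertex is the six-vertex stochastic $R$-matrix obtained from \eqref{eq: stochR}. Theorem \ref{YB} applies with one fused column $(a_j,b_j)$ and two unfused lines $(x,xq)$, $(y,yq)$. Using the degeneration argument of Proposition \ref{prop: hsw}, the Yang--Baxter equation holds for the stochastic weights as well, since by Proposition \ref{rescale} the gauge transformation $\Theta$ relates the stochastic and tilde versions. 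Repeated application moves the cross through all $N$ columns. On the right both outgoing edges are $0$, and the $R$-vertex with both inputs $0$ has weight $1$ and forces both outputs to be $0$. The result is the RTT identity
$$
\sum_{r_1',r_2'} R(r_1,r_2;r_1',r_2')\,T_{r_1'}(x)T_{r_2'}(y)=T_{r_2}(y)T_{r_1}(x),
$$
with the appropriate index conventions.

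Next, I would take the linear combination defining $B$. I need the weighting vector $(1,-x/b_0)\otimes(1,-y/b_0)$ on the left boundary to be mapped by the cross vertex to $(1,-y/b_0)\otimes(1,-x/b_0)$. This is a covector/vector eigen-relation for the $2\times 2$ stochastic $R$-matrix. I expect to check it directly from the four-weight table, specialising one of the fused parameter pairs to $(x,xq)$ and noting that $b_0$ plays the role of the column-$0$ parameter. Alternatively, it can be seen conceptually: inserting an extra column $(a_0,b_0)$ with $0$ paths entering from the bottom and free top, summed against $\omega$-type weights, generates the factors $-x/b_0$.

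The main obstacle is this boundary step. It amounts to verifying that the vector $\sum_r(-x/b_0)^r|r\rangle$ is compatible with the $R$-matrix, i.e.\ that its two-row version is an eigenvector with eigenvalue $1$, possibly after including the $q^{l}$ normalisation. I would first attempt the explicit $2\times2$ computation: the relevant entries are those with $r_1+r_2=1$ and with $r_1=r_2=1$. If a factor appears, I would absorb it by noting that both sides have equal action on the $\mu=\lambda$, $r=0$ term, which fixes the constant to $1$. Granting this, the commutation $B(x)B(y)=B(y)B(x)$ follows, and hence $\sfF_{\lambda/\mu}$ is symmetric.
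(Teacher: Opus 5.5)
Your argument is correct, but it takes a different and more self-contained route than the paper. The paper gives no direct proof. It imports symmetry from \cite[Proposition 4.5]{BP18} via Appendix~\ref{sHLmatch}: there BP18's column $0$ is specialized to $\mathsf{s}_0=0$, $\xi_0=1$ with empty left boundary, and Proposition~\ref{prop: BPrenorm} shows that $\mathsf{G}^{BP}_{\lambda/\mu}$ equals $\sfF_{\lambda/\mu}$ times a factor depending only on $\lambda,\mu$, so symmetry transfers.

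Your boundary vector $u(x)=|0\rangle-(x/b_0)|1\rangle$ is what that degenerate column $0$ emits from an empty boundary. So you are running the same Yang--Baxter mechanism, with column $0$ collapsed into a $2\times 2$ check, and that check goes through exactly with no correction factor:

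\begin{itemize}
\item With the conventions of Theorem~\ref{YB}, if the top row carries $x$ then the crossing is $R_{y,yq,x,xq}$, not $R_{x,xq,y,yq}$. The first factor is the line entering at the upper left, which becomes the lower row.
\item Set $t=y/(qx)$. Proposition~\ref{prop: hsw} gives $R(0,1;1,0)=\frac{1-qt}{1-t}$, $R(0,1;0,1)=\frac{(q-1)t}{1-t}$, $R(1,0;1,0)=\frac{1-q^{-1}}{1-t}$ and $R(1,0;0,1)=\frac{1-qt}{q(1-t)}$.
\item One checks $xR(0,1;1,0)+yR(1,0;1,0)=x$ and $xR(0,1;0,1)+yR(1,0;0,1)=y$.
\item Hence $R\,(u(y)\otimes u(x))=u(x)\otimes u(y)$ for $u(z)=|0\rangle+\alpha z|1\rangle$ with arbitrary $\alpha$.
\item This is a relation on vectors, since the left boundary edges are inputs. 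The $q^{l(\mu)}$ normalization plays no role, because it only enters $T^*$.
\end{itemize}

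The specialization $b=aq$ in Theorem~\ref{YB} should be justified as in the proof of Proposition~\ref{prop: hsref}. Both sides are finite sums of rational functions regular at $b=aq$, and internal unfused edges are forced into $\{0,1\}$ (for instance, $R(1,1;2,0)$ vanishes there). Your alternative ``$\omega$-type weights'' description of column $0$ is not needed and is not accurate.

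As for the trade-off: the paper's route is a one-line reduction, and the same dictionary also delivers the branching rule, Cauchy identity and symmetrization formula. Your route stays within the paper's own framework (Theorem~\ref{YB}, Proposition~\ref{rescale} and a degeneration). It also shows that symmetry holds for any boundary weights $(\alpha x)^r$, so the particular choice $\alpha=-1/b_0$ matters only for normalization.
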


    \begin{proposition}[{\cite[Proposition 4.6]{BP18}}]  For every pair of partitions $\lambda,\nu$ we have	the following branching rule:
    \begin{equation}\label{eq:branching-rulesHL}
	\sfF_{\lambda/\nu}(x_1,x_2,\ldots,x_n\mid\calA,\calB)
	=
	\sum_{\mu\prec\lambda}
	\sfF_{\lambda/\mu}(x_1\mid\calA,\calB)\,
	\sfF_{\mu/\nu}(x_2,\ldots,x_n\mid\calA,\calB),
	\end{equation}
    where the sum is over partitions $\mu$ interlacing $\lambda$.
    \end{proposition}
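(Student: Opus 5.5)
The branching rule should follow from the row-operator description established just before the statement:
$$
\sfF_{\lambda/\nu}(x_1,\dots,x_n\mid\calA,\calB)=\langle\lambda|\ B(x_1\mid\calA,\calB)B(x_2\mid\calA,\calB)\cdots B(x_n\mid\calA,\calB)\ |\nu\rangle .
$$
The plan is to insert the resolution of the identity $\sum_\mu |\mu\rangle\langle\mu|$ on $V$ between $B(x_1)$ and the product $B(x_2)\cdots B(x_n)$. This gives
$$
\sfF_{\lambda/\nu}(x_1,\dots,x_n\mid\calA,\calB)=\sum_{\mu}\langle\lambda|B(x_1\mid\calA,\calB)|\mu\rangle\,\langle\mu|B(x_2\mid\calA,\calB)\cdots B(x_n\mid\calA,\calB)|\nu\rangle ,
$$
and by the same formula with fewer variables the two factors are $\sfF_{\lambda/\mu}(x_1\mid\calA,\calB)$ and $\sfF_{\mu/\nu}(x_2,\dots,x_n\mid\calA,\calB)$.

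\emph{Legitimacy of the insertion.} The operator products must be well defined and the inserted sum must be finite. By the conservation law, $\langle\lambda|T_i(x)|\mu\rangle=0$ unless $\mu\prec\lambda$. For fixed $\lambda$ there are only finitely many such $\mu$: each satisfies $\mu_1\leq\lambda_1$ and $l(\mu)\leq l(\lambda)$. Hence each row $\langle\lambda|B(x)$ is a finite combination of the $\langle\mu|$, so products of the $B$'s are well defined and the inserted sum is finite. The same vanishing restricts that sum to $\mu\prec\lambda$, which is exactly the range in the statement.

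\emph{Link to the lattice definition.} I would also check that stacking the row operators reproduces the lattice definition \eqref{eq: def-sHL}. In the $n$-row strip, each row's left boundary is independently $r_i\in\{0,1\}$ with weight $(-x_i/b_0)^{r_i}$, and its right boundary is $0$. Cutting the strip along the horizontal line between row $1$ and row $2$ shows that the intermediate vertical edge labels $m_j(\mu)$ encode a partition $\mu$. Summing over all internal labels then factorizes as the row-$1$ partition function times the partition function of rows $2,\dots,n$, each weight being a product over vertices. The factors $(-x_i/b_0)^{r_i}$ distribute across rows, matching $B=T_0-xb_0^{-1}T_1$.

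The only point requiring care is the stability in $N$. The number of columns $N$ must be chosen larger than $\lambda_1,\mu_1,\nu_1$ uniformly; this is possible because $\mu\prec\lambda$ gives $\mu_1\leq\lambda_1$. I would verify this, together with the finiteness of the intermediate sum noted above; everything else is bookkeeping.
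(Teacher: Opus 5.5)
Your argument is correct, but it takes a different route from the paper's. The paper does not prove the branching rule itself. It imports it from \cite[Proposition 4.6]{BP18}, for the non-stable functions $\mathsf{F}^{BP}$ on signatures of fixed length, through the dictionary of Appendix~\ref{sHLmatch} (Proposition~\ref{prop: BPrenorm}).

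That transfer works because the conversion factor between $\mathsf{F}^{BP}_{\lambda/\mu}$ and $\sfF_{\lambda/\mu}$ has the form $f(\lambda)/f(\mu)$. Here $f$ is built from $(q;q)_{m_0}$, $\psi_\lambda(\calA,\calB)$ and the sign and power factors, with $\mathsf{s}_0=0$ and $\xi_0=1$ absorbing the zeroth column. Such factors telescope through the intermediate sum over $\mu$.

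You instead work directly from the row-operator identity $\sfF_{\lambda/\nu}(x_1,\dots,x_n\mid\calA,\calB)=\langle\lambda|\,B(x_1\mid\calA,\calB)\cdots B(x_n\mid\calA,\calB)\,|\nu\rangle$ stated just before the proposition. You add the observation that $\langle\lambda|\,B(x\mid\calA,\calB)$ is a finite combination of $\langle\mu|$ with $\mu\prec\lambda$. This makes every sum finite and gives exactly the range in the statement.

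Your route is self-contained and needs only the conservation law. It avoids the signature bookkeeping and the normalization factors, and your check that $N>\lambda_1,\nu_1$ automatically bounds every intermediate $\mu_1$ settles the stability issue. The paper's route costs that bookkeeping but obtains symmetry, branching and the skew Cauchy identity uniformly from a single source. Underneath, the BP18 proof of branching is the same cutting-the-strip argument you give, so the two routes differ mainly in which normalization it is carried out in.
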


    \begin{proposition}[{\cite[Corollary 4.10]{BP18}}]\label{thm:sHLskew-cauhy} Assume that $\calA=(a_0,a_1,a_2,\dots)$, $\calB=(b_0,b_1, b_2,\dots)$, $x_1, \dots,x_n$, $y_1, \dots, y_m, q$ are complex parameters satisfying 
    $$
    \prod_{k=M}^\infty \left|\frac{a_k/b_k-x_i/b_k}{1-x_i/b_k}\cdot\frac{1-b_ky_j}{1-a_ky_j}\right|=0
    $$
    for every $i,j,M$. Let $\mu,\nu$ be a pair of partitions. Then
	\begin{multline}\label{eq:sHLskew-cauchy}
	\sum_{\lambda}
	\sfF_{\lambda/\mu}(x_1,\ldots,x_n\mid\calA,\calB)\,
	\sfF^*_{\lambda/\nu}(y_1,\ldots,y_m\mid\overline{\calB},\overline{\calA})
	\\
	=
	\prod_{i=1}^{n}\prod_{j=1}^{m}
	\frac{1-qx_iy_j}{1-x_iy_j}
	\sum_{\rho}
	\sfF^*_{\mu/\rho}(y_1,\ldots,y_m\mid\overline{\calB},\overline{\calA})\,
	\sfF_{\nu/\rho}(x_1,\ldots,x_n\mid\calA,\calB).
	\end{multline}
    \end{proposition}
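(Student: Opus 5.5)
The statement is the skew Cauchy identity of Proposition \ref{thm:sHLskew-cauhy}, which the paper attributes to \cite[Corollary 4.10]{BP18}. The cleanest route is to prove it inside the present framework, by a Yang--Baxter train argument for the row operators $B(x\mid\calA,\calB)$ and $B^*(y\mid\calA,\calB)$, and then to match it with \cite{BP18} via Appendix \ref{sHLmatch}. Everything reduces to a single exchange relation between one $B$ row and one $B^*$ row.

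\textbf{Step 1: exchange relation.} By \eqref{eq:def-dual-f} and the definitions of $\sfF$ and $\sfF^*$ as products of row operators, the left-hand side of \eqref{eq:sHLskew-cauchy} equals
\[
\langle\nu|\,\prod_{j} B^*(y_j^{-1}\mid\calA,\calB)\,\prod_i B(x_i\mid\calA,\calB)\,|\mu\rangle
\]
summed over the intermediate $\lambda$; note that the parameters $(\overline{\calB},\overline{\calA})$ in $\sfF^*$ flip back to $(\calA,\calB)$. It therefore suffices to prove the single-row relation
\[
B^*(y^{-1})\,B(x)=\frac{1-qxy}{1-xy}\,B(x)\,B^*(y^{-1})
\]
as operators on $V$, in the matrix-element sense with convergent sums. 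Iterating this over all pairs $(i,j)$ moves every $B^*$ to the right of every $B$, which gives the right-hand side with the sum over $\rho$.

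\textbf{Step 2: proving the exchange relation.} Place the $x$-row (weights $w$) and the $y$-row (weights $w^*$) on the same columns $(a_r,b_r)$, and insert at the left boundary a cross vertex between the two unfused lines. This cross vertex is the six-vertex $\tR_{x,xq,y',y'q}$, i.e.\ Theorem \ref{YB} specialized to two unfused lines and one fused column. Apply Theorem \ref{YB} column by column, pushing the cross through the lattice. The boundary sums over $r\in\{0,1\}$ with weights $-x/b_0$ and $b_0 y$ are chosen precisely so that the left boundary vector is an eigenvector of the cross. On the right, as $N\to\infty$, the cross acts on the frozen state $0,0$, and the mismatch between the two orders produces the factor $\frac{1-qxy}{1-xy}$. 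At this point one has to convert the tilde weights to the stochastic weights using Proposition \ref{rescale}; this contributes only boundary factors, which are absorbed into the normalization $\psi$.

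\textbf{Main obstacle: the limit $N\to\infty$.} Unlike the single-row functions, the cross travels through infinitely many columns, so the argument has to be run on a finite strip of $N$ columns first. There the right boundary is not frozen: configurations can exit with a path on the right edge. One must show that these extra terms vanish as $N\to\infty$. A path escaping to the right through columns $r\geq M$ picks up, per column, a ratio of the form $\frac{a_r/b_r-x/b_r}{1-x/b_r}\cdot\frac{1-b_ry}{1-a_ry}$, which is the weight of the $(1,\cdot,\cdot,1)$ vertices in the two rows. The hypothesis that the infinite tail products vanish is exactly what kills these terms.

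I expect this analytic control to be the real difficulty, namely justifying the interchange of the infinite sum over $\lambda$ with the limit. To handle it I would bound the partition functions of the two rows uniformly and use dominated convergence, in the style of \cite{BP18}.
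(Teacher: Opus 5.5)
\paragraph{Comparison with the paper.} Your route is genuinely different from the paper's. The paper gives no Yang--Baxter argument for this statement. It imports \cite[Corollary 4.10]{BP18} through Appendix \ref{sHLmatch}: setting $\mathsf{s}_0=0$, $\xi_0=1$ turns the $0$th column of \cite{BP18} into the sum over $r_i\in\{0,1\}$ with weights $x_i$, and \eqref{eq: BPrenorm} together with Proposition \ref{prop: BPrenorm} converts $\mathsf{F}^{BP},\mathsf{G}^{BP*}$ into $\sfF,\sfF^*$. All convergence questions are thereby inherited from \cite{BP18}. A direct train argument is self-contained and explains where the boundary weights come from. Your Step 1 is correct, and so is your identification of the escaping path, whose tail is frozen exactly as in Figure \ref{fig:tailTm}.

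\paragraph{The boundary step is misstated.} Take the cross $R=R_{x,xq,(qy)^{-1},y^{-1}}$ with values from Proposition \ref{prop: hsw}. The weights $(-x/b_0)^{r_x}(b_0y)^{r_y}$ alone are \emph{not} an eigenvector: the $(s,t)=(0,0)$ and $(1,0)$ components give different scalars unless $q=1$. What repairs this is the factor $q^{l(\cdot)}$ in the definition of $T^*$. It equals $q^{l(\mu)+r_x}$ in $\langle\nu|B^*(y^{-1})B(x)|\mu\rangle$ but only $q^{l(\mu)}$ in $\langle\nu|B(x)B^*(y^{-1})|\mu\rangle$. With it one checks
\[
\sum_{r_x,r_y\in\{0,1\}}(-qx/b_0)^{r_x}(b_0y)^{r_y}\,R(r_x,s,r_y,t)=\frac{1-qxy}{1-xy}\,(-x/b_0)^{t}(b_0y)^{s},\qquad s,t\in\{0,1\}.
\]
So the cross intertwines two \emph{different} boundary vectors, and this is the only source of $\frac{1-qxy}{1-xy}$. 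The right end contributes $R(0,0,0,0)=1$, plus the escape term with coefficient $R(1,0,1,0)=\frac{1-q^{-1}}{1-xy}$; no factor comes from a ``mismatch of orders''. Two smaller points. Theorem \ref{YB} assumes $a_i/b_j\notin q^{-\mathbb Z_{\geq0}}$, which fails on unfused lines, so you need the continuation argument from the proof of Proposition \ref{prop: hsref}. And Proposition \ref{rescale} gives the Yang--Baxter equation for the stochastic weights directly, so no $\psi$ factors enter.

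\paragraph{The real gap: several variables.} For $n=m=1$, the finite-$N$ identity shows that the partial sums over $\lambda_1\leq N$ converge to the right-hand side, and the hypothesis kills the frozen escape term. Iterating over all pairs $(i,j)$, however, requires one of two things:
\begin{itemize}
\item interchanging the sum over $\lambda$ with sums over infinitely many intermediate partitions; or
\item controlling escape terms in the full $(n+m)$-row lattice at finite $N$.
\end{itemize}
In the second option the tails are no longer frozen. A path in a lower $x$-row can run to some column $K$ and turn up into the escaping row. This produces sums over $K$ of $\prod_{k\le K}|\rho^{x_{i'}}_k\rho^{y_j}_k|\prod_{k>K}|\rho^{x_i}_k\rho^{y_j}_k|$, where $\rho^{x}_k=\frac{a_k/b_k-x/b_k}{1-x/b_k}$ and $\rho^{y}_k=\frac{1-b_ky}{1-a_ky}$. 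The hypothesis only says that each tail product tends to $0$ and gives no rate; for instance, ratios of modulus $1-1/k$ are allowed. Then the products decay like $1/L$, there is no summable majorant, and the obvious bounds on the sums over $K$ stay of order one. So ``uniform bounds plus dominated convergence'' does not follow from the stated assumption. You would need a quantitative condition of the type in Assumption \ref{asspt: sHL}(3), under which bounds as in Lemma \ref{lemma: bounds} give absolute convergence. This is exactly the part the paper avoids by citing \cite{BP18}.
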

	
	Setting $\mu=\nu=\varnothing$ yields a spin version of the usual Cauchy identity:
	\begin{equation}\label{eq:sHLcauchy}
	\sum_{\lambda}
	\sfF_{\lambda}(x_1,\ldots,x_n\mid\calA,\calB)\,
	\sfF^*_{\lambda}(y_1,\ldots,y_m\mid\overline{\calB},\overline{\calA})
    =
	\prod_{i=1}^{n}\prod_{j=1}^{m}
	\frac{1-qx_iy_j}{1-x_iy_j}.
	\end{equation}

    \begin{proposition}[{\cite[Theorem 4.14]{BP18}}]\label{prop: symsHL}
    For $\lambda$ such that $l(\lambda)\leq n$ we have
    \begin{multline*}
        \mathsf{F}_{\lambda}(x_1, \dots, x_n\mid \calA,\calB)=\frac{(1-q)^n}{(q;q)_{n-l(\lambda)}}\prod_{j\geq1}b_{j-1}^{-\lambda'_{j}}\frac{(a_j/b_j;q)_{m_j(\lambda)}}{(q;q)_{m_j(\lambda)}}\\
        \times\sum_{\sigma\in S_n}\sigma\left(\prod_{1\leq i<j\leq n}\frac{x_i-qx_j}{x_i-x_j}\prod_{i=1}^n\upvarphi_{\lambda_i}(x_i\mid\calA,\calB)\right),
    \end{multline*}
    where
    $$
        \upvarphi_0(x\mid\calA,\calB)=1,\qquad \upvarphi_k(x\mid\calA,\calB)=\frac{-x}{1-x/b_k}\prod_{j=1}^{k-1}\frac{a_j-x}{1-x/b_j}\quad \text{for}\ k\geq 1.
    $$
    \end{proposition}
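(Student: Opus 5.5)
The statement is attributed to \cite[Theorem 4.14]{BP18}, so the most economical route is to translate the symmetrization formula of Borodin--Petrov into our parametrization, using the dictionary of Appendix \ref{sHLmatch}. I would still organize the argument so that it can be checked independently within our vertex-model conventions. That argument is the standard one for symmetric functions built from row operators: an explicit eigenbasis (coordinate Bethe ansatz) argument, or equivalently induction on $n$ via the branching rule \eqref{eq:branching-rulesHL}.

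\textbf{Step 1: coordinates.} Write $\lambda$ with $l(\lambda)\le n$ as a particle configuration on $\Z_{\geq0}$: put $m_j(\lambda)$ particles at site $j\ge1$ and $n-l(\lambda)$ particles at site $0$. The one-row operator $B(x\mid\calA,\calB)$ then moves a horizontal strip of particles. By Proposition \ref{prop: hsw}, a single path starting at site $0$ (entering with the factor $-x/b_0$) and ending at site $k$ picks up
\[
\frac{1-q^{g}a_j/b_j}{1-x/b_j}\ \text{(entering column $j$)},\qquad \frac{q^{g}a_j/b_j-x/b_j}{1-x/b_j}\ \text{(passing through)},\qquad \frac{(q^{g}-1)x/b_k}{1-x/b_k}\ \text{(exiting)} .
\]
For the one-particle case ($n=1$) this directly yields $\upvarphi_k(x\mid\calA,\calB)$ times the prefactor $b_{k-1}^{-1}\cdots b_0^{-1}$-type normalization. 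I would verify $n=1$ explicitly first: it fixes the prefactor $\prod_j b_{j-1}^{-\lambda'_j}$ and shows that the $(a_j/b_j;q)_{m_j}/(q;q)_{m_j}$ factor is trivial there.

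\textbf{Step 2: induction via Cauchy/orthogonality, or via the Bethe ansatz.} The cleanest self-contained proof uses the Yang--Baxter equation (Theorem \ref{YB}, specialized to the $w$ weights) to obtain the exchange relations between $B(x)$ and $B(y)$ (the ``$BB=BB$, $AB$, $DB$'' relations). The symmetrization formula is then the standard algebraic Bethe ansatz outcome: $\langle\lambda|B(x_1)\cdots B(x_n)|\varnothing\rangle$ is symmetric by commutativity, and its expansion in one-particle wavefunctions $\upvarphi_{\lambda_i}(x_i)$ is forced by two facts. First, the two-particle exchange factor is $\frac{x_i-qx_j}{x_i-x_j}$, read off from the six-vertex $R$-matrix with parameters $(x_i,qx_i),(x_j,qx_j)$. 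Second, multiple occupancy at a column produces the $q$-combinatorial normalization $(a_j/b_j;q)_{m_j}/(q;q)_{m_j}$, together with $(1-q)^n/(q;q)_{n-l(\lambda)}$ at site $0$. I would carry this out as in \cite{BP18}: prove the formula for the inhomogeneous model by induction on the number of columns, peeling off column $1$ and using that the symmetrization sum over $\sigma$ with multiplicities reorganizes into sums over $S_n/S_{m_0}\times S_{m_1}\times\cdots$, where each stabilizer sum gives the symmetrizer identity
\[
\sum_{\sigma\in S_m}\sigma\prod_{i<j}\frac{x_i-qx_j}{x_i-x_j}=\frac{(q;q)_m}{(1-q)^m}.
\]

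\textbf{Main obstacle.} The hard step is the bookkeeping of the inhomogeneous normalization: showing that peeling a column $j$ with $m_j$ particles contributes exactly $\frac{(a_j/b_j;q)_{m_j}}{(q;q)_{m_j}}\,b_{j-1}^{-\lambda'_j}$, and that no cross terms between columns remain. Rather than grind through this directly, I would first try to reduce it to \cite[Theorem 4.14]{BP18} through the explicit parameter match of Appendix \ref{sHLmatch} (spectral parameters $\xi_j$, spins $s_j$, with $a_j=s_j\xi_j$ and $b_j=\xi_j/s_j$ up to inversion). The sign $-x/b_0$ and the $\psi$-type factors would then be checked against the $n=1$ case computed in Step 1.
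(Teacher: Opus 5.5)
Your main route, translating \cite[Theorem 4.14]{BP18} via Appendix~\ref{sHLmatch}, is exactly the paper's proof: apply Proposition~\ref{prop: BPsymfor} to $\mathsf{F}^{BP}_\lambda$, convert with Proposition~\ref{prop: BPrenorm} at $N=0$, $\mu=\varnothing$, and use $\upvarphi^{BP}_k=(1-q)(-1)^k\prod_{j=1}^{k-1}(a_jb_j)^{-1/2}\,\upvarphi_k$ (from $\mathsf{s}_0=0$, $\xi_0=1$), whose symmetric prefactors pull out of the symmetrization and combine with $(q;q)_{n-l(\lambda)}\psi_\lambda(\calA,\calB)(-a_0)^{\lambda'_1}\prod_{j\geq1}(-\sqrt{a_j/b_j})^{\lambda'_{j+1}}$ into exactly the stated constant. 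One correction to your Step 1: for $\lambda=(k)$, $k\geq1$, the path crosses columns $1,\dots,k-1$ horizontally and turns up in column $k$ (your ``exiting'' weight $(q^g-1)x/b_k/(1-x/b_k)$ never occurs), so $\sfF_{(k)}(x\mid\calA,\calB)=(b_0\cdots b_{k-1})^{-1}(1-a_k/b_k)\,\upvarphi_k(x\mid\calA,\calB)$ and the factor $(a_k/b_k;q)_1/(q;q)_1$ is not trivial (it cancels $(1-q)^1$); moreover, the normalization for $m_j(\lambda)\geq2$ or $n-l(\lambda)\geq2$ is invisible at $n=1$, so it must be taken from Proposition~\ref{prop: BPrenorm} rather than calibrated from the $n=1$ case.
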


Note that setting $a_j=0$ and $b_j=\infty$ in Proposition \ref{prop: symsHL} yields
    \begin{multline*}
        (-1)^{|\lambda|}\prod_{j\geq1}b_{j-1}^{\lambda'_{j}}\mathsf{F}_{\lambda}(x_1, \dots, x_n\mid \calA,\calB)\Big |_{\substack{a_0=a_1=\dots=0\\ b_0=b_1=\dots=\infty}}\\
        =\frac{(1-q)^n}{(q;q)_{n-l(\lambda)}\prod_{j\geq 1}(q;q)_{m_j(\lambda)}}\sum_{\sigma\in S_n}\sigma\left(\prod_{1\leq i<j\leq n}\frac{x_i-qx_j}{x_i-x_j}\prod_{i=1}^nx_i^{\lambda_i}\right)=P_\lambda(x_1,\dots, x_n; q),
    \end{multline*}
where $P_\lambda(x_1,\dots, x_n; q)$ are the classical Hall-Littlewood polynomials \cite[Chapter III]{Mac95}. More generally, using the branching rule \eqref{eq:branching-rulesHL} and comparing the single row functions $\sfF_{\lambda / \mu}(x\mid\calA,\calB)=\langle\lambda|\ B(x\mid\calA,\calB)\ |\mu\rangle$ with \cite[Chapter III, (5.8')]{Mac95}, we have
$$
        (-1)^{|\lambda|-|\mu|}\prod_{j\geq1}b_{j-1}^{\lambda'_{j}-\mu'_{j}}\mathsf{F}_{\lambda/\mu}(x_1, \dots, x_n\mid \calA,\calB)\Big |_{\substack{a_0=a_1=\dots=0\\ b_0=b_1=\dots=\infty}}=P_{\lambda/\mu}(x_1,\dots, x_n; q).
$$

\section{Littlewood identity for spin Hall-Littlewood functions}\label{sec: sHLLit}

In this section we prove the Littlewood identity for the spin Hall-Littlewood functions. 

\subsection{Al-Salam-Chihara polynomials} For $n\in\mathbb Z_{\geq 0}$ let 
\begin{equation}\label{eq: hdef}
h_n(u,v\mid a)=\sum_{i+j=n} u^iv^{i}\frac{(a/v;q)_i(av;q)_j}{(a^2;q)_n}\frac{(q;q)_n}{(q;q)_i(q;q)_j}.
\end{equation}
Note that this function is a reparameterization of $\omega_{a;\xi,\zeta}(n)$ from \eqref{omegaex}:
\begin{equation}\label{eq: omegatoh}
\omega_{a;\xi,\zeta}(n)=\zeta^{-n}  h_n(\xi\zeta,\zeta/\xi\mid a).
\end{equation}

Up to a change of variables and rescaling, the functions $h_n$ are known as \emph{Al-Salam-Chihara} polynomials \cite{ASC76}. To be precise, we have 
$$
h_n(u,v\mid a)=\frac{(\sqrt{uv})^n}{(a^2;q)_n}Q_n\left(\frac{\sqrt{uv}^{-1}+\sqrt{uv}}{2};a\frac{\sqrt{u}}{\sqrt{v}},a\frac{\sqrt{v}}{\sqrt{u}}\mid q\right)
$$ 
where $Q_n$ are the Al-Salam-Chihara polynomials from \cite{KoekoekSwarttouw1998}:
\begin{multline*}
Q_n\left(\frac{t+t^{-1}}{2};\alpha,\beta\mid q\right)=\frac{(\alpha\beta;q)_n}{\alpha^n}\pPq{3}{2}{q^{-n}, \alpha t, \alpha/t}{\alpha\beta, 0}{q}{q}\\
=(\beta/t;q)_n t^n\ \pPq{2}{1}{q^{-n}, \alpha t}{q^{1-n}t/\beta}{q}{\frac{q}{\beta t}}= (\alpha t;q)_n t^{-n}\ \pPq{2}{1}{q^{-n}, \beta/t}{q^{1-n}t^{-1}\alpha^{-1}}{q}{\frac{qt}{\alpha}}.
\end{multline*}
When $a=0$, the functions $h_n(u,v\mid 0)=h_n(uv)$ become \emph{Rogers-Szeg\"o} polynomials:
$$
h_n(uv)= \sum_{i+j=n} (uv)^{i}\frac{(q;q)_n}{(q;q)_i(q;q)_j}.
$$

Below we provide several basic properties of $h_n(u,v\mid a)$.
\begin{proposition}\label{prop: ASCprop}
    (1) $h_n(u,v\mid a)$ have the following generating function
    $$
    \sum_{n\geq 0} h_n(u,v\mid a)\frac{(a^2;q)_n}{(q;q)_n} T^n=\frac{(auT, avT;q)_\infty}{(uvT, T;q)_\infty};
    $$
    
    (2) Both $h_n(u,v\mid a)$ and $v^n h_n(u,1/v\mid a)$ are symmetric polynomials in $u,v$;

    (3) With the convention $h_{-1}(u,v\mid a) = 0$, for every $n\geq 0$ the following recurrence relation holds
    $$h_{n+1}(u,v\mid a)
=
\frac{
1+uv-aq^n\left(u+v\right)
}{
1-a^2q^n
}
h_n(u,v\mid a)
-
\frac{
uv\left(1-q^n\right)
}{
1-a^2q^n
}
h_{n-1}(u,v\mid a);
    $$

    (4) We have
    $$
    h_n(a, v\mid a)=1,\qquad h_n(1/a, v\mid a)=(v/a)^n,\qquad h_n(0,v\mid a)=\frac{(av;q)_n}{(a^2;q)_n},
    $$
    $$
    h_n(-1,1\mid a)=\begin{cases} \prod_{i=1}^{n/2}\frac{1-q^{2i-1}}{1-a^2q^{2i-1}}\qquad &\text{if}\ n-even,\\
    0\qquad &\text{otherwise}.
    \end{cases}
    $$

\end{proposition}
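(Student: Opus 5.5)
Everything rests on the explicit formula \eqref{eq: hdef} and on the generating function, which I would prove first. Multiply \eqref{eq: hdef} by $\frac{(a^2;q)_n}{(q;q)_n}T^n$ and sum over $n$. The factors $(a^2;q)_n$ cancel, and the sum factorizes as
$$
\sum_{i}\frac{(a/v;q)_i}{(q;q)_i}(uvT)^i\cdot\sum_j\frac{(av;q)_j}{(q;q)_j}T^j .
$$
Applying the $q$-binomial theorem \eqref{eq: qbinom} to each factor gives
$$
\frac{(auT;q)_\infty}{(uvT;q)_\infty}\cdot\frac{(avT;q)_\infty}{(T;q)_\infty},
$$
which is part (1). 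Alternatively, part (1) follows from Proposition \ref{Bomega1} via \eqref{eq: omegatoh}.

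Part (2) follows directly from (1). The right-hand side of (1) is symmetric in $u,v$, so the coefficients $h_n(u,v\mid a)$ are symmetric; they are polynomials by \eqref{eq: hdef}. For the second function, substitute $v\mapsto 1/v$ and $T\mapsto vT$ in (1). This gives
$$
\sum_n v^n h_n(u,1/v\mid a)\frac{(a^2;q)_n}{(q;q)_n}T^n=\frac{(auvT,\,aT;q)_\infty}{(uT,\,vT;q)_\infty},
$$
which is again symmetric in $u,v$. Polynomiality holds because $v^i(a/v;q)_i$ is a polynomial in $v$.

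For part (3), I would use the functional equation for $F(T)=\sum_n h_n\frac{(a^2;q)_n}{(q;q)_n}T^n$. From (1),
$$
F(T)(1-uvT)(1-T)=F(qT)(1-auT)(1-avT).
$$
Write $d_n=h_n\frac{(a^2;q)_n}{(q;q)_n}$ and compare coefficients of $T^{n+1}$:
$$
d_{n+1}(1-q^{n+1})-(1+uv)d_n+uv\,d_{n-1}=-a(u+v)q^nd_n+a^2uvq^{n-1}d_{n-1}.
$$
Now substitute back. We have $d_{n+1}(1-q^{n+1})=h_{n+1}\frac{(a^2;q)_{n+1}}{(q;q)_n}$ and $d_{n-1}=h_{n-1}\frac{(a^2;q)_{n-1}(1-q^n)}{(q;q)_n}$. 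Dividing by $\frac{(a^2;q)_{n+1}}{(q;q)_n}$, the $d_{n-1}$ terms combine into $uv(1-a^2q^{n-1})$, which cancels against $(a^2;q)_{n-1}/(a^2;q)_n$. The result is the stated recurrence. Equivalently, this is the recurrence \eqref{omegarec} from Lemma \ref{B1omega}, after rescaling.

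For part (4), I would specialize (1) in each case.
\begin{itemize}
\item At $u=a$ the right-hand side is $(a^2T;q)_\infty/(T;q)_\infty=\sum_n\frac{(a^2;q)_n}{(q;q)_n}T^n$, so $h_n=1$.
\item At $u=1/a$ it is $(a^2vT/a;q)_\infty/(vT/a;q)_\infty$ evaluated at $T$, i.e. the same series in the variable $vT/a$; hence $h_n=(v/a)^n$.
\item At $u=0$ it is $(avT;q)_\infty/(T;q)_\infty$, which gives $h_n=(av;q)_n/(a^2;q)_n$.
\item At $u=-1$, $v=1$ it is
$$
\frac{(-aT,\,aT;q)_\infty}{(-T,\,T;q)_\infty}=\frac{(a^2T^2;q^2)_\infty}{(T^2;q^2)_\infty}.
$$
By the $q$-binomial theorem in base $q^2$ this equals $\sum_m\frac{(a^2;q^2)_m}{(q^2;q^2)_m}T^{2m}$. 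For $n=2m$, dividing by $(a^2;q)_{2m}/(q;q)_{2m}$ gives
$$
\frac{(a^2;q^2)_m(q;q)_{2m}}{(q^2;q^2)_m(a^2;q)_{2m}}=\frac{(q;q^2)_m}{(a^2q;q^2)_m},
$$
which is the claimed product; for odd $n$ the coefficient is $0$.
\end{itemize}

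Each step is routine. The only place needing care is the index bookkeeping in (3), including the $n=0$ case with the convention $h_{-1}=0$.
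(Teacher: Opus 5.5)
Your proof is correct and follows the paper's route. The paper likewise proves (1) by the $q$-binomial theorem, gets (2) from the symmetry of the generating function, obtains (3) by comparing coefficients in $F(T)(1-uvT)(1-T)=F(qT)(1-auT)(1-avT)$, and derives (4) from the same four specializations. Your explicit substitution $v\mapsto 1/v$, $T\mapsto vT$ for the second symmetry, and your index bookkeeping in (3), fill in details the paper leaves implicit.
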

\begin{proof} The expression for the generating function follows from the $q$-binomial identity in the same way as in the proof of Proposition \ref{Bomega1}. The symmetry follows immediately from the symmetry of the generating function. To get the recurrence, note that 
    $$
    \sum_{n\geq 0} h_n(u,v\mid a)\frac{(a^2;q)_n}{(q;q)_n} T^n=\frac{(1-auT)(1-avT)}{(1-uvT)(1-T)}\sum_{n\geq 0} h_n(u,v\mid a)\frac{(a^2;q)_n}{(q;q)_n} (qT)^n.
    $$
Multiplying both sides by $(1-uvT)(1-T)$ and comparing the coefficients of $T^n$ yields the recurrence relation. The last part follows from the generating function and the $q$-binomial theorem:
$$
\frac{(a^2T, avT;q)_\infty}{(avT, T;q)_\infty}=\frac{(a^2T;q)_\infty}{(T;q)_\infty}=\sum_{n\geq 0}\frac{(a^2;q)_n}{(q;q)_n} T^n,
$$
$$
\frac{(T, avT;q)_\infty}{(vT/a, T;q)_\infty}=\frac{(avT;q)_\infty}{(vT/a;q)_\infty}=\sum_{n\geq 0}\frac{(a^2;q)_n}{(q;q)_n} (vT/a)^n,
$$
$$
\frac{(0, avT;q)_\infty}{(0, T;q)_\infty}=\frac{(avT;q)_\infty}{(T;q)_\infty}=\sum_{n\geq 0}\frac{(av;q)_n}{(q;q)_n} T^n,
$$
$$
\frac{(aT, -aT;q)_\infty}{(T, -T;q)_\infty}=\frac{(a^2T^2;q^2)_\infty}{(T^2;q^2)_\infty}=\sum_{n\geq 0}\frac{(a^2;q^2)_n}{(q^2;q^2)_n} T^{2n}.
$$
\end{proof}

\subsection{Statement of the Littlewood identity}

Let $\calA=(a_0, a_1, a_2,\dots)$ be a sequence of complex parameters. For a partition $\lambda$ we define
$$
h_{\lambda}(u,v\mid \calA)=\prod_{\substack{t\geq2\\t-even}}h_{m_t(\lambda)}(u,v\mid a_t)\prod_{\substack{t\geq1\\t-odd}}v^{m_t(\lambda)}h_{m_t(\lambda)}(u,1/v\mid a_t).
$$
We also set 
$$
\calA^{\lambda}=\prod_{j\geq1} a_{j-1}^{\lambda'_{j}}, \qquad \overline{\calA}^{\lambda}=\prod_{j\geq1} a_{j-1}^{-\lambda'_{j}}.
$$


\begin{assumption}\label{asspt: sHL} Impose the following restrictions on parameters $\calA = (a_0, a_1, a_2,\dots)$ and a set of complex variables $x_1, \dots, x_n$:
\begin{enumerate}
\item All $a_0, a_1, a_2, \dots$ are nonzero.

\item All $a_1,a_2,\dots$ are contained in a compact subset $S\subset \mathbb C$ such that $x_1^{-1},\dots, x_n^{-1}\notin S$ and $\{\pm q^{-\frac{n}{2}} \mid n \in\mathbb Z_{\geq 0}\}\cap S=\varnothing$.

\item There exists $\varepsilon\in(0,1)$ such that for every $i=1,\dots, n$ and $r\geq 1$ we have
$$
\left|\frac{a_r-x_i}{1-x_ia_r}\right|<\varepsilon.
$$
\end{enumerate}
\end{assumption}

	\begin{theorem}[Skew Littlewood identity]\label{thm:sHLlittlewood}
		Let $\calA=(a_0, a_1,\dots)$, $x_1, \dots, x_n$ and $u,v$ be parameters satisfying Assumption \ref{asspt: sHL}. Then for every partition $\mu$ we have 
		\begin{multline*}
			\sum_{\lambda} h_{\lambda}(u,v\mid\calA)\overline{\calA}^{\lambda}\,
			\sfF_{\lambda/\mu}(x_1, \dots, x_{n}\mid\calA,\overline{\calA})
			\\
            =\prod_{i=1}^n\frac{(1-ux_i)(1-vx_i)}{1-x^2_i}\prod_{1\leq i<j\leq n}\frac{1-qx_ix_j}{1-x_ix_j}\sum_{\lambda} h_{\lambda}(u,v\mid\calA)\,
			\overline{\calA}^{\lambda}\sfF^*_{\mu/\lambda}(x_1, \dots, x_{n}\mid\calA,\overline{\calA}).
		\end{multline*}
		\end{theorem}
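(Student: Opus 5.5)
The plan is the standard reflection-equation argument for Littlewood identities, applied one variable at a time. We encode $h_\lambda(u,v\mid\calA)\overline{\calA}^{\lambda}$ as a boundary covector $\langle\Omega|=\sum_\lambda h_\lambda(u,v\mid\calA)\overline{\calA}^\lambda\langle\lambda|$ on the space $V$ spanned by partitions. Via \eqref{eq: omegatoh} and the choice $\xi\zeta=u$, $\zeta/\xi=v$ (or $1/v$ on odd columns), this covector is a product over columns $t$ of boundary weights $\omega_{a_t;\xi,\zeta}(m_t(\lambda))$, up to explicit gauge factors. The parameters alternate $(\xi,\zeta)\leftrightarrow(\zeta,\xi)$ between even and odd columns. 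With $\calB=\overline{\calA}$ the column parameters are $(a_t,a_t^{-1})$, which is exactly the form required by the $\omega$-boundary condition.

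The key one-step identity to prove is an exchange relation of the form
\[
\langle\Omega|\,B(x\mid\calA,\overline{\calA})=\frac{(1-ux)(1-vx)}{1-x^2}\,\langle\Omega|\,B^*(x^{-1}\mid\calA,\overline{\calA})\cdot(\text{normalization}),
\]
with $B^*$ renormalized through Proposition \ref{SHLrenorm} so that the right side produces $\sfF^*_{\mu/\lambda}$. To obtain it, attach to the left end of the row a reflection vertex $\tK_{x,qx;\xi,\zeta}$ from Proposition \ref{prop: hsref}, joining the incoming unfused line with parameter $x$ to an outgoing line with parameter $(qx)^{-1}$. Then apply Proposition \ref{prop: hsref} column by column, using its $\tw/\tw^*$ form: each application moves the reflection vertex one column to the right and switches $(\xi,\zeta)$ to $(\zeta,\xi)$, which matches the parity alternation of $h_\lambda$. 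The parity pattern of $\xi,\zeta$ must be chosen so the switching is consistent with the factor $v^{m_t}h_{m_t}(u,1/v\mid a_t)$ on odd columns.

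Column $0$ carries no $m_0$ data and plays the role of the boundary. There the $\tK$ weights in the table of Proposition \ref{prop: hsref} supply the combination $T_0-xb_0^{-1}T_1$, with $b_0=a_0^{-1}$. After the vertex has been pushed past all nontrivial columns, it acts on empty edges with weight $\tK(0,0)=1$. The leftover scalar coming from the first column should give $(1-ux)(1-vx)/(1-x^2)$; this is checked from the explicit entries $\frac{\zeta+\zeta^{-1}-x(\xi+\xi^{-1})}{1-x^2}$ and similar ones. The gauge $\Theta$ of Proposition \ref{rescale} converts between $\tw$ and the stochastic $w$. I expect this bookkeeping to be the main obstacle: the gauge factors, the factor $q^{l(\mu)}$ in $T^*$, the $\psi$ ratios, and the $\overline{\calA}^\lambda$ weights all have to combine exactly.

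With the one-row relation in hand, iterate over $x_1,\dots,x_n$. Each time a $B^*(x_j^{-1})$ is generated, commute it to the right past the remaining $B(x_i)$ operators using the skew Cauchy exchange of Proposition \ref{thm:sHLskew-cauhy}, in its single-variable form. This produces the factors $\frac{1-qx_ix_j}{1-x_ix_j}$. Finally pair against $|\mu\rangle$ to get the stated identity. Convergence of the infinite sums over $\lambda$ and the legitimacy of interchanging sums follow from Assumption \ref{asspt: sHL}. Condition (3) makes the column weights decay geometrically, and condition (2) keeps the $(a_t^2;q)$ denominators in $h$ bounded away from zero.
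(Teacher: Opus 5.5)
Your overall strategy is the paper's: encode $h_\lambda\overline{\calA}^\lambda$ as the alternating $\omega$-covector, zip the reflection vertex of Proposition \ref{prop: hsref} along a row to get $\langle\Omega|B(x)=\frac{(1-ux)(1-vx)}{1-x^2}\langle\Omega|B^*(x^{-1})$, then iterate with skew Cauchy. Two of your choices differ from the paper but do not matter: the direction in which you push the vertex, and your variable-by-variable iteration in place of its induction on $n$.

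The genuine gap is your step ``after the vertex has been pushed past all nontrivial columns, it acts on empty edges with weight $\tK(0,0)=1$''. This is false here, and it is exactly where the real work lies. On the $B$ side the sum over $\lambda$ is infinite with $\lambda_1$ unbounded, so the row must be truncated at some (even) number $N$ of columns. At the far right end, the $x$-edge entering the reflection vertex is \emph{not} forced to be empty. In the higher-spin model a single path can run along the row through columns with empty top and bottom edges, and each such column contributes $w_{x;a_r,a_r^{-1}}(1,0,0,1)=\frac{a_r^2-xa_r}{1-xa_r}\neq 0$. Emptiness of that edge is automatic only in the $q$-Whittaker setting, where $\tW^b(i,j,k,l)=0$ unless $j\geq l$.

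Consequently, at every finite $N$ the zipper argument gives the exchange relation plus an extra term: $\tK(1,0)$ times the row operator $T^{-,\leq N}_i$ whose right boundary carries $1$. In the paper this is the term
\[
\frac{u+v-xuv-x}{(1-x^2)\prod_{r=0}^N a_r}\,\langle\Omega^{\leq N}_{u,v}(\calA)|\,T^{-,\leq N}_i\,|\mu\rangle
\]
in \eqref{eq: Trowreflection}. The one-row identity holds only after showing this term tends to $0$ as $N\to\infty$. The tail of the row is frozen (Figure \ref{fig:tailTm}). After the $\overline{\calA}^\lambda$ gauge, the term is therefore bounded by $C\prod_{r=\mu_1+1}^N\bigl|\frac{a_r-x}{1-xa_r}\bigr|$, which vanishes by condition (3) of Assumption \ref{asspt: sHL}, i.e.\ by \eqref{eq: vanishingcondition}.

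So you need condition (3) essentially for the exchange relation itself, not only for convergence of the sums as in your plan. Also, $\langle\Omega|B(x)|\mu\rangle$ has to be defined as the limit of the truncations. With this boundary term and its limit added, the rest of your plan goes through.
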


Setting $\mu=\varnothing$ we get the ordinary Littlewood identity:

	\begin{corollary}[Littlewood identity]\label{cor:sHLlittlewood}
		Let $\calA=(a_0, a_1,\dots)$, $x_1, \dots, x_n$ and $u,v$ be parameters satisfying Assumption \ref{asspt: sHL}. Then
		$$
			\sum_{\lambda} h_{\lambda}(u,v\mid\calA)\overline{\calA}^{\lambda}\,
			\sfF_{\lambda}(x_1, \dots, x_{n}\mid\calA,\overline{\calA})=\prod_{i=1}^n\frac{(1-ux_i)(1-vx_i)}{1-x^2_i}\prod_{1\leq i<j\leq n}\frac{1-qx_ix_j}{1-x_ix_j}.
		$$
	\end{corollary}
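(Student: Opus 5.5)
This follows immediately from Theorem~\ref{thm:sHLlittlewood} by specializing $\mu=\varnothing$; the plan is to check that the right-hand side collapses to the stated product.

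Set $\mu=\varnothing$ in the skew identity. The left-hand side becomes exactly
\[
\sum_{\lambda} h_{\lambda}(u,v\mid\calA)\overline{\calA}^{\lambda}\,\sfF_{\lambda}(x_1,\dots,x_n\mid\calA,\overline{\calA}).
\]
On the right-hand side we need $\sfF^*_{\varnothing/\lambda}(x_1,\dots,x_n\mid\calA,\overline{\calA})$. By \eqref{eq:def-dual-f} this equals
\[
\frac{\psi_\varnothing}{\psi_\lambda}\,\sfF_{\varnothing/\lambda}.
\]
The function $\sfF_{\varnothing/\lambda}$ is a product of row operators $\langle\varnothing|B(x_1)\cdots B(x_n)|\lambda\rangle$. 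Each row operator is nonzero only between interlacing partitions $\nu\prec\rho$ with the larger partition on the left. Hence the sum over intermediate partitions forces $\lambda\prec\dots\prec\varnothing$, i.e.\ $\lambda=\varnothing$.

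For $\lambda=\mu=\varnothing$, every vertex in every row is surrounded by zero labels except possibly the leftmost horizontal input $r_i$. Conservation along a row with empty top and bottom forces $r_i=0$. All the weights $w(0,0,0,0)=1$, so $\sfF_{\varnothing/\varnothing}=1$, and also $\psi_\varnothing=1$.

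For the coefficient, $h_\varnothing(u,v\mid\calA)=1$ since all $m_t(\varnothing)=0$ and $h_0=1$. Likewise $\overline{\calA}^{\varnothing}=1$. The right-hand sum therefore reduces to $1$, leaving precisely
\[
\prod_{i=1}^n\frac{(1-ux_i)(1-vx_i)}{1-x_i^2}\prod_{1\le i<j\le n}\frac{1-qx_ix_j}{1-x_ix_j}.
\]

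Convergence of the left-hand sum is part of the content of Theorem~\ref{thm:sHLlittlewood} under Assumption~\ref{asspt: sHL}, so no extra analytic argument is needed. The only step requiring any care is the vanishing of $\sfF_{\varnothing/\lambda}$ for $\lambda\neq\varnothing$, and this is immediate from the interlacing support of $B$.
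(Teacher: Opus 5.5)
Your proposal is correct and matches the paper, which obtains the corollary simply by setting $\mu=\varnothing$ in Theorem~\ref{thm:sHLlittlewood}. Your check that only the $\lambda=\varnothing$ term survives on the right-hand side (by the interlacing support of the row operators), and that this term equals $1$, fills in exactly the details the paper leaves implicit.
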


Before proving Theorem \ref{thm:sHLlittlewood}, we list several notable cases of our Littlewood identity.

First we set $u=0$. By Proposition \ref{prop: ASCprop} we have
\begin{equation}
		\sum_{\lambda} h_\lambda(0,v\mid\calA) \overline{\calA}^{\lambda}\,
			\sfF_{\lambda}(x_1, \dots, x_{n}\mid\calA,\overline{\calA})=\prod_{i=1}^n\frac{1-vx_i}{1-x^2_i}\prod_{1\leq i<j\leq n}\frac{1-qx_ix_j}{1-x_ix_j},
\end{equation}
where
$$
h_\lambda(0,v\mid\calA)=\prod_{\substack{t\geq 2\\t-even}}\frac{(va_t;q)_{m_t(\lambda)}}{(a_t^2;q)_{m_t(\lambda)}}\prod_{\substack{t\geq 1\\t-odd}}v^{m_t(\lambda)}\frac{(a_t/v;q)_{m_t(\lambda)}}{(a_t^2;q)_{m_t(\lambda)}}.
$$
In particular, setting $v=1$ yields 
\begin{equation}
		\sum_{\lambda} \left(\prod_{t\geq 1}\frac{(a_t;q)_{m_t(\lambda)}}{(a_t^2;q)_{m_t(\lambda)}}\right) \overline{\calA}^{\lambda}\,
			\sfF_{\lambda}(x_1, \dots, x_{n}\mid\calA,\overline{\calA})=\prod_{i=1}^n\frac{1}{1+x_i}\prod_{1\leq i<j\leq n}\frac{1-qx_ix_j}{1-x_ix_j}.
\end{equation}

Next set $v=1$ and $u=-1$. Then we get the following version of Littlewood identity:
\begin{equation}\label{eventransposed_li}
\begin{aligned} 
&\sum_{\lambda:\,\lambda'\text{even}}
\prod_{t\ge1}\prod_{r=1}^{m_t(\lambda)/2}
\frac{1-q^{2r-1}}{1-a_t^2q^{2r-1}}\,
\overline{\mathcal A}^{\,\lambda}\,
\sfF_{\lambda/\mu}
(x_1,\ldots,x_n\mid\mathcal A,\overline{\mathcal A})
\\[2mm]
&\quad=
\prod_{1\le i<j\le n}
\frac{1-qx_ix_j}{1-x_ix_j}
\sum_{\lambda:\,\lambda'\text{even}}
\prod_{t\ge1}\prod_{r=1}^{m_t(\lambda)/2}
\frac{1-q^{2r-1}}{1-a_t^2q^{2r-1}}\,
\overline{\mathcal A}^{\,\lambda}\,
\sfF^{*}_{\mu/\lambda}
(x_1,\ldots,x_n\mid\mathcal A,\overline{\mathcal A}),
\end{aligned}
\end{equation}
where we call a partition $\mu$ even if all its parts are even. In other words, the sums are over partitions $\lambda$ with columns of even length.

Several additional interesting cases appear in the homogeneous case, when $a_0=a_1=a_2=\dots=a$. In this case we write $a$ instead of $\calA$,  writing $\sfF_{\lambda/\mu}(x_1,\dots, x_n\mid a, a^{-1})$ and $h_\lambda(u,v\mid a)$. Then Proposition \ref{prop: ASCprop} implies
$$
h_\lambda(u,a\mid a)=u^{l(\lambda_{odd})},
$$
where $\lambda_{odd}$ is the partition obtained by removing all nonzero even parts from $\lambda$. Plugging this value into the Littlewood identity yields 
\begin{equation}\label{eq: evenLittlewood1}
		\sum_{\lambda} u^{l(\lambda_{odd})} a^{-|\lambda|}\,
			\sfF_{\lambda}(x_1, \dots, x_{n}\mid a,a^{-1})=\prod_{i=1}^n\frac{(1-ax_i)(1-ux_i)}{1-x^2_i}\prod_{1\leq i<j\leq n}\frac{1-qx_ix_j}{1-x_ix_j}.
\end{equation}
In particular, setting $u=0$ results in
\begin{equation} \label{even_li}
		\sum_{\lambda:\lambda \text{ even}}a^{-|\lambda|}\,
			\sfF_{\lambda}(x_1, \dots, x_{n}\mid a,a^{-1})=\prod_{i=1}^n\frac{1-ax_i}{1-x^2_i}\prod_{1\leq i<j\leq n}\frac{1-qx_ix_j}{1-x_ix_j}.
\end{equation}

Finally, when $a_0=a_1=a_2=\dots=a\to0$, the functions $h_m(u,v\mid 0)$ become Rogers-Szeg\"o polynomials, while
$$
        \overline{\calA}^{\lambda}\mathsf{F}_{\lambda}(x_1, \dots, x_n\mid \calA,\overline{\calA})\Big |_{a_0=a_1=\dots=0}=P_{\lambda}(-x_1,\dots, -x_n; q).
$$
So, we get the Littlewood identity for the classical Hall-Littlewood polynomials found in \cite{War06}.

\begin{remark}
    We can get a slightly more general version of \eqref{eq: evenLittlewood1} and \eqref{even_li} by  keeping the even parameters $a_{2n}$ and fixing only the odd ones $a_{1}=a_3=\dots=a$. However, we did not find a way to obtain a similar identity for the full family $\calA$.
\end{remark}

\subsection{Row reflection} For the remainder of the section we focus on the proof of Theorem \ref{thm:sHLlittlewood}. First we describe an application of the reflection equation from Proposition \ref{prop: hsref} to row operators. 

Fix parameters $\calA, u,v, x$ such that $a_i^2\notin q^{-\mathbb Z}$ and $a_i\neq x^{-1}$ for all $i$. This ensures that all vertex weights $w_{x;a_i, a_i^{-1}}$ are well-defined, as well as $h_\lambda(u,v\mid\calA)$. Let $\langle\Omega_{u,v}(\calA)|\in V^*$ denote the following dual vector
\begin{equation}\label{eq: dOmegadef}
\langle\Omega_{u,v}(\calA)|=\sum_{\lambda}h_{\lambda}(u,v\mid\calA)\overline{\calA}^{\lambda} \langle\lambda|.
\end{equation}
Additionally assuming $v\neq0$ and setting $\xi=\sqrt{u}/\sqrt{v}$ and $\zeta=\sqrt{u}\sqrt{v}$, \eqref{eq: omegatoh} implies
\begin{equation*}
\langle\Omega_{u,v}(\calA)|=\sum_{\lambda}\overline{\calA}^{\lambda}\zeta^{\lambda_1'}\prod_{\substack{t\geq2\\ t-even}}\omega_{a_{t};\xi,\zeta}(m_t(\lambda))\prod_{\substack{t\geq1\\ t-odd}}\omega_{a_{t};\zeta,\xi}(m_t(\lambda))\ \langle\lambda|.
\end{equation*}

For $N\geq 0$ we define a partial version of $\langle\Omega_{u,v}(\calA)|$ by
\begin{equation*}
\langle\Omega_{u,v}^{\leq N}(\calA)|=\sum_{\lambda:\lambda_1\leq N} \langle\Omega_{u,v}(\calA)\mid\lambda\rangle\ \langle\lambda|.
\end{equation*}
For $N>0$ we also define the following row operator, which only acts on partitions $\lambda,\mu$ satisfying $\lambda_1,\mu_1\leq N$:
$$
    \langle\lambda|\ T^{-,\leq N}_i(x\mid\calA,\calB)\ |\mu\rangle=		
    \begin{gathered}
    \begin{tikzpicture}
			\draw[unfused*] (1,2) -- (6.6,2);
			\node[right] at (6.6,2) {$\dots$};
			\draw[unfused] (7.2,2) -- (10,2);
			\foreach\x in {1,...,2}{
				\draw[fused] (2.5*\x-0.5,1) -- (2.5*\x-0.5,3);
			}
				\draw[fused] (8.5,1) -- (8.5,3);
			\node[above right] at (2,2) {\tiny $w$};
			\node[above right] at (4.5,2) {\tiny $w$};
            \node[above right] at (8.5,2) {\tiny $w$};
			\node[above] at (8.5,3) {\tiny $m_N(\lambda); a_N, b_N$};
			\node[above] at (4.5,3) {\tiny $m_2(\lambda); a_2, b_2$};
			\node[above] at (2,3) {\tiny $m_1(\lambda); a_1, b_1$};
			\node[left] at (1,2) {\tiny $i; x$};
			\node[below] at (8.5,1) {\tiny $m_N(\mu); a_{N}, b_N$};
			\node[below] at (4.5,1) {\tiny $m_2(\mu); a_{2}, b_2$};
			\node[below] at (2,1) {\tiny $m_1(\mu); a_{1}, b_1$};
			\node[right] at (10,2) {\tiny $1;x$};
		\end{tikzpicture}
        \end{gathered}
    $$

    \begin{proposition}
        Let $N\in2\mathbb Z_{>0}$. For every $\mu$ such that $\mu_1\leq N$ and $i\in \{0,1\}$ we have
        \begin{multline}\label{eq: Trowreflection}
       \langle \Omega_{u,v}^{\leq N}(\calA)|\ T_i(x\mid\calA,\overline{\calA})\ |\mu\rangle +  \frac{u+v-xuv-x}{(1-x^2)\prod_{r= 0}^Na_r} \langle \Omega_{u,v}^{\leq N}(\calA)|\ T^{-,\leq N}_i(x\mid\calA,\overline{\calA})\ |\mu\rangle\\
       =\sum_{j\in\{0,1\}}q^{-\frac j2}\tK_{x,qx;\zeta,\xi}(i,j)(\zeta/a_0)^{i+j}\langle \Omega_{u,v}(\calA)|\  T^*_j(x^{-1}\mid\calA,\overline{\calA})\ |\mu\rangle,
        \end{multline}
        where $\xi=\sqrt{u}/\sqrt{v}$ and $\zeta=\sqrt{u}\sqrt{v}$.
    \end{proposition}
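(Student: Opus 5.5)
The plan is to push the single reflection vertex $\tK_{x,qx;\zeta,\xi}$ from the left end of the row to the right end, one column at a time. Each step uses the higher-spin reflection relation of Proposition \ref{prop: hsref}. Throughout, $\langle\Omega_{u,v}(\calA)|$ is viewed as a tensor product of column boundary covectors. In the expansion
\[
\langle\Omega_{u,v}(\calA)|=\sum_\lambda \overline{\calA}^{\lambda}\zeta^{\lambda'_1}\prod_{t\ \mathrm{even}}\omega_{a_t;\xi,\zeta}(m_t(\lambda))\prod_{t\ \mathrm{odd}}\omega_{a_t;\zeta,\xi}(m_t(\lambda))\,\langle\lambda|,
\]
column $t$ carries the boundary covector $\omega_{a_t;\cdot,\cdot}$, with $\xi$ and $\zeta$ alternating in $t$. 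The factor $\overline{\calA}^{\lambda}\zeta^{\lambda'_1}=\prod_j (\zeta^{\delta_{j1}}/a_{j-1})^{\lambda'_j}$ is handled by noting that $\lambda'_j$ equals the horizontal occupation entering column $j$ plus $\mu'_j$. So, up to $\mu$-dependent constants common to both sides, this factor is a gauge along the horizontal line: each horizontal edge between columns $j-1$ and $j$ gets weight $a_{j-1}^{-1}$ (times $\zeta$ at the left boundary). This is what produces the factor $(\zeta/a_0)^{i+j}$ and, at the right end, $\prod_{r=0}^N a_r^{-1}$.

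First, form the left-hand side expression with $\tK$ attached at the left. That is, consider $\sum_j q^{-j/2}\tK_{x,qx;\zeta,\xi}(i,j)\,\langle\Omega|T^*_j(x^{-1})|\mu\rangle$. Here the $T^*_j$ row consists of $w^*$ vertices with horizontal spectral parameter $(qx)^{-1}$ running right to left. Next, convert all weights to the tilde normalization via Proposition \ref{rescale} and the gauge $\Theta$; the factor $q^{-j/2}$ and the $q^{l(\mu)}$ in $T^*$ should be absorbed here. Then apply Proposition \ref{prop: hsref} at column $1$, with $c=a_1$ and boundary covector $\omega_{a_1;\zeta,\xi}$ (so the roles of $\xi,\zeta$ are swapped relative to the proposition). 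This exchanges $\tK_{\zeta,\xi}$ on the left of a $\tw^*$ vertex for $\tK_{\xi,\zeta}$ on the right of a $\tw$ vertex. The swap of $\xi,\zeta$ at each step exactly matches the alternation of $\omega_{a_t;\xi,\zeta}$ and $\omega_{a_t;\zeta,\xi}$ between odd and even $t$.

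Iterating over columns $1,\dots,N$ turns the row of $w^*$ vertices into a row of $w$ vertices with horizontal parameter $x$. After $N$ steps (with $N$ even), a reflection vertex $\tK_{x,qx;\zeta,\xi}(\cdot,l)$ sits at the right end. There the outgoing edge of the $T^*$ row is fixed to $0$ beyond column $N$. Columns $>N$ carry the truncation; this is why $\Omega^{\leq N}$ appears, since partitions with $\lambda_1\le N$ must have zero horizontal occupation leaving column $N$ on the $w$ side. Evaluating the right-end $\tK$ with the explicit table in Proposition \ref{prop: hsref} gives two contributions. The entry $\tK(0,0)=1$ gives the $T_i$ term. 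The off-diagonal entry with value $(\xi+\xi^{-1}-x(\zeta+\zeta^{-1}))\,xq^{1/2}/(1-x^2)$ gives the $T^{-,\le N}_i$ term with outgoing occupation $1$. Substituting $\xi=\sqrt{u/v}$ and $\zeta=\sqrt{uv}$ converts the combination, together with the gauge factor $\zeta\prod_r a_r^{-1}$, into $(u+v-xuv-x)/((1-x^2)\prod_{r=0}^N a_r)$.

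The main obstacle is the bookkeeping rather than the concept. One must check that the tilde-to-stochastic gauge factors, the $a_j^{-1}$ gauges from $\overline{\calA}^{\lambda}$, the $q^{l(\mu)}$ and $q^{-j/2}$ factors, and the inversion $(qx)^{-1}$ of spectral parameters in Proposition \ref{prop: hsref} all telescope consistently across columns. I would first verify this on a single column ($N$ replaced by one step), by comparing constants at the $v_0$ configuration. I would then argue that the same per-column identity holds for every column, so that only boundary factors survive.
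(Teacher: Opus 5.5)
Your proposal is the paper's zipper argument, run from the left end of the row to the right end instead of the other way round. Since Proposition \ref{prop: hsref} is an equality, the direction is immaterial. Your alternation of $\tK_{\zeta,\xi}$ and $\tK_{\xi,\zeta}$ against $\omega_{a_t;\zeta,\xi}$ and $\omega_{a_t;\xi,\zeta}$, and your use of Proposition \ref{rescale}, match the paper.

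One correction in the final step. The right-end entry that produces the $T^{-,\le N}_i$ term is
\[
\tK_{x,qx;\zeta,\xi}(1,0)=\frac{\xi+\xi^{-1}-x(\zeta+\zeta^{-1})}{1-x^2},
\]
with no factor $xq^{\frac12}$; the value you quote is actually $\tK_{x,qx;\xi,\zeta}(0,1)$. Since $\zeta(\xi+\xi^{-1})=u+v$ and $\zeta(\zeta+\zeta^{-1})=uv+1$, multiplying the correct entry by the gauge ratio $\zeta\prod_{r=0}^N a_r^{-1}$ gives exactly the coefficient in the statement. Your value would leave a spurious factor $xq^{\frac12}$.

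You should also state explicitly why the $T^*$ side may use the full $\Omega_{u,v}(\calA)$ rather than $\Omega^{\le N}_{u,v}(\calA)$. The reason is that $\langle\lambda|T^*_j|\mu\rangle\neq 0$ forces $\lambda\prec\mu$, hence $\lambda_1\le\mu_1\le N$. Your sentence about ``zero horizontal occupation leaving column $N$'' describes only the $T_i$ term. The occupation-one configuration is precisely the $T^{-,\le N}_i$ term.
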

     Using Proposition \ref{prop: hsref}, one can check that $\tK_{x,qx;\zeta,\xi}(i,j)\zeta^{i+j}$ depends polynomially on $u$ and $v$. So, both sides of \eqref{eq: Trowreflection} are polynomials in $u,v$ and are well-defined even when $v=0$. 
    \begin{proof}
        We use a version of the standard ``zipper" argument. Set $\xi=\sqrt{u}/\sqrt{v}$ and $\zeta=\sqrt{u}\sqrt{v}$. Consider the following row partition function with $N$ vertices of type $\widetilde{w}$, followed by a reflection weight $\tK_{x,qx;\zeta,\xi}$:
        \begin{equation}\label{eq: sHLrowright}
        \begin{gathered}
          \begin{tikzpicture}
			\draw[unfused*] (1,2) -- (6.1,2);
			\node[right] at (6.1,2) {$\dots$};
			\draw[unfused] (6.8,2) -- (9,2)--(10,3);
            \draw[unfused] (11,2)--(10,3);
			\foreach\x in {1,...,2}{
				\draw[fused] (2.5*\x-0.5,1) -- (2.5*\x-0.5,3);
			}
				\draw[fused] (8,1) -- (8,3);
			\node[above right] at (2,2) {\tiny $\widetilde{w}$};
			\node[above right] at (4.5,2) {\tiny $\widetilde{w}$};
            \node[above right] at (8,2) {\tiny $\widetilde{w}$};
                   \node at (8,3)  {$\text{---}$};
                    \node at (4.5,3)  {$\text{---}$};
                    \node at (2,3)  {$\text{---}$};
			\node[above] at (8,3) {\tiny $\omega_{a_N;\xi,\zeta}$};
			\node[above] at (4.5,3) {\tiny $\omega_{a_2;\xi,\zeta}$};
			\node[above] at (2,3) {\tiny $\omega_{a_1;\zeta,\xi}$};
            \node[above] at (10,3) {\tiny $\tK_{\zeta,\xi}$};
            \node at (10,3) {$\bullet$};
			\node[left] at (1,2) {\tiny $i; x$};
			\node[below] at (8.5,1) {\tiny $m_N(\mu); a_{N}, a^{-1}_N$};
			\node[below] at (4.5,1) {\tiny $m_2(\mu); a_{2}, a^{-1}_2$};
			\node[below] at (2,1) {\tiny $m_1(\mu); a_{1}, a^{-1}_1$};
			\node[right] at (11,2) {\tiny $0;q^{-1}x^{-1}$};
	   	\end{tikzpicture}
        \end{gathered}
        \end{equation}
        Repeatedly using Proposition \ref{prop: hsref}, we can move the reflection vertex to the left end of the row. Note that each application of Proposition \ref{prop: hsref} swaps the reflection weights 
        $\tK_{\zeta,\xi}$ and $\tK_{\xi,\zeta}$. However, the matching alternation between $\omega_{a_t;\zeta,\xi}$ and $\omega_{a_t;\xi,\zeta}$ allows us to repeat the argument throughout the entire row. So, \eqref{eq: sHLrowright} is equal to the following partition function:
        \begin{equation}\label{eq: sHLrowleft}
        \begin{gathered}
          \begin{tikzpicture}
            \draw[unfused] (-1,2) -- (0,3);
			\draw[unfused] (6.1,2) -- (1,2) -- (0,3);
			\node[right] at (6.1,2) {$\dots$};
			\draw[unfused*] (6.8,2) -- (9,2);
			\foreach\x in {1,...,2}{s
				\draw[fused] (2.5*\x-0.5,1) -- (2.5*\x-0.5,3);
			}
				\draw[fused] (8,1) -- (8,3);
			\node[above right] at (2,2) {\tiny $\widetilde{w}^*$};
			\node[above right] at (4.5,2) {\tiny $\widetilde{w}^*$};
            \node[above right] at (8,2) {\tiny $\widetilde{w}^*$};
                   \node at (8,3)  {$\text{---}$};
                    \node at (4.5,3)  {$\text{---}$};
                    \node at (2,3)  {$\text{---}$};
			\node[above] at (8,3) {\tiny $\omega_{a_N;\xi,\zeta}$};
			\node[above] at (4.5,3) {\tiny $\omega_{a_2;\xi,\zeta}$};
			\node[above] at (2,3) {\tiny $\omega_{a_1;\zeta,\xi}$};
            \node[above] at (0,3) {\tiny $\tK_{\zeta,\xi}$};
            \node at (0,3) {$\bullet$};
			\node[left] at (-1,2) {\tiny $i; x$};
			\node[below] at (8.5,1) {\tiny $m_N(\mu); a_{N}, a^{-1}_N$};
			\node[below] at (4.5,1) {\tiny $m_2(\mu); a_{2}, a^{-1}_2$};
			\node[below] at (2,1) {\tiny $m_1(\mu); a_{1}, a^{-1}_1$};
			\node[right] at (9,2) {\tiny $0;q^{-1}x^{-1}$};
	   	\end{tikzpicture}
        \end{gathered}
        \end{equation}
        Using Proposition \ref{rescale} to change the weights $\widetilde{w}$ to $w$, we get that the partition function in \eqref{eq: sHLrowright} is equal to
        \begin{multline*}
            q^{-\frac{\mu'_1}{2}}a_0^{i}\zeta^{-\mu'_1-i}\prod_{r\geq 1}a_{r-1}^{\mu'_{r}}\langle\Omega_{u,v}^{\leq N}|\ T_i(x\mid\calA,\overline{\calA})\ |\mu\rangle \tK_{x,qx;\zeta,\xi}(0,0) \\
            +q^{\frac{-\mu'_1}{2}}a_0^{i}\zeta^{-\mu'_1-i+1}\prod_{r= 1}^{N+1}a_{r-1}^{\mu'_{r}-1}\langle\Omega_{u,v}^{\leq N}|\ T^{-,\leq N}_i(x\mid\calA,\overline{\calA})\ |\mu\rangle \tK_{x,qx;\zeta,\xi}(1,0).
        \end{multline*}
        Similarly, the partition function in \eqref{eq: sHLrowleft} is
        $$
        \sum_{j\in\{0,1\}} \tK_{x,qx; \zeta,\xi}(i,j) q^{\frac{-\mu_1'-j}{2}}a_0^{-j}\zeta^{-\mu_1'+j}\prod_{r\geq 1} a_{r-1}^{\mu'_{r}} \langle\Omega_{u,v}^{\leq N}|\ T^*_j(x^{-1}\mid\calA,\overline{\calA})\ |\mu\rangle.
        $$
        Using the expressions above, the equality between \eqref{eq: sHLrowright} and \eqref{eq: sHLrowleft} implies the claim. Note that $\langle\lambda|\ T_j^*(y\mid\calA,\calB)\ |\mu\rangle$ vanishes unless $\lambda\prec \mu$, so we can replace $\Omega^{\leq N}_{u,v}$ by $\Omega_{u,v}$ in the right-hand side.
    \end{proof}

    \begin{proposition}\label{prop: Tinfiniterowreflection} Assume that 
    \begin{equation}\label{eq: vanishingcondition}
     \prod_{r=M}^\infty \frac{a_r-x}{1-xa_r} =0\qquad \text{for\ every\ }M\in\mathbb{Z}_{>0}.
    \end{equation}
    Then, for every partition $\mu$ and $i\in\{0,1\}$ we have
        \begin{equation}\label{eq: Tinfiniterowreflection}
       \langle \Omega_{u,v}(\calA)|\ T_i(x\mid\calA,\overline{\calA})\ |\mu\rangle =\sum_{j\in\{0,1\}}q^{-\frac j2}\tK_{x,qx;\zeta,\xi}(i,j)(\zeta/a_0)^{i+j}\langle \Omega_{u,v}(\calA)| T^*_j(x^{-1}\mid\calA,\overline{\calA}) |\mu\rangle,
        \end{equation}
        where $\xi=\sqrt{u}/\sqrt{v}$, $\zeta=\sqrt{u}\sqrt{v}$ and
        $$
        \langle \Omega_{u,v}(\calA)|\ T_i(x\mid\calA,\overline{\calA})\ |\mu\rangle:=\lim_{N\to\infty}\langle \Omega^{\leq N}_{u,v}(\calA)|\ T_i(x\mid\calA,\overline{\calA})\ |\mu\rangle.
        $$
    \end{proposition}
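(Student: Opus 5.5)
The plan is to take the finite identity \eqref{eq: Trowreflection} for even $N$ and let $N\to\infty$ along even integers. The right-hand side of \eqref{eq: Trowreflection} does not depend on $N$: it involves the full covector $\Omega_{u,v}(\calA)$, but $T^*_j$ only connects $|\mu\rangle$ to partitions $\lambda\prec\mu$. That pairing is therefore a finite sum and gives exactly the right-hand side of \eqref{eq: Tinfiniterowreflection}. The same reasoning shows that the first term on the left side of \eqref{eq: Trowreflection} depends on $N$ only through which $\lambda$ with $\mu\prec\lambda$ and $\lambda_1\leq N$ are kept. For $N\geq \mu_1+1$ this set is stable, so $\langle \Omega^{\leq N}_{u,v}(\calA)|\ T_i\ |\mu\rangle$ is eventually constant. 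The limit defining the left side of \eqref{eq: Tinfiniterowreflection} therefore exists. It then suffices to prove that
$$
\frac{u+v-xuv-x}{(1-x^2)\prod_{r=0}^N a_r}\,\langle \Omega^{\leq N}_{u,v}(\calA)|\ T^{-,\leq N}_i(x\mid\calA,\overline{\calA})\ |\mu\rangle\longrightarrow 0 \qquad (N\to\infty,\ N\ \text{even}).
$$

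Take $N>\mu_1+1$. In the row defining $T^{-,\leq N}_i$, the path exits on the right with label $1$. By the conservation law the configuration is then forced, with at most one $\lambda$ (or a bounded number of them) contributing. Columns $r=\mu_1+2,\dots,N$ have $m_r(\mu)=0$, and the horizontal edge entering them must carry $1$, so $m_r(\lambda)=0$ there as well, except that one path must end at column $N$. Concretely, $\lambda$ is obtained from the nontrivial part near $\mu$ by adding a row of length $N$, which means $m_N(\lambda)=1$ and $\lambda'_r$ increases by one for $r\leq N$. Each vertex in columns $\mu_1+2,\dots,N-1$ is a $w_{x;a_r,a_r^{-1}}$ vertex of type $(1,0,0,1)$. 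By Proposition \ref{prop: hsw}, with $b=a_r^{-1}$ and $g=0$, its weight is
$$
\frac{a_r^2-xa_r}{1-xa_r}=a_r\,\frac{a_r-x}{1-xa_r}.
$$
The covector coefficient contains $\overline{\calA}^{\lambda}$, which gains an extra factor $\prod_{r=1}^{N}a_{r-1}^{-1}$ compared with the $\mu$-part. Together with the prefactor $\prod_{r=0}^N a_r^{-1}$ and the factors $a_r$ from the vertex weights, the $a_r$'s cancel up to a bounded number of boundary factors, such as $a_N^{-1}$, $a_{N-1}^{-1}$ and terms near $\mu_1$. One also gets $\zeta$-powers and $\omega_{a_N;\cdot,\cdot}(1)$, $\omega_{a_r;\cdot,\cdot}(0)=1$ factors. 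What remains is essentially
$$
C(\mu,x,u,v)\cdot c_N\prod_{r=\mu_1+2}^{N-1}\frac{a_r-x}{1-xa_r},
$$
where $c_N$ collects the last-column contributions, namely $\omega_{a_N}(1)$, the final vertex weight and stray $a_N^{-1}$ factors.

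The main obstacle is to check that $c_N$ stays bounded as $N\to\infty$. Then the vanishing condition \eqref{eq: vanishingcondition} forces the product, and hence the whole term, to zero. I would first compute $c_N$ explicitly using $\omega_{a;\xi,\zeta}(1)=(\zeta+\zeta^{-1}-a(\xi+\xi^{-1}))/(1-a^2)$ and the table in Proposition \ref{prop: hsw}. In the intended setting (Assumption \ref{asspt: sHL}) the $a_r$ lie in a compact set avoiding $\pm1$ and $x^{-1}$, so $c_N$ is bounded. If the bare condition \eqref{eq: vanishingcondition} is not strong enough on its own, I would absorb one factor $(a_{N}-x)/(1-xa_{N})$ or $a_N^{-1}$ into $c_N$ and argue with the remaining tail product. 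I also need to double-check the exact bookkeeping of which columns contribute which weights, and whether the $1$-labelled edge also passes column $N$. This changes only finitely many factors and does not affect the vanishing.
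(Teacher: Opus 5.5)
Your skeleton matches the paper's: pass to the limit in \eqref{eq: Trowreflection} along even $N$, note that the right-hand side does not depend on $N$, and kill the $T^{-,\leq N}$ term with a frozen tail of weights $w_{x;a_r,a_r^{-1}}(1,0,0,1)=a_r\frac{a_r-x}{1-xa_r}$. There is a genuine gap, however: the frozen tail itself, which is the whole content of the proof, is misidentified.

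At column $N>\mu_1$ the bottom label is $m_N(\mu)=0$ and the right label is $1$. Conservation reads $(\text{left})+0=m_N(\lambda)+1$ with the left label in $\{0,1\}$, so the left label is $1$ and $m_N(\lambda)=0$. No path turns up at column $N$, and $\lambda$ does not acquire a row of length $N$; the configuration you describe has weight zero. Repeating the argument leftward, every vertex in columns $\mu_1+1,\dots,N$ is of type $(1,0,0,1)$ and $m_r(\lambda)=0$ for $r>\mu_1$. Hence
\[
\langle\lambda|\,T_i^{-,\leq N}(x\mid\calA,\overline{\calA})\,|\mu\rangle=\langle\lambda|\,T_i^{-,\leq \mu_1}(x\mid\calA,\overline{\calA})\,|\mu\rangle\prod_{r=\mu_1+1}^{N}\frac{a_r^2-xa_r}{1-xa_r}
\]
for $\lambda_1\leq\mu_1$, and the matrix element vanishes otherwise.

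This is not cosmetic, and your claim that the discrepancy ``changes only finitely many factors'' is false. A new row of length $N$ would multiply $\overline{\calA}^{\lambda}$ by $\prod_{r=0}^{N-1}a_r^{-1}$. Combined with the prefactor $\prod_{r=0}^{N}a_r^{-1}$ and the vertex factors $a_r$, your own accounting leaves an uncancelled $\prod_r a_r^{-1}$ over the tail. The tail then becomes $\prod_r\frac{a_r-x}{a_r(1-xa_r)}$, which \eqref{eq: vanishingcondition} does not control.

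In the correct picture, $h_\lambda(u,v\mid\calA)$, $\overline{\calA}^{\lambda}$ and $\langle\lambda|T_i^{-,\leq\mu_1}|\mu\rangle$ range over a finite set independent of $N$. The factors $a_r$, $r=\mu_1+1,\dots,N$, cancel exactly against the matching part of the prefactor. The whole term equals $C\prod_{r=\mu_1+1}^{N}\frac{a_r-x}{1-xa_r}$ with $C$ independent of $N$. So there is no $c_N$, no factor $\omega_{a_N}(1)$, and no need for the compactness in Assumption \ref{asspt: sHL}; \eqref{eq: vanishingcondition} alone suffices.

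Separately, your claim that $\langle\Omega^{\leq N}_{u,v}(\calA)|\,T_i\,|\mu\rangle$ is eventually constant is false. The condition $\mu\prec\lambda$ does not bound $\lambda_1$: for $\mu=\varnothing$ and $i=1$, every $\lambda=(r)$ contributes with nonzero weight, so this is a genuinely infinite series. Its convergence is an output of the argument, not an input. Once the $T^{-,\leq N}$ term tends to $0$, \eqref{eq: Trowreflection} forces the $T_i$ term to converge to the $N$-independent right-hand side. With that claim dropped and the tail corrected, your argument becomes the paper's.
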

    \begin{proof} Fix $\mu$ and take the $N\to\infty$ limit of \eqref{eq: Trowreflection}. Note that the right-hand side does not depend on $N$, and the limit of the $T_i$ term in the left-hand side is $\langle \Omega_{\xi,\zeta}(\calA)|\ T_i(x\mid\calA,\overline{\calA})\ |\mu\rangle$. So, we only need to show that the contribution of the $T^{-,\leq N}$ term vanishes under this limit. To see it, consider the partition function $\langle\lambda|\ T^{-,\leq N}(x\mid\calA,\calB)\ |\mu\rangle$. If $N>\mu_1$, the bottom edge of the right-most vertex has value $0$, while the right edge of this vertex has value $1$. By the conservation law, the only possible non-zero weight of this vertex is
    $$
    w_{x;a_N,a_N^{-1}}(1,0,0,1)=\frac{a_N^2-xa_N}{1-xa_N}.
    $$
\begin{figure}
    \centering
    \begin{tikzpicture}
			\draw[unfused*] (1,2) -- (6.6,2);
			\node[right] at (6.6,2) {$\dots$};
			\draw[unfused] (7.3,2) -- (10,2);
			\foreach\x in {1,...,2}{
				\draw[fused] (2.5*\x-0.5,1) -- (2.5*\x-0.5,3);
			}
				\draw[fused] (8.5,1) -- (8.5,3);
                \draw[black, line width = 1pt] (3,2) -- (6.6,2);
                \draw[black, line width = 1pt] (7.3,2) -- (10,2);
			\node[above right] at (2,2) {\tiny $w$};
			\node[above right] at (4.5,2) {\tiny $w$};
            \node[above right] at (8.5,2) {\tiny $w$};
			\node[above] at (8.5,3) {\tiny $0; a_N, a^{-1}_{N}$};
			\node[above] at (4.5,3) {\tiny $0; a_{L+1}, a^{-1}_{L+1}$};
			\node[above] at (2,3) {\tiny $m_L(\lambda); a_{L}, a^{-1}_L$};
			\node[left] at (1,2) {\tiny $\dots$};
			\node[below] at (8.5,1) {\tiny $0; a_{N}, a^{-1}_N$};
			\node[below] at (4.5,1) {\tiny $0; a_{L+1}, a^{-1}_{L+1}$};
			\node[below] at (2,1) {\tiny $m_L(\mu); a_{L}, a^{-1}_L$};
			\node[right] at (10,2) {\tiny $1;x$};
		\end{tikzpicture}
    \caption{The frozen ``tail" of $\langle\lambda|\ T^{-,\leq N}(x\mid\calA,\calB)\ |\mu\rangle$. Here $L=\mu_1$ and the filled horizontal line indicates the unique configuration with non-zero weight.}
    \label{fig:tailTm}
\end{figure}

    In this case the horizontal edge between columns $N-1$ and $N$ has value $1$ and we can repeat the same argument for the remaining partition function with $N-1$ columns, as long as $N>\mu_1$. So, the only nontrivial row configuration has the form depicted in Figure \ref{fig:tailTm}, and we have
    $$
    \langle\lambda| T_i^{-,\leq N}(x\mid\calA,\overline{\calA})|\mu\rangle=\begin{cases} \langle\lambda| T_i^{-,\leq \mu_1}(x\mid\calA,\overline{\calA})|\mu\rangle \prod_{r=\mu_1+1}^N\frac{a_r^2-xa_r}{1-xa_r}\qquad &\text{if}\ \lambda_1\leq\mu_1,\\ 0\qquad &\text{otherwise}.
    \end{cases}
    $$
    Using this expression, the absolute value of the $T_i^{-,\leq N}$ term in \eqref{eq: Trowreflection} is bounded above by
    $$
    C\left|\prod_{r=\mu_1+1}^N\frac{a_r-x}{1-xa_r} \right|,
    $$
    where $C$ is a constant which does not depend on $N$. Due to \eqref{eq: vanishingcondition}, this term vanishes in the limit $N\to\infty$. 
    \end{proof}

    \begin{corollary}\label{cor: TinfiniteBrowreflection} Assume that 
    $$
     \prod_{r=M}^\infty \frac{a_r-x}{1-xa_r} =0\qquad \text{for\ every\ }M\in\mathbb{Z}_{>0}.
    $$
    Then, for every partition $\mu$, we have
        \begin{equation}\label{eq: TinfiniteBrowreflection}
       \langle \Omega_{u,v}(\calA)|\ B(x\mid\calA,\overline{\calA})\ |\mu\rangle =\frac{(1-xu)(1-xv)}{1-x^2}\langle \Omega_{u,v}(\calA)|\ B^*(x^{-1}\mid\calA,\overline{\calA})\ |\mu\rangle,
        \end{equation}
        where 
        $$
        \langle \Omega_{u,v}(\calA)|\ B(x\mid\calA,\overline{\calA})\ |\mu\rangle:=\lim_{N\to\infty}\langle \Omega^{\leq N}_{u,v}(\calA)|\ B(x\mid\calA,\overline{\calA})\ |\mu\rangle.
        $$
    \end{corollary}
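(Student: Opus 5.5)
The plan is to combine the two instances $i=0,1$ of Proposition \ref{prop: Tinfiniterowreflection} linearly, following the definitions $B=T_0-xb_0^{-1}T_1$ and $B^*(y)=T^*_0+b_0y^{-1}T^*_1$. With $\calB=\overline{\calA}$ we have $b_0=a_0^{-1}$, so $B(x\mid\calA,\overline{\calA})=T_0-xa_0T_1$ and $B^*(x^{-1}\mid\calA,\overline{\calA})=T^*_0+xa_0^{-1}T^*_1$. The limit defining the left-hand side of \eqref{eq: TinfiniteBrowreflection} is linear in the row operators, and both limits for $T_0$ and $T_1$ exist by Proposition \ref{prop: Tinfiniterowreflection}. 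Therefore the left-hand side equals $\langle\Omega|T_0|\mu\rangle-xa_0\langle\Omega|T_1|\mu\rangle$, and we can substitute \eqref{eq: Tinfiniterowreflection} for each term.

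After substitution, the coefficient of $\langle\Omega_{u,v}(\calA)|T^*_j(x^{-1})|\mu\rangle$ is
$$
q^{-\frac j2}(\zeta/a_0)^{j}\bigl(\tK_{x,qx;\zeta,\xi}(0,j)-x\zeta\,\tK_{x,qx;\zeta,\xi}(1,j)\bigr).
$$
The factor $a_0$ from $B$ cancels against $(\zeta/a_0)^{i}$ when $i=1$. The claim then reduces to two scalar identities:
$$
\tK(0,0)-x\zeta\tK(1,0)=\frac{(1-xu)(1-xv)}{1-x^2},\qquad q^{-\frac12}\frac{\zeta}{a_0}\bigl(\tK(0,1)-x\zeta\tK(1,1)\bigr)=\frac{(1-xu)(1-xv)}{1-x^2}\cdot\frac{x}{a_0}.
$$

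I would verify these using the explicit table in Proposition \ref{prop: hsref}, with $z=x$ and the roles of $\xi,\zeta$ swapped, since here $\tK_{\zeta,\xi}$ appears. From that table, $\tK(0,0)=1$ and $\tK(1,1)=q^{1/2}$. The off-diagonal entries are
$$
\tK(1,0)=\frac{\xi+\xi^{-1}-x(\zeta+\zeta^{-1})}{1-x^2},\qquad \tK(0,1)=\frac{xq^{1/2}\bigl(\zeta+\zeta^{-1}-x(\xi+\xi^{-1})\bigr)}{1-x^2}.
$$
With $\zeta\xi=u$ and $\zeta/\xi=v$, we get $\zeta(\xi+\xi^{-1})=u+v$ and $\zeta(\zeta+\zeta^{-1})=1+uv$.

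For the first identity, $1-x\zeta\tK(1,0)=\bigl(1-x^2-x(u+v)+x^2(1+uv)\bigr)/(1-x^2)=(1-xu)(1-xv)/(1-x^2)$, as required. For the second, $\zeta\tK(0,1)q^{-1/2}=x(1+uv-x(u+v))/(1-x^2)$ and $x\zeta^2\tK(1,1)q^{-1/2}=xuv$. Their difference is $x\bigl(1+uv-x(u+v)-uv+x^2uv\bigr)/(1-x^2)=x(1-xu)(1-xv)/(1-x^2)$, which matches.

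The only delicate point is bookkeeping: making sure the $\xi\leftrightarrow\zeta$ swap in $\tK_{x,qx;\zeta,\xi}$ is applied correctly to the table, and tracking the sign and $a_0$ conventions in $B$ and $B^*$. Polynomiality in $u,v$, noted after \eqref{eq: Trowreflection}, extends the identity to $v=0$.
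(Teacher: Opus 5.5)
Your proposal is correct and is the paper's proof. Like the paper, you combine the $i=0,1$ cases of Proposition \ref{prop: Tinfiniterowreflection} via $B=T_0-xa_0T_1$ and $B^*(x^{-1})=T^*_0+xa_0^{-1}T^*_1$, reducing everything to the scalar relation $\sum_{i}(-x)^i q^{-j/2}\zeta^{i+j}\tK_{x,qx;\zeta,\xi}(i,j)=\frac{(1-xu)(1-xv)}{1-x^2}x^j$. The paper only states that relation, while you verify it from the table in Proposition \ref{prop: hsref}, applying the $\xi\leftrightarrow\zeta$ swap correctly.
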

    \begin{proof}
    Follows immediately from the definition of $B, B^*$, Proposition \ref{prop: Tinfiniterowreflection} and the relation
    $$
    \sum_{i\in\{0,1\}}(-x)^iq^{-\frac j2}\zeta^{i+j}\tK_{x,qx;\zeta,\xi}(i,j)=\frac{(1-x\zeta/\xi)(1-x\zeta\xi)}{1-x^2} x^j.
    $$
    \end{proof}

\subsection{Proof of Theorem \ref{thm:sHLlittlewood}} From now we fix $\calA$ and $x_1,\dots, x_n$ satisfying Assumption \ref{asspt: sHL}. The following upper bound ensures absolute convergence of all series throughout the proof. 

\begin{lemma}\label{lemma: bounds} (1) Let $k\in\mathbb Z_{\geq0}$. There exists a constant $C$ depending on $k$ such that for all partitions $\lambda$ satisfying $l(\lambda)\leq k$ we have 
$$
|h_\lambda(u,v\mid\calA)|\leq C.
$$

(2) Let $k\in\mathbb Z_{\geq0}$, $n\in\mathbb Z_{>0}$. There exists a constant $C$ depending on $k$ and $n$ such that for every pair of partitions $\lambda,\mu$ satisfying $l(\mu)\leq k$ we have
$$
|\overline{\calA}^{\lambda-\mu}\sfF_{\lambda/\mu}(x_1, \dots, x_n\mid\calA,\overline\calA)|\leq C\lambda_1^{(n-1)(k+\frac{n}{2})}\varepsilon^{|\lambda/\mu|},
$$
$$
|\overline{\calA}^{\mu-\lambda}\sfF^*_{\lambda/\mu}(x_1, \dots, x_n\mid\calA,\overline\calA)|\leq C\lambda_1^{(n-1)(k+\frac{n}{2})}\varepsilon^{|\lambda/\mu|}.
$$
Here $\varepsilon\in(0,1)$ is the number from part (3) of Assumption \ref{asspt: sHL}.
\end{lemma}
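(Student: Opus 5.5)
For part (1), I would bound each factor of $h_\lambda(u,v\mid\calA)$ uniformly in the multiplicity. Since $l(\lambda)\leq k$, at most $k$ of the $m_t(\lambda)$ are nonzero, and each is at most $k$. So it suffices to bound $|h_m(u,v\mid a)|$ and $|v^m h_m(u,1/v\mid a)|$ uniformly over $a\in S$ and $0\leq m\leq k$. By \eqref{eq: hdef}, $h_m(u,v\mid a)$ is a finite sum whose numerators are polynomials in $a$ and whose only denominator is $(a^2;q)_m$. This denominator is continuous in $a$ and nonvanishing on $S$, because part (2) of Assumption \ref{asspt: sHL} excludes $\pm q^{-j/2}$ from $S$. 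Since $S$ is compact, $|(a^2;q)_m|$ is bounded below on $S$ and the numerators are bounded above. This gives the constant $C$; in fact only finitely many $m$ matter, so $C=\max(1,\max_{m\leq k,\,a\in S}|\cdot|)^k$ works.

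For part (2), I would use the branching rule \eqref{eq:branching-rulesHL} and bound the single-row functions. First, by Proposition \ref{SHLrenorm} and \eqref{eq:def-dual-f}, the dual $\sfF^*_{\lambda/\mu}=\frac{\psi_\lambda}{\psi_\mu}\sfF_{\lambda/\mu}$ with $\psi_\lambda(\calA,\overline\calA)=\prod_r a_{r-1}^{-2\lambda'_r}(q;q)_{m_r}/(a_r^2;q)_{m_r}$. After multiplying by $\overline\calA^{\mu-\lambda}$ instead of $\overline\calA^{\lambda-\mu}$, the powers of $a$ match the first estimate. The ratios $(q;q)_m/(a^2;q)_m$ are bounded above and below uniformly in $m$ and $a\in S$, by convergence of infinite products and compactness. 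However, they appear for every $r$ with $m_r(\lambda)>0$, and this needs care; see below.

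For the single-row function, I would write $\langle\lambda|B(x)|\mu\rangle$ as a product over columns $r$ of the higher-spin weights $w_{x;a_r,a_r^{-1}}$ from Proposition \ref{prop: hsw}. With $b=a^{-1}$, the weights are as follows.
\begin{itemize}
\item $(0,g,g,0)$: weight $(1-q^gxa)/(1-xa)$, bounded.
\item $(1,g,g,1)$: weight $(a-x)/(1-xa)\cdot a\cdot(\text{bounded})$, after extracting the factor $a$.
\item $(0,g,g-1,1)$ and $(1,g,g+1,0)$: weights bounded, with factors $xa$ and $a^2$ respectively.
\end{itemize}
Each column $r$ crossed by the horizontal path, i.e. with $\lambda'_{r+1}-\mu'_{r+1}=1$, contributes either $(a_r-x)/(1-xa_r)$ up to bounded factors, or a creation/annihilation weight.

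I would then check that the powers of $a_r$ exactly cancel against $\overline\calA^{\lambda-\mu}=\prod a_{r-1}^{\mu'_r-\lambda'_r}$. The leftover is at most $\varepsilon$ per column crossed, which gives $\varepsilon^{\lambda_1-\mu_1}$-type decay, since the path crosses roughly $|\lambda/\mu|$ columns in total over the rows. The boundedness here relies on part (3) of Assumption \ref{asspt: sHL} and on $x_i^{-1}\notin S$, which bounds $1/(1-xa)$.

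Iterating the branching rule over $n$ rows gives a sum over chains $\mu\prec\nu^{(n-1)}\prec\dots\prec\lambda$. Each chain is bounded by $C^n\varepsilon^{|\lambda/\mu|}$ times a polynomially bounded number of non-decaying factors, coming from columns where a row begins or ends. The number of chains of interlacing partitions with lengths at most $k+n$ and parts at most $\lambda_1$ is at most polynomial in $\lambda_1$, roughly $\lambda_1^{\sum_{j<n}(k+j)}$, which matches the exponent $(n-1)(k+\frac n2)$.

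I expect the main obstacle to be the bookkeeping showing that the $a_r$ powers cancel and that only boundedly many columns per row fail to contribute a factor $\varepsilon$. Specifically, the columns where the horizontal path starts or stops, and the non-crossing $(0,g,g,0)$ vertices, must be bounded uniformly in $g$. For $(q;q)_g/(a^2;q)_g$-type factors and $1-q^g xa$ this holds, since $|q|<1$ and $S$ avoids the poles.
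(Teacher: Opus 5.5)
Your overall strategy is the paper's. Part (1) is compactness on $S$. For part (2) you reduce $\mathsf F^*$ to $\mathsf F$ through the $\psi$-ratio, bound the one-row partition function vertex by vertex after absorbing $\overline{\mathcal A}^{\lambda-\mu}$ into the weights, and then iterate the branching rule. Your direct chain count $\prod_{j=1}^{n-1}(\lambda_1+1)^{k+j}$ is just the paper's induction unrolled and gives the same exponent.

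\textbf{The gap.} The step that carries the whole estimate is that all but boundedly many occupied horizontal edges produce a factor $\varepsilon$. You leave this as an expectation, and the mechanism you sketch for it is wrong. The crossing weight is
$$w_{x;a,a^{-1}}(1,g,g,1)=a\,\frac{q^ga-x}{1-xa}.$$
This has the form $a\frac{a-x}{1-xa}$ only when $g=0$. For $g\geq 1$ the ratio $\frac{q^ga-x}{1-xa}$ is merely bounded, not controlled by $\varepsilon$; for example, at $a_r=x$ it vanishes for $g=0$ but not for $g\geq1$. So the exceptional columns are not only those where a path starts or stops. Every column where the path passes an occupied vertical line also fails to give $\varepsilon$.

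A smaller correction: $w(1,g,g+1,0)=\frac{1-q^ga^2}{1-xa}$ carries no factor $a^2$. With the correct weights the cancellation is exact once you multiply each $w_{x;a_r,a_r^{-1}}(i,j,k,l)$ by $a_r^{-l}$. This works because the edge leaving column $r$ carries $\lambda'_{r+1}-\mu'_{r+1}$, and the boundary factor $-xa_0$ absorbs $a_0^{\mu'_1-\lambda'_1}$.

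\textbf{The missing count.} After this normalization, take a vertex with both vertical edges empty. Its weight is $1$ if $i=0$, and exactly $\frac{a_r-x}{1-xa_r}$ if $i=1$, since conservation forces $l=1$. A vertex with a nonzero vertical edge lies in a column with $m_r(\mu)>0$ or $m_r(\lambda)>0$. There are at most $l(\mu)+l(\lambda)\leq 2k+1$ such columns. Exactly $|\lambda/\mu|$ vertices have $i=1$, so at least $|\lambda/\mu|-2k-1$ of them carry $\frac{a_r-x}{1-xa_r}$. All remaining weights are bounded by a constant $D$ uniform in $g$ and $r$, using $x^{-1}\notin S$. This gives the paper's bound $D^{2k+1}\varepsilon^{|\lambda/\mu|-2k-1}$.

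The same count settles the point you deferred for $\mathsf F^*$. The $\psi$-ratio has nontrivial factors only for $r$ with $m_r(\lambda)>0$ or $m_r(\mu)>0$. There are at most $l(\lambda)+l(\mu)\leq 2k+n$ of these, because $\mathsf F_{\lambda/\mu}=0$ unless $l(\lambda)\leq l(\mu)+n$. Each factor is bounded above and below uniformly, so the product is bounded.
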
 
\begin{proof} (1) Let $S$ be the compact set from Assumption \ref{asspt: sHL}. Note that $h_m(u,v\mid a)$ is continuous as a function of $a\in S$ for all $m$, so there are constants $c_m$ such that $|h_m(u,v\mid a)|,|v^mh_m(u,1/v\mid a)| <c_m$ for all $a\in S$. Since $l(\lambda)\leq k$, there are at most $k$ indices $t$ such that $m_t(\lambda)>0$, and for such $t$ we have $m_t(\lambda)\leq k$. So $h_\lambda(u,v\mid\calA)$ is bounded by $(\max_{i=1,\dots,k} c_i)^k$.

(2) We focus on $\sfF$, the bound for $\sfF^*$ then follows due to
$$
\overline{\calA}^{\mu-\lambda}\sfF^*_{\lambda/\mu}(x_1, \dots, x_n\mid\calA,\overline\calA)=\overline{\calA}^{\lambda-\mu}\prod_{r\geq 1}\frac{(a_r^2;q)_{m_r(\mu)}}{(q;q)_{m_r(\mu)}}\frac{(q;q)_{m_r(\lambda)}}{(a_r^2;q)_{m_r(\lambda)}}\sfF_{\lambda/\mu}(x_1, \dots, x_n\mid\calA,\overline\calA).
$$
We use induction on $n$. For the base case $n=1$, the function $\sfF_{\lambda/\mu}(x_1\mid\calA,\overline\calA)$ is computed using the single row partition function $\langle\lambda|B(x\mid\calA,\overline{\calA})|\mu\rangle$. Multiplication by $\overline{\calA}^{\lambda-\mu}=\prod_{r\geq0}a_{r-1}^{\mu'_{r}-\lambda'_{r}}$ is equivalent to multiplying the weights $w_{x;a_r,a_r^{-1}}(i,j,k,l)$ in this row partition function by $a_r^{-l}$, which results in the weights 

\begin{align*}
\begin{array}{cccc}
\begin{tikzpicture}[scale=0.9, baseline=-4pt]
	\draw[unfused] (-1,0) -- (1,0);
	\draw[fused] (0,-1) -- (0,1);
	\node[left] at (-1,0) {\tiny $0;x$};\node[right] at (1,0) {\tiny $0;x$};
	\node[below] at (0,-1) {\tiny $g;a_r,a_r^{-1}$};\node[above] at (0,1) {\tiny $g;a_r,a_r^{-1}$};
	\node[above right] at (0,0) {};
\end{tikzpicture}
&
\begin{tikzpicture}[scale=0.9, baseline=-4pt]
	\draw[unfused] (-1,0) -- (1,0);
	\draw[fused] (0,-1) -- (0,1);
	\node[left] at (-1,0) {\tiny $0;x$};\node[right] at (1,0) {\tiny $1;x$};
	\node[below] at (0,-1) {\tiny $g;a_r,a_r^{-1}$};\node[above] at (0,1) {\tiny $g-1;a_r,a_r^{-1}$};
	\node[above right] at (0,0) {};
\end{tikzpicture}
&
\begin{tikzpicture}[scale=0.9, baseline=-4pt]
	\draw[unfused] (-1,0) -- (1,0);
	\draw[fused] (0,-1) -- (0,1);
	\node[left] at (-1,0) {\tiny $1;x$};\node[right] at (1,0) {\tiny $0;x$};
	\node[below] at (0,-1) {\tiny $g;a_r,a^{-1}_{r}$};\node[above] at (0,1) {\tiny $g+1;a_r,a_r^{-1}$};
	\node[above right] at (0,0) {};
\end{tikzpicture}
&
\begin{tikzpicture}[scale=0.9, baseline=-4pt]
	\draw[unfused] (-1,0) -- (1,0);
	\draw[fused] (0,-1) -- (0,1);
	\node[left] at (-1,0) {\tiny $1;x$};\node[right] at (1,0) {\tiny $1;x$};
	\node[below] at (0,-1) {\tiny $g;a_r,a_r^{-1}$};\node[above] at (0,1) {\tiny $g;a_r,a_r^{-1}$};
	\node[above right] at (0,0) {};
\end{tikzpicture}
\\
\\
\dfrac{1-q^gxa_r}{1-xa_r}
\qquad
&
\dfrac{(q^g-1)x}{1-xa_r}
\qquad
&
\dfrac{1-q^ga_r^2}{1-xa_r}
\qquad
&
\dfrac{q^ga_r-x}{1-xa_r}
\vspace{1ex}
\end{array}
\end{align*}
Note that, due to conservation laws, the unique nonzero configuration of $\langle\lambda|B(x\mid\calA,\overline{\calA})|\mu\rangle$ has $|\lambda/\mu|$ horizontal occupied edges. At the same time, there are $k$ occupied vertical edges at the bottom and $k+1$ vertical edges at the top boundary. So, there are at least $|\lambda/\mu|-2k-1$ vertices with the weight $\frac{a_r-x}{1-xa_r}$, and at most $2k+1$ vertices with the weight other than $1$ and  $\frac{a_r-x}{1-xa_r}$. Due to our assumptions on the parameters, $\left|\frac{a_r-x}{1-xa_r}\right|<\varepsilon$, while all other weights can be bounded above by a sufficiently large constant $D$. So
$$
\left|\langle\lambda|\, B(x\mid\calA,\overline{\calA})\, |\mu\rangle\right|\leq D^{2k+1} \varepsilon^{|\lambda/\mu|-2k-1},
$$
which proves the induction base. 

For the induction step, assume that we have proved the claim for $n$. Then, by the branching rule and the inductive hypothesis, we have
\begin{multline*}
|\overline{\calA}^{\lambda-\mu}\sfF_{\lambda/\mu}(x_1, \dots, x_n, x_{n+1}\mid\calA,\overline\calA)|\\
\leq\sum_{\nu:\nu\prec\lambda} |\overline{\calA}^{\lambda-\nu}\sfF_{\lambda/\nu}(x_{n+1}\mid\calA,\overline\calA)||\overline{\calA}^{\nu-\mu}\sfF_{\nu/\mu}(x_1, \dots, x_n\mid\calA,\overline\calA)|\\
\leq \sum_{\nu:\nu\prec\lambda} C_1\varepsilon^{|\lambda/\nu|} C_2 \nu_1^{(n-1)(k+\frac{n}{2})}\varepsilon^{|\nu/\mu|}
\end{multline*}
for some constants $C_1, C_2$ depending only on $n,k$. Since $\sfF_{\lambda/\mu}(x_1, \dots, x_n, x_{n+1}\mid\calA,\overline\calA)$ vanishes unless $l(\lambda)\leq l(\mu)+n+1=n+k+1$, we have $\nu_1\leq \lambda_1$ and $l(\nu)\leq l(\lambda)-1\leq n+k$. In particular, there are at most $\lambda_1^{n+k}$ potential choices for $\nu\prec\lambda$. This implies $\sum_{\nu:\nu\prec\lambda}\nu_1^{(n-1)(k+\frac{n}{2})}\leq\lambda_1^{n(k+\frac{n+1}{2})}$, finishing the computation of the upper bound. 
\end{proof}

Now we prove Theorem \ref{thm:sHLlittlewood} by induction on $n$.

The base case $n=1$ follows immediately from Corollary \ref{cor: TinfiniteBrowreflection}: by the definitions of $\sfF_{\lambda/\mu}$ and $\sfF^*_{\lambda/\mu}$, \eqref{eq: TinfiniteBrowreflection} is equivalent to
\begin{equation*}
\sum_{\lambda} \langle\Omega_{u,v}(\calA)\mid\lambda\rangle\ \sfF_{\lambda/\mu}(x\mid\calA,\overline\calA)=\frac{(1-xu)(1-xv)}{1-x^2} \sum_{\lambda} \langle\Omega_{u,v}(\calA)\mid\lambda\rangle\ \sfF^*_{\mu/\lambda}(x\mid\calA,\overline\calA).
\end{equation*}
Since $\langle\Omega_{u,v}(\calA)\mid\lambda\rangle=h_\lambda(u,v\mid\calA)\overline{\calA}^\lambda$, this relation implies the inductive base.

For the inductive step, assume that Theorem \ref{thm:sHLlittlewood} holds for $n$. Let  
$$
\Upsilon_n=\prod_{i=1}^n\frac{(1-ux_i)(1-vx_i)}{1-x_i^2}\prod_{1\leq i<j\leq n}\frac{1-qx_ix_j}{1-x_ix_j}.
$$
Using the branching rule, we have
\begin{multline}\label{eq: sHLLproof1}
\sum_{\lambda} \overline{\calA}^\lambda h_\lambda(u,v\mid\calA)  \sfF_{\lambda/\mu}(x_1, \dots, x_{n+1}\mid\calA,\overline\calA)\\
=\sum_{\lambda}\sum_\nu \overline{\calA}^\lambda h_\lambda(u,v\mid\calA) \sfF_{\lambda/\nu}(x_1, \dots, x_{n}\mid\calA,\overline\calA)\sfF_{\nu/\mu}(x_{n+1}\mid\calA,\overline\calA).
\end{multline}
Due to Lemma \ref{lemma: bounds}, the sums converge absolutely, hence we can exchange the order of summation in the right-hand side. After applying the inductive hypothesis to the sum over $\lambda$, the right-hand side becomes 
\begin{equation}\label{eq: sHLLproof2}
\Upsilon_n \sum_{\nu}\sum_\lambda \overline{\calA}^\lambda h_\lambda(u,v\mid\calA) \sfF^*_{\nu/\lambda}(x_1, \dots, x_{n}\mid\calA,\overline\calA)\sfF_{\nu/\mu}(x_{n+1}\mid\calA,\overline\calA).
\end{equation}
Now we can switch the summation order again and use the skew Cauchy identity from Theorem \ref{thm:sHLskew-cauhy} to the sum with respect to $\nu$, getting
$$
\Upsilon_n\prod_{i=1}^n\frac{1-qx_ix_{n+1}}{1-x_ix_{n+1}}\sum_{\lambda}\sum_\nu \overline{\calA}^\lambda h_\lambda(u,v\mid\calA) \sfF^*_{\mu/\nu}(x_1, \dots, x_{n}\mid\calA,\overline\calA)\sfF_{\lambda/\nu}(x_{n+1}\mid\calA,\overline\calA).
$$
Applying the single variable case of Theorem \ref{thm:sHLlittlewood}, we obtain
\begin{multline*}
\Upsilon_n\frac{(1-ux_{n+1})(1-vx_{n+1})}{1-x_{n+1}^2}\prod_{i=1}^n\frac{1-qx_ix_{n+1}}{1-x_ix_{n+1}}\\\times\sum_{\nu}\sum_\lambda\overline{\calA}^\lambda h_\lambda(u,v\mid\calA) \sfF^*_{\mu/\nu}(x_1, \dots, x_{n}\mid\calA,\overline\calA)\sfF^*_{\nu/\lambda}(x_{n+1}\mid\calA,\overline\calA).
\end{multline*}
Theorem \ref{thm:sHLlittlewood} now follows from the branching rule and the relation
$$
\Upsilon_n\frac{(1-ux_{n+1})(1-vx_{n+1})}{1-x_{n+1}^2}\prod_{i=1}^n\frac{1-qx_ix_{n+1}}{1-x_ix_{n+1}}=\Upsilon_{n+1}.
$$
\qed

\begin{remark} One can show that Assumption \ref{asspt: sHL} is stronger than is needed for Theorem \ref{thm:sHLlittlewood} to hold. In fact, we only need \eqref{eq: vanishingcondition} and a way to justify the changes of summation orders in \eqref{eq: sHLLproof1} and \eqref{eq: sHLLproof2}. The latter can be done in other ways, for instance, when
$$
\overline{\calA}^{\lambda-\mu}\sfF_{\lambda/\mu}(x_1, \dots, x_n\mid\calA, \overline{\calA}), \overline{\calA}^{\mu-\lambda}\sfF^*_{\lambda/\mu}(x_1, \dots, x_n\mid\calA, \overline{\calA}), h_\lambda(u,v\mid\calA)\geq 0
$$
for all $\lambda,\mu$, the summation orders can be exchanged without additional justification due to all terms having the same sign.
\end{remark}

\section{Inhomogeneous spin $q$-Whittaker polynomials}\label{sec:sqW}

	In this section we recall the definition and properties of the inhomogeneous spin $q$-Whittaker polynomials $\F_{\lambda}$ introduced in~\cite{Kor24}.

    We also treat $q^{\frac12}$ as a formal parameter, with $\calF=\mathbb C(q^{\frac12})$. For the entire section we use $\calA=(a_0, a_1, a_2, \dots)$ and $\calB=(b_0, b_1, b_2, \dots)$ to denote a pair of sequences of generic parameters. Note that these sequences of parameters start from $a_0$ and $b_0$, which is more convenient here. 

	\subsection{Partition function}\label{sec: partition} Let $\lambda=(\lambda_1\geq\lambda_2\geq\dots\geq 0)$, $\mu=(\mu_1\geq\mu_2\geq\dots\geq 0)$ be a pair of partitions. For each $N>l(\lambda),l(\mu)$ and $(r_1,\dots,r_n)\in\Z_{\geq 0}^n$ we consider the partition function $ZW^{r_1,\dots,r_n}_{\lambda/\mu,N}(x_1,\dots, x_n\mid \calA,\calB)$ given by the following lattice configuration:
    
	\begin{center}
		\begin{tikzpicture}
			\foreach\y in {4,...,5}{
				\draw[fused*] (1,\y) -- (8.1,\y);
				\node[right] at (8.1,\y) {$\dots$};
				\draw[fused] (8.7,\y) -- (10,\y);
			}
			\draw[fused*] (1,2) -- (8.1,2);
			\node[right] at (8.1,2) {$\dots$};
			\draw[fused] (8.7,2) -- (10,2);
			\foreach\x in {1,...,3}{
				\draw[fused] (2.5*\x-0.5,3.3) -- (2.5*\x-0.5,6);
				\draw[fused*] (2.5*\x-0.5,1) -- (2.5*\x-0.5,2.5);
				\node at (2.5*\x-0.5, 3) {$\vdots$};
			}
			\node[above right] at (2,5) {\tiny $W^b$};
			\node[above right] at (4.5,5) {\tiny $W^b$};
			\node[above right] at (2,4) {\tiny $W^b$};
			\node[above right] at (4.5,4) {\tiny $W^b$};
			\node[above right] at (2,2) {\tiny $W^b$};
			\node[above right] at (4.5,2) {\tiny $W^b$};
			\node[above] at (7,6) {$\cdots$};
			\node[above] at (4.5,6) {\tiny $\lambda_2-\lambda_3; a_2, b_2$};
			\node[above] at (2,6) {\tiny $\lambda_1-\lambda_2; a_1, b_1$};
			\node[left] at (1,2) {\tiny $r_n; a_n, x_n$};
			\node[left] at (1,4) {\tiny $r_2; a_2, x_2$};
			\node[left] at (1,5) {\tiny $r_1; a_1, x_1$};
			\node[below] at (7,1) {$\cdots$};
			\node[below] at (4.5,1) {\tiny $\mu_2-\mu_3; a_{n+2}, b_2$};
			\node[below] at (2,1) {\tiny $\mu_1-\mu_2; a_{n+1}, b_1$};
			\node[right] at (10,2) {\tiny $0;a_{N+n}, x_n$};
			\node[right] at (10,3) {$\vdots$};
			\node[right] at (10,4) {\tiny $0; a_{N+2}, x_2$};
			\node[right] at (10,5) {\tiny $0; a_{N+1}, x_1$};
		\end{tikzpicture}
	\end{center}
	The configuration occupies a strip with $n$ horizontal rows and $N$ columns. Every internal vertex carries the stochastic $W^b$ weight from Section \ref{stochW} and the boundary conditions are
	\begin{itemize}
		\item the top edge of the $j$-th column carries the label $\lambda_j-\lambda_{j+1}$ and parameter pair $(a_j,b_j)$;
		\item the bottom edge of the $j$-th column carries the label $\mu_j-\mu_{j+1}$ and parameter pair $(a_{n+j},b_j)$;
		\item the left edge of the $i$-th row carries the label $r_i$ and parameter pair $(a_i,x_i)$;
		\item the right edge of the $i$-th row carries the label $0$ and has parameters $(a_{N+i},x_i)$.
	\end{itemize}
Similarly to the spin Hall-Littlewood case, for fixed $\lambda,\mu$ the partition function $ZW^{r_1,\dots,r_n}_{\lambda/\mu,N}$ does not depend on $N$ as long as $N>l(\lambda),l(\mu)$. So, we omit $N$ from our notation, writing $ZW^{r_1,\dots,r_n}_{\lambda/\mu}$ instead. Equivalently, one can think of $ZW^{r_1,\dots,r_n}_{\lambda/\mu}$ as a partition function of a semi-infinite strip with infinitely many trivial columns.
    

	\begin{definition}
		The \emph{inhomogeneous spin $q$-Whittaker polynomial} $\F_{\lambda/\mu}(x_1,\dots,x_n\mid\calA,\calB)$ is defined by
		\begin{equation}\label{eq:def-F}
			\F_{\lambda/\mu}(x_1, \dots, x_n\mid\calA,\calB)
			=\sum_{r_1, \dots, r_n\geq 0}
			\prod_{i=1}^n(x_i/b_0)^{r_i}
			\frac{(a_ix_i^{-1};q)_{r_i}}{(q;q)_{r_i}}\,
			ZW^{r_1, \dots, r_n}_{\lambda/\mu}.
		\end{equation}
	\end{definition}

    The functions $\F_{\lambda/\mu}(x_1,\dots,x_n\mid\calA,\calB)$ can also be described with row operators. Let $V$ denote the vector space over $\calF$ spanned by the basis vectors $\{|\lambda\rangle\}_{\lambda}$ enumerated by partitions. For $i\in\mathbb Z_{\geq0}$ define the row operator $\T_i(x\mid\calA,\calB)$ as the matrix
    $$
    \langle\lambda'|\ \T_i(x\mid\calA,\calB)|\mu'\rangle=		
    \begin{gathered}
    \begin{tikzpicture}
			\draw[fused*] (1,2) -- (8.1,2);
			\node[right] at (8.1,2) {$\dots$};
			\draw[fused] (8.7,2) -- (10,2);
			\foreach\x in {1,...,3}{
				\draw[fused] (2.5*\x-0.5,1) -- (2.5*\x-0.5,3);
			}
			\node[above right] at (2,2) {\tiny $W^b$};
			\node[above right] at (4.5,2) {\tiny $W^b$};
            \node[above right] at (7,2) {\tiny $W^b$};
			\node[above] at (7,3) {\tiny $\lambda_3-\lambda_4; a_3, b_3$};
			\node[above] at (4.5,3) {\tiny $\lambda_2-\lambda_3; a_2, b_2$};
			\node[above] at (2,3) {\tiny $\lambda_1-\lambda_2; a_1, b_1$};
			\node[left] at (1,2) {\tiny $i; a_1, x$};
			\node[below] at (7,1) {\tiny $\mu_3-\mu_4; a_{4}, b_3$};
			\node[below] at (4.5,1) {\tiny $\mu_2-\mu_3; a_{3}, b_2$};
			\node[below] at (2,1) {\tiny $\mu_1-\mu_2; a_{2}, b_1$};
			\node[right] at (10,2) {\tiny $0$};
		\end{tikzpicture}
        \end{gathered}
    $$
    The partitions $\lambda,\mu$ encode the top and bottom boundary configurations in a way identical to the row operators $T_i$ from Section \ref{sec: sHL-def}, but due to the conjugation $\lambda'$ we change $m_i(\lambda)$ to $m_i(\lambda')=\lambda_i-\lambda_{i+1}$. 
    
     Due to the conservation law, the partition function $\langle\lambda'|\ \T_i(x\mid\calA,\calB)|\mu'\rangle$ has a unique configuration with nonzero weight. This configuration exists only when $i=\lambda_1-\mu_1$, and the horizontal edge between columns $j$ and $j+1$ is forced to carry the value $\lambda_{j+1}-\mu_{j+1}$. In particular, $\lambda_{j+1}-\mu_{j+1}\geq 0$. Moreover, since the weight $W^b(i,j,k,l)$ vanishes unless $j\geq l$, the partition function is $0$ unless $\lambda_{j+1}-\mu_{j+1}\leq \mu_{j}-\mu_{j+1}$ for all $j$. So
    $$
    \langle\lambda'|\ \T_i(x\mid\calA,\calB)|\mu'\rangle=0 \qquad \text{unless}\ i=\lambda_1-\mu_1\ \text{and}\ \mu\prec\lambda.
    $$

    To get the spin $q$-Whittaker polynomials, define
    $$
    \B(x\mid\calA,\calB)=\sum_{r\geq 0}(x/b_0)^{r}\frac{(a_1x^{-1};q)_{r}}{(q;q)_{r}} \T_r(x\mid\calA,\calB).
    $$
    Then the vertex model definition of $\F_{\lambda/\mu}$ is equivalent to
    $$
    \F_{\lambda/\mu}(x_1,\dots,x_n\mid\calA,\calB)=\langle\lambda'|\ \B(x_1\mid\calA,\calB)\B(x_2\mid\tau\calA,\calB)\dots\B(x_n\mid\tau^{n-1}\calA,\calB)\ |\mu'\rangle.
    $$

    We can also define the spin $q$-Whittaker polynomials using the weights $W^a$. Let
    $\T^*_i(y\mid\calA,\calB)$ denote the matrix
    $$
    \langle\lambda'|\ \T_i^*(y\mid\calA,\calB)|\mu'\rangle=(b_0/y)^{\mu_1}\left(		
    \begin{gathered}
    \begin{tikzpicture}
			\draw[fused] (7.9,2) -- (1,2);
			\node[right] at (7.9,2) {$\dots$};
			\draw[fused*] (8.5,2) -- (9.3,2);
			\foreach\x in {1,...,3}{
				\draw[fused] (2.5*\x-0.5,1) -- (2.5*\x-0.5,3);
			}
			\node[above right] at (2,2) {\tiny $W^a$};
			\node[above right] at (4.5,2) {\tiny $W^a$};
            \node[above right] at (7,2) {\tiny $W^a$};
			\node[above] at (7,3) {\tiny $\lambda_3-\lambda_4; a_3, b_4$};
			\node[above] at (4.5,3) {\tiny $\lambda_2-\lambda_3; a_2, b_3$};
			\node[above] at (2,3) {\tiny $\lambda_1-\lambda_2; a_1, b_2$};
			\node[below] at (1,1.9) {\tiny $i; y, b_1$};
			\node[below] at (7,1) {\tiny $\mu_3-\mu_4; a_{3}, b_3$};
			\node[below] at (4.5,1) {\tiny $\mu_2-\mu_3; a_{2}, b_2$};
			\node[below] at (2,1) {\tiny $\mu_1-\mu_2; a_{1}, b_1$};
			\node[below] at (9.3,2) {\tiny $0$};
		\end{tikzpicture}
        \end{gathered}\right)
    $$
    and set
    $$
    \B^*(y\mid\calA,\calB)=\sum_{r\geq 0}\T^*_r(y\mid\calA,\calB).
    $$
    Using
    $$
    \begin{gathered}
			\begin{tikzpicture}[scale=1.3]
				\draw[fused] (1,0) -- (-1,0);
				\draw[fused] (0,-1) -- (0,1);
				\node[below] at (-0.9,-0.1) {\tiny \shortstack[c]{$k; y,b_1$}};
				\node[below] at (0.9,-0.1) {\tiny \shortstack[c]{$j; y,b_2$}};
				\node[below] at (0,-1) {\tiny $i; a,b_1$};
				\node[above] at (0,1) {\tiny $l; a,b_2$};
				\node[above right] at (0,0) {\tiny $W^a$};
			\end{tikzpicture}
		\end{gathered}
        = \frac{(y/b_1)^{l}}{(a/b_1)^{j}}\frac{(q;q)_i(q;q)_j(y/b_1;q)_k(a/b_2;q)_l}{(a/b_1;q)_i(y/b_2;q)_j(q;q)_k(q;q)_l}
        \begin{gathered}
			\begin{tikzpicture}[scale=1.3]
				\draw[fused] (-1,0) -- (1,0);
				\draw[fused] (0,-1) -- (0,1);
				\node[below] at (-0.9,-0.1) {\tiny \shortstack[c]{$k;b_1^{-1},y^{-1}$}};
				\node[below] at (0.9,-0.1) {\tiny \shortstack[c]{$j; b_2^{-1},y^{-1}$}};
				\node[below] at (0,-1) {\tiny $l; b_2^{-1},a^{-1}$};
				\node[above] at (0,1) {\tiny $i; b_1^{-1},a^{-1}$};
				\node[above right] at (0,0) {\tiny $W^b$};
			\end{tikzpicture}
		\end{gathered}
    $$
    we can show the following relation between $\B$ and $\B^*$:
    \begin{proposition}[{\cite[Proposition 4.4]{Kor24}}]\label{renorm}
        For every pair of partitions $\lambda,\mu$ we have
        $$
        \langle\mu'|\B^*(y\mid\calA,\calB)|\lambda'\rangle=\frac{\psi_{\lambda'}(\calA,\calB)}{\psi_{\mu'}(\calA,\tau\calB)}\langle\lambda'|\B(y^{-1}\mid\overline{\calB},\overline{\calA})|\mu'\rangle,
        $$
        where 
    	\begin{equation}\label{eq:def-psi-old}
	       \psi_{\lambda'}(\calA,\calB)=\psi_{\lambda'}(\overline{\calB},\overline{\calA})
	        =
	       \prod_{r\geq 1}\left(\frac{b_{r-1}}{a_{r-1}}\right)^{\lambda_{r}}
	       \frac{(q;q)_{\lambda_r-\lambda_{r+1}}}
	       {(a_r/b_r;q)_{\lambda_r-\lambda_{r+1}}}.
	\end{equation}
    \end{proposition}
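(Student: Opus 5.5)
The plan is to mirror the proof of Proposition \ref{SHLrenorm}: first establish a vertex-by-vertex identity relating $W^a$ to $W^b$ with inverted parameters, then multiply over the row and collect the gauge factors. The single-vertex identity is the displayed one immediately before the statement. I would verify it directly from Theorem \ref{WaWb-theo} and the stochastic normalization of Section \ref{stochW}.

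\textbf{Step 1: the local identity.} We have
$$
W^a_{y,b_1,a,b_2}(k,i,j,l)=\Phi(k-j,j,y/a,a/b_1),
$$
while the right-hand side is
$$
W^b_{b_1^{-1},y^{-1},b_2^{-1},a^{-1}}(k,l,i,j)=\Phi(j,l-j,y/b_2,b_2/b_1)
$$
(after substituting the inverted parameters). Both are explicit products of $q$-Pochhammer symbols. Their ratio should reduce to the stated prefactor using the conservation law $k+i=j+l$ and the elementary identity
$$
(c;q)_m=(-c)^m q^{\binom m2}(q^{1-m}/c;q)_m,
$$
applied where needed. This is a routine but sign- and power-sensitive computation; it is the only genuinely computational step.

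\textbf{Step 2: multiply along the row.} Take $\langle\mu'|\T^*_r(y\mid\calA,\calB)|\lambda'\rangle$. Note that the roles of $\lambda,\mu$ are swapped relative to the definition, so the top labels are $\mu_j-\mu_{j+1}$ with parameters $(a_j,b_{j+1})$, and the bottom labels are $\lambda_j-\lambda_{j+1}$ with parameters $(a_j,b_j)$. Apply the local identity at column $j$ with $(a,b_1,b_2)=(a_j,b_j,b_{j+1})$. The inverted picture is exactly the row defining $\langle\lambda'|\T_r(y^{-1}\mid\overline{\calB},\overline{\calA})|\mu'\rangle$. Its left edge carries parameters $(b_1^{-1},y^{-1})$, matching $(\overline{\calB}_1,x)$, and its columns carry $(b_j^{-1},a_j^{-1})$ on top and $(b_{j+1}^{-1},a_j^{-1})$ on the bottom, matching the shifts $\tau$ in the definition of $\T$.

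\textbf{Step 3: collect and telescope the prefactors.} The factors $(q;q)_i/(a_j/b_j;q)_i$ and $(a_j/b_{j+1};q)_l/(q;q)_l$ assemble into the Pochhammer parts of $\psi_{\lambda'}(\calA,\calB)/\psi_{\mu'}(\calA,\tau\calB)$. The factors $(q;q)_j/(y/b_{j+1};q)_j$ and $(y/b_j;q)_k/(q;q)_k$ on the horizontal edges telescope along the row. What survives at the left boundary is the factor $(y/b_1;q)_r/(q;q)_r$. Together with the power $(b_0/y)^{\mu_1}$ in the definition of $\T^*$, this should reproduce the coefficient $(y^{-1}/\overline{b}_0)^r(\overline{a}_1 y;q)_r/(q;q)_r$ appearing in $\B(y^{-1}\mid\overline{\calB},\overline{\calA})$, using $r=\mu_1-\lambda_1$.

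The monomials $(y/b_j)^l/(a_j/b_j)^{j}$ must then combine into $\prod_r (b_{r-1}/a_{r-1})^{\lambda_r}$-type ratios. The horizontal labels are $\mu_{j+1}-\lambda_{j+1}$, so these powers telescope via $\sum_j(\text{label})$ identities. Summing over $r\geq0$, only one term survives for fixed $\lambda,\mu$, which gives the claim.

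The main obstacle is the bookkeeping in Step 3, in particular the index shifts $b_j$ versus $b_{j+1}$ in $\psi_{\mu'}(\calA,\tau\calB)$ and the exact powers of $y$ and $b_0$. I would first check the $n=1$-column case and $\lambda=\varnothing$ to pin down the normalization.
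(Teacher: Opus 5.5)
Your route is the paper's (the same one used for Proposition \ref{SHLrenorm}): apply the displayed local identity at every vertex, recognize the inverted row as $\langle\lambda'|\T_r(y^{-1}\mid\overline{\calB},\overline{\calA})|\mu'\rangle$, and telescope. Your Step 2 parameter matching and your telescoping of the Pochhammer factors are right. However, several explicit formulas are wrong, and as written Steps 1 and 3 do not close.

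\textbf{Step 1.} You misread the tilted $W^a$ vertex. It is the standard vertex rotated by $90^\circ$ counterclockwise. Its incoming edges are the bottom one ($i$, parameters $(a,b_1)$) and the right one ($j$, parameters $(y,b_2)$), and its outgoing edges are left ($k$) and top ($l$). So the conservation law is $i+j=k+l$, the same as for the $W^b$ side, which you read correctly as $W^b_{b_1^{-1},y^{-1},b_2^{-1},a^{-1}}(k,l,i,j)$. The left side is therefore $W^a_{a,b_1,y,b_2}(i,j,k,l)=\Phi(i-k,k,a/y,y/b_1)$. Your $W^a_{y,b_1,a,b_2}(k,i,j,l)$ is not compatible with the edge parameters, since it would put $(a,b_2)$ on the $i$-edge. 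Your conservation law $k+i=j+l$ also contradicts the one on the $W^b$ side. Moreover, the last argument on the right is $y^{-1}/a^{-1}=a/y$, not $b_2/b_1$, so the right side is $\Phi(j,l-j,y/b_2,a/y)$. With your two formulas the ratio keeps factors such as $(y/a;q)_{k-j}$ and $(b_2/b_1;q)_{l-j}$, and it cannot equal the displayed prefactor. With the correct ones, $i-k=l-j$ makes $(a/y;q)_{i-k}/(q;q)_{i-k}$ cancel outright and the prefactor appears at once; no $q$-reflection formula is needed.

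\textbf{Step 3.} Here you revert to the unswapped bookkeeping. In $\langle\mu'|\T^*_r(y\mid\calA,\calB)|\lambda'\rangle$ the ket is $\lambda'$, so the normalization is $(b_0/y)^{\lambda_1}$, not $(b_0/y)^{\mu_1}$. Paths enter through the bottom ($\lambda$) and leave through the top ($\mu$) and the left end. Hence $r=\lambda_1-\mu_1$, and the horizontal edge between columns $j$ and $j+1$ carries $\lambda_{j+1}-\mu_{j+1}$. With $r=\mu_1-\lambda_1$ nothing could match, because $\langle\lambda'|\T_r(y^{-1}\mid\overline\calB,\overline\calA)|\mu'\rangle$ vanishes unless $r=\lambda_1-\mu_1$.

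After these corrections, the coefficient of $\T_r$ in $\B(y^{-1}\mid\overline\calB,\overline\calA)$ is $(a_0/y)^{r}(y/b_1;q)_r/(q;q)_r$. What remains is to check
\[
\Bigl(\frac{b_0}{y}\Bigr)^{\lambda_1}\prod_{j\geq1}\Bigl(\frac{y}{b_j}\Bigr)^{\mu_j-\mu_{j+1}}\Bigl(\frac{b_j}{a_j}\Bigr)^{\lambda_{j+1}-\mu_{j+1}}=\Bigl(\frac{a_0}{y}\Bigr)^{\lambda_1-\mu_1}\prod_{r\geq1}\Bigl(\frac{b_{r-1}}{a_{r-1}}\Bigr)^{\lambda_r}\Bigl(\frac{a_{r-1}}{b_r}\Bigr)^{\mu_r}.
\]
Both sides expand to $y^{\mu_1-\lambda_1}b_0^{\lambda_1}\prod_{r\geq2}(b_{r-1}/a_{r-1})^{\lambda_r}a_{r-1}^{\mu_r}\prod_{r\geq1}b_r^{-\mu_r}$, which finishes the argument.
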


    We define the dual spin $q$-Whittaker polynomials by
    $$
    \F^*_{\lambda/\mu}(y_1,\dots, y_m\mid\calA, \calB)=\langle\mu'|\B^*(y^{-1}_m\mid\overline{\calB},\tau^{m-1}\overline{\calA})\dots\B^*(y^{-1}_2\mid\overline{\calB},\tau\overline{\calA})\B^*(y^{-1}_1\mid\overline{\calB},\overline{\calA})|\lambda'\rangle.
    $$
    Then Proposition \ref{renorm} implies
	\begin{equation}\label{eq:def-dual-F}
	\F^*_{\lambda/\mu}(y_1,\ldots,y_m\mid\calA,\calB)
	=
	\frac{\psi_{\lambda'}(\calA,\calB)}
	{\psi_{\mu'}(\tau^m\calA,\calB)}
	\F_{\lambda/\mu}(y_1,\ldots,y_m\mid\calA,\calB).
	\end{equation}

	\subsection{Properties of $\F_{\lambda/\mu}$}\label{sec:basic-properties}

	Here we list several properties of our polynomials: the branching and symmetry properties proved in ~\cite{BK24}, and the Cauchy identity proved in~\cite{Kor24}.

    \begin{proposition}[{\cite[Theorem 5.9]{BK24}}]  The function $\F_{\lambda/\mu}(x_1,\dots,x_n\mid\calA,\calB)$ is a symmetric polynomial in $x_1,\dots, x_n$.
    \end{proposition}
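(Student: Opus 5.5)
The statement to prove is that $\F_{\lambda/\mu}(x_1,\dots,x_n\mid\calA,\calB)$ is a symmetric polynomial in $x_1,\dots,x_n$. I would prove polynomiality and symmetry separately, using the row-operator description
$$
\F_{\lambda/\mu}(x_1,\dots,x_n\mid\calA,\calB)=\langle\lambda'|\,\B(x_1\mid\calA,\calB)\B(x_2\mid\tau\calA,\calB)\cdots\B(x_n\mid\tau^{n-1}\calA,\calB)\,|\mu'\rangle.
$$

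\textbf{Polynomiality.} For fixed $\lambda,\mu$, interlacing forces $r=\lambda_1-\mu_1$ in each row. So each $\langle\nu'|\B(x\mid\calA,\calB)|\rho'\rangle$ is a single product of $W^b$ weights times $(x/b_0)^r(a_1x^{-1};q)_r/(q;q)_r$. In this product $x$ enters only through the parameter $b_1=x$ of the left horizontal edge, and this parameter propagates along the row. I would expand the weights $\Phi(l,j-l,a_2/x,x/b_j)$ and check that the prefactor $x^r(a_1/x;q)_r$ cancels all negative powers of $x$. If that telescoping is messy, a fallback is to show the single-row function has no poles in $x$ and grows at most polynomially.

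\textbf{Symmetry.} It suffices to treat adjacent transpositions $x_k\leftrightarrow x_{k+1}$. Here I would use the Yang--Baxter-type relation of Theorem \ref{WRW-YB}, a $\tW^b\tR\tW^a$-type ``train'' argument converted to stochastic weights via Proposition \ref{rescale}. The goal is an exchange relation
$$
\B(x\mid\calA,\calB)\,\B(y\mid\tau\calA,\calB)=\B(y\mid\calA,\calB)\,\B(x\mid\tau\calA,\calB).
$$
To get it, I would attach an $R$-vertex crossing the two rows on the left, with edge parameters $(a_1,x)$ and $(a_2,y)$. Then I would push it column by column through the rows using Theorem \ref{WRW-YB}, which swaps the $b$-parameters $x,y$ and shifts the $a$-labels consistently. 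Far to the right the rows are empty, so by the stochasticity relation the $R$-vertex acts trivially on $0\otimes0$.

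\textbf{Main obstacle.} The hardest step is the left boundary. The weighted sum over $r$ with coefficients $(x/b_0)^r(a_1x^{-1};q)_r/(q;q)_r$ must be shown to be an eigen-covector of the $R$-vertex, so that it absorbs the crossing. I would try to recognize this covector as a $v_0$-type (boundary) vector of $V^{\sqrt{a}}_{\sqrt{x}}$ that transforms simply under $\tR$, verifying this via the $q$-binomial theorem \eqref{eq: qbinom}. Note that the paper itself cites \cite[Theorem 5.9]{BK24} for this result, so one could instead simply invoke that reference.
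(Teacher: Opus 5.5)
Your target exchange relation $\B(x\mid\calA,\calB)\,\B(y\mid\tau\calA,\calB)=\B(y\mid\calA,\calB)\,\B(x\mid\tau\calA,\calB)$ is the right one. The polynomiality part is also fine, and easier than you expect. With $b_1=x$, each weight $W^b_{a,x,a',b}(i,j,k,l)=\Phi(l,j-l,a'/x,x/b)$ contains the factor $x^l(a'/x;q)_l=\prod_{s<l}(x-a'q^s)$, so it is already a polynomial in $x$; the prefactor $x^r(a_1/x;q)_r$ is polynomial too, and no cancellation is needed. Alternatively, read polynomiality off \eqref{eq:one-variable-F} and the branching rule.

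The gap is in the symmetry step: the crossing vertex and the local relation you propose do not fit two $\B$-rows.
\begin{itemize}
\item An $\tR$-crossing swaps \emph{both} parameters, $V^{\sqrt{a_1}}_{\sqrt{x}}\otimes V^{\sqrt{a_2}}_{\sqrt{y}}\to V^{\sqrt{a_2}}_{\sqrt{y}}\otimes V^{\sqrt{a_1}}_{\sqrt{x}}$. After it the top row carries $(a_2,y)$, and pushing through column $1$ makes its top edge $(a_2,b_1)$ instead of $(a_1,b_1)$.
\item Your boundary statement ($\tR$ sends the $(x,y)$ boundary vector to the $(y,x)$ one) does not even typecheck, because the $(y,x)$ boundary lives in $V^{\sqrt{a_1}}_{\sqrt{y}}\otimes V^{\sqrt{a_2}}_{\sqrt{x}}$.
\item Theorem \ref{WRW-YB} cannot push anything through a column of $\B(x)\B(y)$. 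It contains a $\tW^a$ vertex on a line running right-to-left, so it is the $\B$/$\B^*$ exchange behind the Cauchy identity. A column of two $\B$-rows consists of two $\tW^b$ vertices.
\end{itemize}

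What works is a $\tW^b$-crossing $V^{\sqrt{a_1}}_{\sqrt{x}}\otimes V^{\sqrt{a_2}}_{\sqrt{y}}\to V^{\sqrt{a_1}}_{\sqrt{y}}\otimes V^{\sqrt{a_2}}_{\sqrt{x}}$. This is exactly the vertex that swaps only the $b$-parameters and shifts the $a$-labels as you describe. It is pushed through each column by the braid relation
\[
(\tW^b\otimes 1)(1\otimes\tW^b)(\tW^b\otimes 1)=(1\otimes\tW^b)(\tW^b\otimes 1)(1\otimes\tW^b)
\]
on $V^{\sqrt{a_1}}_{\sqrt{b_1}}\otimes V^{\sqrt{a_2}}_{\sqrt{b_2}}\otimes V^{\sqrt{a_3}}_{\sqrt{b_3}}$. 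This relation is not stated in the paper, but it follows exactly as Theorem \ref{YB}: both sides are isomorphisms onto $V^{\sqrt{a_1}}_{\sqrt{b_3}}\otimes V^{\sqrt{a_2}}_{\sqrt{b_2}}\otimes V^{\sqrt{a_3}}_{\sqrt{b_1}}$ fixing $v_0^{\otimes 3}$. It holds for generic parameters, hence identically, and transfers to $W^b$ by Proposition \ref{rescale}.

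At the right end the crossing has both outputs equal to $0$. Conservation, not stochasticity, then forces its inputs to be $0$ with weight $1$.

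The left boundary is a vector, since it is summed over inputs. The identity it must satisfy is
\[
\sum_{r_1,r_2} c_{a_1,x}(r_1)\,c_{a_2,y}(r_2)\,W^b_{a_1,x,a_2,y}(r_1,r_2,k,l)=c_{a_1,y}(k)\,c_{a_2,x}(l),\qquad c_{a,x}(r)=(x/b_0)^r\frac{(a/x;q)_r}{(q;q)_r}.
\]
Write $r_2=l+m$ and cancel the common factor $b_0^{-k}c_{a_2,x}(l)$. This reduces to
\[
\sum_{m=0}^{k}x^{k-m}y^m\frac{(a_1/x;q)_{k-m}(x/y;q)_m}{(q;q)_{k-m}(q;q)_m}=y^k\frac{(a_1/y;q)_k}{(q;q)_k},
\]
which is $\frac{(a_1T;q)_\infty}{(xT;q)_\infty}\cdot\frac{(xT;q)_\infty}{(yT;q)_\infty}=\frac{(a_1T;q)_\infty}{(yT;q)_\infty}$ by \eqref{eq: qbinom}.

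So your instinct that the $q$-binomial theorem handles the boundary is right, but it applies to $W^b$, not $\tR$. For comparison, the paper gives no argument for this statement; it simply cites \cite[Theorem 5.9]{BK24}.
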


    \begin{proposition}[{\cite[Proposition 5.5]{BK24}}]  For every pair of partitions $\lambda,\nu$ we have	the following branching rule:
    \begin{equation}\label{eq:branching-rule}
	\F_{\lambda/\nu}(x_1,x_2,\ldots,x_n\mid\calA,\calB)
	=
	\sum_{\mu\prec\lambda}
	\F_{\lambda/\mu}(x_1\mid\calA,\calB)\,
	\F_{\mu/\nu}(x_2,\ldots,x_n\mid\tau\calA,\calB),
	\end{equation}
    where the sum is over partitions $\mu$ interlacing $\lambda$.
    \end{proposition}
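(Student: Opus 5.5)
The plan is to read the branching rule \eqref{eq:branching-rule} directly off the row-operator description of $\F_{\lambda/\nu}$:
$$
\F_{\lambda/\nu}(x_1,\dots,x_n\mid\calA,\calB)=\langle\lambda'|\ \B(x_1\mid\calA,\calB)\B(x_2\mid\tau\calA,\calB)\cdots\B(x_n\mid\tau^{n-1}\calA,\calB)\ |\nu'\rangle .
$$
Between the first factor and the rest I insert the resolution of the identity $\sum_\mu |\mu'\rangle\langle\mu'|$ on $V$. The first factor then becomes $\langle\lambda'|\B(x_1\mid\calA,\calB)|\mu'\rangle=\F_{\lambda/\mu}(x_1\mid\calA,\calB)$. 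The remaining product $\B(x_2\mid\tau\calA,\calB)\cdots\B(x_n\mid\tau^{n-1}\calA,\calB)$ is exactly the $(n-1)$-variable product with parameter sequence $\tau\calA$, so its matrix element is $\F_{\mu/\nu}(x_2,\dots,x_n\mid\tau\calA,\calB)$. The restriction to $\mu\prec\lambda$ then comes for free, since $\langle\lambda'|\T_i|\mu'\rangle=0$ unless $\mu\prec\lambda$.

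The substantive step is justifying the operator description itself, i.e.\ checking that gluing rows of the lattice for $ZW^{r_1,\dots,r_n}_{\lambda/\mu}$ matches the $\tau$-shifted row operators. I would verify this in four points.

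\begin{enumerate}
\item In the lattice, the $i$-th row's left edge carries parameters $(a_i,x_i)$. In $\B(x_i\mid\tau^{i-1}\calA,\calB)$ the left edge carries $(\tau^{i-1}\calA)_1=a_i$, and the prefactor is $(a_ix_i^{-1};q)_r$, matching \eqref{eq:def-F}.
\item In the single-row operator $\T$, the top of column $j$ carries $(a_j,b_j)$ and the bottom carries $(a_{j+1},b_j)$. Shifting by $\tau^{i-1}$ gives top $(a_{i-1+j},b_j)$ and bottom $(a_{i+j},b_j)$. Consecutive rows therefore glue consistently: row $i$'s bottom equals row $i+1$'s top, and the last row's bottom is $(a_{n+j},b_j)$, as in the lattice.
\item The internal vertical edges between rows range over all nonnegative labels. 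The sum over them is reorganised as a sum over intermediate partitions $\mu$ via $\mu_j-\mu_{j+1}$. This is a bijection between sequences of nonnegative integers with finite support and partitions; $\mu_1$ is recovered because the label is zero in far columns.
\item The parameter $b_0$ enters each row only through the prefactor $(x/b_0)^r$, which is unaffected by $\tau$ acting on $\calA$.
\end{enumerate}

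Points (1)–(4) show that $\F_{\lambda/\nu}$ equals the stated product of row operators and that splitting off the top row yields the branching rule.

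The only obstacle I expect is convergence and well-definedness. The sum over $r\ge0$ in $\B$ is infinite, but for fixed boundary partitions only $r=\lambda_1-\mu_1$ contributes. The sum over $\mu$ is finite, since $\mu\prec\lambda$ bounds $\mu_1\le\lambda_1$ and bounds the length of $\mu$. Hence every step is a finite algebraic identity of rational functions in the parameters, and no analytic justification is needed.
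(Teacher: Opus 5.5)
Your argument is correct and is essentially the intended one. The paper does not reprove this statement (it cites \cite{BK24}), but it records exactly the row-operator factorization $\F_{\lambda/\nu}=\langle\lambda'|\B(x_1\mid\calA,\calB)\cdots\B(x_n\mid\tau^{n-1}\calA,\calB)|\nu'\rangle$ that you check in steps (1)--(4), and the branching rule follows by inserting $\sum_\mu|\mu'\rangle\langle\mu'|$ after the first factor. In step (3), state explicitly that interlacing forces every intermediate partition to have length at most $l(\lambda)<N$; this is what makes the $N$-column rows with zero right boundary match the row operators exactly.
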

    
    Plugging in the values of the vertex weights, one can compute the one-variable skew function:
	\begin{equation}\label{eq:one-variable-F}
	\F_{\lambda/\mu}(x\mid \calA,\calB)=
	\begin{cases}
	\begin{aligned}
	&x^{|\lambda|-|\mu|}
	\prod_{r\geq 1} b_{r-1}^{\mu_{r}-\lambda_{r}}
	\\
	&\quad{}\times
	\dfrac{(a_r/x;q)_{\lambda_r-\mu_r}(x/b_r;q)_{\mu_r-\lambda_{r+1}}(q;q)_{\mu_r-\mu_{r+1}}}
	{(q;q)_{\lambda_r-\mu_r}(q;q)_{\mu_r-\lambda_{r+1}}(a_{r+1}/b_r;q)_{\mu_r-\mu_{r+1}}},
	\end{aligned}
	& \text{if }\mu\prec\lambda,\\[1.2em]
	0, & \text{otherwise.}
	\end{cases}
	\end{equation}
	Together with the branching rule, this determines the skew polynomials $\F_{\lambda/\mu}$ recursively. In particular, $\F_{\lambda/\mu}(x_1,\ldots,x_n\mid\calA,\calB)$ vanishes unless $\mu\subset\lambda$ and $l(\lambda)\leq l(\mu)+n$. Comparing \eqref{eq:one-variable-F} with \cite[Chapter VI, (6.24)(ii)]{Mac95}, we have
    $$
    \prod_{r\geq 1} b_{r-1}^{\lambda_{r}-\mu_{r}}\F_{\lambda/\mu}(x_1, \dots, x_n\mid \calA,\calB)\bigg|_{\substack{a_0=a_1=\dots=0\\b_0=b_1=\dots=\infty}}=Q_{\lambda/\mu}(x_1, \dots, x_n;q,0),
    $$
    where the right-hand side is the $t=0$ Macdonald polynomial $Q_{\lambda/\mu}(x_1,\dots, x_n;q,0)$, also known as the \emph{$q$-Whittaker polynomial}.

    \begin{theorem}[{\cite[Theorem 5.8]{Kor24}}]\label{thm:skew-cauhy} Let $\mu,\nu$ be a pair of partitions. Then, as an identity of formal power series in the variables $x_i,y_j$ and $q$, we have
	\begin{multline}\label{eq:skew-cauchy}
	\sum_{\lambda}
	\F_{\lambda/\mu}(x_1,\ldots,x_n\mid\calA,\calB)\,
	\F^*_{\lambda/\nu}(y_1,\ldots,y_m\mid\overline{\calB},\overline{\calA})
	\\
	=
	\prod_{i=1}^{n}\prod_{j=1}^{m}
	\frac{(a_i y_j;q)_\infty(x_i/b_j;q)_\infty}
	{(x_i y_j;q)_\infty(a_i/b_j;q)_\infty}
	\sum_{\rho}
	\F^*_{\mu/\rho}(y_1,\ldots,y_m\mid\overline{\calB},\tau^n\overline{\calA})\,
	\F_{\nu/\rho}(x_1,\ldots,x_n\mid\calA,\tau^m\calB).
	\end{multline}
    \end{theorem}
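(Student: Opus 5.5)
The plan is to deduce \eqref{eq:skew-cauchy} from a single-row exchange relation between $\B(x\mid\calA,\calB)$ and $\B^*(y\mid\calA',\calB')$, proved by a Yang--Baxter ``zipper'' argument, and then iterate it over all $n\times m$ pairs of rows. Concretely, the one-row statement I would aim for is
$$
\B(x\mid\calA,\calB)\,\B^*(y^{-1}\mid\overline{\calB},\overline{\calA})=\frac{(a_1y;q)_\infty(x/b_1;q)_\infty}{(xy;q)_\infty(a_1/b_1;q)_\infty}\,\B^*(y^{-1}\mid\tau\overline{\calB},\overline{\calA})\,\B(x\mid\calA,\tau\calB),
$$
understood as an identity of matrix coefficients $\langle\lambda'|\cdots|\nu'\rangle$ (up to the exact placement of the shifts $\tau$, which I would fix by tracking edge parameters). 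For $n=m=1$ this is exactly \eqref{eq:skew-cauchy}, since the left-hand side summed against $\langle\mu'|$ and $|\nu'\rangle$ gives the sum over $\lambda$, and the right-hand side gives the sum over $\rho$.

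To prove the one-row relation, I would stack a $W^b$ row (for $\B$) on top of a $W^a$ row (for $\B^*$), drawn in the opposite direction as in the definition of $\T^*$. At the far right, where all labels are $0$, I would insert a cross vertex $\tR$. Its weight there is $1$, since $\tR(0,0,0,0)=1$ and the edges to its right are frozen at $0$.

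The vertex-by-vertex move of this cross through each column is exactly Theorem \ref{WRW-YB}, in the form $(\tW^a\otimes1)(1\otimes\tR)(\tW^b\otimes1)=(1\otimes\tW^b)(\tR\otimes1)(1\otimes\tW^a)$. Each application swaps the order of the $W^b$ and $W^a$ vertices in that column and shifts the $b$-parameters by one, which produces the $\tau$'s. Since these identities are stated for the tilded weights, I would pass to the stochastic weights $W^a,W^b,R$ via Proposition \ref{rescale}; this only introduces boundary factors, which are absorbed into the normalizations $\psi$ in Proposition \ref{renorm}.

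After the cross has passed through all columns, it sits at the left boundary. There it is contracted with the two boundary covectors: the left edge of the $\B$ row carries $\sum_r (x/b_0)^r\frac{(a_1/x;q)_r}{(q;q)_r}$, and the left edge of the $\B^*$ row carries the sum over all $r$ with weight $1$. What remains is to show that this boundary covector is an eigenvector of $\tR$, i.e.\ that
$$\sum_{i,j}c_i\,d_j\,R(i,j,k,l)=\kappa\, c'_k\, d'_l$$
for suitable boundary covectors $c,d,c',d'$, with
$$\kappa=\frac{(a_1y;q)_\infty(x/b_1;q)_\infty}{(xy;q)_\infty(a_1/b_1;q)_\infty}.$$
This boundary computation is where I expect the main obstacle to lie. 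I would attack it by writing $R=W^a\circ W^b$ (Theorem \ref{tRdef}) and contracting one $q$-Hahn vertex at a time. Each contraction is a sum of type $\sum_j \Phi(\cdot,\cdot)\,T^j$, which collapses by the $q$-binomial theorem \eqref{eq: qbinom} and the $q$-Chu--Vandermonde identity, producing ratios of infinite products. As a consistency check, the constant term in $x,y$ must be $1$ and the result must be stochastic, since the row sums of $R$ equal $1$.

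Finally, I would iterate the one-row relation: moving all $n$ operators $\B(x_i\mid\tau^{i-1}\calA,\cdot)$ past all $m$ operators $\B^*$ yields the product of the $nm$ factors together with the accumulated shifts $\tau^n\overline{\calA}$ and $\tau^m\calB$ appearing on the right of \eqref{eq:skew-cauchy}. Convergence is not an issue when working with formal power series in $x_i,y_j,q$. Indeed, by \eqref{eq:one-variable-F} each $\F_{\lambda/\mu}$ is homogeneous of degree $|\lambda|-|\mu|$ in the $x$'s up to a bounded $q$-expansion, and similarly for $\F^*$. Hence each coefficient of $x^\alpha y^\beta$ receives contributions from only finitely many $\lambda$, so all the rearrangements of the intermediate sums are legitimate.
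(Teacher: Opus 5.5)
The paper does not prove this statement; it quotes it from \cite[Theorem 5.8]{Kor24}, so your proposal has to stand on its own. Your architecture (a zipper via Theorem~\ref{WRW-YB}, then $nm$ row exchanges) is the natural one, but fix the orientation first. The sum over $\lambda$ is $\langle\nu'|\B^*(y^{-1}\mid\calA,\calB)\,\B(x\mid\calA,\calB)|\mu'\rangle$, so the $\B$ row lies \emph{below} the $\B^*$ row. This is exactly the left side of Theorem~\ref{WRW-YB}: $\tW^b$ under $\tW^a$, cross on the right. The relation to prove is therefore
\[
\B^*(y^{-1}\mid\calA,\calB)\,\B(x\mid\calA,\calB)=\kappa\,\B(x\mid\calA,\tau\calB)\,\B^*(y^{-1}\mid\tau\calA,\calB).
\]
Your version puts $\kappa$ on the wrong side and gives $\B^*$ the wrong arguments. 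With the corrected relation, the iteration does produce $\prod_{i,j}\kappa(x_i,y_j;a_i,b_j)$ and the shifts $\tau^m\calB$, $\tau^n\overline\calA$. Also, no $\psi$-factors arise when you pass to stochastic weights. The $\Theta$-conjugations depend only on the ordered input and output tensor products, so Theorem~\ref{WRW-YB} holds verbatim for $W^a,W^b,R$.

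The genuine gap is the boundary step, which you both misformulate and leave unproved. Because the $\B^*$ line runs right to left, the cross $R_{a_1,x,y^{-1},b_1}$ at the left end has one external \emph{input} and one external \emph{output}:
\begin{itemize}
\item the input is the left edge of the $\B$ row, with weight $(x/b_0)^r(a_1/x;q)_r/(q;q)_r$;
\item the output is the left edge of the $\B^*$ row, with weight $1$ times the prefactor $(b_0y)^{\lambda_1}$ of $\T^*$, where $\lambda_1=\mu_1+r$.
\end{itemize}
The zipper therefore reduces the one-row relation to the mixed contraction
\[
\sum_{r,s\geq0}(xy)^r\frac{(a_1/x;q)_r}{(q;q)_r}\,R_{a_1,x,y^{-1},b_1}(r,j,s,l)=\kappa\,(x/b_1)^l\frac{(a_1/x;q)_l}{(q;q)_l}\qquad\text{for all } j,l\geq0,
\]
whose right side does not depend on $j$.

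Your ``eigenvector'' identity $\sum_{i,j}c_id_jR(i,j,k,l)=\kappa c'_kd'_l$ instead contracts both \emph{inputs}. For fixed $(k,l)$ that is a finite sum of rational functions, so it cannot equal $\kappa$ times the rational boundary weights of the shifted rows. The correct identity also does not yield to contracting one $q$-Hahn vertex at a time. The leg $r$ enters the first vertex of $R=W^b\circ W^a$ and the leg $s$ leaves the second. The naive split then formally factors off $\sum_p(xy)^p\frac{(1/(xy);q)_p}{(q;q)_p}=\frac{(1;q)_\infty}{(xy;q)_\infty}=0$ against a divergent sum. For $j=l=0$ the mixed contraction is the $q$-Gauss sum ${}_2\phi_1(a_1/x,1/(yb_1);a_1/b_1;q,xy)=\kappa$, which shows where $\kappa$ comes from. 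The general $(j,l)$ case is the actual content of the theorem, and it is missing.

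There is also a bookkeeping issue with formal series. The intermediate crosses are not power series in $x,y$; for example $R_{a_t,x,y^{-1},b_t}(1,0,1,0)=\frac{1-(yb_t)^{-1}}{1-a_t/b_t}$. So run the zipper at fixed boundary labels, where all sums are finite, and sum against the boundary weights only at the end. Finally, $\F_{\lambda/\mu}$ is not homogeneous in $x$. What does hold is that its total $(x,q)$-degree is at least $|\lambda|-|\mu|$ minus a bounded amount.
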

	
	In the non-skew case this gives
	\begin{equation}\label{eq:cauchy}
	\sum_{\lambda}
	\F_{\lambda}(x_1,\ldots,x_n\mid\calA,\calB)\,
	\F^*_{\lambda}(y_1,\ldots,y_m\mid\overline{\calB},\overline{\calA})
	=
	\prod_{i=1}^{n}\prod_{j=1}^{m}
	\frac{(a_i y_j;q)_\infty(x_i/b_j;q)_\infty}
	{(x_i y_j;q)_\infty(a_i/b_j;q)_\infty}.
	\end{equation}

\section{Littlewood identity for spin $q$-Whittaker functions}\label{sec:proof}

In this section we use the reflection equation from Theorem~\ref{reflection-theo} together with the Cauchy identity from Theorem \ref{thm:skew-cauhy} to find Littlewood identities for the inhomogeneous spin $q$-Whittaker polynomials. These identities can be interpreted as the dual versions of the identities from Section \ref{sec: sHLLit}. However, we do not know a way to rigorously justify this duality, so we give an independent proof.

\subsection{Statement and notable examples} We are going to use the Al-Salam-Chihara polynomials $h_n(u,v\mid a)$ and $h_\lambda(u,v\mid \calA)$ from Section \ref{sec: sHLLit}. Note that
$$
h_{\lambda'}(u,v\mid \calA)=\prod_{\substack{t\geq 1\\t-odd}}v^{\lambda_t-\lambda_{t+1}}h_{\lambda_t-\lambda_{t+1}}(u,1/v\mid a_{t})\prod_{\substack{t\geq 1\\t-even}}h_{\lambda_t-\lambda_{t+1}}(u,v\mid a_{t}).
$$
We also use the notation $\overline{\calA}^{\lambda'}=\prod_{r\geq 1}a_{r-1}^{-\lambda_{r}}$.

	\begin{theorem}[Skew Littlewood identity]\label{thm:littlewood}
		Let $\calA=(a_0,a_1,a_2,\dots)$ and $u,v$ be generic parameters. Then, for every partition $\mu$, we have 
		\begin{multline}\label{eq: littlewoodsqW}
			\sum_{\lambda} h_{\lambda'}(u,v\mid\calA)\overline\calA^{\lambda'}\,
			\F_{\lambda/\mu}(x_1, \dots, x_{n}\mid\calA,\overline{\calA})
			\\
            =\Psi_n(x_1,\dots, x_n; u,v\mid\calA)\sum_{\lambda} h_{\lambda'}(u,v\mid\tau^n\calA)(\tau^n\overline\calA)^{\lambda'}\,
			\F^*_{\mu/\lambda}(x_1, \dots, x_{n}\mid\calA,\tau^n\overline{\calA}),
		\end{multline}
		where both sides are formal power series in $x_1, \dots, x_n, q$ and
        $$
		\Psi_n(x_1,\dots x_n; u,v\mid \calA):=\prod_{i=1}^n\frac{(a_iu, a_iv;q)_\infty}{(a_i^2, x_iu, x_iv;q)_\infty}\cdot\frac{\prod_{i=1}^n\prod_{j=1}^n(x_ia_j;q)_\infty}{\prod_{1\leq i<j\leq n}(x_ix_j, a_ia_j;q)_\infty}.
		$$
        The identity also holds numerically when $q,u,v, x_1, \dots, x_n$ and $\calA=(a_0,a_1,a_2,\dots)$ are complex numbers satisfying
        \begin{equation}\label{eq: littlewoodsqWcond}
        \begin{gathered}
        |q|<1,\qquad |x_1|, \dots, |x_n|< \max(1, |u|, |v|)^{-1},\\
        a_ia_jq^m\neq 1\quad \text{for\ all\ } i,j\in\mathbb Z_{>0}, m\in\mathbb Z_{\geq0}.
        \end{gathered}
        \end{equation}
		\end{theorem}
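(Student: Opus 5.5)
We follow the scheme of Section \ref{sec: sHLLit}: first establish a one-row identity by a zipper argument with the W-boundary relation, then induct on $n$ using the branching rule and the skew Cauchy identity.

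\textbf{Step 1: the boundary covector.} Let $\langle\Omega_{u,v}(\calA)|$ be the dual vector with coefficient $h_{\lambda'}(u,v\mid\calA)\overline\calA^{\lambda'}$ on $\langle\lambda'|$. By \eqref{eq: omegatoh}, with $\xi=\sqrt{u}/\sqrt{v}$ and $\zeta=\sqrt{u}\sqrt{v}$, this is (up to the factor $\zeta^{\lambda_1}$ and gauge factors) the tensor product over columns $t$ of the covectors $\omega_{a_t;\cdot,\cdot}$. The subscripts $\xi$ and $\zeta$ alternate with the parity of $t$, and column $t$ carries the parameters $(a_t,a_t^{-1})$, which is exactly the form required by Theorem \ref{reflection-theo}.

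\textbf{Step 2: one-row reflection.} Consider a row of $\tW^b$ vertices with fused horizontal line $(x$-dependent$)$, capped at the right end by a reflection vertex $\tK$ joining it to a returning line. Repeatedly applying Theorem \ref{reflection-theo} moves the $\tK$ vertex leftward. Each step converts $\tW^b$ into $\tW^a$, swaps $\tK_{\zeta,\xi}\leftrightarrow\tK_{\xi,\zeta}$, and shifts the parameter of the $\omega$-boundary, producing the $\tau$-shift of $\calA$. The result should be
\[
\langle\Omega_{u,v}(\calA)|\,\B(x\mid\calA,\overline\calA)\,|\mu'\rangle=\Psi_1(x;u,v\mid\calA)\,\langle\Omega_{u,v}(\tau\calA)|\,\B^*(x^{-1}\mid\ldots)\,|\mu'\rangle .
\]
The row has the horizontal parameters $(a_1,x)$ and the incoming weights $(x/b_0)^r(a_1x^{-1};q)_r/(q;q)_r$ from the definition of $\B$.

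Three points need care here. First, we gauge from $\tW$ to $W$ weights via Proposition \ref{rescale}. Second, we must match the parameters of the reflection edge with those of Theorem \ref{reflection-theo}: $b=x$ and $c$ the next $a_t$. Third, the sum over $r$ on the left together with $\tK$ on the right must produce the prefactor $\frac{(a_1u,a_1v;q)_\infty(xa_1;q)_\infty}{(a_1^2,xu,xv;q)_\infty}$. This prefactor should come from evaluating $\sum_r(\ldots)\tK(r,j)$, presumably through the generating function of Proposition \ref{prop: ASCprop}(1) and the $q$-binomial theorem \eqref{eq: qbinom}. I expect this evaluation, i.e.\ collapsing the $\tK$ contraction with the $\B$-weights to a product times the $\B^*$-weights, to be the main computational obstacle.

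There is also a truncation issue. Cutting the row at column $N$ leaves a boundary term, as in Proposition \ref{prop: Tinfiniterowreflection}. As formal power series in $x$ and $q$ this term vanishes, since each extra column contributes a positive power of $x$; numerically it vanishes under \eqref{eq: littlewoodsqWcond}.

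\textbf{Step 3: induction.} Induct on $n$, exactly as in the proof of Theorem \ref{thm:sHLlittlewood}:
\begin{enumerate}
\item split off $x_{n+1}$ via the branching rule \eqref{eq:branching-rule}, taking care of the $\tau$-shifts of $\calA$;
\item apply the inductive hypothesis;
\item swap the order of summation and apply the skew Cauchy identity of Theorem \ref{thm:skew-cauhy} with $m=1$, which contributes $\prod_i\frac{(a_ix_{n+1},a_{n+1}x_i;q)_\infty}{(x_ix_{n+1},a_ia_{n+1};q)_\infty}$ with $b_j=a_j^{-1}$;
\item apply the one-row case with the shifted sequence $\tau^n\calA$;
\item recombine using the branching rule for $\F^*$.
\end{enumerate}
The prefactors multiply to $\Psi_{n+1}$.

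Formally, all interchanges are legitimate because every coefficient of a monomial in $x$ and $q$ is a finite sum. For the numerical statement, absolute convergence under \eqref{eq: littlewoodsqWcond} should follow from bounds analogous to Lemma \ref{lemma: bounds}, with degree in $x$ controlling growth. Alternatively, one can argue by analytic continuation from the formal identity.
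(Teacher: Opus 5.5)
Your outline follows the paper's architecture: the alternating $\omega$-covector, the zipper through a row of $\tW^b$ vertices via Theorem~\ref{reflection-theo} (this is Lemma~\ref{lem: row flip}), then branching plus skew Cauchy. The gap is the step you call the ``main computational obstacle''. It is the one genuinely new ingredient of the $q$-Whittaker case, and you leave it unproved. After the zipper, the left end of the row produces $\sum_p (xa_0)^p\frac{(a_1/x;q)_p}{(q;q)_p}(\cdots)\,\tK_{a_1,x;\zeta,\xi}(p,l)$. To obtain $\Psi_1$ you need the closed form of Proposition~\ref{prop: Ksum}:
\[
\sum_{p\geq 0}x^{p}\zeta^{p+l}\frac{(c/x;q)_p}{(q;q)_p}(x/c)^{l/2}(c/x;q)_l\,\tK_{c,x;\zeta,\xi}(p,l)=x^l(c/x;q)_l\frac{(cx,c\zeta/\xi,c\xi\zeta;q)_\infty}{(c^2,x\zeta/\xi,x\xi\zeta;q)_\infty}.
\]
Unlike the two-term relation behind Corollary~\ref{cor: TinfiniteBrowreflection}, this is an infinite sum over the fully fused reflection matrix. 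It does not follow directly from Proposition~\ref{prop: ASCprop}(1) and \eqref{eq: qbinom}.

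Suppose you expand $\tK$ by the first line of \eqref{tKdef}. The factor $(c/x;q)_p$ cancels the denominator of $\Phi$ and the power $x^p$ drops out. The sum then splits formally into three pieces:
\begin{itemize}
\item the prefactor $x^l(c/x;q)_l$;
\item $\sum_k(\zeta x)^k\frac{(x^{-2};q)_k}{(q;q)_k}\omega_{x^{-1};\zeta,\xi}(k)$, which by Proposition~\ref{Bomega1} equals $\frac{(\zeta^2,1;q)_\infty}{(x\xi\zeta,x\zeta/\xi;q)_\infty}=0$;
\item $\sum_m\zeta^{m+l}\frac{(cx;q)_m}{(q;q)_m}\omega_{c;\xi,\zeta}(m+l)$, every summand of which has constant term $\frac{1}{1-c^2}$, so it diverges.
\end{itemize}
So this regrouping is illegitimate. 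Using the second line of \eqref{tKdef} instead leads to sums $\sum_p(x\zeta)^p\frac{(c/x;q)_p}{(q;q)_p}\omega_{c;\zeta,\xi}(p+r)$. These are not generating functions of the type in Proposition~\ref{prop: ASCprop}(1). Even giving the $p$-sum a meaning as a formal series requires the polynomiality of Lemma~\ref{Krat}.

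The paper gets this identity from the W-boundary relation itself rather than by direct summation. Take Theorem~\ref{reflection-theo} with $i=0$ and specialize $a=b=x$; all weights involved are regular there. The $\tW^a$ weight on the right-hand side then carries a factor $(1;q)_p$, forcing $p=0$, so the right-hand side collapses to $(xc)^{l/2}\omega_{c;\xi,\zeta}(j+l)$. This gives, for every $j$, the finite identity \eqref{eq: proofKstep2}, in which $\tK_{c,x;\zeta,\xi}(\cdot,l)$ is convolved with $\omega_{x;\xi,\zeta}$ and $q$-Hahn weights. One then sends $j\to\infty$ in $\mathbb Q(c,\xi)[[x,q,\zeta]]$ using degree estimates and evaluates the two leftover series by \eqref{eq: qbinom}; this yields Proposition~\ref{prop: Ksum}. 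Without this, or some other proof of the summation, your Step~2 does not produce the factor $\frac{(a_1x,a_1u,a_1v;q)_\infty}{(a_1^2,xu,xv;q)_\infty}$. Hence neither the base case nor the induction is established.

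A smaller point: this model has no truncation term. Since $W^b(i,j,k,l)=0$ unless $j\geq l$, the horizontal line is empty beyond column $l(\mu)+1$. The reflection vertex attached at the right end therefore contributes $\tK(0,0)=1$, and the zipper is an exact finite computation.
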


        \begin{corollary}[Littlewood identity]\label{cor:littlewood}
		Let $\calA=(a_0,a_1,\dots)$ and $u,v$ be generic parameters. Then
		$$
			\sum_{\lambda} h_{\lambda'}(u,v\mid\calA)\overline\calA^{\lambda'}\,
			\F_{\lambda}(x_1, \dots, x_{n}\mid\calA,\overline{\calA})
			\\
            =\Psi_n(x_1,\dots x_n; u,v\mid \calA).
		$$
        where both sides are formal power series in $x_1, \dots, x_n, q$. The identity also holds numerically for complex numbers satisfying \eqref{eq: littlewoodsqWcond}.
	\end{corollary}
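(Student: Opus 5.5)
The statement is the corollary, which is the case $\mu=\varnothing$ of Theorem \ref{thm:littlewood}. Since the paper has not yet proved that theorem, the plan is to outline its proof and then specialize.

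\textbf{Specialization.} Put $\mu=\varnothing$ in \eqref{eq: littlewoodsqW}. By \eqref{eq:one-variable-F} and the branching rule, $\F^*_{\varnothing/\lambda}$ vanishes unless $\lambda=\varnothing$. Also $\F^*_{\varnothing/\varnothing}=1$, $h_{\varnothing}=1$ and $(\tau^n\overline\calA)^{\varnothing}=1$. So the right-hand side collapses to $\Psi_n(x_1,\dots,x_n;u,v\mid\calA)$, which is exactly the claim. The same specialization works for the numerical version under \eqref{eq: littlewoodsqWcond}.

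\textbf{Base case $n=1$ of the theorem.} I would mirror Section \ref{sec: sHLLit}, using the W-boundary relation of Theorem \ref{reflection-theo} in place of Proposition \ref{prop: hsref}.
\begin{itemize}
\item Form the covector $\sum_\lambda h_{\lambda'}(u,v\mid\calA)\overline\calA^{\lambda'}\langle\lambda'|$. Via \eqref{eq: omegatoh}, it factors column by column into $\omega_{a_t;\xi,\zeta}$ and $\omega_{a_t;\zeta,\xi}$ with alternating $\xi,\zeta$, where $\xi=\sqrt{u/v}$ and $\zeta=\sqrt{uv}$.
\item Pair this covector with a row of $\tW^b$ vertices carrying the horizontal line $(a_1,x)$, terminated on the right by a reflection weight $\tK$.
\item Zip $\tK$ leftwards with Theorem \ref{reflection-theo}. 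This turns $\tW^b$ vertices into $\tW^a$ vertices, i.e.\ the row operator $\B$ into $\B^*$, while shifting the column parameters $a_t\to a_{t+1}$. That shift is the source of $\tau\calA$ in the statement.
\item Convert to stochastic weights with Proposition \ref{rescale}.
\item At the left end, sum the boundary weights $\tK(i,j)$ against $(x/b_0)^r(a_1/x;q)_r/(q;q)_r$. By the $q$-binomial theorem \eqref{eq: qbinom} and the generating function in Proposition \ref{prop: ASCprop}(1), this should produce the factor $\Psi_1=\frac{(a_1u,a_1v;q)_\infty(x a_1;q)_\infty}{(a_1^2,xu,xv;q)_\infty}$.
\end{itemize}

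\textbf{Truncation and formal convergence.} Cut the row after $N$ columns with $N$ even, as in Proposition \ref{prop: Tinfiniterowreflection}. The boundary term at the cut carries extra weight at column $N$. As a formal power series in $x$ and $q$, that term should have $x$-degree growing with $N$, so it vanishes in the limit.

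\textbf{Induction on $n$.} Use the branching rule \eqref{eq:branching-rule}, apply the inductive hypothesis, then the skew Cauchy identity (Theorem \ref{thm:skew-cauhy}) with one $x$ against $n$ variables, and then the one-variable case with shifted $\calA$.
\begin{itemize}
\item The Cauchy factor $\prod_{i<n+1}\frac{(a_ix_{n+1},a_{n+1}x_i;q)_\infty}{(x_ix_{n+1},a_ia_{n+1};q)_\infty}$ supplies the cross terms of $\Psi_{n+1}$.
\item Shifts of $\calA$ must match between the two sides throughout.
\item Reordering sums is harmless formally. Numerically it needs absolute convergence under \eqref{eq: littlewoodsqWcond}, via bounds like Lemma \ref{lemma: bounds}.
\end{itemize}

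\textbf{Main obstacle.} The hardest part is the bookkeeping at the ends of the zipped row. One must identify the left-end sum with the infinite-product prefactor, and verify that the parameter and $\tau$ shifts and the gauge factors $\psi$ from Proposition \ref{renorm} line up exactly.
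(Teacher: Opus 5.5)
Your architecture matches the paper's: the $\mu=\varnothing$ collapse, the zipper with Theorem~\ref{reflection-theo} (which is where $\tau\calA$ comes from), and the induction via branching, skew Cauchy and the one-variable case. The gap is the step that carries the real content of the base case.

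After zipping, $\T_i$ becomes $\sum_l(\cdots)\,\tK_{a_1,x;\zeta,\xi}(i,l)\,\T^*_l$. To reassemble $\B^*=\sum_l\T^*_l$ with the prefactor $\Psi_1$, you need, for every $l$ and with $c=a_1$, $\xi\zeta=u$, $\zeta/\xi=v$, the closed form of Proposition~\ref{prop: Ksum}:
\[
\sum_{p\geq0}x^p\zeta^{p+l}\frac{(c/x;q)_p}{(q;q)_p}\Bigl(\frac{x}{c}\Bigr)^{l/2}(c/x;q)_l\,\tK_{c,x;\zeta,\xi}(p,l)=x^l(c/x;q)_l\,\frac{(cx,\,c\zeta/\xi,\,c\xi\zeta;q)_\infty}{(c^2,\,x\zeta/\xi,\,x\xi\zeta;q)_\infty}.
\]
You only say this ``should'' follow from the $q$-binomial theorem and Proposition~\ref{prop: ASCprop}(1), and that is not an argument. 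Three things stand in the way:
\begin{itemize}
\item $\tK$ is a fourfold sum containing Pochhammer symbols in $1/x$;
\item the sum over $p$ is infinite;
\item even the fact that the left side defines a formal power series in $x,q$ requires Lemma~\ref{Krat}.
\end{itemize}

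The missing idea is to extract this identity from the boundary relation itself. Take $i=0$ in Theorem~\ref{reflection-theo} and specialize $a=b=x$, treating both sides as rational functions since $a/b=1$ is excluded by the hypotheses.
\begin{itemize}
\item On the right, the weight $\tW^a_{c,x^{-1},x^{-1},c^{-1}}$ contains the factor $(1;q)_p$, which forces $p=0$. Since $\tK(0,0)=1$, the right side collapses to $(xc)^{l/2}\omega_{c;\xi,\zeta}(j+l)$.
\item The left side becomes a finite sum of $\omega_{x;\xi,\zeta}(j-p)\,\tK_{c,x;\zeta,\xi}(p,l)$ against explicit $q$-Hahn factors.
\item Let $j\to\infty$ in $\mathbb Q(c,\xi)[[x,q,\zeta]]$ along the degree filtration. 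Only then apply the $q$-binomial theorem to the two remaining sums over $i$; this gives the closed form.
\end{itemize}

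Two smaller corrections.

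First, your truncation step addresses a non-issue. Since $W^b(i,j,k,l)=0$ unless $j\geq l$, every horizontal edge to the right of column $l(\mu)+1$ is empty whatever the top boundary is. So the $\tK$ attached at the right end sees $(0,0)$ and contributes exactly $1$. Unlike the spin Hall--Littlewood case, no $T^{-,\leq N}$-type term and no limit $N\to\infty$ arise.

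Second, for the numerical statement, redoing the induction with absolute-convergence bounds would need numerical versions of the skew Cauchy identity and of Proposition~\ref{prop: Ksum}. These are only available formally; Theorem~\ref{thm:skew-cauhy} is stated as a formal identity. The paper instead proceeds as follows:
\begin{itemize}
\item prove the formal identity first;
\item use Lemmas~\ref{lem: bound1} and~\ref{lem: bound2} to show the left-hand side is analytic on a connected domain containing the origin;
\item note the right-hand side ($\Psi_n$ times a finite sum) is analytic there too;
\item conclude by uniqueness of Taylor expansions.
\end{itemize}
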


We can use the same specializations of $u,v$ as in Section \ref{sec: sHLLit} to get several simplifications of the identity. When  $u=0$ we have
\begin{equation}
		\sum_{\lambda} h_{\lambda'}(0,v\mid\calA) \overline{\calA}^{\lambda'}\,
			\F_{\lambda}(x_1, \dots, x_{n}\mid\calA,\overline{\calA})=\prod_{i=1}^n\frac{(a_iv;q)_\infty}{(a_i^2, x_iv;q)_\infty}\frac{\prod_{i=1}^n\prod_{j=1}^n(x_ia_j;q)_\infty}{\prod_{1\leq i<j\leq n}(x_ix_j, a_ia_j;q)_\infty},
\end{equation}
where
$$
h_{\lambda'}(0,v\mid\calA)=\prod_{\substack{t\geq 2\\t \text{\ even}}}\frac{(va_t;q)_{\lambda_t-\lambda_{t+1}}}{(a_t^2;q)_{\lambda_t-\lambda_{t+1}}}\prod_{\substack{t\geq 1\\t\text{\ odd}}}v^{\lambda_t-\lambda_{t+1}}\frac{(a_t/v;q)_{\lambda_t-\lambda_{t+1}}}{(a_t^2;q)_{\lambda_t-\lambda_{t+1}}}.
$$
Setting $v=1$ and $u=-1$ results in
\begin{multline*}
\sum_{\lambda:\,\lambda\text{ even}}
\prod_{t\ge1}\prod_{r=1}^{\frac{\lambda_t-\lambda_{t+1}}2}
\frac{1-q^{2r-1}}{1-a_t^2q^{2r-1}}\,
\overline{\mathcal A}^{\,\lambda'}\,
\F_{\lambda}
(x_1,\ldots,x_n\mid\mathcal A,\overline{\mathcal A})\\
=
\prod_{i=1}^n\frac{(a_i^2;q^2)_\infty}{(x_i^2;q^2)_\infty}\cdot\frac{\prod_{i,j}(x_ia_j;q)_\infty}{\prod_{i<j}(x_ix_j;q)_\infty \prod_{i\leq j} (a_ia_j;q)_\infty},
\end{multline*}
where in all products $i,j\in[1,n]$.

In the homogeneous case $a_0=a_1=a_2=a_3=\dots=a$ we can set $v=a$ to get
\begin{equation}
		\sum_{\lambda} u^{l((\lambda')_{odd})} a^{-|\lambda|}\,
			\F_{\lambda}(x_1, \dots, x_{n}\mid a,a^{-1})=\frac{(au;q)^n_\infty}{\prod_{i=1}^n(x_iu;q)_\infty}\frac{\prod_{i=1}^n(x_ia;q)^{n-1}_\infty}{\prod_{1\leq i<j\leq n}(x_ix_j,a^2;q)_\infty},
\end{equation}
\begin{equation}
		\sum_{\lambda:\lambda'\text{\ even}}a^{-|\lambda|}\,
			\F_{\lambda}(x_1, \dots, x_{n}\mid a,a^{-1})=\frac{\prod_{i=1}^n(x_ia;q)^{n-1}_\infty}{\prod_{1\leq i<j\leq n}(x_ix_j,a^2;q)_\infty}.
\end{equation}

Finally, when $a_0=a_1=a_2=\dots=a\to0$, the functions $\overline\calA^{\lambda'}\,
			\F_{\lambda}(x_1, \dots, x_{n}\mid\calA,\overline{\calA})$ degenerate to $Q_\lambda(x_1,\dots, x_n; q,0)$. So, we get
		$$
			\sum_{\lambda} h_{\lambda'}(u,v\mid 0) Q_{\lambda}(x_1, \dots, x_{n}; q,0)=\prod_{i=1}^n\frac{1}{(x_iu, x_iv;q)_\infty}\prod_{1\leq i<j\leq n}\frac{1}{(x_ix_j;q)_\infty},
		$$
        which is equivalent to the identities from \cite{War06} after applying $\omega_{q,0}$ \cite[Chapter VI]{Mac95}.

\subsection{Summation of the reflection weights}

To prove Theorem \ref{thm:littlewood} we need several properties of the reflection weights $\tK_{\xi,\zeta}$.

\begin{lemma}\label{Krat} For every $i,j\in\mathbb Z_{\geq0}$ the function
$$
(x/c)^{j/2}\zeta^{i+j}(c/x;q)_j(c^2;q)_{i+j}\tK_{c,x;\zeta,\xi}(i,j)
$$
is a polynomial in $\mathbb Z[x,c,q,\zeta,\xi^{\pm1}]$.
\end{lemma}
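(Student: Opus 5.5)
The plan is to start from the second closed form of the reflection weights derived in the proof of Proposition \ref{prop: hsref} (the expansion of \eqref{tKdef} through $\tW^b$), specialized to $a=c$, $b=x$ and with $\xi,\zeta$ interchanged. That formula reads
\[
\tK_{c,x;\zeta,\xi}(i,j)=\sum_{l=0}^{j}\omega_{c;\zeta,\xi}(i+j-l)\,\omega_{x^{-1};\xi,\zeta}(l)\,\frac{x^{l}}{(x/c)^{j/2}}\frac{(x^{-2};q)_l(cx;q)_{j-l}(q;q)_j}{(c/x;q)_j(q;q)_l(q;q)_{j-l}}.
\]
After multiplying by $(x/c)^{j/2}(c/x;q)_j$, the half-integer power and the denominator $(c/x;q)_j$ cancel. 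What remains is a finite sum over $l$. Each term is the $q$-binomial coefficient $\frac{(q;q)_j}{(q;q)_l(q;q)_{j-l}}$, which lies in $\mathbb Z[q]$, times $(cx;q)_{j-l}$, times the product $\omega_{c;\zeta,\xi}(i+j-l)\cdot x^l(x^{-2};q)_l\,\omega_{x^{-1};\xi,\zeta}(l)$. I will split the remaining factors $\zeta^{i+j}(c^2;q)_{i+j}$ as $\zeta^{i+j-l}(c^2;q)_{i+j}$ acting on the first $\omega$ and $\zeta^l$ acting on the second.

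For the first factor I use the explicit formula from Proposition \ref{Bomega1}:
\[
\omega_{c;\zeta,\xi}(n)=\sum_{p+r=n}\xi^{p-r}\frac{(c\zeta/\xi;q)_p(c\xi/\zeta;q)_r}{(c^2;q)_n}\frac{(q;q)_n}{(q;q)_p(q;q)_r},\qquad n=i+j-l.
\]
Since $n\le i+j$, the product $(c^2;q)_{i+j}$ is divisible by $(c^2;q)_n$, so the denominator clears. The factor $\zeta^{n}=\zeta^{p}\zeta^{r}$ absorbs the negative powers of $\zeta$, because $\zeta^r(c\xi/\zeta;q)_r=\prod_{k<r}(\zeta-c\xi q^k)$. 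This leaves a polynomial in $c,q,\zeta,\xi^{\pm1}$.

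For the second factor I use
\[
\omega_{x^{-1};\xi,\zeta}(l)=\sum_{p+r=l}\zeta^{p-r}\frac{(\xi/(x\zeta);q)_p(\zeta/(x\xi);q)_r}{(x^{-2};q)_l}\frac{(q;q)_l}{(q;q)_p(q;q)_r}.
\]
Here the prefactor $(x^{-2};q)_l$ cancels the denominator. The factor $x^l=x^px^r$ turns each of the $l$ Pochhammer factors into a linear polynomial in $x$, such as $x-\xi q^k/\zeta$ and $x-\zeta q^k/\xi$. Finally $\zeta^l\zeta^{p-r}=\zeta^{2p}$, and $\zeta^{2p}\prod_{k<p}(x-\xi q^k/\zeta)=\zeta^p\prod_{k<p}(x\zeta-\xi q^k)$, which is polynomial in $\zeta$. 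Multiplying the three polynomial pieces and summing over $l$ gives an element of $\mathbb Z[x,c,q,\zeta,\xi^{\pm1}]$, with integer coefficients because every $q$-binomial coefficient is in $\mathbb Z[q]$.

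The only delicate point is this bookkeeping of $\zeta$-powers: the total $\zeta^{i+j}$ must be distributed exactly as $\zeta^{n}$ to the first covector and $\zeta^{l}$ to the second, and one must check that no negative powers of $\zeta$ survive. The signs of the exponents in \eqref{omegaex} were chosen so that this works, as the computation above shows. Negative powers of $\xi$ are permitted by the statement, so they need no treatment. One should also confirm that the cited expansion is valid as an identity of rational functions, so that specializing $a=c$, $b=x$ is legitimate. This follows from \eqref{tKdef} and Theorem \ref{WaWb-theo}, since the sum over $l$ is finite.
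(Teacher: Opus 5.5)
Your proposal is correct and follows essentially the same route as the paper. The paper also starts from the $\tW^b$ form (second line) of \eqref{tKdef} and expands both covectors via \eqref{omegaex}. It distributes $\zeta^{i+j}$, $x^l$ and $(c^2;q)_{i+j}$ exactly as you do, arriving at a finite sum of $q$-multinomial coefficients times polynomial factors such as $\zeta^l(c\xi/\zeta;q)_l$, $(\zeta x)^p(\xi/(x\zeta);q)_p$ and $x^r(\zeta/(\xi x);q)_r$.
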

\begin{proof} Follows from the second line of \eqref{tKdef}, which implies
\begin{multline*}
(x/c)^{\frac j2}\zeta^{i+j}(c/x;q)_j(c^2;q)_{i+j}\tK_{c,x;\zeta,\xi}(i,j)\\
=\sum_{\substack{k,l,p,r\geq 0\\k+l+p+r=i+j}}\xi^{k-l}\zeta^{p+k}\frac{(c^2;q)_{i+j}}{(c^2;q)_{k+l}}\frac{(q;q)_{k+l}}{(q;q)_k(q;q)_l}\frac{(q;q)_j}{(q;q)_p(q;q)_r(q;q)_{j-p-r}}\\ \times\left(\frac{c\zeta}{\xi};q\right)_k\left[\zeta^l\left(\frac{c\xi}{\zeta};q\right)_l\right]\left[(\zeta x)^p\left(\frac{\xi}{x\zeta};q\right)_p\right]\left[x^r\left(\frac{\zeta}{\xi x};q\right)_r\right](cx;q)_{j-p-r}.
\end{multline*}
All expressions in square brackets are indeed polynomials in $x,q,\zeta$, even though they involve $x^{-1}$ and $\zeta^{-1}$. The $q$-binomial expressions are polynomials in $q$ due to the well-known expression
$$
\frac{(q;q)_{k+l}}{(q;q)_k(q;q)_l}=\sum_{1\leq i_1<i_2<\dots<i_k\leq k+l}q^{\sum_{s=1}^k(i_s-s)}.
$$
\end{proof}

\begin{proposition} \label{prop: Ksum} For every $l\in\mathbb Z_{\geq 0}$ we have
\begin{equation}\label{eq: Ksum}
        \sum_{p\geq 0}x^{p}\zeta^{p+l}\frac{(c/x;q)_p}{(q;q)_p} (x/c)^{l/2}(c/x;q)_l\tK_{c,x;\zeta,\xi}(p,l)\\
        = x^l(c/x;q)_l\frac{(cx,c\zeta/\xi,c\xi\zeta;q)_\infty}{(c^2,x\zeta/\xi,x\xi\zeta;q)_\infty}.
\end{equation}
where the identity holds in the space of formal power series in $x,q$ with rational coefficients in $\mathbb Q(c,\xi,\zeta)$. 
\end{proposition}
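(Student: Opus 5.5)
Here the sum over $p$ pairs the incoming edge of $\tK_{c,x;\zeta,\xi}$ (parameters $(c,x)$) with the covector
\[
\sum_{p\geq0}x^p\zeta^p\frac{(c/x;q)_p}{(q;q)_p}\langle v_p| .
\]
Up to the gauge factor $(x/c)^{l/2}$, this is the generating function from Proposition \ref{Bomega1} in a degenerate regime. The plan is to recognize it as a boundary covector and then evaluate the sum using the $q$-binomial theorem.

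\textbf{Step 1: expand $\tK$ using the second line of \eqref{tKdef}.} For $\tK_{c,x;\zeta,\xi}$ this line reads
\[
\tK_{c,x;\zeta,\xi}(p,l)=\sum_{k,m}\omega_{c;\zeta,\xi}(k)\,\omega_{x^{-1};\xi,\zeta}(m)\,\tW^b_{c,x,x^{-1},c^{-1}}(p,l,k,m).
\]
The $\tW^b$ weight is explicit:
\[
\tW^b(p,l,k,m)=\delta_{p+l=k+m}\frac{(c/x)^{l/2}\,x^m}{\cdots}\Phi\bigl(m,\,l-m,\,1/x^2,\,cx\bigr),
\]
and it is independent of $p$ except through conservation. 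Substitute this and interchange the order of summation. The sum over $p$ then becomes a sum over $k$ with $p=k+m-l$. The factor $(x/c)^{l/2}$ cancels the gauge.

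\textbf{Step 2: reduce to generating functions.} After Step 1, the left-hand side becomes
\[
\sum_m x^m\,\Phi(m,l-m,x^{-2},cx)\,\omega_{x^{-1};\xi,\zeta}(m)\cdot\Bigl[\sum_k x^{p}\zeta^{p+l}\frac{(c/x;q)_p}{(q;q)_p}\,\omega_{c;\zeta,\xi}(k)\Bigr],
\]
with $p=k+m-l$. To handle the inner bracket, I would first use the symmetric description (first line of \eqref{tKdef}, via $\tW^a$) for whichever ordering makes the $p$-sum a full generating function. I would try the following identity at the covector level: the pairing of $\tW^b$ with $\sum_p(\text{coefficients})\langle v_p|$ should be computable through the generating function $\sum_n\omega_{s;\xi,\zeta}(n)\frac{(s^2;q)_n}{(q;q)_n}T^n=\frac{(s\xi T,s\xi^{-1}T;q)_\infty}{(\zeta T,\zeta^{-1}T;q)_\infty}$ from Proposition \ref{Bomega1}, evaluated at a specific $T$.

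\textbf{Step 3: evaluate.} The products on the right-hand side, namely $(c\zeta/\xi,c\xi\zeta;q)_\infty/(x\zeta/\xi,x\xi\zeta;q)_\infty$, should arise from $\omega_c$'s generating function $\frac{(c\zeta T,c\zeta^{-1}T)}{(\xi T,\xi^{-1}T)}$ at $T=\xi\zeta\cdot$(something). Meanwhile $(cx;q)_\infty/(c^2;q)_\infty$ and $x^l(c/x;q)_l$ should come from the $q$-binomial theorem \eqref{eq: qbinom} applied to the $\Phi$ factors.

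\textbf{Alternative route and main obstacle.} A cleaner alternative is to avoid heavy $q$-series manipulation. Both sides are formal power series in $x$, and the right-hand side, viewed as a function of $l$, satisfies an evident first-order recursion. By Lemma \ref{Krat}, the left-hand side is rational with controlled poles. So one could prove the identity for $x=q^{-N}c$-type specializations, or at $x=c$, where $(c/x;q)_p$ truncates to $p=0$, and then extend. However, truncation at $x=c q^{N}$ kills only terms with $p>N$, so I would combine it with an interpolation or uniqueness argument in $x$. I expect the main obstacle to be exactly this resummation in Steps 2–3: organizing the fourfold sum so that the $p$-sum collapses via the $q$-binomial theorem, since the gauge factors and the $\xi\leftrightarrow\zeta$ alternation must line up precisely. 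I would first verify the $l=0$ case, which reduces to a pure generating-function identity, and then derive general $l$ through the recursion.
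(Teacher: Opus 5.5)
Your Step 1 is fine as far as it goes. With the second line of \eqref{tKdef}, the $\tW^b$ weight forces $m\le l$, the factors $(x^{-2};q)_m$ and $(c/x;q)_l$ cancel, and the left-hand side becomes a finite combination over $m\in[0,l]$ of the shifted sums
\[
S_r=\sum_{p\geq 0}(x\zeta)^p\,\frac{(c/x;q)_p}{(q;q)_p}\,\omega_{c;\zeta,\xi}(p+r),\qquad r=l-m.
\]
After that there is no argument, and the one mechanism you name does not apply. The $S_r$ are not values of the generating function in Proposition \ref{Bomega1}. That function weights $\omega_{c;\zeta,\xi}(n)$ by $(c^2;q)_n/(q;q)_n$; here the weight is $(c/x;q)_p/(q;q)_p$, and for $r>0$ the index is also shifted.

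Already for $l=0$, where the left-hand side is just $S_0$, you need a genuine summation theorem. Expanding $(c/x;q)_p/(c^2;q)_p$ by the $q$-binomial theorem and resumming gives ${}_2\phi_1(x\xi\zeta,\,x\zeta/\xi;\,cx\zeta^2;\,q,\,c/x)$, which the $q$-Gauss sum evaluates to the right answer. Moreover, the intermediate objects (e.g.\ $(c/x;q)_\infty$) are not elements of $\mathbb Q(c,\xi,\zeta)[[x,q]]$, so this must be done analytically and transported back.

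For $l>0$ you give no way to evaluate the $S_r$, or to see that the $m$-convolution collapses to $x^l(c/x;q)_l\,S_0$. The first-order recursion in $l$ is evident only on the right-hand side, and nothing in your plan produces it on the left. The $q$-Onsager eigen-relations of $\tK$ mix $(p\pm1,l)$ with $(p,l\pm1)$, and you do not say how to extract an $l$-recursion for the $p$-summed quantity. The specialization $x=cq^N$ only truncates the $p$-sum, and still leaves a nontrivial finite identity to prove for each $N$ and $l$.

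The missing idea is not to evaluate $\tK$ at all, but to feed it through the W-boundary relation. Take Theorem \ref{reflection-theo} with $i=0$ and the degenerate choice $a=b=x$. At $i=0$ nothing on either side is singular at $a=b$, so the rational identity specializes. On the right-hand side, the $\tW^a$ weight reduces to $(xc)^{l/2}\delta_{p=0}$ because of a factor $(1;q)_p$, and $\tK_{x,x;\xi,\zeta}(0,0)=1$. So for every $j$ you get the finite identity
\[
\sum_{p\geq0}x^{p}\frac{(c/x;q)_p(x^2;q)_{j-p}(q;q)_j}{(cx;q)_j(q;q)_p(q;q)_{j-p}}\,\omega_{x;\xi,\zeta}(j-p)\,\tK_{c,x;\zeta,\xi}(p,l)=(xc)^{l/2}\,\omega_{c;\xi,\zeta}(j+l).
\]
Multiply by $(x/c)^{l/2}\zeta^{j+l}(c/x;q)_l$, expand both $\omega$'s via \eqref{omegaex}, and let $j\to\infty$ in the total-degree filtration of $\mathbb Q(c,\xi)[[x,q,\zeta]]$; this is where Lemma \ref{Krat} enters.

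The left side then factors as the sum in \eqref{eq: Ksum} times $\frac{(x\zeta/\xi;q)_\infty}{(cx;q)_\infty}\sum_{i\geq0}\zeta^{2i}\frac{(x\xi/\zeta;q)_i}{(q;q)_i}$. The right side becomes $x^l(c/x;q)_l\frac{(c\zeta/\xi;q)_\infty}{(c^2;q)_\infty}\sum_{i\geq0}\zeta^{2i}\frac{(c\xi/\zeta;q)_i}{(q;q)_i}$. Two applications of the $q$-binomial theorem then give \eqref{eq: Ksum} for all $l$ at once, with no ${}_2\phi_1$ summation and no separate treatment of $l>0$.
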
 
\begin{proof} Apply Theorem \ref{reflection-theo} with $i=0$ to get
\begin{multline}\label{eq: proofKstep1}
    \sum_{p\geq0}\widetilde{W}^b_{a,b,c,a^{-1}}(0,j,j-p,p)\omega_{a;\xi,\zeta}(j-p)\widetilde{K}_{c,b;\zeta,\xi}(p,l)\\
    =\sum_{p\geq 0}\widetilde{K}_{a,b;\xi,\zeta}(0,p)\widetilde{W}^a_{c,a^{-1},b^{-1},c}(j,l,p,j+l-p)\omega_{c;\xi,\zeta}(j+l-p).
\end{multline}
Set $a=b=x$. Using the value for $\tW^b$, the left-hand side becomes
\begin{equation}
\label{eq: proofofabs}
    \sum_{p\geq0}x^{p}\frac{(c/x;q)_p(x^2;q)_{j-p}(q;q)_j}{(cx;q)_j(q;q)_p(q;q)_{j-p}}\omega_{x;\xi,\zeta}(j-p)\tK_{c,x;\zeta,\xi}(p,l).
\end{equation}
Plugging $a=b=x$ in the right-hand side requires some care. From \eqref{tKdef} we have
$$
\widetilde{K}_{a,b;\xi,\zeta}(0,p)=\sum_{s,t\geq 0}\omega_{b^{-1};\xi,\zeta}(s)\omega_{a;\zeta,\xi}(t)\tW^a_{a,b,b^{-1},a^{-1}}(0,p,s,t)=\omega_{a;\zeta,\xi}(p),
$$
where we use the relation $\tW^a_{a,b,b^{-1},a^{-1}}(0,p,s,t)=\delta_{t=p}$. At the same time,
$$
\widetilde{W}^a_{c,x^{-1},x^{-1},c}(j,l,p,j+l-p)=(xc)^{\frac{l}2}\frac{(1;q)_{p}(xc;q)_{j-p}}{(xc;q)_{j}}\frac{(q;q)_{j}}{(q;q)_{p}(q;q)_{j-p}}=(xc)^{\frac{l}2}\delta_{p=0}
$$
due to the $(1;q)_p$ factor forcing $p=0$. So, setting $a=b=x$ in the right-hand side of \eqref{eq: proofKstep1} yields
$$
(xc)^{\frac{l}2}\omega_{c;\xi,\zeta}(j+l).
$$
So, we get
\begin{equation}\label{eq: proofKstep2}
        \sum_{p\geq0}x^{p}\frac{(c/x;q)_p(x^2;q)_{j-p}(q;q)_j}{(cx;q)_j(q;q)_p(q;q)_{j-p}}\omega_{x;\xi,\zeta}(j-p)\tK_{c,x;\zeta,\xi}(p,l)=(xc)^{\frac{l}2}\omega_{c;\xi,\zeta}(j+l).
\end{equation}
From \eqref{omegaex} we have
$$
\omega_{a;\xi,\zeta}(n)=\zeta^{-n}\sum_{i\geq 0}\zeta^{2i}\frac{(a\xi/\zeta;q)_i(a\zeta/\xi;q)_{n-i}}{(a^2;q)_{n}}\frac{(q;q)_{n}}{(q;q)_i(q;q)_{n-i}}.
$$
Plugging this expression into \eqref{eq: proofKstep2} and multiplying both sides by $(x/c)^{l/2}\zeta^{j+l}(c/x;q)_l$ yields the following identity of rational functions for every $j\in\mathbb Z_{\geq 0}$: 
\begin{multline}\label{eq: proofKstep3}
        \sum_{p,i\geq0}x^{p}\zeta^{2i+p+l}\frac{(c/x;q)_p(q;q)_j(x\xi/\zeta;q)_i(x\zeta/\xi;q)_{j-p-i}}{(cx;q)_j(q;q)_p(q;q)_i(q;q)_{j-p-i}}(x/c)^{l/2}(c/x;q)_l\tK_{c,x;\zeta,\xi}(p,l)\\
        =x^l(c/x;q)_l\sum_{i\geq 0}\zeta^{2i}\frac{(c\xi/\zeta;q)_i(c\zeta/\xi;q)_{j+l-i}(q;q)_{j+l}}{(c^2;q)_{j+l}(q;q)_i(q;q)_{j+l-i}}.
\end{multline}

Up until now we have worked with finite rational expressions: all the summations above have only finitely many nonzero terms. To finish the proof we need to take the limit $j\to\infty$, and for this step we need to work with formal power series in $q,x,\zeta$ over the field of rational functions $\mathbb Q(c,\xi)$. For $f\in \mathbb Q(c,\xi)[[x,q,\zeta]]$ let $\deg f$ denote the minimal total degree with respect to $x,q$ and $\zeta$. Let 
$$
\mathcal P_N=\{f\in \mathbb Q(c,\xi)[[x,q,\zeta]]: \deg(f)\geq N\}
$$ 
denote the degree filtration of $\mathbb Q(c,\xi)[[x,q,\zeta]]$. We also set $\mathcal P=\mathcal P_0=\mathbb Q(c,\xi)[[x,q,\zeta]]$. Note that both sides of \eqref{eq: proofKstep3} can be treated as elements of $\mathcal P$. More precisely:
\begin{itemize}
\item $x^p(c/x;q)_p\in\mathcal P_{p-1}$;
\item $\zeta^{2i}(x\xi/\zeta;q)_i, \zeta^{2i}(c\xi/\zeta;q)_i\in \mathcal P_{i}$;
\item $(x/c)^{l/2}\zeta^{p+l}(c/x;q)_l\tK_{c,x;\zeta,\xi}(p,l)\in\mathcal P$ by Lemma \ref{Krat};
\item all other $q$-Pochhammer symbols appearing in \eqref{eq: proofKstep3} are clearly in $\mathcal P$. 
\end{itemize}

Now, treating both sides of \eqref{eq: proofKstep3} as elements of $\mathcal P$, we take $j\to\infty$. To do it, fix $N\in\mathbb Z_{\geq 0}$ and consider the component of total degree $\leq N$. On the left-hand side each summand has degree at least $p+i-1$, while the summands on the right-hand side have degree at least $i$. Hence, modulo $\mathcal P_{N+1}$, we have
\begin{multline}\label{eq: proofKstep4}
        \sum_{p,i:p+i\leq N+1}x^{p}\zeta^{2i+p+l}\frac{(c/x;q)_p(q;q)_j(x\xi/\zeta;q)_i(x\zeta/\xi;q)_{j-p-i}}{(cx;q)_j(q;q)_p(q;q)_i(q;q)_{j-p-i}}(x/c)^{l/2}(c/x;q)_l\tK_{c,x;\zeta,\xi}(p,l)\\
        \equiv x^l(c/x;q)_l\sum_{i= 0}^{N}\zeta^{2i}\frac{(c\xi/\zeta;q)_i(c\zeta/\xi;q)_{j+l-i}(q;q)_{j+l}}{(c^2;q)_{j+l}(q;q)_i(q;q)_{j+l-i}}\qquad (\mathrm{mod}\ \mathcal P_{N+1}).
\end{multline}
Note that on both sides $j$ enters the expression only as an index of $q$-Pochhammer symbols, and due to the restrictions on $p,i$ all these indices are at least $j-N-1$. Hence, for $j>2N+1$ these symbols $(\bullet;q)_{j-m}$ do not depend on $j$ modulo $\mathcal P_{N+1}$ and we can replace them by $(\bullet;q)_{\infty}$. So,
\begin{multline*}
        \sum_{p,i:p+i\leq N+1}x^{p}\zeta^{2i+p+l}\frac{(c/x;q)_p(x\xi/\zeta;q)_i(x\zeta/\xi;q)_{\infty}}{(cx;q)_\infty(q;q)_p(q;q)_i}(x/c)^{l/2}(c/x;q)_l\tK_{c,x;\zeta,\xi}(p,l)\\
        \equiv x^l(c/x;q)_l\sum_{i= 0}^{N}\zeta^{2i}\frac{(c\xi/\zeta;q)_i(c\zeta/\xi;q)_{\infty}}{(c^2;q)_{\infty}(q;q)_i}\qquad (\mathrm{mod}\ \mathcal P_{N+1}).
\end{multline*}
Removing the restrictions on the sums, rearranging the terms and sending $N\to\infty$ yields
\begin{multline*}
        \left(\sum_{p\geq 0}x^{p}\zeta^{p+l}\frac{(c/x;q)_p}{(q;q)_p}(x/c)^{l/2}(c/x;q)_l\tK_{c,x;\zeta,\xi}(p,l)\right)\left(\sum_{i\geq 0}\zeta^{2i}\frac{(x\xi/\zeta;q)_i}{(q;q)_i}\right)\\
        = x^l(c/x;q)_l\frac{(cx;q)_\infty(c\zeta/\xi;q)_{\infty}}{(c^2;q)_{\infty}(x\zeta/\xi;q)_{\infty}}\sum_{i\geq 0}\zeta^{2i}\frac{(c\xi/\zeta;q)_i}{(q;q)_i}.
\end{multline*}
Simplifying both sums over $i$ using the $q$-binomial theorem, we get 
\begin{multline*}
        \sum_{p\geq 0}x^{p}\zeta^{p+l}\frac{(c/x;q)_p}{(q;q)_p}(x/c)^{l/2}(c/x;q)_l\tK_{c,x;\zeta,\xi}(p,l)\frac{(x\xi\zeta;q)_\infty}{(\zeta^2;q)_\infty}\\
        = x^l(c/x;q)_l\frac{(cx;q)_\infty(c\zeta/\xi;q)_{\infty}(c\xi\zeta;q)_\infty}{(c^2;q)_{\infty}(x\zeta/\xi;q)_{\infty}(\zeta^2;q)_\infty}.
\end{multline*}
This is equivalent to \eqref{eq: Ksum}. Note that, while we have worked over $\mathbb Q(c,\xi)[[x,q,\zeta]]$, both sides of \eqref{eq: Ksum} are power series in $x,q$ with rational coefficients in $\mathbb Q(c,\xi,\zeta)$. Since every rational function in $\zeta$ can be uniquely represented by a Laurent series in $\zeta$, \eqref{eq: Ksum} holds in  $\mathbb Q(c,\xi,\zeta)[[x,q]]$ as well.
\end{proof}

\subsection{Reflection of row operators} From this point the proof is similar to the proof of Theorem \ref{thm:sHLlittlewood}.  First, we focus on the row operators from Section \ref{sec: partition}.

Let $\sqrt{u}$, $\sqrt{v}$, $\sqrt{x}$ and $\sqrt{a_0},\sqrt{a_1},\dots$ be generic parameters in $\calF$. Define the following element of $V^*$:
\begin{equation}\label{eq: dconjOmegadef}
\langle\Omega'_{u,v}(\calA)|=\sum_{\lambda}h_{\lambda'}(u,v\mid\calA)\overline{\calA}^{\lambda'} \langle\lambda'|.
\end{equation}
Setting $\xi=\sqrt{u}/\sqrt{v}$, $\zeta=\sqrt{u}\sqrt{v}$ and applying \eqref{eq: omegatoh} we get
\begin{equation}\label{eq: dconjOmegadefalt}
\langle\Omega'_{u,v}(\calA)|=\sum_{\lambda}\overline{\calA}^{\lambda'}\zeta^{\lambda_1}\prod_{\substack{t\geq1\\ t \text{\ even}}}\omega_{a_{t};\xi,\zeta}(\lambda_t-\lambda_{t+1})\prod_{\substack{t\geq1\\ t \text{\ odd}}} \omega_{a_{t};\zeta,\xi}(\lambda_t-\lambda_{t+1})\ \langle\lambda'|.
\end{equation}

\begin{lemma}\label{lem: row flip} For every partition $\mu$ and $i\in\mathbb Z_{\geq 0}$ we have
\begin{multline}\label{eq:row flip}
\langle\Omega'_{u,v}(\calA)|\ \T_i(x\mid\calA,\overline{\calA})\ |\mu'\rangle \\
= \sum_{l\geq 0} a_0^{-i}(xa_1)^{-l/2}\zeta^{i+l}\tK_{a_1,x;\zeta,\xi}(i,l)\langle\Omega_{u,v}'(\tau\calA)|\, \T^*_l(x^{-1}\mid\tau\calA,\overline{\calA})\, |\mu'\rangle.
\end{multline}
\end{lemma}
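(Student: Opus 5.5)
This is a zipper argument, the $q$-Hahn analogue of the proof of \eqref{eq: Trowreflection}, with the W-boundary relation of Theorem \ref{reflection-theo} in place of Proposition \ref{prop: hsref}.

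\textbf{Setup.} Write $\langle\Omega'_{u,v}(\calA)|$ in the form \eqref{eq: dconjOmegadefalt}. Then $\langle\Omega'_{u,v}(\calA)|\,\T_i(x\mid\calA,\overline{\calA})\,|\mu'\rangle$ is a single-row partition function of $W^b$ vertices. The left edge carries $(i;a_1,x)$. The $t$-th column has bottom edge $(\mu_t-\mu_{t+1};a_{t+1},a_t^{-1})$, and its top edge is capped by $\omega_{a_t;\xi,\zeta}$ or $\omega_{a_t;\zeta,\xi}$ according to the parity of $t$, with the prefactor $\overline{\calA}^{\lambda'}\zeta^{\lambda_1}$ absorbed by gauge factors. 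Truncate to $N$ columns (see the last paragraph). Because the row is frozen at the right end, we may attach a reflection vertex $\tK$ there at the cost of an explicit trivial factor. Then use Proposition \ref{rescale} to pass from $W^b$ to $\tW^b$.

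\textbf{Zipper.} Theorem \ref{reflection-theo}, with $(a,b,c)\to(a_t,x,a_{t+1})$, moves the $\tK$ vertex past one column. The move turns a $\tW^b$ vertex into a $\tW^a$ vertex, flips $\tK_{\zeta,\xi}\leftrightarrow\tK_{\xi,\zeta}$, and replaces $\omega_{a_t}$ by $\omega_{a_{t+1}}$. The alternation of $\xi,\zeta$ among the $\omega$'s matches the alternation of the $\tK$ subscripts, so the move can be iterated across the whole row. Two bookkeeping points need care here: the arrangement of the edge parameters has to be exactly $(a,b),(c,a^{-1}),(b^{-1},c^{-1})$ so that the theorem applies at each column, and the parity conventions have to align so that the final vertex is $\tK_{a_1,x;\zeta,\xi}$. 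A faster alternative that may avoid the per-column bookkeeping is to apply Schur's lemma directly to the product of invariant covectors, as in the proof of Theorem \ref{thm: reflection3}.

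\textbf{Reading off the right-hand side.} After all the moves we have $\tK_{a_1,x;\zeta,\xi}(i,l)$ at the left end, followed by a row of $\tW^a$ vertices whose top edges are capped by $\omega_{a_{t+1}}$, with the parities shifted. Gauging back to $W^a$ identifies this row with $(b_0/y)^{\mu_1}$-normalized $\T^*_l(x^{-1}\mid\tau\calA,\overline{\calA})$ paired against $\langle\Omega'_{u,v}(\tau\calA)|$. The one-step shift of the parities is exactly what turns $\calA$ into $\tau\calA$. Collecting the gauge factors from Proposition \ref{rescale}, from $\overline{\calA}^{\lambda'}$, from $\zeta^{\lambda_1}$, and from $(b_0/y)^{\mu_1}$ should produce $a_0^{-i}(xa_1)^{-l/2}\zeta^{i+l}$; this is a routine but error-prone computation.

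\textbf{Main obstacle: the right boundary.} The row is infinite, and $\langle\Omega'|$ pairs with infinitely many $\lambda$, since $\lambda_1-\mu_1=i$ is fixed but column lengths are unbounded. The task is to make the truncation at the right end rigorous, for example by taking $N>l(\mu)+1$ so that the columns beyond $N$ are frozen, with the right horizontal edge equal to $0$. I would check that at a frozen column the reflection vertex contributes only $\tK(0,0)=1$, and that, unlike the spin Hall-Littlewood case, no extra $T^-$-type term survives: by the conservation law and the structure of the $q$-Hahn weights, all edges beyond the support vanish on both sides. If a residual term does appear, I would handle it by degree counting in formal power series, as in the proof of Proposition \ref{prop: Ksum}.
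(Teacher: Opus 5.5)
Your proposal is correct and follows the paper's proof: gauge to $\tW^b$ via Proposition \ref{rescale}, write $\langle\Omega'_{u,v}(\calA)|$ as $\omega$-caps via \eqref{eq: dconjOmegadefalt}, attach $\tK_{\zeta,\xi}$ at the right end (where it has weight $1$), and zip it leftward column by column with Theorem \ref{reflection-theo}. Your ``main obstacle'' is not actually an obstacle, because $W^b(i,j,k,l)=0$ unless $j\geq l$: this forces the edge entering $\tK$ to be empty once $N>l(\mu)+1$, and only finitely many $\lambda$ contribute (those with $\lambda_1=\mu_1+i$, $\mu\prec\lambda$, $l(\lambda)\leq l(\mu)+1$), so both sides are finite sums and no residual term or degree-counting argument arises.
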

\begin{proof} Fix the partition $\mu$, and let $N$ be an even integer such that $N>l(\mu)+1$. Since $W^b(i,j,k,l)=0$ unless $j\geq l$, in the row partition function $\langle\lambda'|\T_i(x\mid\calA,\calB)|\mu'\rangle$ all horizontal edges after column $l(\mu)+1$ are forced to be empty. In particular, regardless of $\lambda$, it is enough to use $N$ columns to compute this partition function. Moreover, due to the conservation law and the argument above,
$$
\langle\lambda'|\ \T_i(x\mid\calA,\calB)\ |\mu'\rangle=0 \qquad \text{unless}\ \lambda_1=\mu_1+i\ \text{and}\  l(\lambda)\leq l(\mu)+1,
$$
$$
\langle\lambda'|\ \T_l^*(y\mid\calA,\calB)\ |\mu'\rangle=0 \qquad \text{unless}\ \lambda_1+l=\mu_1\ \text{and}\ l(\lambda)\leq l(\mu).
$$
So, both sides of \eqref{eq:row flip} involve a finite computation and are well-defined.

From Proposition \ref{rescale} we have
\begin{multline*}
\langle\lambda'|\ \T_i(x\mid\calA,\overline{\calA})\ |\mu'\rangle=(x/a_1)^{\frac{\mu_1}2}\prod_{t\geq1}(a_ta_{t+1})^{-\frac{\mu_{t+1}}2}a_{t}^{\lambda_{t+1}}\\ \times   
\begin{gathered}
    \begin{tikzpicture}
			\draw[fused*] (1,2) -- (8.1,2);
			\node[right] at (8.1,2) {$\dots$};
			\draw[fused] (8.7,2) -- (10,2);
			\foreach\x in {1,...,3}{
				\draw[fused] (2.5*\x-0.5,1) -- (2.5*\x-0.5,3);
			}
			\node[above right] at (2,2) {\tiny $\tW^b$};
			\node[above right] at (4.5,2) {\tiny $\tW^b$};
            \node[above right] at (7,2) {\tiny $\tW^b$};
			\node[above] at (7,3) {\tiny $\lambda_3-\lambda_4; a_3, a^{-1}_3$};
			\node[above] at (4.5,3) {\tiny $\lambda_2-\lambda_3; a_2, a^{-1}_2$};
			\node[above] at (2,3) {\tiny $\lambda_1-\lambda_2; a_1, a^{-1}_1$};
			\node[left] at (1,2) {\tiny $i; a_1, x$};
			\node[below] at (7,1) {\tiny $\mu_3-\mu_4; a_{4}, a^{-1}_3$};
			\node[below] at (4.5,1) {\tiny $\mu_2-\mu_3; a_{3}, a^{-1}_2$};
			\node[below] at (2,1) {\tiny $\mu_1-\mu_2; a_{2}, a^{-1}_1$};
			\node[right] at (10,2) {\tiny $0;a_{N+1},x$};
		\end{tikzpicture}
        \end{gathered}.
\end{multline*}
Then \eqref{eq: dconjOmegadefalt} yields
\begin{multline*}
\langle\Omega'_{u,v}(\calA)|\ \T_i(x\mid\calA,\overline{\calA})\ |\mu'\rangle=(\zeta/a_0)^{\mu_1+i}(x/a_1)^{\frac{\mu_1}2}\prod_{t\geq1}(a_ta_{t+1})^{-\frac{\mu_{t+1}}2}\\ \times   
\begin{gathered}
    \begin{tikzpicture}
			\draw[fused*] (1,2) -- (5.5,2);
			\node[right] at (5.5,2) {$\dots$};
			\draw[fused] (6.2,2) -- (8.5,2);
			\foreach\x in {1,...,3}{
				\draw[fused] (2.5*\x-0.5,1) -- (2.5*\x-0.5,3);
			}
			\node[above right] at (2,2) {\tiny $\tW^b$};
			\node[above right] at (4.5,2) {\tiny $\tW^b$};
			\node[above right] at (7,2) {\tiny $\tW^b$};
            \node[above] at (7,3) {\tiny $\omega_{a_N;\xi,\zeta}$};
            \node at (7,3)  {$\text{---}$};
			\node[above] at (4.5,3) {\tiny $\omega_{a_2;\xi,\zeta}$};
            \node at (4.5,3)  {$\text{---}$};
			\node[above] at (2,3) {\tiny $\omega_{a_1;\zeta,\xi}$};
            \node at (2,3)  {$\text{---}$};
			\node[left] at (1,2) {\tiny $i; a_1, x$};
			\node[below] at (7,1) {\tiny $0; a_{N+1}, a^{-1}_N$};
			\node[below] at (4.5,1) {\tiny $\mu_2-\mu_3; a_{3}, a^{-1}_2$};
			\node[below] at (2,1) {\tiny $\mu_1-\mu_2; a_{2}, a^{-1}_1$};
			\node[right] at (8.5,2) {\tiny $0;a_{N+1},x$};
		\end{tikzpicture}
        \end{gathered}.
\end{multline*}
Attach a reflection weight $\tK_{\zeta,\xi}$ at the right end of the partition function above:
$$
    \begin{tikzpicture}
			\draw[fused*] (1,2) -- (5.5,2);
			\node[right] at (5.5,2) {$\dots$};
			\draw[fused] (6.2,2) -- (8.5,2) -- (9.5,3);
            \draw[fused] (10.5,2) -- (9.5,3);
			\foreach\x in {1,...,3}{
				\draw[fused] (2.5*\x-0.5,1) -- (2.5*\x-0.5,3);
			}
            
			\node[above right] at (2,2) {\tiny $\tW^b$};
			\node[above right] at (4.5,2) {\tiny $\tW^b$};
			\node[above right] at (7,2) {\tiny $\tW^b$};
            \node[above] at (7,3) {\tiny $\omega_{a_N;\xi,\zeta}$};
            \node at (7,3)  {$\text{---}$};
			\node[above] at (4.5,3) {\tiny $\omega_{a_2;\xi,\zeta}$};
            \node at (4.5,3)  {$\text{---}$};
			\node[above] at (2,3) {\tiny $\omega_{a_1;\zeta,\xi}$};
            \node at (2,3)  {$\text{---}$};
            \node[above] at (9.5,3) {\tiny $\tK_{\zeta,\xi}$};
            \node at (9.5,3) {$\bullet$};
			\node[left] at (1,2) {\tiny $i; a_1, x$};
			\node[below] at (7,1) {\tiny $0; a_{N+1}, a^{-1}_{N}$};
			\node[below] at (4.5,1) {\tiny $\mu_2-\mu_3; a_{3}, a^{-1}_2$};
			\node[below] at (2,1) {\tiny $\mu_1-\mu_2; a_{2}, a^{-1}_1$};
			\node[right] at (10.5,2) {\tiny $0;x^{-1},a^{-1}_{N+1}$};
		\end{tikzpicture}
$$
Note that adding this reflection does not change the partition function. Indeed, since $\tW^b(i,j,k,l)=0$ unless $j\geq l$, the edge entering $\tK_{\zeta,\xi}$ from the left is forced to be empty, so the reflection weight is $1$.

Next we repeatedly use Theorem \ref{reflection-theo} to move the reflection point to the left end of the row. This results in
\begin{multline*}
\langle\Omega'_{u,v}(\calA)|\ \T_i(x\mid\calA,\overline{\calA})\ |\mu'\rangle=(\zeta/a_0)^{\mu_1+i}(x/a_1)^{\frac{\mu_1}2}\prod_{t\geq1}(a_ta_{t+1})^{-\frac{\mu_{t+1}}2}\\ \times   
\begin{gathered}
    \begin{tikzpicture}
			\draw[fused] (-1,2) -- (0,3);
            \draw[fused] (5.5,2) -- (1,2) -- (0,3); 
			\node[right] at (5.5,2) {$\dots$};
			\draw[fused*] (6.2,2) -- (8.5,2);
			\foreach\x in {1,...,3}{
				\draw[fused] (2.5*\x-0.5,1) -- (2.5*\x-0.5,3);
			}
			\node[above right] at (2,2) {\tiny $\tW^a$};
			\node[above right] at (4.5,2) {\tiny $\tW^a$};
			\node[above right] at (7,2) {\tiny $\tW^a$};
            \node[above] at (0,3) {\tiny $\tK_{\zeta,\xi}$};
            \node at (0,3) {$\bullet$};
            \node[above] at (7,3) {\tiny $\omega_{a_{N+1};\xi,\zeta}$};
            \node at (7,3)  {$\text{---}$};
			\node[above] at (4.5,3) {\tiny $\omega_{a_3;\xi,\zeta}$};
            \node at (4.5,3)  {$\text{---}$};
			\node[above] at (2,3) {\tiny $\omega_{a_2;\zeta,\xi}$};
            \node at (2,3)  {$\text{---}$};
			\node[left] at (-1,2) {\tiny $i; a_1, x$};
			\node[below] at (7,1) {\tiny $0; a_{N+1}, a^{-1}_N$};
			\node[below] at (4.5,1) {\tiny $\mu_2-\mu_3; a_{3}, a^{-1}_2$};
			\node[below] at (2,1) {\tiny $\mu_1-\mu_2; a_{2}, a^{-1}_1$};
			\node[right] at (8.5,2) {\tiny $0;x^{-1},a_{N+1}^{-1}$};
		\end{tikzpicture}
        \end{gathered}.
\end{multline*}
Applying Proposition \ref{rescale} and the definition of $\T^*$ yields \eqref{eq:row flip}.
\end{proof}

\begin{lemma}\label{lem: Brow flip} For every partition $\mu$ we have
$$
\langle\Omega'_{u,v}(\calA)|\ \B(x\mid\calA,\overline{\calA})\ |\mu'\rangle=\frac{(a_1x;q)_\infty(a_1u;q)_\infty(a_1v;q)_\infty}{(a^2_1;q)_\infty(xu;q)_\infty(xv;q)_\infty} \langle\Omega'_{u,v}(\tau\calA)|\ \B^*(x^{-1}\mid\tau\calA,\overline{\calA})\ |\mu'\rangle,
$$
where both sides are power series in $x,q$ with coefficients in $\mathbb Q(u,v, a_0,a_1,\dots)$.
\end{lemma}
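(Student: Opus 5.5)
We plan to expand $\B$ and $\B^*$ into the row operators $\T_i$ and $\T^*_l$ and apply Lemma \ref{lem: row flip} term by term. By definition,
$$
\langle\Omega'_{u,v}(\calA)|\,\B(x\mid\calA,\overline{\calA})\,|\mu'\rangle=\sum_{i\geq0}(xa_0)^i\frac{(a_1/x;q)_i}{(q;q)_i}\langle\Omega'_{u,v}(\calA)|\,\T_i(x\mid\calA,\overline{\calA})\,|\mu'\rangle,
$$
using $b_0=a_0^{-1}$ for the sequence $\overline{\calA}$. For fixed $\mu$, only $i\leq$ (finite range) contributes, because $\langle\lambda'|\T_i|\mu'\rangle$ forces $\lambda_1=\mu_1+i$, while $\lambda$ ranges over all partitions. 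Hence the sum over $i$ is infinite, and we treat it as a formal power series in $x,q$. The factor $x^i(a_1/x;q)_i$ is a polynomial in $x$ of low degree $\geq0$. Substituting \eqref{eq:row flip}, the factors $a_0^{\pm i}$ cancel, and we obtain
$$
\sum_{l\geq0}\Big(\sum_{i\geq0}x^i\zeta^{i+l}\frac{(a_1/x;q)_i}{(q;q)_i}(xa_1)^{-l/2}\tK_{a_1,x;\zeta,\xi}(i,l)\Big)\langle\Omega'_{u,v}(\tau\calA)|\,\T^*_l(x^{-1}\mid\tau\calA,\overline{\calA})\,|\mu'\rangle .
$$

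The inner sum over $i$ is exactly the one evaluated in Proposition \ref{prop: Ksum} with $c=a_1$. That proposition includes the extra factor $(x/c)^{l/2}(c/x;q)_l$. Dividing it out gives
$$
\sum_{i}x^i\zeta^{i+l}\frac{(a_1/x;q)_i}{(q;q)_i}\tK_{a_1,x;\zeta,\xi}(i,l)=(xa_1)^{l/2}\,\frac{(a_1x,a_1\zeta/\xi,a_1\xi\zeta;q)_\infty}{(a_1^2,x\zeta/\xi,x\xi\zeta;q)_\infty}.
$$
Here we must check that $x^l(c/x;q)_l/((x/c)^{l/2}(c/x;q)_l)=(xc)^{l/2}$, which gives the prefactor $(xa_1)^{l/2}$. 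This cancels the $(xa_1)^{-l/2}$, so the inner sum becomes independent of $l$. Recalling $\zeta/\xi=v$ and $\zeta\xi=u$, the prefactor is exactly $\frac{(a_1x,a_1u,a_1v;q)_\infty}{(a_1^2,xu,xv;q)_\infty}$. The remaining sum $\sum_l\langle\Omega'_{u,v}(\tau\calA)|\T^*_l(x^{-1}\mid\tau\calA,\overline{\calA})|\mu'\rangle$ is $\langle\Omega'_{u,v}(\tau\calA)|\B^*(x^{-1}\mid\tau\calA,\overline{\calA})|\mu'\rangle$ by definition of $\B^*$. It is a finite sum, since $\T^*_l$ forces $\lambda_1+l=\mu_1$.

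The main obstacle is justifying the exchange of the sum over $i$ with the sum over $l$ in the formal setting, and the use of Proposition \ref{prop: Ksum}, which holds only as a formal power series in $x,q$ over $\mathbb Q(c,\xi,\zeta)$. The plan is to handle this in two steps. First, we note that $l$ ranges over the finite set $0\leq l\leq\mu_1$, so only the $i$-sum is infinite. Second, for each fixed $l$ we verify termwise convergence in $\mathbb Q(\ldots)[[x,q]]$. Lemma \ref{Krat} shows that $(x/a_1)^{l/2}\zeta^{i+l}(a_1/x;q)_l(a_1^2;q)_{i+l}\tK_{a_1,x;\zeta,\xi}(i,l)$ is polynomial, so the $i$-th summand has $x$-degree growing with $i$, via $x^i(a_1/x;q)_i\in\mathcal P_{i-1}$. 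Any denominators that appear are harmless power series. One must also check that the coefficients $\langle\Omega'_{u,v}(\tau\calA)|\T^*_l|\mu'\rangle$ involve only finitely many $\lambda$, which holds since $\lambda\prec\mu$. Finally, $\zeta,\xi$ appear only through polynomial expressions in $u,v$, so the coefficients lie in $\mathbb Q(u,v,a_0,a_1,\dots)$ as claimed.
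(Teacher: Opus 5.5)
Your proposal is correct and follows the paper's proof. Like the paper, you expand $\B$ into the $\T_i$, apply Lemma \ref{lem: row flip}, use $l\le\mu_1$ to swap the sums, and evaluate the $i$-sum with Proposition \ref{prop: Ksum} at $c=a_1$.

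The one bookkeeping difference is where the factor $(x/a_1)^{l/2}(a_1/x;q)_l$ goes. The paper keeps it attached to $\tK$, and uses \eqref{eq: computeTd} to see that $\langle\Omega'_{u,v}(\tau\calA)|\T^*_l(x^{-1}\mid\tau\calA,\overline{\calA})|\mu'\rangle/(x^l(a_1/x;q)_l)$ is itself a power series. Your ``dividing out'' of that factor only makes sense in the fraction field, because $x^l(a_1/x;q)_l=\prod_{k=0}^{l-1}(x-a_1q^k)$ is not a unit in $\mathbb Q(\dots)[[x,q]]$ for $l\geq 2$. So your remark that all denominators are harmless power series should be amended: either work in the fraction field, or cite \eqref{eq: computeTd} as the paper does.
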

\begin{proof}
First let us explain why both sides are well-defined. Note that the vertex weights $W^b_{a_t, x, a_{t+1},a_t^{-1}}$ are polynomials in $x$ and rational functions in $q$, regular at $q=0$. Since $\langle\Omega'_{u,v}(\calA)|\ \T_i(x\mid\calA,\overline{\calA})\ |\mu'\rangle$ is a finite sum of products of such weights, it is clearly a well-defined series in $q,x$. At the same time $(xa_0)^i(a_1/x;q)_i$ is a polynomial in $q,x$ with the total degree of each monomial at least $i-1$, so
$$
\langle\Omega'_{u,v}(\calA)|\ \B(x\mid\calA,\overline{\calA})\ |\mu'\rangle=\sum_{i\geq 0}(xa_0)^i\frac{(a_1/x;q)_i}{(q;q)_i}\langle\Omega'_{u,v}(\calA)|\ \T_i(x\mid\calA,\overline{\calA})\ |\mu'\rangle
$$
is well-defined. 

For the right-hand side, using the values of $W^a$ weights yields
\begin{multline}\label{eq: computeTd}
\langle\lambda'|\  \T^*_{\mu_1-\lambda_1}(x^{-1}\mid\tau\calA,\overline{\calA})\ |\mu'\rangle\\
=(x/a_0)^{\mu_1}\prod_{t\geq1}
(x/a_t)^{\mu_{t+1}-\lambda_t}\frac{(a_t/x;q)_{\mu_t-\lambda_t}(xa_{t+1};q)_{\lambda_t-\mu_{t+1}}(q;q)_{\mu_t-\mu_{t+1}}}{(a_{t}a_{t+1};q)_{\mu_t-\mu_{t+1}}(q;q)_{\mu_t-\lambda_t}(q;q)_{\lambda_t-\mu_{t+1}}}\\
=a_0^{-\mu_1}\frac{(a_1/x;q)_{l}}{(q;q)_l}\prod_{t\geq1}x^{\mu_{t}-\lambda_{t}}a_t^{\lambda_t-\mu_{t+1}}\frac{(a_{t+1}/x;q)_{\mu_{t+1}-\lambda_{t+1}}(xa_{t+1};q)_{\lambda_t-\mu_{t+1}}(q;q)_{\mu_t-\mu_{t+1}}}{(a_{t}a_{t+1};q)_{\mu_t-\mu_{t+1}}(q;q)_{\mu_{t+1}-\lambda_{t+1}}(q;q)_{\lambda_t-\mu_{t+1}}}.
\end{multline}
For all other values of $l$ the matrix coefficient $\langle\lambda'|\  \T^*_{l}(x^{-1}\mid\tau\calA,\overline{\calA})\ |\mu'\rangle$ is $0$ due to the conservation law. In particular, the coefficient vanishes unless $l\leq\mu_1$, so 
$$
\langle\Omega'_{u,v}(\tau\calA)|\ \B^*(x^{-1}\mid\tau\calA,\overline\calA)\ |\mu'\rangle
$$
consists of a finite sum of such coefficients. This implies that the right-hand side is also well-defined. 

To prove the claim, use Lemma \ref{lem: row flip} to get
\begin{multline*}
\langle\Omega'_{u,v}(\calA)|\ \B(x\mid\calA,\overline{\calA})\ |\mu'\rangle=\sum_{i\geq0}(xa_0)^i\frac{(a_1/x;q)_i}{(q;q)_i}\langle\Omega'_{u,v}(\calA)|\ \T_i(x\mid\calA,\overline{\calA})\ |\mu'\rangle\\
=\sum_{i\geq0}\sum_{l=0}^{\mu_1}\left[x^i\zeta^{i+l}\frac{(a_1/x;q)_i}{(q;q)_i}(x/a_1)^{l/2}(a_1/x;q)_l\tK_{a_1,x;\zeta,\xi}(i,l)\right]\\
\times\left[\frac{1}{x^l(a_1/x;q)_l}\langle\Omega'_{u,v}(\tau\calA)|\ \T_l^*(x^{-1}\mid\tau\calA,\overline{\calA})\ |\mu'\rangle\right].
\end{multline*}
Note that the expressions in both sets of square brackets are well-defined formal power series in $x,q$, due to Lemma \ref{Krat} and \eqref{eq: computeTd}. Proposition \ref{prop: Ksum} then implies
$$
\langle\Omega'_{u,v}(\calA)|\ \B(x\mid\calA,\overline{\calA})\ |\mu'\rangle=\frac{(a_1x,a_1u,a_1v;q)_\infty}{(a^2_1,xu,xv;q)_\infty}  \sum_{l=0}^{\mu_1}\langle\Omega'_{u,v}(\tau\calA)| \T_l^*(x^{-1}\mid\tau\calA,\overline{\calA})\ |\mu'\rangle.\qedhere
$$
\end{proof}

\subsection{Proof of Theorem \ref{thm:littlewood}} The proof is by induction on $n$. We start with the base case $n=1$.

From the definitions of $\F_{\lambda/\mu}$ and $\F^*_{\lambda/\mu}$, Lemma \ref{lem: Brow flip} is equivalent to
\begin{multline*}
\sum_{\lambda} \langle\Omega'_{u,v}(\calA)\mid\lambda'\rangle\ \F_{\lambda/\mu}(x\mid\calA,\overline\calA)\\
=\frac{(a_1x;q)_\infty(a_1u;q)_\infty(a_1v;q)_\infty}{(a^2_1;q)_\infty(xu;q)_\infty(xv;q)_\infty} \sum_{\lambda} \langle\Omega'_{u,v}(\tau\calA)\mid\lambda'\rangle\ \F^*_{\mu/\lambda}(x\mid\calA,\tau\overline\calA).
\end{multline*}
This is exactly the skew Littlewood identity with $n=1$.

For the induction step, assume that Theorem \ref{thm:littlewood} holds for $n$. Using the branching rule, we have
\begin{multline*}
\sum_{\lambda}  h_{\lambda'}(u,v\mid\calA)\overline{\calA}^{\lambda'} \F_{\lambda/\mu}(x_1, \dots, x_{n+1}\mid\calA,\overline\calA)\\
=\sum_{\lambda,\nu} h_{\lambda'}(u,v\mid\calA)  \overline{\calA}^{\lambda'}\F_{\lambda/\nu}(x_1, \dots, x_{n}\mid\calA,\overline\calA)\F_{\nu/\mu}(x_{n+1}\mid\tau^n\calA,\overline\calA).
\end{multline*}
Applying the inductive assumption to the expression above yields 
\begin{multline*}
\Psi_n(x_1,\dots, x_n;u,v\mid\calA)\\\times\sum_{\lambda,\nu} h_{\lambda'}(u,v\mid\tau^n\calA)  (\tau^n\overline{\calA})^{\lambda'}\F^*_{\nu/\lambda}(x_1, \dots, x_{n}\mid\calA,\tau^n\overline\calA)\F_{\nu/\mu}(x_{n+1}\mid\tau^n\calA,\overline\calA).
\end{multline*}
Now we can use the skew Cauchy identity from Theorem \ref{thm:skew-cauhy} to the sum with respect to $\nu$, getting
\begin{multline*}
\Psi_n(x_1,\dots, x_n;u,v\mid\calA)\prod_{i=1}^n\frac{(x_ia_{n+1};q)_\infty(a_ix_{n+1};q)_\infty}{(x_ix_{n+1};q)_\infty(a_ia_{n+1};q)_\infty}\\\times\sum_{\lambda,\nu} h_{\lambda'}(u,v\mid\tau^n\calA)(\tau^n\overline{\calA})^{\lambda'} \F^*_{\mu/\nu}(x_1, \dots, x_{n}\mid\calA,\tau^{n+1}\overline\calA)\F_{\lambda/\nu}(x_{n+1}\mid\tau^n\calA,\tau^n\overline\calA).
\end{multline*}
Applying the single variable case of Theorem \ref{thm:littlewood}, we obtain
\begin{multline*}
\Psi_n(x_1,\dots, x_n;u,v\mid\calA)\frac{(a_{n+1}x_{n+1},a_{n+1}u,a_{n+1}v;q)_\infty}{(a^2_{n+1},x_{n+1}u,x_{n+1}v;q)_\infty}\prod_{i=1}^n\frac{(x_ia_{n+1};q)_\infty(a_ix_{n+1};q)_\infty}{(x_ix_{n+1};q)_\infty(a_ia_{n+1};q)_\infty}\\\times\sum_{\lambda,\nu} h_{\lambda'}(u,v\mid\tau^{n+1}\calA) (\tau^{n+1}\overline{\calA})^{\lambda'}\F^*_{\mu/\nu}(x_1, \dots, x_{n}\mid\calA,\tau^{n+1}\overline\calA)\F^*_{\nu/\lambda}(x_{n+1}\mid\tau^n\calA,\tau^{n+1}\overline\calA).
\end{multline*}
The inductive step now follows from the branching rule and the relation
\begin{multline*}
\Psi_n(x_1,\dots, x_n;u,v\mid\calA)\frac{(a_{n+1}x_{n+1},a_{n+1}u,a_{n+1}v;q)_\infty}{(a^2_{n+1},x_{n+1}u,x_{n+1}v;q)_\infty}\prod_{i=1}^n\frac{(x_ia_{n+1};q)_\infty(a_ix_{n+1};q)_\infty}{(x_ix_{n+1};q)_\infty(a_ia_{n+1};q)_\infty}\\=\Psi_{n+1}(x_1,\dots, x_{n+1};u,v\mid\calA). \qedhere
\end{multline*}
\qed

\subsection{Proof of the numerical version} We finish this section by proving that Theorem \ref{thm:littlewood} holds numerically as well. We need the following upper bounds.

\begin{lemma}\label{lem: bound1} Let $u,v,a\in\mathbb C$ and $\delta\in(0,1)$. There exists a constant $C$  such that for all $n\in\mathbb Z_{\geq 0}$ and $q\in\{z\in\mathbb C: |z|\leq \delta\}$ we have
$$
\Big|(a^2;q)_\infty\ h_n(u,v\mid a)\Big|\leq (n+1) C \max(|uv|, 1)^n.  
$$
\end{lemma}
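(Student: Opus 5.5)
We will bound $(a^2;q)_\infty h_n(u,v\mid a)$ directly from the explicit sum \eqref{eq: hdef}. Multiplying by $(a^2;q)_\infty$ cancels the denominator $(a^2;q)_n$:
$$
(a^2;q)_\infty\, h_n(u,v\mid a)=(a^2q^n;q)_\infty\sum_{i+j=n}(uv)^i\,(a/v;q)_i\,(av;q)_j\,\frac{(q;q)_n}{(q;q)_i(q;q)_j}.
$$
The advantage of this form is that no denominators vanishing in $q$ remain except $(q;q)_i(q;q)_j$, which are uniformly bounded away from zero on $|q|\le\delta$. The plan is to bound each of the $n+1$ summands by $C\max(|uv|,1)^n$.

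First, the prefactor satisfies $|(a^2q^n;q)_\infty|\le\prod_{k\ge0}(1+|a|^2\delta^k)=(-|a|^2;\delta)_\infty$, which is a finite constant independent of $n$ and $q$. Similarly, we have
$$
|(a/v;q)_i|\le(-|a/v|;\delta)_\infty \quad\text{and}\quad |(av;q)_j|\le(-|av|;\delta)_\infty.
$$
For the $q$-binomial coefficient, $|(q;q)_n|\le(-\delta;\delta)_\infty$, and $|(q;q)_m|\ge\prod_k(1-\delta^{k+1})=(\delta;\delta)_\infty>0$. Hence the quotient is at most $(-\delta;\delta)_\infty/(\delta;\delta)_\infty^2$. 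Finally, $|uv|^i\le\max(|uv|,1)^n$ for $0\le i\le n$.

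Multiplying these bounds together gives each term at most $C\max(|uv|,1)^n$ with
$$
C=(-|a|^2;\delta)_\infty(-|a/v|;\delta)_\infty(-|av|;\delta)_\infty\,\frac{(-\delta;\delta)_\infty}{(\delta;\delta)_\infty^2}.
$$
Summing the $n+1$ terms yields the claim.

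The only subtlety is the degenerate case $v=0$, where $a/v$ is undefined. There we instead use the polynomial form $(uv)^i(a/v;q)_i=u^i\prod_{k<i}(v-aq^k)$. This gives the bound $|u|^i(|v|+|a|)^i$, which is not of the form $\max(|uv|,1)^n$ when $v=0$ and $u\neq0$. Two routes resolve this. One can use the symmetry in $u,v$ from Proposition \ref{prop: ASCprop}(2) to swap roles, which covers $v=0$, $u\ne0$. Alternatively, one notes that for $uv=0$ the explicit formula $h_n(0,v\mid a)=(av;q)_n/(a^2;q)_n$ from Proposition \ref{prop: ASCprop}(4) gives a bounded quantity directly. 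I expect this edge-case bookkeeping, rather than the main estimate, to be the only point requiring care.
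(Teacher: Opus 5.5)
Your proposal is correct and is essentially the paper's proof. Both cancel $(a^2;q)_n$, bound each of the $n+1$ summands using $\prod_{k\geq1}(1-\delta^k)\leq|(q;q)_m|\leq\prod_{k\geq1}(1+\delta^k)$ and $|(x;q)_m|\leq\prod_{k\geq0}(1+|x|\delta^k)$, and then bound the leftover factor $(a^2q^n;q)_\infty$ uniformly. Your handling of the degenerate case $uv=0$, via the $u\leftrightarrow v$ symmetry and the closed form $h_n(0,v\mid a)=(av;q)_n/(a^2;q)_n$, is correct and covers a point the paper passes over without comment.
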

\begin{proof} Note that for all $q\in\{z\in\mathbb C: |z|\leq \delta\}$ and $n\in\mathbb Z_{\geq 1}$ we have
\begin{equation}
\label{eq: bound1qw}
0<\prod_{i\geq 1} (1-\delta^i)\leq|(q;q)_n|\leq \prod_{i\geq 1} (1+\delta^i)<\infty
\end{equation}
and for any fixed $x\in\mathbb C$ we additionally have
\begin{equation}
\label{eq: bound2qw}
|(x;q)_n|\leq \prod_{i\geq 0} (1+|x|\delta^i)<\infty.
\end{equation}
Combining these inequalities with \eqref{eq: hdef} yields
$$
|(a^2;q)_nh_n(u,v\mid a)|< (n+1)C_1 \max(uv,1)^n
$$
for a constant $C_1$ which does not depend on $n\in\mathbb Z_{\geq0}$ and $q: |q|<\delta$. Then
$$
|(a^2;q)_\infty\ h_n(u,v\mid a)|<|(q^na^2;a)_\infty|(n+1)C_1 \max(uv,1)^n.
$$
Since $|(q^na^2;q)_\infty|<\prod_{i\geq 1}(1+|a|^2\delta^{i-1})<\infty$, the claim follows.
\end{proof}

\begin{lemma}\label{lem: bound2} Fix $k\in\mathbb Z_{\geq 0}$, $n\in\mathbb Z_{>0}$, $\delta\in(0,1)$, two parameters $\varepsilon, \varepsilon'$ satisfying $0<\varepsilon'<\varepsilon$ and two families of complex parameters $\calA=(a_0,a_1,a_2,\dots)$, $\calB=(b_0,b_1,b_2,\dots)$. Then there is a constant $C$ such that for all $x_1, \dots, x_n,q\in\mathbb C$ such that $|x_i|<\varepsilon'$, $|q|<\delta$ and every pair of partitions $\lambda,\mu$ such that $l(\mu)\leq k$ we have
$$
\left|\calB^{\lambda'-\mu'}\F_{\lambda/\mu}(x_1,\dots, x_n\mid\calA,\calB)\prod_{i=1}^n\prod_{j=1}^{k+n-i}(a_{j+i}/b_j;q)_\infty\right|<C(\lambda_1+1)^{(n-1)(k+\frac{n}{2})}\varepsilon^{|\lambda|-|\mu|}.
$$  
\end{lemma}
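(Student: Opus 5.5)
The plan mirrors the proof of Lemma \ref{lemma: bounds}(2): first a one-variable bound read off from \eqref{eq:one-variable-F}, then induction on $n$ through the branching rule \eqref{eq:branching-rule}. The only new features are the factors $(a_{j+i}/b_j;q)_\infty$, which clear the denominators $(a_{r+1}/b_r;q)_{\mu_r-\mu_{r+1}}$, and the need for uniformity in $q$ over $|q|<\delta$.

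\textbf{Base case $n=1$.} Take $\mu\prec\lambda$ with $l(\mu)\leq k$; then $l(\lambda)\leq k+1$. By \eqref{eq:one-variable-F}, the normalized one-variable function $\calB^{\lambda'-\mu'}\F_{\lambda/\mu}(x\mid\calA,\calB)$ equals $x^{|\lambda|-|\mu|}$ times a product over $r\leq k+1$ of ratios of finite $q$-Pochhammer symbols. The nontrivial factors are as follows.
\begin{itemize}
\item The factor $(a_r/x;q)_{\lambda_r-\mu_r}x^{\lambda_r-\mu_r}=\prod_{s}(x-a_rq^s)$ has modulus at most $\prod_s(\varepsilon'+|a_r|\delta^s)$. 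Only the first few terms exceed $\varepsilon'$, so this is at most $C\varepsilon'^{\lambda_r-\mu_r}\leq C\varepsilon^{\lambda_r-\mu_r}$.
\item The factor $(x/b_r;q)_m$ is uniformly bounded by \eqref{eq: bound2qw}.
\item The factors $(q;q)_m^{\pm1}$ are uniformly bounded by \eqref{eq: bound1qw}.
\item The denominator $(a_{r+1}/b_r;q)_{\mu_r-\mu_{r+1}}$ is the one dangerous factor. Multiplying by $(a_{r+1}/b_r;q)_\infty$ gives $(a_{r+1}q^{m}/b_r;q)_\infty$, which is again bounded by \eqref{eq: bound2qw}.
\end{itemize}
Only $r\leq k$ can have $\mu_r-\mu_{r+1}>0$, which is why the product runs over $j\leq k+n-i$. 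The remaining factors are $(a_{j+1}/b_j;q)_\infty$ for $j$ with no matching denominator, and these are bounded in modulus as well. This yields the claim for $n=1$ with exponent $0$.

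\textbf{Induction step.} For $n+1$ variables, expand by \eqref{eq:branching-rule}:
$$
\F_{\lambda/\mu}(x_1,\dots,x_{n+1}\mid\calA,\calB)=\sum_{\nu\prec\lambda}\F_{\lambda/\nu}(x_1\mid\calA,\calB)\,\F_{\nu/\mu}(x_2,\dots,x_{n+1}\mid\tau\calA,\calB).
$$
Here $l(\nu)\leq k+n$. Apply the base case to the first factor, with $k$ replaced by $k+n$ and the product over $j\leq k+n$ coming from $i=1$. Apply the induction hypothesis to the second factor, with the shifted family $\tau\calA$. Under the shift, the required factors $(a_{j+1+i}/b_j;q)_\infty$ for $i=1,\dots,n$ and $j\leq k+n-i$ become exactly the factors $i=2,\dots,n+1$ of the target product, with $j\leq k+(n+1)-i$. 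The two product indexings therefore combine to the statement for $n+1$; this index matching is routine but must be checked carefully.

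Then $\varepsilon^{|\lambda/\nu|}\varepsilon^{|\nu/\mu|}=\varepsilon^{|\lambda|-|\mu|}$. The number of $\nu\prec\lambda$ with $l(\nu)\leq k+n$ is at most $(\lambda_1+1)^{k+n}$, and each term carries $(\nu_1+1)^{(n-1)(k+n/2)}\leq(\lambda_1+1)^{(n-1)(k+n/2)}$. Summing gives the exponent $(n-1)(k+n/2)+k+n=n(k+\frac{n+1}{2})$, as required.

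\textbf{Main obstacle.} The main obstacle is the base case. One must make sure that every denominator $(a_{r+1}/b_r;q)_{m}$ is cancelled by some $(a_{j+i}/b_j;q)_\infty$ in the product, and that the bounds hold uniformly in $q$ and in $m$. The latter is handled by $|(a_{r+1}q^m/b_r;q)_\infty|\leq\prod_{s\geq0}(1+|a_{r+1}/b_r|\delta^s)$.
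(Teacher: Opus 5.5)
Your proposal is correct and follows the paper's proof. The base case bounds the one-variable formula \eqref{eq:one-variable-F} factor by factor, with $(a_{r+1}/b_r;q)_\infty/(a_{r+1}/b_r;q)_m=(q^ma_{r+1}/b_r;q)_\infty$ absorbing the only unbounded denominators; the induction then goes through the branching rule, and your explicit check that the $\tau\calA$-shifted product supplies the factors $i=2,\dots,n+1$ fills in what the paper dismisses as ``the same as in Lemma~\ref{lemma: bounds}''.

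One small correction: every factor $\varepsilon'+|a_r|\delta^s$ exceeds $\varepsilon'$, so justify the bound $C\varepsilon'^{m}$ by the convergence of $\prod_{s\geq 0}(1+|a_r|\delta^s/\varepsilon')$, or, as the paper does, by noting that these factors are eventually below $\varepsilon$.
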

\begin{proof} We use induction on $n$. When $n=1$ we can use the expression \eqref{eq:one-variable-F} for the one-variable function $\F_{\lambda/\mu}(x\mid\calA,\calB)$. Since $l(\mu)\leq k$ we can limit the product over $r\geq 1$ in  \eqref{eq:one-variable-F} to the product over $r\in[1;k+1]$.  Using \eqref{eq: bound1qw} and \eqref{eq: bound2qw} we can bound the contributions of almost all $q$-Pochhammer symbols in \eqref{eq:one-variable-F}, except $(a_r/x;q)_{\lambda_r-\mu_r}$ in the numerator and $(a_{r+1}/b_{r};q)_{\mu_r-\mu_{r+1}}$ in the denominator. So, we can find a constant $C_1$ such that for all $x,q,\lambda,\mu$ satisfying $|x|<\varepsilon', |q|<\delta,l(\mu)\leq k$ and $\mu\prec\lambda$ we have
$$
\left|\calB^{\lambda'-\mu'}\F_{\lambda/\mu}(x\mid\calA,\calB)\right|\leq C_1 \prod_{r=1}^{k+1}\left|\frac{x^{\lambda_r-\mu_r}(a_r/x;q)_{\lambda_r-\mu_r}}{(a_{r+1}/b_r;q)_{\mu_r-\mu_{r+1}}}\right|.
$$
Note that 
$$
|x^m(a_r/x;q)_m|=\left|\prod_{i=1}^m(x-a_rq^{i-1})\right|<\prod_{i=1}^m(\varepsilon'+|a_r|\delta^{i-1}).
$$
Since $\varepsilon'+|a_r|\delta^{i-1}<\varepsilon$ for $i\gg 0$, there is a constant $C_2$ such that
$$
\prod_{r=1}^{k+1}|x^{\lambda_r-\mu_r}(a_r/x;q)_{\lambda_r-\mu_r}|< C_2 \varepsilon^{|\lambda|-|\mu|}.
$$
Finally, since $\frac{(a_{r+1}/b_r;q)_{\infty}}{(a_{r+1}/b_r;q)_{n}}=(q^na_{r+1}/b_r;q)_{\infty}$ is uniformly bounded for all $n$, $q:|q|<\delta$, we get the desired upper bound for $\calB^{\lambda'-\mu'}\F_{\lambda/\mu}(x\mid\calA,\calB)\prod_{j=1}^k(a_{j+1}/b_j;q)_{\infty}$.

The step of induction follows from the branching rule in the same way as in Lemma \ref{lemma: bounds}.

\end{proof}

Now, fix a collection of complex parameters $\calA$ and $u,v\in\mathbb C$ and a partition $\mu$. Without loss of generality, assume $|u|\leq|v|$. Consider each side of \eqref{eq: littlewoodsqW} as an analytic function in $x_1, \dots, x_n, q$. We first focus on the sum in the left-hand side. Choose $\delta\in(0,1)$. The sum over $\lambda$ can be restricted to partitions $\lambda$ such that $\lambda/\mu$ is a union of $n$ horizontal strips, in particular, $l(\lambda)\leq n+l(\mu)$. For such $\lambda$ Lemma \ref{lem: bound1} implies the following uniform upper bound for all $q\in\{q\in\mathbb C: |q|<\delta\}$:
\begin{multline*}
\left|\prod_{i=1}^{n+l(\mu)}(a_i^2;q)_\infty h_{\lambda'}(u,v\mid\calA)\right|\\
\leq C_1 \prod_{\substack{i\in[1,n+l(\mu)]\\i-odd}} (\lambda_i-\lambda_{i+1}+1)\max(|u|,|v|)^{\lambda_i-\lambda_{i+1}}  \prod_{\substack{i\in[1,n+l(\mu)]\\i-even}} (\lambda_i-\lambda_{i+1}+1)\max(1,|uv|)^{\lambda_i-\lambda_{i+1}}
\\\leq C_1(\lambda_1+1)^{n+l(\mu)} \max(|v|^{\lambda_1}|u|^{m_{even}(\lambda')}, |v|^{m_{odd}(\lambda')}).
\end{multline*}
Here $C_1$ is a constant and $m_{odd}(\mu)$ ($m_{even}(\mu)$) denotes the number of odd parts of $\mu$ (nonzero even parts of $\mu$ respectively). Clearly $m_{odd}(\lambda')\leq\lambda_1$ and $m_{even}(\lambda')\leq\lambda_2$, so  
$$
\left|\prod_{i=1}^{n+l(\mu)}(a_i^2;q)_\infty h_{\lambda'}(u,v\mid\calA)\right|\leq C_1(\lambda_1+1)^{n+l(\mu)} \max(|v|,1)^{\lambda_1}\max(|u|,1)^{\lambda_2}.
$$
Now choose $\varepsilon,\varepsilon'$ such that $0<\varepsilon'<\varepsilon<\min(1,|v|^{-1})$. Lemma \ref{lem: bound2} implies that there is a constant $C_2$ such that
$$
\left|\overline{\calA}^{\lambda'-\mu'}\F_{\lambda/\mu}(x_1,\dots, x_n\mid\calA,\overline{\calA})\prod_{i=1}^n\prod_{j=1}^{l(\mu)+n-i}(a_ja_{j+i};q)_\infty\right|<C_2(\lambda_1+1)^{(n-1)(l(\mu)+\frac{n}{2})}\varepsilon^{|\lambda|-|\mu|}
$$
for every partition $\lambda$ and $x_1, \dots, x_n,q$ such that $|x_1|,\dots,|x_n|<\varepsilon'$ and $|q|<\delta$. Combining these upper bounds and using that $\mu$ is fixed, we get a constant $C$ such that
\begin{multline*}
\left|h_{\lambda'}(u,v\mid\calA)\overline{\calA}^{\lambda'}\F_{\lambda/\mu}(x_1,\dots, x_n\mid\calA,\overline{\calA})\prod_{i=0}^n\prod_{j=1}^{l(\mu)+n-i}(a_ja_{j+i};q)_\infty\right|\\
<C(\lambda_1+1)^{n(l(\mu)+\frac{n+1}{2})}\varepsilon^{|\lambda|}\max(|v|,1)^{\lambda_1}\max(|u|,1)^{\lambda_2}
\end{multline*}
for all $\lambda, x_1, \dots, x_n, q$ as above. Due to our conditions on $\varepsilon$, the bound above is summable over $\lambda$. Dividing by $\prod_{i=0}^n\prod_{j=1}^{l(\mu)+n-i}(a_ja_{j+i};q)_\infty$  and taking $\delta\to 1$, $\varepsilon,\varepsilon'\to \min(1,|v|^{-1})$ shows that the left-hand side of \eqref{eq: littlewoodsqW} is an analytic function on
\begin{multline*}
D=\{(x_1, \dots, x_n, q)\in\mathbb C^{n+1}: |q|<1; |x_1|,\dots, |x_n|< \min(1, |v|^{-1});\\
q^{-m}\neq a_ia_j\ \text{for\ all\ } i,j\geq1,m\geq0\}.
\end{multline*}

The sum in the right-hand side of \eqref{eq: littlewoodsqW} is finite, so the right-hand side is clearly analytic on $D$. Since $D$ is a connected domain containing $0$, \eqref{eq: littlewoodsqW} follows from the identity of Taylor expansions of both sides about $0$. The latter is equivalent to the identity of the formal power series. 
\qed

\appendix

\section{Match with non-stable spin Hall-Littlewood functions} \label{sHLmatch}

Here we describe the correspondence between our spin Hall-Littlewood functions $\sfF$ and the higher spin Hall-Littlewood functions  from \cite{BP18}, which we denote by $\mathsf{F}^{BP}$, $\mathsf{G}^{BP}$.

We start by matching the vertex models. In \cite{BP18} the fused edges are parametrized by their spin $s$ and spectral $\xi$ parameters. In terms of our parameters $a,b$ they are given by $\mathsf{s}=\sqrt{a}/\sqrt{b}$ and $\xi=(\sqrt{a}\sqrt{b})^{-1}$. With this notation our higher spin six-vertex weights $w_{z;a,b}$ are exactly the stochastic weights $\mathsf{L}_{z\xi, \mathsf{s}}$ from \cite{BP18}. However, a different normalization of the weights is mainly used in \cite{BP18}. We denote it by $w^{BP}_{u;\mathsf{s}}$ and it is given by
\begin{align*}
\begin{array}{cccc}
\begin{tikzpicture}[scale=0.9, baseline=-4pt]
	\draw[unfused] (-1,0) -- (1,0);
	\draw[fused] (0,-1) -- (0,1);
	\node[left] at (-1,0) {\tiny $0;z$};\node[right] at (1,0) {\tiny $0;z$};
	\node[below] at (0,-1) {\tiny $g;\xi,\mathsf{s}$};\node[above] at (0,1) {\tiny $g; \xi,\mathsf{s}$};
	\node[above right] at (0,0) {\tiny $w^{BP}_{z\xi,\mathsf{s}}$};
\end{tikzpicture}
&
\begin{tikzpicture}[scale=0.9, baseline=-4pt]
	\draw[unfused] (-1,0) -- (1,0);
	\draw[fused] (0,-1) -- (0,1);
	\node[left] at (-1,0) {\tiny $0;z$};\node[right] at (1,0) {\tiny $1;z$};
	\node[below] at (0,-1) {\tiny $g;\xi,\mathsf{s}$};\node[above] at (0,1) {\tiny $g-1; \xi,\mathsf{s}$};
	\node[above right] at (0,0) {\tiny $w^{BP}_{z\xi,\mathsf{s}}$};
\end{tikzpicture}
&
\begin{tikzpicture}[scale=0.9, baseline=-4pt]
	\draw[unfused] (-1,0) -- (1,0);
	\draw[fused] (0,-1) -- (0,1);
	\node[left] at (-1,0) {\tiny $1;z$};\node[right] at (1,0) {\tiny $0;z$};
	\node[below] at (0,-1) {\tiny $g;\xi,\mathsf{s}$};\node[above] at (0,1) {\tiny $g+1; \xi,\mathsf{s}$};
	\node[above right] at (0,0) {\tiny $w^{BP}_{z\xi,\mathsf{s}}$};
\end{tikzpicture}
&
\begin{tikzpicture}[scale=0.9, baseline=-4pt]
	\draw[unfused] (-1,0) -- (1,0);
	\draw[fused] (0,-1) -- (0,1);
	\node[left] at (-1,0) {\tiny $1;z$};\node[right] at (1,0) {\tiny $1;z$};
	\node[below] at (0,-1) {\tiny $g;\xi,\mathsf{s}$};\node[above] at (0,1) {\tiny $g; \xi,\mathsf{s}$};
	\node[above right] at (0,0) {\tiny $w^{BP}_{z\xi,\mathsf{s}}$};
\end{tikzpicture}
\\
\\
\dfrac{1-q^g\mathsf{s}z\xi}{1-\mathsf{s}z\xi}
\qquad
&
\dfrac{(1-\mathsf{s}^2q^{g-1})z\xi}{1-\mathsf{s}z\xi}
\qquad
&
\dfrac{1-q^{g+1}}{1-\mathsf{s}z\xi}
\qquad
&
\dfrac{z\xi-\mathsf{s}q^g}{1-\mathsf{s}z\xi}
\vspace{1ex}
\end{array}
\end{align*}
Note that
\begin{equation}\label{eq: BPrenorm}
\begin{tikzpicture}[scale=1.1, baseline=-4pt]
	\draw[unfused] (-1,0) -- (1,0);
	\draw[fused] (0,-1) -- (0,1);
	\node[left] at (-1,0) {\tiny $i;z$};\node[right] at (1,0) {\tiny $l;z$};
	\node[below] at (0,-1) {\tiny $j;\xi,\mathsf{s}$};\node[above] at (0,1) {\tiny $k; \xi,\mathsf{s}$};
	\node[above right] at (0,0) {\tiny $w^{BP}_{z\xi,\mathsf{s}}$};
\end{tikzpicture}
=(-\mathsf{s})^{-l}\frac{(\mathsf{s}^2;q)_j}{(q;q)_{j}} \frac{(q;q)_{k}}{(\mathsf{s}^2;q)_k}
\begin{tikzpicture}[scale=1.1, baseline=-4pt]
	\draw[unfused] (-1,0) -- (1,0);
	\draw[fused] (0,-1) -- (0,1);
	\node[left] at (-1,0) {\tiny $i;z$};\node[right] at (1,0) {\tiny $l;z$};
	\node[below] at (0,-1) {\tiny $j; \mathsf{s}/\xi, (\mathsf{s}\xi)^{-1}$};\node[above] at (0,1) {\tiny $k; \mathsf{s}/\xi, (\mathsf{s}\xi)^{-1}$};
	\node[above right] at (0,0) {\tiny $w$};
\end{tikzpicture}
\end{equation}

Using the vertex weights $w^{BP}$, two families of functions $\mathsf{F}^{BP}$, $\mathsf{G}^{BP}$ are defined in \cite{BP18}. Both families are parametrized by pairs of \emph{signatures} $\lambda,\mu$, which are integer sequences of finite length. We use $\mathrm{Sign}_N^+$ to denote the set of non-negative signatures $\lambda_1\geq\lambda_2\geq\dots \geq \lambda_N\geq0$. Additionally, these functions depend on two infinite sequences of parameters, which are denoted by $\mathsf{S}=(\mathsf{s}_0, \mathsf{s}_1, \dots)$, $\Xi=(\xi_0, \xi_1,\dots)$. With this notation, for $\lambda\in\mathrm{Sign}_{N+n}^+$ and $\mu\in\mathrm{Sign}_{N}^+$ the function $\mathsf{F}_{\lambda/\mu}^{BP}(x_1, \dots, x_n\mid \Xi,\mathsf{S})$ is defined by the partition function depicted on the left part of Figure \ref{fig: BPF}, where all the edges on the left boundary have value $1$. For $\lambda\in\mathrm{Sign}_{N}^+$ and $\mu\in\mathrm{Sign}_{N}^+$ the function $\mathsf{G}_{\lambda/\mu}^{BP}(x_1, \dots, x_m\mid \Xi,\mathsf{S})$ is defined by the right part of Figure \ref{fig: BPF}, where the left boundary is empty. Below we list some of the properties of these functions from \cite{BP18}:

\begin{figure}

	\begin{center}
    		\begin{tikzpicture}[scale=1, xscale=0.9]
			\foreach\y in {4,...,5}{
				\draw[unfused*] (1,\y) -- (6.1,\y);
				\node[right] at (6.1,\y) {$\dots$};
				\draw[unfused] (6.8,\y) -- (7.2,\y);
			}
			\draw[unfused*] (1,2) -- (6.1,2);
			\node[right] at (6.1,2) {$\dots$};
			\draw[unfused] (6.8,2) -- (7.2,2);
			\foreach\x in {1,...,3}{
				\draw[fused] (1.7*\x+0.3,3.3) -- (1.7*\x+0.3,6);
				\draw[fused*] (1.7*\x+0.3,1) -- (1.7*\x+0.3,2.5);
				\node at (1.7*\x+0.3, 3) {$\vdots$};
			}
			\node[above right] at (2,5) {\tiny $w^{BP}$};
			\node[above right] at (3.7,5) {\tiny $w^{BP}$};
			\node[above right] at (5.4,5) {\tiny $w^{BP}$};
			\node[above right] at (2,4) {\tiny $w^{BP}$};
			\node[above right] at (3.7,4) {\tiny $w^{BP}$};
			\node[above right] at (5.4,4) {\tiny $w^{BP}$};
			\node[above right] at (2,2) {\tiny $w^{BP}$};
			\node[above right] at (3.7,2) {\tiny $w^{BP}$};
			\node[above right] at (5.4,2) {\tiny $w^{BP}$};
			\node[above] at (5.7,6) {\tiny $m_2(\lambda);  \xi_2,\mathsf{s}_2$};
			\node[above] at (3.7,6) {\tiny $m_1(\lambda);  \xi_1,\mathsf{s}_1$};
			\node[above] at (1.7,6) {\tiny $m_0(\lambda); \xi_0,\mathsf{s}_0$};
			\node[left] at (1,2) {\tiny $1; x_n$};
			\node[left] at (1,4) {\tiny $1; x_2$};
			\node[left] at (1,5) {\tiny $1; x_1$};
			\node[below] at (5.7,1) {\tiny $m_2(\mu);  \xi_2,\mathsf{s}_2$};
			\node[below] at (3.7,1) {\tiny $m_1(\mu);  \xi_1,\mathsf{s}_1$};
			\node[below] at (1.7,1) {\tiny $m_0(\mu); \xi_0,\mathsf{s}_0$};
			\node[right] at (7.2,2) {\tiny $0;x_n$};
			\node[right] at (7.2,3) {$\vdots$};
			\node[right] at (7.2,4) {\tiny $0; x_2$};
			\node[right] at (7.2,5) {\tiny $0; x_1$};
		\end{tikzpicture}
        \quad
		\begin{tikzpicture}[scale=1,xscale=0.9]
			\foreach\y in {4,...,5}{
				\draw[unfused*] (1,\y) -- (6.1,\y);
				\node[right] at (6.1,\y) {$\dots$};
				\draw[unfused] (6.8,\y) -- (7.2,\y);
			}
			\draw[unfused*] (1,2) -- (6.1,2);
			\node[right] at (6.1,2) {$\dots$};
			\draw[unfused] (6.8,2) -- (7.2,2);
			\foreach\x in {1,...,3}{
				\draw[fused] (1.7*\x+0.3,3.3) -- (1.7*\x+0.3,6);
				\draw[fused*] (1.7*\x+0.3,1) -- (1.7*\x+0.3,2.5);
				\node at (1.7*\x+0.3, 3) {$\vdots$};
			}
			\node[above right] at (2,5) {\tiny $w^{BP}$};
			\node[above right] at (3.7,5) {\tiny $w^{BP}$};
			\node[above right] at (5.4,5) {\tiny $w^{BP}$};
			\node[above right] at (2,4) {\tiny $w^{BP}$};
			\node[above right] at (3.7,4) {\tiny $w^{BP}$};
			\node[above right] at (5.4,4) {\tiny $w^{BP}$};
			\node[above right] at (2,2) {\tiny $w^{BP}$};
			\node[above right] at (3.7,2) {\tiny $w^{BP}$};
			\node[above right] at (5.4,2) {\tiny $w^{BP}$};
			\node[above] at (5.7,6) {\tiny $m_2(\lambda);  \xi_2,\mathsf{s}_2$};
			\node[above] at (3.7,6) {\tiny $m_1(\lambda);  \xi_1,\mathsf{s}_1$};
			\node[above] at (1.7,6) {\tiny $m_0(\lambda); \xi_0,\mathsf{s}_0$};
			\node[left] at (1,2) {\tiny $0; x_n$};
			\node[left] at (1,4) {\tiny $0; x_2$};
			\node[left] at (1,5) {\tiny $0; x_1$};
			\node[below] at (5.7,1) {\tiny $m_2(\mu);  \xi_2,\mathsf{s}_2$};
			\node[below] at (3.7,1) {\tiny $m_1(\mu);  \xi_1,\mathsf{s}_1$};
			\node[below] at (1.7,1) {\tiny $m_0(\mu); \xi_0,\mathsf{s}_0$};
			\node[right] at (7.2,2) {\tiny $0;x_n$};
			\node[right] at (7.2,3) {$\vdots$};
			\node[right] at (7.2,4) {\tiny $0; x_2$};
			\node[right] at (7.2,5) {\tiny $0; x_1$};
		\end{tikzpicture}
	\end{center}
    \caption{\label{fig: BPF}Left: the partition function defining $\mathsf{F}_{\lambda/\mu}^{BP}(x_1, \dots, x_n\mid\Xi,\mathsf{S})$. Right: the partition function defining $\mathsf{G}_{\lambda/\mu}^{BP}(x_1, \dots, x_n\mid\Xi,\mathsf{S})$. In both pictures $m_0(\mu)$ denotes the number of $0$s in the signature $\mu\in\mathrm{Sign}^+_{N}$, that is, $m_0(\mu)=N-l(\mu)$.}
\end{figure}

\begin{proposition}[{\cite[Proposition 4.5]{BP18}}]\label{prop: BPsym}
    Both functions $\mathsf{F}_{\lambda/\mu}^{BP}(x_1, \dots, x_n\mid \Xi,\mathsf{S})$ and $\mathsf{G}_{\lambda/\mu}^{BP}(x_1, \dots, x_n\mid \Xi,\mathsf{S})$ are symmetric in $x_1, \dots, x_n$.
\end{proposition}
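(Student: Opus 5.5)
The statement to prove is Proposition~\ref{prop: BPsym}, the symmetry of $\mathsf{F}^{BP}_{\lambda/\mu}$ and $\mathsf{G}^{BP}_{\lambda/\mu}$ in $x_1,\dots,x_n$. The plan is the standard Yang--Baxter train argument. It is enough to show invariance under each adjacent transposition $x_r\leftrightarrow x_{r+1}$, since these generate $S_n$.

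\textbf{Step 1: the cross vertex.} Insert an $R$-vertex between rows $r$ and $r+1$ at the left boundary. Both crossing lines are unfused, with parameters $(x_r,qx_r)$ and $(x_{r+1},qx_{r+1})$, so the vertex is an ordinary six-vertex weight $R_{x_r,qx_r,x_{r+1},qx_{r+1}}$. This weight is well defined for generic $x_r,x_{r+1}$: it arises from the two-dimensional modules by degeneration, exactly as in Proposition~\ref{prop: hsw}.

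\textbf{Step 2: the relation used.} Theorem~\ref{YB}, specialized as in Proposition~\ref{prop: hsw} by setting two of the edge-parameter pairs equal to $(x,qx)$, gives the RLL relation with stochastic normalization. We first state it for the tilde weights and then pass to stochastic weights using Proposition~\ref{rescale}. The resulting gauge factors involve only boundary occupation numbers, which are fixed. Finally, translate to $w^{BP}$ using \eqref{eq: BPrenorm}. That normalization multiplies each vertex by a factor depending only on its vertical labels, times $(-\mathsf{s})^{-l}$. Along a column these factors telescope to a factor depending only on the top and bottom boundary labels. The $(-\mathsf{s})^{-l}$ factors multiply to $\prod (-\mathsf{s}_c)^{-(\text{right edge labels})}$ over the row. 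Since the cross vertex permutes the two rows together with their spectral parameters, the same set of horizontal edges is counted on both sides of the exchange. So the RLL relation survives in the $w^{BP}$ normalization.

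\textbf{Step 3: the train argument.} Apply the relation column by column, pushing the cross vertex through all columns from left to right. This swaps the two rows, now with $x_r$ and $x_{r+1}$ interchanged. The cross vertex then sits at the right boundary, where both outgoing edges are $0$. By conservation both incoming edges are also $0$, and the stochastic weight $R(0,0,0,0)=1$ there.

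At the left boundary the cross vertex has inputs $(1,1)$ for $\mathsf{F}^{BP}$ and $(0,0)$ for $\mathsf{G}^{BP}$. In each case conservation and the spin-$\frac12$ restriction force the outputs to equal the inputs, and the weights $R(1,1,1,1)$ and $R(0,0,0,0)$ are both equal to $1$ in the stochastic normalization. Hence inserting the cross vertex at the left changes nothing, and removing it at the right changes nothing, which proves invariance under $x_r\leftrightarrow x_{r+1}$.

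\textbf{Main obstacle.} The delicate point is the bookkeeping in Step~2: checking that the value $R(1,1,1,1)=1$ holds in the normalization actually used, and that the renormalization \eqref{eq: BPrenorm} commutes with the exchange. If the row factors did not match, one would instead compute $R_{x,qx,y,qy}(1,1,1,1)$ directly from \eqref{eq: stochR}, which has only finitely many terms, and track any constant ratio. That ratio would have to be symmetric in $x_r,x_{r+1}$ or cancel against the factor from the right boundary.
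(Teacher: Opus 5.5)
Your proposal is correct and is essentially the standard Yang--Baxter ``train'' argument behind the cited \cite[Proposition 4.5]{BP18}; the paper itself gives no proof and only cites that result. Your final worry does not arise, since $R_{x,qx,y,qy}(1,1,1,1)=R_{x,qx,y,qy}(0,0,0,0)=1$ in both the stochastic and the tilde normalizations. One imprecision in Step~2: the $(-\mathsf{s}_c)^{-l}$ factors agree on the two sides not because the same edges are counted, but because conservation at the cross vertex preserves the total number of paths leaving the column to the right.
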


\begin{proposition}\label{prop: BPsymfor}
For any $n\geq 0$ and $\mu\in\mathrm{Sign}^+_{n}$ we have
$$
\mathsf{F}_{\mu}^{BP}(x_1, \dots, x_n\mid \Xi,\mathsf{S})=\sum_{\sigma\in S_n}\sigma\left(\prod_{1\leq i<j\leq n}\frac{x_i-qx_j}{x_i-x_j}\prod_{i=1}^n\upvarphi^{BP}_{\mu_i}(x_i\mid\Xi,\mathsf{S})\right),
$$
where
$$
\upvarphi^{BP}_k(x\mid\Xi,\mathsf{S})=\frac{1-q}{1-\mathsf{s}_k\xi_k x}\prod_{j=0}^{k-1}\frac{\xi_j x-\mathsf{s}_j}{1-\mathsf{s}_j\xi_jx}.
$$
\end{proposition}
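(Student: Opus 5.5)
The approach is to prove the formula for $\mathsf{F}^{BP}_\mu$ by matching it with Proposition \ref{prop: symsHL} (i.e.\ \cite[Theorem 4.14]{BP18}) after translating between the two normalizations. The cleanest route, however, is the standard argument of \cite{BP18}, and I would follow it: identify $\mathsf{F}^{BP}_\mu$ with the coefficient in a Bethe-ansatz type formula. Concretely, the plan is to quote \cite[Theorem 4.14]{BP18} directly. It is stated there for $\mathsf{F}^{BP}_{\mu}$ with $\mu\in\mathrm{Sign}^+_n$, empty bottom boundary and all left boundary edges occupied, which is exactly the left part of Figure \ref{fig: BPF} with $N=0$. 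The only work is to check that the conventions agree, so the proof is a careful dictionary check.

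First, I check the vertex weights $w^{BP}_{z\xi,\mathsf{s}}$ in the table above against \cite[(2.x)]{BP18}, i.e.\ the weights $w_u$ with $u=z\xi_j$ and spin $\mathsf{s}_j$ at column $j$. With these weights and $n$ paths entering from the left, the paths end at positions $\mu_1\geq\dots\geq\mu_n$, counted with multiplicities $m_k(\mu)$ in column $k$ (column $0$ included, since $\mu$ is a signature and may have zero parts). Second, I verify the one-variable case $n=1$ directly. A single path enters at column $0$ and travels horizontally through columns $0,\dots,k-1$ with weight $w^{BP}(1,0,0,1)=\frac{\xi_jx-\mathsf{s}_j}{1-\mathsf{s}_j\xi_jx}$ at each of them. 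It turns up at column $k$ with weight $w^{BP}(1,0,1,0)=\frac{1-q}{1-\mathsf{s}_k\xi_kx}$. The product of these is exactly $\upvarphi^{BP}_k(x)$, which confirms the normalization of the formula.

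For general $n$, the main obstacle is the symmetrization identity itself. If a self-contained argument is wanted rather than a citation, I would run the standard induction using the Yang--Baxter equation (Theorem \ref{YB} specialized to unfused lines). The key steps are these:
\begin{itemize}
\item Write $\mathsf{F}^{BP}_\mu$ as $\langle\mu|\,\mathcal C(x_1)\cdots\mathcal C(x_n)\,|\varnothing\rangle$, with row operators $\mathcal C$ that create a path at the left.
\item Use the exchange relations $\mathcal C(x)\mathcal C(y)=\mathcal C(y)\mathcal C(x)$ to get symmetry (Proposition \ref{prop: BPsym}).
\item Show that the right-hand side satisfies the same recursion in the first column. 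Removing the rows whose path turns up in column $0$ gives a factor of $\prod(x_i-qx_j)/(x_i-x_j)$ type summed over subsets, which reduces to the $q$-binomial identity $\sum_{\sigma\in S_m}\sigma\prod_{i<j}\frac{x_i-qx_j}{x_i-x_j}=\frac{(q;q)_m}{(1-q)^m}$.
\item Shift the column parameters, $\Xi,\mathsf S\mapsto\tau\Xi,\tau\mathsf S$, and apply induction on $\mu_1$.
\end{itemize}
The hard part is the bookkeeping of the column-$0$ multiplicity $m_0(\mu)$, where the factor $(1-q)/(1-\mathsf{s}_0\xi_0x)$ must combine with the symmetrization constant. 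I expect this to go through exactly as in \cite[Section 4]{BP18}, so in the text I would simply cite \cite[Theorem 4.14]{BP18} after confirming the dictionary above.
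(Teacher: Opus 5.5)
Your proposal is correct and takes the same route as the paper, which gives no independent proof of this statement but simply quotes it from \cite{BP18} as the first part of \cite[Theorem 4.14]{BP18}; your $n=1$ check correctly matches $\upvarphi^{BP}_k$ with the weights $w^{BP}(1,0,0,1)$ and $w^{BP}(1,0,1,0)$ in the table. Drop the opening idea of deducing the statement from Proposition~\ref{prop: symsHL}, since the paper derives that proposition from this one via Proposition~\ref{prop: BPrenorm}, so it would be circular, and it only covers $\mathsf{s}_0=0$, $\xi_0=1$. Also, in your optional sketch, ``removing'' the rows that turn up in column $0$ is not literally valid, because in later columns paths from below still pass through those rows or turn right into them, so a column-$0$ reduction would have to handle mixed left boundaries (e.g.\ via the Yang--Baxter equation).
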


\begin{proposition}[{\cite[Corollary 4.10]{BP18}}]\label{prop: BPCauchy} Assume that both $\Xi$ and $\mathsf{S}$ are sequences of complex parameters, and let $x_1, \dots, x_n, y_1,\dots, y_m\in\mathbb C$ satisfy
$$
\prod_{k=M}^\infty \left|\frac{\xi_kx_i-\mathsf{s}_k}{1-\mathsf{s}_k\xi_kx_i}\cdot\frac{\xi^{-1}_ky_j-\mathsf{s}_k}{1-\mathsf{s}_k\xi^{-1}_ky_j}\right|=0
$$
for every $i,j,M$. Then for every pair of signatures $\lambda\in\mathrm{Sign}^+_{N}$, $\nu\in\mathrm{Sign}^+_{N+n}$ we have
\begin{multline*}
\sum_{\kappa\in\mathrm{Sign}_{N+n}^+} \mathsf{F}^{BP}_{\kappa/\lambda}(x_1,\dots, x_n\mid\Xi,\mathsf{S}) \mathsf{G}^{BP*}_{\kappa/\nu}(y_1,\dots, y_m\mid\overline{\Xi},\mathsf{S})\\
=\prod_{i=1}^n\prod_{j=1}^m\frac{1-qx_iy_j}{1-x_iy_j}\sum_{\mu\in\mathrm{Sign}_{N}^+} \mathsf{F}^{BP}_{\nu/\mu}(x_1,\dots, x_n\mid\Xi,\mathsf{S}) \mathsf{G}^{BP*}_{\lambda/\mu}(y_1,\dots, y_m\mid\overline{\Xi},\mathsf{S}),
\end{multline*}
where
$$
\mathsf{G}^{BP*}_{\kappa/\nu}(y_1,\dots, y_m\mid\overline{\Xi},\mathsf{S})=\prod_{j\geq 0}\frac{(s_j^2;q)_{m_{j}(\kappa)}}{(q;q)_{m_{j}(\kappa)}}\frac{(q;q)_{m_{j}(\nu)}}{(s_j^2;q)_{m_{j}(\nu)}}\mathsf{G}^{BP}_{\kappa/\nu}(y_1,\dots, y_m\mid\overline{\Xi},\mathsf{S}).
$$
\end{proposition}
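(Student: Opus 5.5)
The plan is the standard ``zipper'' argument for Cauchy identities, run first for a single pair of rows $(x,y)$ and then iterated, in the same way the proofs of Theorems \ref{thm:sHLlittlewood} and \ref{thm:littlewood} iterate their one-row statements. I would work with the stochastic weights $w$ via \eqref{eq: BPrenorm}, so that the weights $w^{BP}$ and the dual normalization in $\mathsf{G}^{BP*}$ become gauge factors $\frac{(\mathsf{s}_j^2;q)_{m}}{(q;q)_{m}}$ attached to the vertical edges. The first step is to encode both families by row operators on the span of signatures:
\begin{itemize}
\item $\mathsf{B}(x)$ is a single row of $w^{BP}$ vertices with left boundary $1$ and right boundary $0$, so that products of $\mathsf{B}(x_i)$ give $\mathsf{F}^{BP}$;
\item $\mathsf{C}(y)$ is a single row with empty left boundary, whose matrix elements, after the gauge factors above and the rotation used in Proposition \ref{SHLrenorm}, give $\mathsf{G}^{BP*}$ with parameters $\overline{\Xi}$.
\end{itemize}
With this encoding, and using the branching property of both families, the claimed identity reduces to an exchange relation $\mathsf{C}^{*}(y)\,\mathsf{B}(x)=\frac{1-qxy}{1-xy}\,\mathsf{B}(x)\,\mathsf{C}^{*}(y)$, understood as an identity of matrix coefficients between fixed signatures.

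To prove the exchange relation, I would truncate to $N$ columns and consider the two-row lattice in which the $x$-row and the $y$-row cross once at a six-vertex cross vertex. This vertex is $\tR$ with both lines unfused, i.e.\ with parameters $(a,b)=(z,zq)$, as in Remark \ref{rem: degen}. The Yang--Baxter equation, Theorem \ref{YB}, specialized so that two lines are unfused and one is a fused column with parameters $(\xi_k,\mathsf{s}_k)$, lets me slide the cross vertex column by column from the left end of the two rows to the right end.
\begin{itemize}
\item At the left end the boundary values are $(1,0)$ in one order and $(0,1)$ in the other, so the cross vertex is frozen. Its weight, divided by the corresponding weight on the other side, gives the factor $\frac{1-qxy}{1-xy}$.
\item At the right end there are two possibilities. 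If both outgoing edges are $0$, the cross vertex is frozen and we recover the product of the two operators in the opposite order. Otherwise a path escapes to the right, and the resulting error term is a configuration with a frozen tail, exactly as in Figure \ref{fig:tailTm}. Its weight is bounded by $C\prod_{k=M}^{N}\bigl|\frac{\xi_kx-\mathsf{s}_k}{1-\mathsf{s}_k\xi_kx}\cdot\frac{\xi_k^{-1}y-\mathsf{s}_k}{1-\mathsf{s}_k\xi_k^{-1}y}\bigr|$ with $C$ independent of $N$.
\end{itemize}
This is where the hypothesis of the proposition enters: it makes the error term tend to $0$ as $N\to\infty$, mirroring the proof of Proposition \ref{prop: Tinfiniterowreflection}.

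Once the exchange relation holds, I would iterate it over all pairs $(x_i,y_j)$. This gives the skew identity, with the sum over $\mu\in\mathrm{Sign}^+_N$ coming from inserting a complete set of intermediate signatures. Each interchange of summation order must be justified by absolute convergence, and I would get it from bounds of the type in Lemma \ref{lemma: bounds}, where each occupied horizontal edge contributes one factor of the vanishing product.

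I expect the main obstacle to be the infinite-volume limit, for two reasons. First, the error term must be shown to vanish uniformly enough to pass through the $N\to\infty$ limit. Second, the sum over the intermediate $\kappa\in\mathrm{Sign}^+_{N+n}$ is infinite, because $\mathsf{G}^{BP*}$ can carry arbitrarily many paths. Handling both would need a bound showing that the weights of configurations with many occupied edges decay geometrically. The Yang--Baxter bookkeeping itself, including matching the gauge factors and $\overline{\Xi}$ through \eqref{eq: BPrenorm}, should be routine.
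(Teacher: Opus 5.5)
This statement is not proved in the paper; it is imported from \cite[Corollary 4.10]{BP18}. Your plan is essentially the Yang--Baxter argument behind that result: rewrite the transposed $\mathsf{G}^{BP*}$ row as a reversed row of $w^*$-type vertices via the rotation in Proposition \ref{SHLrenorm}, slide a six-vertex cross through the columns using Theorem \ref{YB} specialized to two unfused lines, read off $\frac{1-qxy}{1-xy}$ at the frozen left end, and discard the right-end configurations in which a path escapes. For a single pair $(x,y)$ the argument is sound under the stated hypothesis. Write $r_k$ for the product inside the absolute value. Truncating to $D>\max(\lambda_1,\nu_1)$ columns, the escaping configurations force $\kappa_1\leq\min(\lambda_1,\nu_1)$. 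So the error is a finite sum of terms carrying the factor $\prod_{k=\max(\lambda_1,\nu_1)+1}^{D}r_k\to 0$. This also shows that the partial sums over $\kappa$ with $\kappa_1\leq D$ converge to the right-hand side.

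The gap is in passing to several pairs $(x_i,y_j)$. Iterating the exchange relation rearranges several infinite sums (over $\kappa$ and over the intermediate signatures). You plan to justify this by geometric decay in which each occupied horizontal edge contributes one factor of the vanishing product. But a vanishing infinite product gives no bound on individual factors, and under the stated hypothesis absolute convergence can genuinely fail. Take $n=m=1$, $\lambda=\varnothing\in\mathrm{Sign}^+_0$ and $\nu=(L)$ with $L\geq 1$. For $\kappa=(K)$ with $K>L$, the weights $w^{BP}$ together with $1-r_K=\frac{(1-\mathsf{s}_K^2)(1-xy)}{(1-\mathsf{s}_K\xi_Kx)(1-\mathsf{s}_K\xi_K^{-1}y)}$ give $\mathsf{F}^{BP}_{\kappa/\lambda}(x)\,\mathsf{G}^{BP*}_{\kappa/\nu}(y)=C_L\,\frac{(1-q)(1-r_K)}{1-xy}\prod_{k=L+1}^{K-1}r_k$, with $C_L$ independent of $K$. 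Since $r_k$ is a nonconstant rational function of $\xi_k$, you may choose $\xi_k$ so that $r_k=-(1-1/k)$ for $k>L$. Then the hypothesis holds and the partial sums telescope, but the terms have modulus of order $1/K$. So the series converges only conditionally, and Fubini-type rearrangements are unavailable. The bounds of Lemma \ref{lemma: bounds} have no analogue here, because they rest on the uniform estimates of Assumption \ref{asspt: sHL}.

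To close the gap, strengthen the hypothesis to a uniform one, $\sup_{k\geq M}|r^{(i,j)}_k|\leq\varepsilon<1$ for every pair $(i,j)$, together with uniform bounds on the remaining vertex weights. This is exactly what Assumption \ref{asspt: sHL} supplies wherever the paper actually uses the identity. You also need to bound $\mathsf{F}^{BP}_{\kappa/\lambda}\mathsf{G}^{BP*}_{\kappa/\nu}$ jointly rather than each factor separately as in Lemma \ref{lemma: bounds}. The hypothesis controls only products of an $x_i$-factor with a $y_j$-factor, and the $x$-factors alone may exceed $1$ in modulus. The joint bound works as follows. To the right of $\max(\lambda_1,\nu_1)$, conservation makes the $x$-rows and the $y$-rows carry the same number $\#\{p:\kappa_p\geq k\}$ of paths into column $k$. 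Hence all but boundedly many tail vertex factors pair up, column by column, into factors $r^{(i,j)}_k$ of modulus at most $\varepsilon$, which gives the geometric decay you wanted.
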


To recover our stable Hall-Littlewood functions $\sfF_\lambda$ we should set $\mathsf{s}_j=\sqrt{a_j}/\sqrt{b_j}$, $\xi_j=(\sqrt{a_j}\sqrt{b_j})^{-1}$ for every $j\geq1$, and $\mathsf{s}_0=0$, $\xi_0=1$. This substitution results in the following weights in the 0th column:

\begin{align*}
\begin{array}{cccc}
\begin{tikzpicture}[scale=0.9, baseline=-4pt]
	\draw[unfused] (-1,0) -- (1,0);
	\draw[fused] (0,-1) -- (0,1);
	\node[left] at (-1,0) {\tiny $0;z$};\node[right] at (1,0) {\tiny $0;z$};
	\node[below] at (0,-1) {\tiny $g;\xi,\mathsf{s}$};\node[above] at (0,1) {\tiny $g; \xi,\mathsf{s}$};
	\node[above right] at (0,0) {\tiny $w^{BP}_{z\xi,\mathsf{s}}$};
\end{tikzpicture}
&
\begin{tikzpicture}[scale=0.9, baseline=-4pt]
	\draw[unfused] (-1,0) -- (1,0);
	\draw[fused] (0,-1) -- (0,1);
	\node[left] at (-1,0) {\tiny $0;z$};\node[right] at (1,0) {\tiny $1;z$};
	\node[below] at (0,-1) {\tiny $g;\xi,\mathsf{s}$};\node[above] at (0,1) {\tiny $g-1; \xi,\mathsf{s}$};
	\node[above right] at (0,0) {\tiny $w^{BP}_{z\xi,\mathsf{s}}$};
\end{tikzpicture}
&
\begin{tikzpicture}[scale=0.9, baseline=-4pt]
	\draw[unfused] (-1,0) -- (1,0);
	\draw[fused] (0,-1) -- (0,1);
	\node[left] at (-1,0) {\tiny $1;z$};\node[right] at (1,0) {\tiny $0;z$};
	\node[below] at (0,-1) {\tiny $g;\xi,\mathsf{s}$};\node[above] at (0,1) {\tiny $g+1; \xi,\mathsf{s}$};
	\node[above right] at (0,0) {\tiny $w^{BP}_{z\xi,\mathsf{s}}$};
\end{tikzpicture}
&
\begin{tikzpicture}[scale=0.9, baseline=-4pt]
	\draw[unfused] (-1,0) -- (1,0);
	\draw[fused] (0,-1) -- (0,1);
	\node[left] at (-1,0) {\tiny $1;z$};\node[right] at (1,0) {\tiny $1;z$};
	\node[below] at (0,-1) {\tiny $g;\xi,\mathsf{s}$};\node[above] at (0,1) {\tiny $g; \xi,\mathsf{s}$};
	\node[above right] at (0,0) {\tiny $w^{BP}_{z\xi,\mathsf{s}}$};
\end{tikzpicture}
\\
\\
1
\qquad
&
z
\qquad
&
\qquad
1-q^{g+1}
\qquad
&
z
\vspace{1ex}
\end{array}
\end{align*}
This allows us to remove the 0th column from the partition functions in Figure \ref{fig: BPF}, and instead consider the sum over possible $\{0,1\}$ values of the horizontal edges entering the 1st column. Each such edge with value $1$ produces the weight $x_i$, which coincides with the factors used in \eqref{eq: def-sHL} up to $(-b_0)^{l(\lambda)-l(\mu)}$. The weights $1-q^{g+1}$ appear only for $\mathsf{F}^{BP}_{\lambda/\mu}$, and they produce a factor $\frac{(q;q)_{m_0(\lambda)}}{(q;q)_{m_0(\mu)}}$. 
Finally, we use \eqref{eq: BPrenorm} to change the weights $w^{BP}_{z\xi;\mathsf{s}}$ to the weights $w_{z;a,b}$ in the other columns of Figure \ref{fig: BPF}. The factor $\frac{(\mathsf{s}_j^2;q)_i}{(q;q)_{i}}$ from \eqref{eq: BPrenorm} forms a telescoping product in each column, leaving only
$$
\prod_{j\geq 1}\frac{(\mathsf{s}^2_j;q)_{m_j(\mu)}}{(q;q)_{m_j(\mu)}}\frac{(q;q)_{m_j(\lambda)}}{(\mathsf{s}^2_j;q)_{m_j(\lambda)}}=\frac{\psi_{\lambda}(\calA,\calB)}{\psi_{\mu}(\calA,\calB)}\prod_{j\geq 0}\left(\frac{a_j}{b_j}\right)^{\lambda'_{j+1}-\mu'_{j+1}},
$$
where we use the coefficients $\psi_\lambda$ from \eqref{eq:def-psi}. The factors $(-\mathsf{s}_j)^{-l}$ can be counted using the conservation law: for fixed $j$ the total number of such factors is the number of horizontal edges with value $1$ entering column $j+1$, and there are $\lambda'_{j+1}-\mu'_{j+1}$ such edges. Collecting all the factors together, we get the following:
\begin{proposition} \label{prop: BPrenorm} Let $\mathsf{s}_0=0$, $\xi_0=1$ and  $\mathsf{s}_j=\sqrt{a_j}/\sqrt{b_j}$, $\xi_j=(\sqrt{a_j}\sqrt{b_j})^{-1}$ for every $j\geq1$. Then for $\lambda\in\mathrm{Sign}^+_{N+n}$, $\mu\in\mathrm{Sign}^+_{N}$ we have
\begin{multline*}
\mathsf{F}^{BP}_{\lambda/\mu}(x_1, \dots, x_n\mid\Xi,\mathsf{S})\\
=\frac{(q;q)_{N+n-l(\lambda)}}{(q;q)_{N-l(\mu)}}\frac{\psi_{\lambda}(\calA,\calB)}{\psi_{\mu}(\calA,\calB)}(-a_0)^{\lambda'_1-\mu'_1}\prod_{j\geq 1}\left(-\sqrt{\frac{a_j}{b_j}}\right)^{\lambda'_{j+1}-\mu'_{j+1}} \sfF_{\lambda/\mu}(x_1,\dots, x_n\mid\calA,\calB).
\end{multline*}
For $\lambda\in\mathrm{Sign}^+_{N}$, $\mu\in\mathrm{Sign}^+_{N}$ we have
\begin{multline*}
\mathsf{G}^{BP}_{\lambda/\mu}(x_1, \dots, x_m\mid\Xi,\mathsf{S})\\
=\frac{\psi_{\lambda}(\calA,\calB)}{\psi_{\mu}(\calA,\calB)}(-a_0)^{\lambda_1'-\mu_1'}\prod_{j\geq 1}\left(-\sqrt{\frac{a_j}{b_j}}\right)^{\lambda'_{j+1}-\mu'_{j+1}} \sfF_{\lambda/\mu}(x_1,\dots, x_m\mid\calA,\calB).
\end{multline*}
\end{proposition}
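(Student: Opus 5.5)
The plan is a direct bookkeeping comparison of the two vertex models, column by column, using the specialization just described. First, since $\mathsf{s}_0=0$, the weights in the $0$th column of Figure \ref{fig: BPF} reduce to the table above ($1$, $z$, $1-q^{g+1}$, $z$). The $0$th column can therefore be summed out explicitly. For each row $i$, the horizontal edge leaving column $0$ carries a value $r_i\in\{0,1\}$ and contributes a factor $x_i^{r_i}$; for $\mathsf{G}^{BP}$ no other factors appear. For $\mathsf{F}^{BP}$ the left boundary is all ones, so each row with $r_i=0$ deposits a path into column $0$ via a $1-q^{g+1}$ weight. The column occupation goes from $m_0(\mu)=N-l(\mu)$ up to $m_0(\lambda)=N+n-l(\lambda)$, and the product of these weights telescopes to $(q;q)_{N+n-l(\lambda)}/(q;q)_{N-l(\mu)}$. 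This already produces the prefactor in the first formula.

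Next I would treat columns $j\geq1$ using \eqref{eq: BPrenorm} vertex by vertex, which replaces $w^{BP}$ by $w_{z;a_j,b_j}$ with $\mathsf{s}_j/\xi_j=a_j$ and $(\mathsf{s}_j\xi_j)^{-1}=b_j$. In each column the factors $(\mathsf{s}_j^2;q)_{\bullet}/(q;q)_{\bullet}$ telescope between bottom and top, leaving $\frac{(q;q)_{m_j(\lambda)}(\mathsf{s}_j^2;q)_{m_j(\mu)}}{(\mathsf{s}_j^2;q)_{m_j(\lambda)}(q;q)_{m_j(\mu)}}$. Comparing with \eqref{eq:def-psi} and using $\mathsf{s}_j^2=a_j/b_j$, the product over $j\ge1$ equals $\frac{\psi_\lambda}{\psi_\mu}\prod_{j\geq0}(a_j/b_j)^{\lambda'_{j+1}-\mu'_{j+1}}$. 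The factors $(-\mathsf{s}_j)^{-l}$ collect to $(-\mathsf{s}_j)^{-(\lambda'_{j+1}-\mu'_{j+1})}$. This holds because the conservation law forces the number of occupied horizontal edges leaving column $j$ to equal $\lambda'_{j+1}-\mu'_{j+1}$, which is the number of paths that end in columns $>j$ minus those that start there.

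Finally, I would compare the result with the definition \eqref{eq: def-sHL} of $\sfF_{\lambda/\mu}$. There, each $r_i=1$ carries the factor $-x_i/b_0$, and $\sum r_i=\lambda'_1-\mu'_1$. Hence the $x_i^{r_i}$ weights equal $(-b_0)^{\lambda'_1-\mu'_1}$ times the weights of $\sfF_{\lambda/\mu}$. Collecting the powers, I would combine $(-b_0)^{\lambda_1'-\mu_1'}(a_0/b_0)^{\lambda'_1-\mu'_1}=(-a_0)^{\lambda'_1-\mu'_1}$ (the $j=0$ term of the $\psi$ correction) and $(a_j/b_j)^{e}(-\mathsf{s}_j)^{-e}=(-\mathsf{s}_j)^{e}$ with $e=\lambda'_{j+1}-\mu'_{j+1}$, using $\mathsf{s}_j^2=a_j/b_j$ and $(-1)^{2e}=1$.

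The main obstacle is sign and exponent bookkeeping, in particular checking the precise indexing in $\psi$ (the $b_{r-1}/a_{r-1}$ exponent attached to $\lambda'_r$) against the column shift. I would verify this on a one-row, one-path example to fix conventions before writing the general count.
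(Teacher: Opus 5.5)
Your proposal is correct and is essentially the paper's argument. You sum out the $0$th column using the $\mathsf{s}_0=0$ weights, which gives $x_i^{r_i}$ and, for $\mathsf{F}^{BP}$ only, the telescoping $(q;q)_{m_0(\lambda)}/(q;q)_{m_0(\mu)}$; you convert the remaining columns via \eqref{eq: BPrenorm}, telescope into $\psi_\lambda/\psi_\mu$, and count the $(-\mathsf{s}_j)^{-l}$ factors by conservation, and your exponent bookkeeping (including $(-b_0)^{\lambda'_1-\mu'_1}(a_0/b_0)^{\lambda'_1-\mu'_1}=(-a_0)^{\lambda'_1-\mu'_1}$) checks out.
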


Propositions now follow immediately from Propositions \ref{prop: BPsym}-\ref{prop: BPCauchy} by using the substitution from Proposition \ref{prop: BPrenorm}. For the Cauchy identity, note that changing $(\Xi, \mathsf{S})$ to $(\overline{\Xi}, \mathsf{S})$ is equivalent to $(\calA,\calB)\to(\overline{\calB},\overline{\calA})$. Moreover,
$$
\prod_{j\geq 0}\frac{(s_j^2;q)_{m_{j}(\lambda)}}{(q;q)_{m_{j}(\lambda)}}\frac{(q;q)_{m_{j}(\mu)}}{(s_j^2;q)_{m_{j}(\mu)}}=\frac{(q;q)_{m_0(\mu)}}{(q;q)_{m_0(\lambda)}}\frac{\psi_{\mu}(\calA,\calB)}{\psi_{\lambda}(\calA,\calB)}\prod_{j\geq 0}\left(\frac{b_j}{a_j}\right)^{\lambda'_{j+1}-\mu'_{j+1}}.
$$
So, for $\lambda,\mu\in\mathrm{Sign}^+_{N}$ we have
\begin{multline*}
\mathsf{G}^{BP*}_{\lambda/\mu}(x_1, \dots, x_m\mid\overline{\Xi},\mathsf{S})\\
=\frac{(q;q)_{N-l(\mu)}}{(q;q)_{N-l(\lambda)}}(-a_0)^{\mu_1'-\lambda_1'}\prod_{j\geq 1}\left(-\sqrt{\frac{b_j}{a_j}}\right)^{\lambda'_{j+1}-\mu'_{j+1}} \sfF_{\lambda/\mu}(x_1,\dots, x_m\mid\overline\calB,\overline\calA).
\end{multline*}

\printbibliography

	\bigskip
	
	\textsc{S. Gavrilova, MIT, Cambridge, MA, USA}
	
	E-mail: \texttt{sveta117@mit.edu}

    \textsc{S. Korotkikh, The Ohio State University, Columbus, OH, USA}
    
    E-mail: \texttt{shortkih@gmail.com}
    
\end{document}